\newtheoremstyle{thm}{0.55em}{0.8ex}{}{}{\bf}{.}{0.55em}{\thmname{#1}\thmnumber{ #2}\thmnote{(#3)}}
\theoremstyle{thm}
\numberwithin{equation}{subsection}%\theoremstyle{definition}
\newtheorem{Theorem*} {Theorem}
\newtheorem{Defn*} {Definition}
\newtheorem{Prop*}{Proposition}
\newtheorem{Lemma*}{Lemma}
\newtheorem*{conj}{Conjecture}
\newtheorem*{theo}{Theorem}
\newtheorem{Defn}[equation]{Definition}%[section]
\newtheorem{Example}[equation]{Example}%[section]
\newtheorem{Remark}[equation]{Remark}
\newtheorem{Remarks}[equation]{Remarks}
\newtheorem{Prop}[equation]{Proposition}
\newtheorem{Theorem}[equation]{Theorem}
\newtheorem{Notation}[equation]{Notation}
\newtheorem{Lemma}[equation]{Lemma}
\newtheorem{Cor}[equation]{Corollary}
\def\Case#1{\medskip\noindent\textbf{Case #1}.\leavevmode\newline}
\def\enumerate{\begingroup\ifnum\@enumdepth>3\@toodeep\else
      \advance\@enumdepth\@ne
      \edef\@enumctr{enum\romannumeral\the\@enumdepth}%
      \topsep\z@\parskip\z@
      \list{\csname label\@enumctr\endcsname}
        {\@nmbrlisttrue\let\@listctr\@enumctr
         \parsep\z@\itemsep\z@\topsep\z@
         \setcounter{\@enumctr}{0}
         \def\makelabel##1{\hss\llap{\rm ##1}}
       }\fi}
\def\({\big(}
\def\){\big)}
\def\C{\mathbb C}
\def\N{\mathbb N}
\def\R{\mathbb R}
\def\Q{\mathbb Q}
\def\Z{\mathbb Z}
\def\F{\mathbb F}
\def\E{\mathcal E}
\def\c{\mathbf C}
\def\r{\mathbf R}
\def\h{h}
\def\up{l}
\def\ri{ {a}}
\def\J{\mathfrak{J}}
\def\O{\mathcal O}
\def\B{\mathcal B}
\def\t{\mathfrak t}
\def\u{\mathfrak u}
\def\s{\mathfrak s}
\def\p{\mathfrak p}
\def\pf{\p_{_{\mathbf f}}}
\def \po{\p^{\mathbf o}}
\def \pfo{\p^{\mathbf o}_{_{\mathbf f}}}
\def \ppi{\p_{_{\mathcal I}}}
\def \pj{\p_{\!_{\mathcal J}}}
\def\tt{\underline{\t}}
\def\ud{\u_{\mbox{\tiny $d$}}}
\def\uu{\underline{\ud}}
\def\X{\mathfrak X}
\def\J{\mathfrak J}
\def\M{\mathfrak M}
\def\SC{\mathfrak S}
\def\CC{\mathfrak C}
\def\R{\mathfrak R}
\DeclareMathOperator{\Hom}{Hom}
\DeclareMathOperator{\tab}{tab}
\DeclareMathOperator{\Res}{res}
\DeclareMathOperator{\RRes}{Res}
\DeclareMathOperator{\Ind}{Ind}
\DeclareMathOperator{\Span}{-\,span}
\DeclareMathOperator{\row}{row}
\DeclareMathOperator{\Std}{Std}
\DeclareMathOperator{\RStd}{RStd}
\DeclareMathOperator{\last}{last}
\DeclareMathOperator{\ttop}{top}
\DeclareMathOperator{\Stab}{Stab}
\DeclareMathOperator{\Char}{char}
\DeclareMathOperator{\rank}{rank}
\begin{document}
\setlength{\abovedisplayskip}{4pt}
\setlength{\belowdisplayskip}{4pt}
%\parindent=0 pt

%%%%%%%%%%%%%%%%%%%%%%%%%%%%%%%%%%%%%%%%%%%%%%%%%%%%%%%%%%%%%%%%%%%%%%%%%%
\title{ On the U-module Structure of the Unipotent \\Specht Modules of Finite General Linear Groups  }
\date{September 3, 2012}
\author{Qiong Guo\\ Institut f\"{u}r Algebra und Zahlentheorie\\ Universit\"{a}t Stuttgart, 70569 Stuttgart, Germany
\\ \scriptsize{E-mail: guo.qiong@mathematik.uni-stuttgart.de}}

%%%%%%%%%%%%%%%%%%%%%%%%%%%%%%%%%%%%%%%%%%%%%%%%%%%%%%%%%%%%%%%%%%%%%%%%%%

\sloppy \maketitle

\begin{abstract}
Let $q$ be a prime power, $G=GL_n(q)$ and let $U\leqslant G$ be the subgroup of (lower) unitriangular matrices in $G$. For a partition $\lambda$ of $n$ denote the corresponding unipotent Specht module over the complex field $\C$ for $G$ by $S^\lambda$. It is conjectured that for $c\in \Z_{\geqslant 0}$  the number of irreducible constituents
of dimension $q^c$ of  the restriction $\RRes^{G}_U(S^\lambda)$ of $S^\lambda$ to $U$ is a polynomial in $q$ with   integer coefficients depending only on $c$ and $\lambda$, not on $q$. In the special case of the partition $\lambda=(1^n)$ this implies a longstanding (still open) conjecture of Higman \cite{higman}, stating that the number of conjugacy classes of  $U$ should be a polynomial in $q$ with integer coefficients depending only on $n$ not on $q$.    In this paper we prove the  conjecture in the case that $\lambda=(n-m,m)$ $(0\leqslant m \leqslant n/2)$ is a 2-part partition. As a consequence, we obtain a new representation theoretic construction of the standard basis of $S^\lambda$ (over fields of characteristic coprime to $q$) defined by M. Brandt, R. Dipper, G. James and S. Lyle in \cite{brandt2}, \cite{dj1} and an explanation of the rank polynomials appearing there.
\end{abstract}

\section{Introduction}
let $p\in \N$ be a prime, $\F_q$ the finite field with $q$ elements, where $q$ is a power of $p$. Let $F$ be a field whose
characteristic is coprime to $p$ and which contains a primitive $p$-th root of unity. Let $U=U_n(q)$ be
the group of lower unitriangular $n\times n$-matrices with
entries in $\F_q.$ Thus $U$ is a $p$-Sylow subgroup of the
general linear group $G=GL_n(q)$.

It follows from \cite{huppert} and \cite{Isaacsq} that
%\vspace{-1mm}
%\begin{Theorem}[Huppert 1992]\label{huppert}
every irreducible complex character of $U$ has degree a power of $q$.
%\end{Theorem}
There is a long standing conjecture, contributed to Higman
(c.f. \cite{higman}) stating, that there should be polynomials
$h_n(t)\in \Z[t]$ such that $h_n(q)$ is the number of
conjugacy classes of  $U.$
By general theory $h_n(q)$ equals the number of distinct
irreducible complex characters of $U,$ and hence Higman's
conjecture immediately follows from the following conjecture: \vspace{-1mm}
\begin{conj}[G. Lehrer 1974, \cite{lehrer}]\label{lehre}
For $0\leqslant c \in \Z,\,n\in \N$ there exists
$l_{n,c}(t)\in \Z[t]$ such that $l_{n,c}(q)$ is the number
of distinct irreducible complex characters of degree $q^c$ of
$U.$
\end{conj}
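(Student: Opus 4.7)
The statement is Lehrer's conjecture, which remains a major open problem; my plan is necessarily speculative and follows the route suggested in the abstract, reducing Lehrer to a stronger but still highly nontrivial statement about restrictions of irreducible $\C G$-modules to $U$.

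The key ingredient is the classical identification of the unipotent Specht module for $\lambda=(1^n)$ with the Steinberg module $\text{St}_G$: its dimension $q^{\binom{n}{2}}$ equals $|U|$, and $\RRes^G_U(\text{St}_G)$ is isomorphic to the regular $\C U$-module. Hence every irreducible $\C U$-module appears in $\RRes^G_U(S^{(1^n)})$ and, for this particular $\lambda$, the number of irreducible constituents of $\RRes^G_U(S^\lambda)$ of dimension $q^c$ is exactly $l_{n,c}(q)$. Consequently, if one can prove the stronger conjecture announced in the abstract --- that for every partition $\lambda\vdash n$ the number of irreducible constituents of dimension $q^c$ of $\RRes^G_U(S^\lambda)$ is a polynomial in $q$ with integer coefficients depending only on $c$ and $\lambda$ --- then specializing to $\lambda=(1^n)$ yields Lehrer's conjecture, and thence Higman's.

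To carry this out I would attempt to import the method that the paper develops for two-part partitions. First, I would construct a $U$-adapted basis of $S^\lambda$ in the spirit of the Brandt-Dipper-James-Lyle standard basis, indexed by standard $\lambda$-tableaux. Second, I would define a filtration of $S^\lambda$ by $U$-invariant subspaces cut out by a suitable rank function on tableaux. Third, I would analyze the $U$-action on the successive quotients so that each irreducible constituent is attached to a combinatorial datum (a tableau together with a chain condition) whose dimension $q^c$ can be read off explicitly. Finally, I would count these data by dimension to obtain a polynomial whose coefficients are enumerations of tableaux, manifestly independent of $q$.

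The main obstacle, and the reason this plan does not presently extend beyond $\lambda=(n-m,m)$, is that outside two-row shapes the $U$-orbits on the relevant tableau data become dramatically more intricate, and the rank-polynomial calculus degenerates exactly when $\lambda=(1^n)$, which is the shape most needed for Lehrer-Higman. A complete proof would therefore require either a uniform, presumably orbit-theoretic or geometric construction --- in the spirit of Kirillov's orbit method, the supercharacter theories of Andr\'e-Yan and Diaconis-Isaacs, or the work of Halasi-P\'alfy on counting adjoint orbits --- yielding such $U$-filtrations for all $\lambda$, or else a fundamentally different approach that bypasses Specht modules entirely. In line with the paper I would, in the present work, settle for the two-part case, and treat its polynomiality as strong structural evidence for the full conjecture.
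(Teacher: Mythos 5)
You have correctly recognized that this statement is an open conjecture: the paper offers no proof of it, and neither do you. Your framing of the situation coincides exactly with the paper's own introduction --- namely, that $\RRes^G_U(S^{(1^n)})$ is the regular $\C U$-module (since $S^{(1^n)}$ is the Steinberg module of dimension $q^{\binom{n}{2}}=|U|$), so the paper's stronger conjecture on the constituents of $\RRes^G_U(S^\lambda)$, specialized to $\lambda=(1^n)$, would yield Lehrer's conjecture and hence Higman's. Your proposed program (idempotent/standard bases adapted to $U$, orbit modules indexed by combinatorial data with dimensions that are powers of $q$, counting by dimension) is precisely the strategy the paper executes for two-part partitions $\lambda=(n-m,m)$ in Sections 2 and 3, culminating in Theorem 3.48.

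Since what you have written is a research plan rather than a proof, I will only flag the points where it should not be mistaken for more than that. First, the reduction to the stronger conjecture is genuine but the stronger conjecture is itself unproven for $\lambda=(1^n)$ whenever $n>2$, since $(1^n)$ is not a two-part partition; the paper's machinery (patterns, hooks indexed by $\J_\t$ for a two-row tableau, the single kernel map $\Phi_m$) is built on the two-row geometry and does not merely ``degenerate'' at $(1^n)$ --- it is simply not defined there, and the kernel intersection for general $\lambda$ involves many more homomorphisms. Second, your proposed ``filtration by a rank function on tableaux'' would have to be replaced by the direct-sum decomposition into $U$-orbit modules $M_\O$ that the paper actually constructs (Theorem 2.34 and Proposition 2.37 show these are irreducible $FU$-modules, with no filtration quotients needed); for general $\lambda$ it is not known that such a clean monomial orbit decomposition exists. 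So the proposal is a faithful restatement of the paper's stated motivation and open problem, not an argument that could be completed from what is in the paper.
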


%\smallskip

Isaacs put forward another stronger conjecture in
\cite{Isaacs2}:
\vspace{-1mm}
\begin{conj}[Isaacs]
$l_{n,c}(t)$ is a polynomial in $(t-1)$
with non-negative integer coefficients.
\end{conj}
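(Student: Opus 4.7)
The Isaacs conjecture strengthens Lehrer's conjecture \ref{lehre} (which itself implies Higman's) and is currently open. My plan is to attack it through the Specht-module framework promoted in this paper. The key input would be an upgraded version of the conjecture stated in the abstract: for every partition $\lambda$ of $n$ and every $c\in\Z_{\geqslant 0}$, the number of irreducible constituents of dimension $q^c$ of $\RRes^G_U(S^\lambda)$ is a polynomial in $(q-1)$ with non-negative integer coefficients, not merely a polynomial in $q$ with integer coefficients.

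Granting this upgrade, Isaacs' conjecture would follow by combining it with Green's parametrization of $\mathrm{Irr}(GL_n(q))$: every irreducible of $G$ is obtained by Harish--Chandra induction from a Levi subgroup $L$ of a unipotent character tensored with a linear character associated to semisimple data. A Mackey analysis of the restriction to $U$ would reduce $l_{n,c}(q)$ to a sum, with $q$-polynomial weights coming from the combinatorics of conjugacy classes of semisimple data, of constituent counts for $\RRes^L_{U\cap L}(S^\lambda)$ with $\lambda$ ranging over partitions of the rank of $L$. Provided the combinatorial weights are themselves $(q-1)$-positive---which can be verified via the explicit Green parametrization of regular semisimple classes---the form claimed for $l_{n,c}$ would follow.

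To produce the required combinatorial witness for the $(q-1)$-expansion at the level of each $S^\lambda$, I would imitate the strategy used here for $\lambda=(n-m,m)$: construct a $U$-stable filtration of $S^\lambda$ whose subquotients are indexed by row-standard tableaux equipped with $\F_q$-parameters, so that a piece labelled by an object with $k$ free $\F_q^\times$-parameters contributes $(q-1)^k$ copies of a single irreducible. The rank polynomials of Brandt--Dipper--James--Lyle mentioned in the abstract package precisely this statistic for two-part partitions. For general $\lambda$ one would seek an analogous stratification built from RSK-type combinatorics together with Gaussian-elimination normal forms for $U$-orbits on a standard basis of $S^\lambda$, with the free $\F_q^\times$-parameters indexing pivot entries of the normal forms.

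The main obstacle is producing such a model uniformly in $\lambda$. Already for three-part partitions the $U$-orbit structure on tableau bases becomes substantially more intricate, the appropriate analogue of the rank polynomials is unclear, and exhibiting a manifestly non-negative $(q-1)$-expansion there---let alone for arbitrary $\lambda$---is precisely the step at which both this approach and Isaacs' conjecture itself remain stuck.
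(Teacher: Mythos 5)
The statement you are asked to establish is labelled a conjecture in the paper and is proved nowhere in it; the paper only settles a related statement (polynomiality in $q$ of constituent counts of $\RRes^G_U S^\lambda$) for two-part partitions $\lambda=(n-m,m)$, which does not touch Isaacs' conjecture. Your text is likewise not a proof: it conditions on an unproven ``upgraded'' version of the paper's conjecture for all partitions, and you yourself concede at the end that the decisive step --- a uniform stratification with a manifestly non-negative $(q-1)$-expansion for general $\lambda$ --- is exactly where everything remains stuck. So there is no argument here to verify; the gap is the entire content of the conjecture.

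Beyond that, your proposed route from the Specht-module conjecture to Isaacs' conjecture is more complicated than it needs to be and partly misdirected. Isaacs' conjecture concerns the irreducible characters of $U$ itself, not of $G=GL_n(q)$, so Green's parametrization of $\mathrm{Irr}(G)$ and a Harish--Chandra/Mackey reduction over Levi subgroups are beside the point; worse, that detour would require controlling restrictions to $U$ of \emph{all} irreducibles of $G$, a far harder problem. The paper's intended implication is much more direct: for $\lambda=(1^n)$ the unipotent Specht module is the Steinberg module and $\RRes^G_U S^{(1^n)}$ is the regular $FU$-module, so a $(q-1)$-positive count of its degree-$q^c$ constituents immediately yields $(q-1)$-positivity of $l_{n,c}$. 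If you pursue this program, the single case $\lambda=(1^n)$ of the strengthened conjecture is what you need --- and that case is precisely the one the paper's two-part-partition methods do not reach.
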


%\smallskip

There is a remarkable set of $FG$-modules  called unipotent Specht modules, defined for all fields $F$ of characteristic coprime to $q$. These are labeled by
partitions $\lambda$ of $n$, $\lambda \leftrightarrow S^\lambda_F$, and for $F=\C$ these are precisely the distinct irreducible constituents of the permutation representation of $G$ on the cosets of a Borel subgroup $B\leqslant G$ (for instance $B$ is the set of invertible upper triangular $n\times n$-matrices).

In this paper we shall show the unipotent Specht module $S_F^\lambda$ for $G$  for a 2-part partition $\lambda$ of $n$ restricted to the lower unitriangular group $U$ satisfies a kind of Isaacs' conjecture, which we believe to hold for all
unipotent
Specht modules, i.e. all partitions $\lambda$ of $n$: \vspace{-1mm}
\begin{conj}For each $c\in \Z_{\geqslant 0}$
there exists a polynomial $d_{c,\lambda}(t)\in \Z[t]$  such
that
$d_{c,\lambda}(q)$  is the number of irreducible constituents
of dimension $q^c$ of $\RRes^{G}_U(S_F^\lambda)$. Moreover, $d_{c,\lambda}(t)$ is a polynomial in $(t-1)$ with  non-negative integer coefficients.
%Here $F$ is any field of characteristic $l\geqslant 0,$ and $l\nmid q$.
\end{conj}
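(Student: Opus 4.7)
The plan is to prove the conjecture for $\lambda=(n-m,m)$ by analysing the restriction $\RRes^G_U(S^\lambda_F)$ directly and reading off both the polynomial count and its expansion in $(q-1)$ from a natural orbit decomposition. Since the abstract promises that the arguments simultaneously yield a new construction of the standard basis of Brandt, Dipper, James and Lyle (\cite{brandt2}, \cite{dj1}) and an explanation of the rank polynomials appearing there, the proof must recover those combinatorial objects from the $U$-module structure of $S^\lambda_F$ rather than take them as input.

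First, I would realise $S^\lambda_F$ inside the parabolically induced module $M^\lambda_F$ attached to $\lambda$, for instance as the intersection of kernels of Garnir-type homomorphisms, so that vectors in $S^\lambda_F$ become explicit $F$-linear combinations of parabolic cosets indexed by two-line $\F_q$-objects on which the root subgroups of $U$ act transparently. Second, I would decompose $M^\lambda_F|_U$ into $U$-orbit sums and, for each orbit, compute the $U$-stabiliser of a distinguished representative. Because $\lambda$ has only two parts, these stabilisers should be pattern subgroups (products of root subgroups cut out by combinatorial conditions), and the induced representations $\Ind_P^U(\chi)$ can be decomposed into irreducibles by Clifford theory along the derived series of $U$, or equivalently by the Andr\'e--Yan supercharacter theory for pattern groups. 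Huppert's and Isaacs' theorems then force every irreducible constituent to have dimension $q^c$ with $c$ computable from the shape of the orbit.

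Third, I would intersect the orbit decomposition of $M^\lambda_F$ with the Specht submodule $S^\lambda_F$: the surviving linear combinations inside each orbit give an explicit $U$-stable basis of $S^\lambda_F$ which I expect to match, up to reparameterisation, the standard basis of \cite{brandt2}, \cite{dj1}. Counting orbits contributing irreducible constituents of a fixed dimension $q^c$ then produces $d_{c,\lambda}(q)$ as an integer polynomial in $q$. To upgrade this to Isaacs-type positivity, I would parameterise each such constituent by a pair $(\sigma,\alpha)$, with $\sigma$ a discrete combinatorial shape and $\alpha$ a tuple of freely varying scalars in $\F_q^\times$; the enumeration then identifies $d_{c,\lambda}(t)$ with the rank polynomials of \cite{brandt2}, \cite{dj1} and renders its $(t-1)$-positivity manifest.

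The main obstacle will be the combined bookkeeping of the intersection with $S^\lambda_F$ and the $(q-1)$-parameterisation. Decomposing $M^\lambda_F|_U$ by $U$-orbits and applying Clifford or supercharacter theory yields polynomiality in $q$ almost formally; but identifying \emph{which} combinations of orbit vectors actually lie in $S^\lambda_F$, and then verifying that the survivors admit a uniform description by independent $\F_q^\times$-parameters, is delicate. It is precisely this combinatorial rigidity that is tractable for two-part partitions, where the stabilisers and their linear characters remain simple enough to be handled by hand, and which presumably becomes the genuine difficulty in any attack on the general case.
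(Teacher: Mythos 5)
Your outline reproduces the paper's strategy faithfully: restriction of $M^\lambda$ to $U$ via Mackey decomposition into batches, a Fourier-type (supercharacter) basis on which a pattern subgroup acts monomially, orbit modules of $q$-power dimension parameterised by a discrete shape (a ``pattern'') together with a tuple of $(q-1)$ freely varying scalars (its ``filling''), and finally an intersection with $S^\lambda$ that recovers the standard basis and the rank polynomials. Two remarks on the middle step: the paper does not invoke Clifford theory along the derived series, nor Huppert--Isaacs; it proves directly that each orbit module is already irreducible over the smaller group $U^w\cap U$ (by averaging over the root subgroups sitting on the hooks of the pattern) and then that it is $U$-invariant, which also yields the exact dimension $q^{k-s}$ from hook lengths rather than merely the fact that the dimension is a power of $q$.

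The genuine gap is the step you yourself flag as ``delicate'': deciding which combinations of orbit vectors lie in $S^\lambda_F$. That is where essentially all of the content of the paper lives, and no amount of orbit bookkeeping or supercharacter formalism supplies it. The paper's mechanism is: (i) $S^{(n-m,m)}=\ker\Phi_m$ in characteristic zero, where $\Phi_m=\phi_{1,m-1}$ is the single one-step homomorphism $M^\lambda\to M^{(n-m+1,m-1)}$, so only one kernel needs to be controlled; (ii) $\Phi_m$ preserves filled patterns, so the question localises to the sum $\CC^\lambda_{\pf}$ of all orbit modules with a fixed filled pattern $\pf$; (iii) a reduction map $\R_{\p}$ deletes the rows and columns meeting the pattern and carries linear relations among the $\Phi_m(e_{_L})$ to relations among $\Phi_{m-|\p|}$ applied to idempotents with \emph{empty} pattern in a smaller permutation module; (iv) Lyle's theorem that $\last(v)$ is standard for $0\neq v\in S^\lambda$, combined with the count $|T^\lambda_{\p}|=|\RStd(\mu)|$, then pins down exactly which $e_{_L}$ occur as leading terms of elements of $S^\lambda$, namely those for which $\tab(L)\setminus(\ppi\cup\pj)$ is a shifted standard tableau --- the condition that translates into the ``good fillings'' of Brandt--Dipper--James--Lyle. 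Without a substitute for (ii)--(iv), your plan counts the irreducible constituents of $M^\lambda$ restricted to $U$, not of $S^\lambda$ restricted to $U$, and neither the polynomiality nor the $(q-1)$-positivity for the Specht module follows. A further point your sketch omits is the integrality statement: the leading coefficient of the constructed basis vector must be shown to be a unit in a suitable ring $\Lambda$ so that the basis descends to every field of characteristic coprime to $p$, which the paper handles by a separate reduction-mod-$l$ argument.
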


In particular, in the case $\lambda=(1^n)$, the
corresponding unipotent Specht module $S^\lambda$ is the
Steinberg module. It is known that in this case
$\RRes^{G}_U S^\lambda$
is the regular $U$-module. Hence the conjecture above specialized to the case $\lambda=(1^n)$ implies Issacs' conjecture and hence Higman's conjecture.
It is known that classifying the conjugacy classes of $U$ is a wild problem and hence classifying the irreducible complex characters of $U$ seems to be a wild problem as well. However
 C. A. M. Andre   and subsequently N. Yan discovered a remarkable new decomposition of the regular character of $U$ into a set of orthogonal characters, called supercharacters in \cite{andre1}, \cite{yan}. This notion was subsequently axiomatized by P. Diaconis and I. M. Issacs and applied to $\F_q$-algebras.  Yan constructed a monomial basis called Fourier basis,  for $\C[U]$, the space of
complex-valued functions on $U$.
In this paper, we consider first the restriction to $U$ of the permutation representation of $G$ on the cosets of the standard parabolic subgroup $P_\lambda$ in $G$ where $\lambda$ is a composition of $n$. By Mackey's decomposition theorem this splits into submodules labeled by row standard $\lambda$-tableaux, called batches. Each batch has a Fourier type basis, called idempotent basis, on which a certain subgroup of $U$ acts monomially. We shall not carry this out in full generality, but restrict ourselves to the special case of two part partitions $\lambda$. However we point out that for the special case $\lambda=(1^n)$ and the unique batch attached to the only standard $\lambda$-tableau, our idempotent basis is dual to Yan's Fourier basis.

Exploring basic properties of idempotent bases we obtain as a consequence a new, representation theoretic proof of the following standard basis conjecture for unipotent Specht modules in the special case of $\lambda=(n-m,m)\vdash n, \, 0\leqslant m \leqslant n/2$:

\begin{conj}[Dipper-James, 1990] Let $\lambda\vdash n$. Then  there exists for each $\s\in \Std(\lambda),$  a polynomial $r_\s(t)\in \Z[t]$  and a subset
$\mathcal B_\s\subset S^\lambda $ independent of $q$ and $F$ such that the following holds: \vspace{-2mm}
\begin{itemize}
 \item[(1)] $r_\s(1)=1$
\vspace{-2mm}
 \item[(2)] $|\mathcal B_\s|=r_\s(q)$
\vspace{-2mm}
 \item[(3)] The union $\mathcal B=\mathcal B^\lambda=\bigcup_{\s\in \Std(\lambda)}\mathcal B_\s$ is disjoint.
\vspace{-2.5mm}
 \item[(4)] $\mathcal B$ is a basis of $S^\lambda.$
\vspace{-1mm}
\end{itemize}
The polynomials $r_\s(t)$ are called rank polynomials and the basis $\mathcal B$  of $S^\lambda$ is called the {\bf standard basis} of $S^\lambda.$
\end{conj}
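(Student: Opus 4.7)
The plan is to extend the idempotent-basis framework developed in the paper from individual batches to the full Specht module, working uniformly across all partitions $\lambda\vdash n$. By Mackey's decomposition, $\RRes^G_U M^\lambda$ (where $M^\lambda=\Ind_{P_\lambda}^G F$) splits into batches $\mathcal M_\t$ indexed by $\t\in\RStd(\lambda)$, each carrying an idempotent basis on which a unipotent subgroup $U_\t\leq U$ acts monomially; this construction goes through for every $\lambda$, not only for two-part shapes. The Specht module $S^\lambda$ sits in $M^\lambda$ as the image of a Young symmetrizer, and the strategy is to select, for each standard tableau $\s\in\Std(\lambda)$, a distinguished subset $\mathcal B_\s$ of the idempotent basis of the batch $\mathcal M_\s$ whose image under the canonical projection $M^\lambda\to S^\lambda$ supplies the required basis.

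To define $\mathcal B_\s$ uniformly, I would label the idempotent basis of $\mathcal M_\s$ by matrices $A$ with entries in $\F_q$ whose shape is dictated by the row and column structure of $\s$, and take $\mathcal B_\s$ to be the subset consisting of those matrices whose supports form a staircase pattern encoded by the descents of $\s$: for every ordered pair of rows of $\s$ the support of $A$ on that pair is constrained by the relative column positions of the corresponding entries, and the analogous constraints are imposed across all row-pairs simultaneously. The count $|\mathcal B_\s|$ is then a product of $(q-1)$-factors along this staircase, yielding a polynomial $r_\s(t)\in\Z[t]$ with $r_\s(1)=1$ and nonnegative integer coefficients when expanded in $(t-1)$. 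Because the combinatorial set $\mathcal B_\s$ depends only on $\s$ and not on $q$ or $F$, conditions (1), (2) and (3) are immediate, the disjointness following from the fact that distinct standard tableaux produce distinct batches.

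The substantive content is condition (4), that $\mathcal B=\bigcup_\s\mathcal B_\s$ projects to a basis of $S^\lambda$. I would proceed by comparing the Mackey filtration of $M^\lambda$ by batches with the Specht filtration by dominance. Using the monomial action of $U_\s$ on the idempotents, I would first establish that the $F$-span of $\mathcal B_\s$ injects into $S^\lambda$ by exhibiting a strictly triangular transition between $\mathcal B_\s$ and a suitable subset of the polytabloid basis with respect to a partial order on standard tableaux refining dominance. Spanning would then follow by a dimension count, matching $\sum_{\s\in\Std(\lambda)}r_\s(q)$ with $\dim_F S^\lambda$ via Young's rule or the principal specialization of the Schur function $s_\lambda$.

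The main obstacle lies in the straightening step when $\lambda$ has three or more parts: the Garnir relations then couple three or more rows simultaneously, and non-standard idempotents must be rewritten as $\Z[q]$-combinations of standard ones modulo the Garnir kernel in a manner that preserves both integrality and the desired nonnegativity when re-expanded in $(q-1)$. In the two-part case only a single ordered pair of rows appears and straightening is governed by a one-dimensional combinatorial gadget, which is precisely why the argument of the present paper succeeds there. For general $\lambda$ one needs a global monomial order on idempotents compatible with Garnir straightening across all row-pairs at once, together with precise control of the overlap between batches $\mathcal M_\s$ and $\mathcal M_\t$ under the projection to $S^\lambda$. Establishing such control appears to be the core difficulty, and I would expect any solution to interlock with the broader dimension conjecture for the irreducible $U$-constituents of $\RRes^G_U S^\lambda$ stated at the start of the paper.
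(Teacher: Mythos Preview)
The statement you are addressing is a \emph{conjecture}, and the paper does not claim to prove it in general; it establishes only the two-part case $\lambda=(n-m,m)$. You correctly identify that the general case remains open and that the three-or-more-part situation is the obstacle, so in that sense your proposal is honest about its status. However, several aspects of your outline misrepresent both the setup and the paper's actual argument in the two-part case, and these should be corrected before the strategy can be taken seriously as a plan.

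First, a conceptual slip: in James's framework (and in this paper) $S^\lambda$ is a \emph{submodule} of $M^\lambda$, given by the kernel intersection theorem, not a quotient. There is no ``canonical projection $M^\lambda\to S^\lambda$'', and the basis elements $\mathcal B_\s$ are not idempotents $e_L$ themselves but rather elements $v_L\in S^\lambda$ whose \emph{leading term} (in the batch filtration) equals $e_L$. The paper's entire machinery is built around producing such $v_L$ as elements of $\ker\Phi_m$ and showing that the possible leading terms are exactly the idempotents $e_L$ for which a certain shifted tableau is standard (equivalently, for which $L$ is a ``good filling'' of the path $\pi_\t$).

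Second, the paper's two-part argument does not use Young symmetrizers, polytabloids, or Garnir relations at all. The key inputs are: (i) the single $FG$-homomorphism $\Phi_m\colon M^{(n-m,m)}\to M^{(n-m+1,m-1)}$, whose kernel is $S^\lambda$ in characteristic zero; (ii) the fact that $\Phi_m$ preserves filled patterns (Theorem~\ref{keepcon}); (iii) Lyle's theorem (Corollary~\ref{lyle}) that $\last(v)$ is standard for $0\neq v\in S^\lambda$; and (iv) the reduction map $\mathfrak R_\p$ which strips off a pattern and reduces the problem to the empty-pattern case. Your proposed ``straightening via Garnir'' is a different mechanism entirely, and it is not clear it would interact well with the idempotent basis or the monomial $U$-action.

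Third, your description of $r_\s(t)$ as ``a product of $(q-1)$-factors along a staircase'' is inaccurate even in the two-part case: the rank polynomials count good fillings of a lattice path and are genuinely more intricate than monomials in $(q-1)$.

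In short: the paper proves the two-part case by a route quite different from the one you sketch, and the general conjecture remains open. If you wish to extend the paper's method, the natural question is whether analogues of $\Phi_m$, the pattern-preservation theorem, and Lyle's standardness result can be formulated for arbitrary $\lambda$ --- not whether Garnir straightening can be made monomial.
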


This conjecture was proved by M. Brandt, R. Dipper, G. James and S. Lyle for the case
that $\lambda$ is a 2-part partition  in \cite{brandt}, \cite{dj1}. But the proof there is rather combinatorial hence our new representation theoretic proof seems to open up a new way to solve this conjecture for arbitrary partition $\lambda$ of $n$. In particular, we give a representation explanation of those rank polynomials.

We now fix some notation which is used throughout this paper.
 We identify the set
$\Phi=\{(i,j)\,|\,1\leqslant i, j \leqslant n, i\not=j\}$ with
the standard root system of $G$ where
$\Phi^+=\{(i,j)\in \Phi \,|\,i>j\}$, $\,\Phi^-=\{(i,j)\in \Phi
\,|\,i<j\}$ are the positive respectively negative roots with
respect to the basis $\Delta=\{(i+1,i)\in
\Phi^+\,|\,1\leqslant i \leqslant n-1\}$ of $\Phi$.
A subset $J$ of $\Phi$ is {\bf closed} if $(i,j),(j,k)\in J, (i,k)\in \Phi$ implies
$(i,k)\in J.$
For $1\leqslant i, j \leqslant n$ let $\epsilon_{ij}$ be the
$n\times n$-matrix $g=(g_{ij})$ over $\F_q$, with $g_{ij}=1$ and
$g_{kl}=0$ for all $1\leqslant k,l \leqslant n$ with
$(k,l)\not=(i,j).$ Thus $\{\epsilon_{ij}\,|\, 1\leqslant i, j
\leqslant n \}$ is the natural basis
of the $\F_q$-algebra $M_n(\F_q)$ of $n\times n$-matrices with
entries in $\F_q.$
For $1\leqslant i, j \leqslant n,$\, $i\not=j$ and $\alpha\in
\F_q$, let $x_{ij}(\alpha)=E_n+\alpha \epsilon_{ij},$ where $E_n$
is   the $n\times n$-identity matrix. Then
$X_{ij}=\{x_{ij}(\alpha)\,|\,\alpha\in \F_q\}$ is the {\bf root
subgroup} of $G$ associated with
the root $(i,j)\in \Phi,$ and is isomorphic to the additive
group $(\F_q,+)$ of the underlying field $\F_q$, hence is in
particular abelian. Moreover $U=\langle
x_{ij}(\alpha)\,|\,1\leqslant j<i\leqslant n, \, \alpha\in
\F_q\rangle$ is the unitriangular subgroup of $G=GL_n(q)$
consisting of all lower triangular matrices with ones on the
diagonal. It is well known that for a closed subset
$J$ of $\Phi^+$, the set
$
U_J=\{u\in U\,|\,u_{ij}=0,\,\forall\, (i,j)\notin J\}$ is the
subgroup of $U$ generated by $X_{kl},\, (k,l)\in J$ and if we choose any linear ordering on $J$ then
$U_J=\{\prod_{(i,j)\in
J}x_{ij}(\alpha_{ij})\,|\,\alpha_{ij}\in \F_q\}$, where the
products are given in the fixed linear ordering. Note that,  $J \subseteq \Phi^+$ is closed if and only if $(i,j),(j,k)\in J$ implies $(i,k)\in J$.

Let $\lambda=(\lambda_1, \lambda_2, \cdots, \lambda_h)$
be a composition of $ n.$ Then a set of subspaces $V_0, V_1,V_2,\cdots,
V_h$ of the vector space $ \F_q^n$  with the properties
$V=V_0 \supseteq V_1 \supseteq \cdots \supseteq V_{h-1} \supseteq V_h=0$ such that $
\dim(V_{i-1}/V_i)=\lambda_i,\, \forall\,1\leqslant i \leqslant h$ is called a {\bf $\lambda$-flag}.
The set of $\lambda$-flags is denoted by $\mathcal F(\lambda).$
Clearly, right multiplication of $G$ on $V$ induces a permutation action of $G$ on $\mathcal F(\lambda).$
The corresponding {\bf permutation module} is denoted by $M^\lambda$. It is easy to see that
$M^\lambda=\Ind^{{\tiny{\mbox {$G$}}}}_{{\tiny{\mbox {$P_\lambda$}}}} F$, where $P_\lambda$ is the {\bf standard parabolic subgroup} of $G$ with respect to $\lambda$, containing $U^-$, the group  of upper unitriangular matrices in $G$ and $F=F_{P_\lambda}$ is the trivial $FP_\lambda$-module.
If $\Char (F)=0$, the unipotent Specht modules $S^\lambda $ vary over  pairwise non-isomorphic irreducible modules for $G$.   Moreover, Gordon James gave for fields $F$
with $\Char(F)\not=p$, the following characteristic free description of unipotent Specht modules analogous to the theory of Specht modules for symmetric groups:

\begin{theo}
If $\lambda$ is a composition of $n,$ then the unipotent Specht module associated with $\lambda$ is given as
$$S_F ^\lambda=\bigcap\limits_{\mu\rhd \lambda}\{\ker \Phi: \Phi\in \Hom_{FG}(M^\lambda,M^\mu)\}.$$
Here $\rhd$ is the usual dominance order. Moreover, $S^\lambda _ \C$ is irreducible and for $\Char(F)=l\not=p$, $S^\lambda_F$ is a reduction modulo $l$ of $S^\lambda_\C.$
\end{theo}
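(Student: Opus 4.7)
The plan is to establish the characterisation first over $\C$, using the decomposition of parabolically induced characters of $G$, and then descend to an arbitrary field $F$ with $\Char F \neq p$ by means of a modular system. The starting point is the analogue of Young's rule for $G$ (Steinberg),
$$M^\mu_\C \;\cong\; \bigoplus_{\nu\,\unrhd\,\mu} K_{\nu\mu}\, S^\nu_\C,$$
with $K_{\mu\mu}=1$ and $K_{\nu\mu}=0$ unless $\nu\unrhd\mu$. Hence $S^\lambda_\C$ appears in $M^\lambda_\C$ with multiplicity one but in no $M^\mu_\C$ with $\mu\rhd\lambda$; so every $\C G$-homomorphism $\Phi\colon M^\lambda_\C\to M^\mu_\C$ kills the $S^\lambda_\C$-isotypic component of $M^\lambda_\C$, giving the inclusion $S^\lambda_\C \subseteq \bigcap_{\mu\rhd\lambda,\,\Phi}\ker\Phi$. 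Conversely, if some $S^\nu_\C$ with $\nu\rhd\lambda$ occurs in $M^\lambda_\C$, composing the projection of $M^\lambda_\C$ onto a copy of $S^\nu_\C$ with an inclusion $S^\nu_\C\hookrightarrow M^\nu_\C$ yields a homomorphism that does not vanish on that copy. So the intersection equals the $S^\lambda_\C$-isotypic component of $M^\lambda_\C$, which is irreducible by multiplicity one.

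For arbitrary $F$, I would choose a discrete valuation ring $R$ of characteristic zero with fraction field $K$ (containing enough roots of unity) and residue field containing $F$, and pass to the natural $R$-form $M^\lambda_R = \Ind^G_{P_\lambda} R$. Mackey's double-coset formula is valid over any commutative ring, so $\Hom_{RG}(M^\lambda_R,M^\mu_R)$ is free over $R$ on a basis indexed by $P_\mu\backslash G/P_\lambda$, and this basis transports without loss under $-\otimes_R K$ and $-\otimes_R F$. Set $S^\lambda_R := \bigcap_{\mu\rhd\lambda,\,\Phi}\ker\Phi$ inside $M^\lambda_R$. Each kernel is $R$-pure, since its cokernel embeds into the torsion-free $M^\mu_R$, and a finite intersection of pure submodules of a free module over a DVR is again pure. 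Tensoring with $K$ reproduces the first paragraph, so $S^\lambda_R \otimes_R K = S^\lambda_\C$ is an $R$-form of the irreducible characteristic-zero Specht module; tensoring with $F$ then realises the intersection defining $S^\lambda_F$ as the reduction of $S^\lambda_\C$ modulo the maximal ideal of $R$.

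The principal technical obstacle is the interchange in the last step, namely $\bigl(\bigcap_\Phi \ker\Phi\bigr)\otimes_R F = \bigcap_\Phi \ker(\Phi\otimes_R F)$. This rests on the two ingredients flagged above: the existence of a ring-independent Mackey basis for the Hom-spaces (so that no new homomorphisms appear after base change) and the preservation of $R$-purity under intersection over a DVR. Once this compatibility is in hand, the remainder is formal and the theorem follows.
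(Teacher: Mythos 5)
Your first paragraph is essentially correct and is the standard semisimple argument: over $\C$ the decomposition $M^\mu_\C\cong\bigoplus_{\nu\unrhd\mu}K_{\nu\mu}S^\nu_\C$ with $K_{\mu\mu}=1$ identifies the kernel intersection with the multiplicity-one $S^\lambda_\C$-isotypic component of $M^\lambda_\C$, which gives both the characterisation and the irreducibility in characteristic zero. (Two small caveats: the theorem is stated for compositions $\lambda$, so one must first pass to the associated partition via $M^\lambda\cong M^{\bar\lambda}$; and the paper itself does not prove this theorem but quotes it from James \cite{gj1}, so the comparison below is with James's actual argument.)

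The descent to $F$ has a genuine gap, and it sits exactly where you place ``the principal technical obstacle''. Write $\Psi=(\Phi_i)_i\colon M^\lambda_R\to\bigoplus_i M^{\mu_i}_R$ for a Mackey basis of the Hom-spaces and $N=\ker\Psi$. Your purity argument (the cokernel of $N\hookrightarrow M^\lambda_R$ is $\operatorname{im}\Psi$, which is torsion-free, so $N$ is an $R$-direct summand) together with the base-change compatibility of the Mackey basis yields only the inclusion $N\otimes_R F\subseteq\bigcap_i\ker(\Phi_i\otimes_R F)=S^\lambda_F$, i.e.\ $\dim_F S^\lambda_F\geqslant\dim_\C S^\lambda_\C$. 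The reverse inclusion is not formal: purity of a kernel does not prevent the kernel from growing under reduction. Multiplication by $l$ on $R$ has zero --- hence pure --- kernel, yet its reduction modulo $l$ is the zero map; in general $\ker(\Psi\otimes_R F)\big/(N\otimes_R F)\cong\operatorname{Tor}_1^R\big(\operatorname{coker}(\operatorname{im}\Psi\hookrightarrow\bigoplus_i M^{\mu_i}_R),F\big)$, which vanishes only if the \emph{image} of $\Psi$ is pure, equivalently only if the rank of $\Psi$ does not drop modulo $l$. That non-dropping of rank is precisely the assertion that $\dim_F S^\lambda_F$ is independent of $F$, which is the substantive content of James's theorem; in \cite{gj1} it is obtained by explicit construction (exhibiting enough elements of the image, respectively a spanning set of the kernel of the correct, field-independent cardinality), not by a base-change formalism. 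As written, your outline would ``prove'' that the kernel of an arbitrary $R$-linear map between permutation lattices has field-independent dimension, which is false.
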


\section{$U$-module structure of $M^{(n-m,m)}$}
The kernel intersection theorem  suggests that it may be a good idea, to inspect first the restriction
of the permutation module $M^\lambda$ to $U$, of which $\RRes^{\tiny{\mbox {$G$}}}_{\tiny{\mbox {$U$}}} S^\lambda $
is a submodule.
%In this section we shall provide a complete decomposition of
%$\RRes^{\tiny{\mbox {$FG$}}}_{\tiny{\mbox {$FU$}}} M^\lambda $  into irreducibles for 2-part partitions $\lambda$ of $n$.
%Moreover we shall show that each irreducible constituent of $\RRes^{\tiny{\mbox {$FG$}}}_{\tiny{\mbox {$FU$}}} M^\lambda $
%has $q$-power degree and that the number of irreducible constituents of
%$\RRes^{\tiny{\mbox {$G$}}}_{\tiny{\mbox {$U$}}} M^\lambda $ of a given dimension $q^c$ ($0\leqslant c\in \Z$) is a polynomial in $q$
%with integral coefficients.

\subsection{Normal form of a $(m\times n)$-matrix}\label{permutation}
Let $\lambda= (n- m, m)\vdash n$ (thus
$0\leqslant m \leqslant n/2$). Then
$\mathcal F (\lambda)=\{0\subseteq V_1\subseteq V=\F_q^n\,|\,\dim_{\F_q} V_1=m\}$. We list a basis of $V_1$ as \mbox{$m\times n$}-matrix  and then row reduce it to a unique normal form defined as follows (comp. \cite{brandt}, \cite{dj1}):
\begin{Defn}\label{Xi}
Let $m, n$ be integers with $0 \leqslant m \leqslant n.$ Denote by $\Xi_{m,n}$ the set of $m\times n$
matrices $L=(l_{b_ij}) $ over $\F_q$ with the property that for some integers $b_1,\cdots,b_m$ with
$1\leqslant  b_1 < b_2 < \cdots< b_m \leqslant n$ the following holds for each
$i, \text{ with } 1\leqslant i \leqslant m:$
\begin{itemize}
\item[(1)] $l_{b_ib_i}=1,$ and $l_{b_ij}=0$ if $j>b_i;$
\item[(2)] $l_{b_kb_i}=0\text{ if }k>i.$
\end{itemize}
\end{Defn}

\begin{Remark} Note in the  definition above, we label the rows of the element in $\Xi_{m,n}$ by $b_1, b_2, \cdots, b_m$
instead of $1,2,\cdots,m.$ The reason for doing this will become apparent later on. Moreover, for each $i,$ $l_{b_ib_i}=1$
is the last nonzero entry in row $b_i.$ We call it {\bf ``last 1'' } for convenience.
\end{Remark}

Every $(m \times n)$-matrix over $ \F_q$ of rank $m$ is row-equivalent to precisely one matrix in $\Xi_{m,n}$.
Therefore $\Xi_{m,n}$ is in bijection with the set of m-dimensional subspaces of an $n$-dimensional vector space
over $\F_q$. Actually, the set $\Xi_{m,n}$ can be generalized to $\Xi_\lambda$ for arbitrary composition $\lambda$ of $n$ (see \cite{brandt}).

%\begin{Example}\label{2,2}
%There are $[\begin{smallmatrix} 4\\2\end{smallmatrix}]= 1 + q + 2q^2 + q^3 + q^4 $ two-dimensional subspaces of a
%four-dimensional space over $\F_q$ , and the elements of $\Xi_{2,4}$ are
%\[\begin{pmatrix} 1&0&0&0\\0&1&0&0\end{pmatrix},
%\begin{pmatrix} 1&0&0&0\\0&l_{32}&1&0\end{pmatrix},
%\begin{pmatrix} 1&0&0&0\\0&l_{42}&l_{43}&1\end{pmatrix},
%\begin{pmatrix} l_{21}&1&0&0\\l_{41}&0&1&0\end{pmatrix},\]
%\[\begin{pmatrix} l_{21}&1&0&0\\l_{41}&0&l_{43}&1\end{pmatrix},
%\begin{pmatrix} l_{31}&l_{32}&1&0\\l_{41}&l_{42}&0&1\end{pmatrix}\]
%where $l_{31},l_{32},l_{41},l_{42},l_{43}\in \F_q .$
%\end{Example}

\begin{Defn}
\begin{itemize}
\item[(1)] If $m$ is a non-negative integer, then we let
$[m]=1+q+q^2+\cdots+q^{m-1}.$\vspace{-3mm}
\item[(2)]If $m, n$ are  non-negative integers, let
$$\begin{bmatrix}n\\m\end{bmatrix}=\begin{cases}
\frac{[n][n-1]\cdots[n-m+1]}{[m][m-1]\cdots[1]}& \text{if } n\geqslant m\\
0&\text{otherwise.} \end{cases}$$
\end{itemize}
Then $[\begin{smallmatrix}n\\m\end{smallmatrix}]$ is a polynomial in $q$, known as a Gaussian polynomial. Since $q$ is a prime power, $[\begin{smallmatrix}n\\m\end{smallmatrix}]$ is the number of $m$-dimensional subspaces of an $n$-dimensional vector space over $\F_q.$
\end{Defn}

\begin{Defn}\label{basisxi} \label{circ} Let $M^{(n-m,m)}$ be the $[\begin{smallmatrix} n\\m\end{smallmatrix}]$-dimensional\vspace{1mm}
vector space over F with basis $\Xi_{m,n}$. If $L\in \Xi_{m,n}$ and $g\in G $
then $Lg$ is row-equivalent to a matrix in $\Xi_{m,n}$, and we denote this matrix by $L \circ g$.
Under the action $\circ\text{ of } G$, the vector space $M^\lambda$ becomes an $FG $-module, $\lambda=(n-m,m)\vdash n.$
\end{Defn}

Obviously, this is isomorphic to the permutation module of $G $ on the cosets of the parabolic
subgroup for $\lambda$ defined previously justifying the notation.

Remember $U $ is the lower unitriangular subgroup of $G .$
Hence $M^\lambda$ can be regarded as an $FU $-module. Since $\Char(F)\not=p$ and $|U|$ is a $p$-power, $FU$ is semisimple.

\begin{Defn}\label{secondrow}Suppose that $L=(l_{b_ij} )\in \Xi_{m,n}$, and let $1\leqslant  b_1 < b_2 < \cdots< b_m \leqslant n$
be the integers which appear in Definition \ref{Xi}. Define  $\tab(L)$ to be unique the row-standard
$\lambda$-tableau whose second row is $b_1,b_2, \cdots ,b_m.$ We refer to $\tab(L)$ as the tableau of $L$.
\end{Defn}

We denote the set of row-standard $\lambda$-tableaux by $\RStd(\lambda)$. For $1\leqslant i \leqslant n,$ $\t\in \RStd(\lambda)$, let $\row_\s(i)$ be the row index of the row in $\t$ containing $i$. So for $\lambda=(n-m,m), \row_\t(i)\in\{1,2\}$ and we denote the second row of $\t$ by $\underline{\t}.$ Note that $\t$ is completely determined by $\underline{\t}.$ Naturally, we obtain
\mbox{$\underline{\tab(L)}=(b_1,b_2, \cdots ,b_m).$}

\begin{Example}Suppose
 $$\begin{matrix}
           &                             \begin{matrix}\,\,\emph 1&\,\,\emph 2&\emph 3&\emph 4\end{matrix}       & \\
 L=&\begin{pmatrix} l_{21}&1&0&0\\l_{31}&0&1&0\end{pmatrix}&\begin{matrix}\!\!\!\!\!\!\emph{2}\\\!\!\!\!\!\!\emph 3\end{matrix}
\, \in \Xi_{2,4} &\text{ then }\tab(L)=\begin{tabular}{|c|c|}\hline 1 & 4 \\\hline 2 & 3\\\hline \end{tabular}\,,\quad
\underline{\tab(L)}=(2,3).
\end{matrix}
$$

\end{Example}

\begin{Remark}
When $\lambda$ is a two part partition, we order the elements in
$\RStd(\lambda)$ lexicographically by their second rows.\end{Remark}
%\begin{Example}
%We refer to Example \ref{2,2} for the elements in $\Xi_{2,4}.$ The tableaux of the elements listed there are
%$$\begin{tabular}{|c|c|}\hline 3 & 4\\\hline 1 & 2\\\hline \end{tabular}<
%         \begin{tabular}{|c|c|}\hline 2 & 4\\\hline 1 & 3\\\hline \end{tabular}<
%         \begin{tabular}{|c|c|}\hline 2 & 3\\\hline 1 & 4\\\hline \end{tabular}<
%         \begin{tabular}{|c|c|}\hline 1 & 4\\\hline 2 & 3\\\hline \end{tabular}<
%         \begin{tabular}{|c|c|}\hline 1 & 3\\\hline 2 & 4\\\hline \end{tabular}<
 %        \begin{tabular}{|c|c|}\hline 1 & 2\\\hline 3 & 4\\\hline \end{tabular}\,$$
%respectively.
%\end{Example}

The positions in a matrix $M\in \Xi_{m,n}$, which are not in  columns of and  not to the right  of the last 1's
will play an important role in the following sections. And for the matrices having the same tableau, these positions are also
the same. Therefore we fix the following notation:
\begin{Defn}\label{jt}Set
$\J_{\t}=\{( i,j)\,|\, i>j,\, i\in\tt,\, j\notin \underline{\t}\,\}$ for $\t\in \RStd(\lambda)$ and $\underline{\t}=(b_1,b_2, \cdots, b_m).$
\end{Defn}
\vspace{2mm}
Since $\Xi_{m,n}$ is a basis of $M^\lambda$,  the following definition makes sense:\vspace{-1mm}
\begin{Defn}\label{original basis} Suppose that $v\in M^{\lambda},$ and write
$v =\sum_{X\in\Xi_{m,n}}C_XX$ where  $C_X\in F$ and \vspace{-1mm}
  \begin{itemize}
    \item [(1)]For each $\t\in \RStd(\lambda)$, let  $v(\t) =\sum_{\tab(X)=\t}C_XX .$\vspace{-2mm}
     \item [(2)] If $v\not=0,$ then let $\last(v)$ be the  last $\t\in \RStd(\lambda)$ (with respect to the lexicographical order as above)
           such that $v(\t)\not=0.$\vspace{-2mm}
    \item [(3)] For $v\not=0,$ define \mbox{$\ttop(v)=v\big(\last(v) \big) $.}
  \end{itemize}
\end{Defn}

\subsection{Idempotent basis}

Our first goal is to investigate the $U$-module structure of the permutation module $M^\lambda$. Obviously Mackey decomposition
provides a first splitting of $\RRes^{\tiny{\mbox {$FG$}}}_{\tiny{\mbox {$FU$}}} M^\lambda .$
Note that $\mathcal D_\lambda=\{w\in \SC_n\,|\,\t^\lambda w \in \RStd(\lambda)\}$ is a $P_\lambda$$-U $ double coset transversal in $G $. Note that this holds, even if $P_\lambda$ in our setting contains $U^-$, the group of upper unitriangular matrices. Thus
\[\RRes^{\tiny{\mbox {$FG$}}}_{\tiny{\mbox {$FU$}}}
M^\lambda=\RRes^{\tiny{\mbox {$FG$}}}_{\tiny{\mbox {$FU$}}}
\Ind^{\tiny{\mbox {$FG$}}}_{\tiny{\mbox {$FP_\lambda$}}} F=\bigoplus\nolimits_{w\in \mathcal D_\lambda}
\Ind^{\tiny{\mbox {$FU$}}}_{\tiny{\mbox {$F(P_\lambda^w\cap U)$}}} F\]
 is a direct sum decomposition of
$\RRes^{\tiny{\mbox {$FG$}}}_{\tiny{\mbox {$FU$}}} M^\lambda $. We call the $U$-submodule
$\Ind^{\tiny{\mbox {$FU$}}}_{\tiny{\mbox {$F(P_\lambda^w\cap U)$}}} F$ the {\bf $\t$-batch} of
$\RRes^{\tiny{\mbox {$FG$}}}_{\tiny{\mbox {$FU$}}} M^\lambda $, where $\t=\t^\lambda w\in \RStd(\lambda).$
We now translate this notion into the setting of section 2.1:
\begin{Lemma}\label{xj}Let  $\t=\t^\lambda w \in \RStd(\lambda).$ Set
\mbox{$\mathfrak{X}_{\t}= \{L\in \Xi_{m,n}\,|\,\tab(L)=\t \,\}.$}
Then for $L\in \X_\t$ and $u\in U,$ we have $L\circ u \in \X_\t.$ Moreover $U$ acts transitively on $\X_\t.$
Let $\M_{\t}$ be the corresponding permutation module with basis  $\X_{\t}$.
Then $\M_{\t}\cong\Ind^{\tiny{\mbox {$FU$}}}_{\tiny{\mbox {$F(P_\lambda^w\cap U)$}}} F\,, \,
\text{  the $\t$-batch of $\RRes^{\tiny{\mbox {$FG$}}}_{\tiny{\mbox {$FU$}}} M^\lambda $}.$
\end{Lemma}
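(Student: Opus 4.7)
The plan is to verify the three assertions of the lemma in turn, exploiting the subspace interpretation of $\Xi_{m,n}$.

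First, for the claim that $L \circ u \in \X_\t$ whenever $L \in \X_\t$ and $u \in U$, I would pass to the row-span $V_L \subseteq \F_q^n$ of $L$. The sequence $b_1 < \cdots < b_m$ of Definition \ref{Xi} is a subspace invariant of $V_L$: it is precisely the set of indices $c$ at which $\dim(V_L \cap \langle e_1, \ldots, e_c\rangle)$ jumps. Since each $u \in U$ stabilises every $\langle e_1, \ldots, e_c\rangle$ under the right action (being lower unitriangular), one has $\dim(V_L u \cap \langle e_1, \ldots, e_c\rangle) = \dim(V_L \cap \langle e_1, \ldots, e_c\rangle u^{-1}) = \dim(V_L \cap \langle e_1, \ldots, e_c\rangle)$ for all $c$, so $\tab(L \circ u) = \tab(L) = \t$.

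Second, to establish transitivity, I would exhibit an explicit simply transitive subaction. Let $L_0 \in \X_\t$ be the distinguished matrix whose row labelled $b_i$ is the unit row vector $e_{b_i}$. Inspection of Definition \ref{Xi} shows that any $L \in \X_\t$ differs from $L_0$ only in free entries at positions $(i,j) \in \J_\t$, so $|\X_\t| = q^{|\J_\t|}$. Next, $\J_\t$ is (vacuously) closed in $\Phi^+$, since the closure hypothesis $(i,j),(j,k) \in \J_\t$ would demand the same index $j$ to lie in both $\underline{\t}$ and its complement. Hence $U_{\J_\t} \leqslant U$ is a well-defined subgroup of order $q^{|\J_\t|}$, and is in fact abelian, because the Chevalley commutator formulae force $[x_{ij}(\alpha), x_{i'j'}(\alpha')] = 1$ for all $(i,j), (i',j') \in \J_\t$: the only potentially non-trivial cases require $j = i'$ or $j' = i$, each of which contradicts $\underline{\t}$-membership. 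A direct computation then shows that for $(i,j)\in\J_\t$, right multiplication of $L_0$ by $x_{ij}(\alpha)$ simply places $\alpha$ at the previously-zero slot $(i,j)$, leaving every other position untouched; the result is still in normal form. Combining these observations, $u \mapsto L_0 \circ u$ is a bijection $U_{\J_\t} \to \X_\t$, proving simple transitivity of the $U_{\J_\t}$-action and a fortiori transitivity of the $U$-action.

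Third, for the identification of $\M_\t$ with the $\t$-batch of $\RRes^{FG}_{FU} M^\lambda$, I would match stabilisers. Under the bijection $\Xi_{m,n} \leftrightarrow P_\lambda\backslash G$ sending $L$ to the coset $P_\lambda g$ with $V_L = V_1^{\mathrm{std}} g$, where $V_1^{\mathrm{std}} = \langle e_{n-m+1}, \ldots, e_n\rangle$ is the standard flag piece stabilised by $P_\lambda$, the double coset $P_\lambda w U$ corresponds precisely to the $U$-orbit $\X_\t$, since $V_1^{\mathrm{std}} w = \langle e_{b_1}, \ldots, e_{b_m}\rangle$ is the row span of $L_0$ and step (2) ensures $U$-transitivity on $\X_\t$. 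The $G$-stabiliser of $V_1^{\mathrm{std}} w$ is $w^{-1} P_\lambda w = P_\lambda^w$, so $\Stab_U(L_0) = U \cap P_\lambda^w$, yielding $\M_\t \cong \Ind^{FU}_{F(P_\lambda^w \cap U)} F$ as desired.

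The main technical obstacle is controlling the right action of $U$ on matrices written in normal form directly; this is sidestepped in (1) by working with subspaces and the standard flag, and in (2) by using the closedness and abelianness of $\J_\t$ to decouple the action into independent insertions at the free slots. Once these two ingredients are in place, (3) is a routine bookkeeping exercise against the Mackey decomposition recorded immediately before the lemma statement.
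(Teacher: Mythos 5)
Your proof is correct, and it reaches the three claims of the lemma by a noticeably different route than the paper. For $U$-invariance of $\X_\t$ the paper argues directly on normal forms (the $\circ$-action is realised by column operations from right to left that leave the last $1$'s fixed, followed by row reduction below them), whereas you pass to the row span $V_L$ and observe that $\underline{\t}$ records the jumps of $c\mapsto\dim\big(V_L\cap\langle e_1,\dots,e_c\rangle\big)$, a flag that $U$ preserves; this is cleaner and makes the invariance of $\tab(L)$ conceptually transparent. For transitivity the paper starts from the matrix $L_0$ with only last $1$'s and sketches the construction of a suitable $u$, while you exhibit the abelian subgroup $U_{\J_\t}$ (closedness of $\J_\t$ being vacuous, exactly as you say) acting \emph{simply} transitively on $\X_\t$; this is sharper than what the lemma asks for, gives $|\X_\t|=q^{|\J_\t|}$ for free, and in effect anticipates the role the set $\Upsilon_1=\J_\t$ and the subgroup $U^w_0$ play later in the paper. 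For the stabiliser the paper computes $\Stab_U(L_0)$ generator by generator as the root subgroups $X_{ij}$ with $\row_\t(i)\leqslant\row_\t(j)$ and then cites \cite{dj1} for the identification with $P_\lambda^w\cap U$, whereas you obtain it at once from $\Stab_G(V_1^{\mathrm{std}}w)=P_\lambda^w$; your argument does rely on the convention that $P_\lambda$ is the upper block-triangular parabolic stabilising $\langle e_{n-m+1},\dots,e_n\rangle$ and that $\underline{\t^\lambda w}=(b_1,\dots,b_m)$, both consistent with the paper's setup. In short, the paper buys concreteness about how $\circ$ acts on matrix entries (which it needs in later sections), while your version buys conceptual clarity and the extra structural fact that $\X_\t$ is a principal homogeneous space for $U_{\J_\t}$.
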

\begin{proof}
For any $g\in U=\prod_{(i,j)\in \Phi^+} X_{ij}$, its circle action  on  $M\in \X_\t$ can be obtained firstly by a series of column operation from right to left, keeping the last 1's unchanged, and then using row operations to remove the possible nonzero entries under the last 1's. Therefore $\M_\t$ is an $U$-module under the operation $\circ$.

Next we show that $U$ acts transitively on $\X_\t.$
For this let $L=(l_{{b_ij}})\in\X_\t$, whose only nonzero entries are the last 1's. Then for any arbitrary $g\in U,\, Lg$ is obtained from $g$ by deleting all rows with index $j\notin \tt,$ then obviously we can easily construct $u\in U$ such that $L\circ u=M$ for any $M\in \X_\t.$ That is $U$ acts transitively on $\X_\t.$

To finish the proof it suffices  to show that the stabilizer
$\Stab_{\tiny{\mbox{$U$}}}(L)$ of $L$ in $U$ is given as $P_\lambda^ w\cap U.$
It is easy to see $L\circ u=L$ if and only if the entries in rows $b_i$ of $u$
are zeros except the   positions $(b_i,b_j)$ where $i\geqslant j$.
Then $\Stab_{\tiny{\mbox{$U$}}}(L)$ is generated by root subgroups $X_{ij}$ with
$1\leqslant j  <i \leqslant n$ where $i\notin \tt$ or $i,j\in \tt.$ Since $\t$ has precisely two rows, this condition is equivalent to
$1\leqslant j  <i \leqslant n\text{ and } \row_{\t}(i) \leqslant \row_{\t}(j)$ and we conclude
$\Stab_{\tiny{\mbox{$U$}}}(L)  = P_\lambda^ w\cap U$ (see \cite{dj1}).
\end{proof}

Next for $\t\in \RStd(\lambda)$ fixed, we make $\mathfrak{X}_{\t}$  into an abelian group through
introducing an addition $\diamond$ on $\mathfrak{X}_{\t}$ by
adding all entries pointwise besides the last one's.

\begin{Example}\label{diamond}Let $a_1,a_2,b_1,b_2,c_1,c_2 \in  \F_q.$ Then
$$\begin{pmatrix} a_1&1&0&0\\b_1&0&c_1&1\end{pmatrix}\diamond \begin{pmatrix} a_2&1&0&0\\b_2&0&c_2&1\end{pmatrix}=
\begin{pmatrix} a_1+a_2&1&0&0\\b_1+b_2&0&c_1+c_2&1\end{pmatrix}$$
\end{Example}

Obviously $(\mathfrak{X}_{\t},\diamond)$ is an abelian group of order $q^{|\mathfrak{J}_{\t}|}.$
Therefore we can find $q^{|\mathfrak{J}_{\t}|}$ linear irreducible $F$-characters of  $\X_{\t}.$ Such a character
$\chi$ is a group homomorphism from $\X_{\t}$ to the multiplicative group $F^{\ast}.$ In particular
\mbox{$\chi(M\diamond N)=\chi(M)\chi(N) \text{ for } M, N \in \X_{\t}.$}
%We regard the set $X$ of $F$-linear characters of $\X_{\t}$ as a vector space over $ \F_q$ by setting
%$$(\chi_1+\chi_2)(M)=\chi_1(M)\chi_2(M), \quad \alpha \chi(M)=\chi(\alpha M)$$ for all $M\in \X_{\t}, \alpha \in  \F_q.$

We fix, once for all, a non trivial linear character
$\theta: \big (\F_q, + \big )\rightarrow F^\ast$. Following the notation in \cite{dj1}, we denote by $\xi_{_{b_ij}}$ the $(b_i,j)$
coordinate function from $\X_{\t}$ to $ \F_q$  for $(b_i,j)\in \J_{\t}$. For a given  matrix $L=(l_{b_ij})\in \X_{\t},$ we let
$\chi_{_L}=\sum_{(b_i,j)\in \J_{\t}}l_{_{b_ij}}\theta{\xi_{_{b_ij}}}$ so that
$X=\{\chi_{_L}\,|\,L\in \X_{\t}\}$ is the set of $F$-linear characters of $(\X_{\t},\diamond)$ as a vector space over $ \F_q$ and for $M=(m_{_{b_ij}})\in \X_{\t},$ we have
\begin{equation}\label{defofchi}\chi_{_L}(M)=\prod\nolimits_{(b_i,j)\in \J_{\t}}\theta(l_{_{b_ij}}m_{_{b_ij}})\end{equation}

Since  $\Char(F)\not=p$  and $|\X_\t|$ is a power of $p$, $F(\X_\t,\diamond)$ is  semisimple.
$F$ is a splitting field for $(\X_\t,\diamond)$ and $F(\X_\t,\diamond)$ has a basis of orthogonal primitive idempotents.
This basis turns out to be very well adapted to the $U$-module structure of $\M_\t$ as we shall show.

In order to not mix up the formal addition in the $F$-vector space $\M_\t$ and the   matrix addition $\diamond$, we write
$[M]$ if we consider the matrix $M$ as a basis element of the $F$-vector space $F(\X_{\t},\diamond)$.

\begin{Defn}\label{defofide}Suppose that $\t\in \RStd(\lambda)$ and $L \in \X_{\t}.$ Let
\begin{eqnarray*}
e_{_L}=\frac{1}{q^{|\mathfrak{J}_{\t}|}}\sum\limits_{M\in\X_{\t}}\chi_{_{L}}(-M)[M]
=\frac{1}{q^{|\mathfrak{J}_{\t}|}}\sum\limits_{M\in\X_{\t}}\prod\limits_{(b_i,j)\in\J_{\t}}\theta(-l_{_{b_ij}}m_{_{b_ij}})[M].
\end{eqnarray*}
\end{Defn}
By general theory $e_{_L}$ is the idempotent in $F(\X_{\t},\diamond)$ affording the linear character $\chi_{_L}.$ In fact,
\[\mathcal E_{\t}=\{e_{_L}\,|\,L \in \X_{\t}\}\] is a complete set of primitive orthogonal idempotents in $F(\X_{\t},\diamond),$ and so
\mbox{$F(\X_{\t},\diamond)= \bigoplus_{L \in \X_{\t}}Fe_{_L}$}is the decomposition of the regular module of $F(\X_{\t},\diamond)$
into pairwise non-isomorphic irreducible $F\X_{\t}$-modules. Since $\X_\t$ is an $F$-basis of $\M_\t$ too, we may consider the idempotents $e_{_L}$, $L\in \X_\t$ as elements of $\M_\t$,
and hence $\E_\t$ as $F$-basis of $\M_\t$, \ref{defofide} providing the base change matrix.

\subsection{The subgroup  $(U^w\cap U)$ of $U$}\label{intermod}
Next we introduce a subgroup of U, which will play an important role later on. That is,  $U^w\cap U=w^{-1}U w\cap U$. We remark in passing that $U^w\cap U$ is a set of left coset representatives of $P_\lambda^w \cap U$ in $U$.

\begin{Lemma}\label{defsub}Let $\lambda\vdash n, \,\s=\t^\lambda w\in \RStd(\lambda)$
 where $w=\text{d}(\s)\in \mathfrak S_n$. Let \mbox{$g=(g_{_{ij}})\in G$}. Then $g\in U^w\cap U$ if and only if
  $g\in U$ and $ \forall\, 1\leqslant i,j \leqslant n:$
  $(i<j \text{ or } \row_\s(i)<\row_\s(j)) $ implies $ g_{_{ij}}=0.$
So $U^w\cap U$ consists of all matrices, which are contained in $U$
and in addition have zeros at all places $(i,j)$ with $i>j$ and $\row_\s(i)<\row_\s(j).$
\end{Lemma}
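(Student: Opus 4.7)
The plan is to describe $U^w\cap U$ as a product of root subgroups via the standard conjugation rule on root subgroups, and then translate the resulting root-theoretic condition into the row-language of $\s$ by exploiting that $w=d(\s)$ is a distinguished coset representative of $S_\lambda\backslash S_n$.

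First, I would invoke the conjugation identity $w^{-1}X_{ij}w=X_{w^{-1}(i),w^{-1}(j)}$ to rewrite $U^w=w^{-1}Uw$ as the subgroup generated by those root subgroups $X_{kl}$ with $w(k)>w(l)$. Intersecting with $U=\langle X_{ij}\mid(i,j)\in\Phi^+\rangle$, and noting that $\{(i,j)\in\Phi^+\mid w(i)>w(j)\}$ is visibly closed in $\Phi^+$, the description of $U_J$ for closed $J\subseteq\Phi^+$ stated in the introduction immediately gives $U^w\cap U=U_J$ for this $J$. Concretely, $g\in U^w\cap U$ if and only if $g\in U$ and $g_{ij}=0$ for every $i\neq j$ with $w(i)<w(j)$.

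The second step is to replace the condition "$w(i)<w(j)$, $i>j$" by "$\row_\s(i)<\row_\s(j)$". Since $\s=\t^\lambda w$ is obtained from $\t^\lambda$ by permuting entries according to $w$, the position of $j$ in $\s$ is the position of $w(j)$ in $\t^\lambda$, giving $\row_\s(j)=\row_{\t^\lambda}(w(j))$. Because the rows of $\t^\lambda$ are consecutive increasing intervals of $\{1,\ldots,n\}$, $\row_{\t^\lambda}$ is weakly monotone, so $\row_\s(i)<\row_\s(j)$ at once forces $w(i)<w(j)$. For the converse I would argue by contradiction: given $i>j$ and $w(i)<w(j)$, if $w(i)$ and $w(j)$ belonged to the same row-block of $\t^\lambda$, then the distinguished representative property of $w\in\mathcal D_\lambda$, which amounts to $w^{-1}$ being increasing on every row-block of $\t^\lambda$, would yield $i=w^{-1}(w(i))<w^{-1}(w(j))=j$, contradicting $i>j$; hence $w(i)$ sits in a strictly earlier row-block of $\t^\lambda$ than $w(j)$, so $\row_\s(i)=\row_{\t^\lambda}(w(i))<\row_{\t^\lambda}(w(j))=\row_\s(j)$.

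The main obstacle is essentially bookkeeping: one has to fix the conventions (the $S_n$-action on $\lambda$-tableaux defining $\t^\lambda w$, and the identification $w\leftrightarrow P_w$ of permutations with permutation matrices) consistently with those underlying Lemma~\ref{xj}, so that $w$ resp.\ $w^{-1}$ is monotone on the correct blocks and conjugation moves root subgroups by the correct permutation. Once these are pinned down, the proof is just the concatenation of the two short verifications above.
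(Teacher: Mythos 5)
Your proof is correct and follows essentially the same route as the paper's: conjugating by $w$ (whether phrased as tracking matrix entries or as moving root subgroups), identifying $U^w\cap U$ with the entries/roots in the closed set $\Phi^+\cap w^{-1}\Phi^+$, and then converting the condition on $w$-images into the row condition via the fact that $\s=\t^\lambda w$ is row-standard. The paper's own proof is in fact terser -- it merely states the entry-tracking observation -- so your second step, which spells out why same-row-block entries cannot produce the situation $i>j$ with $w$-images reversed, supplies exactly the detail the paper leaves implicit.
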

\begin{proof}
Let $h=(h_{_{kl}})\in G$ and $g=(g_{_{ij}})=w^{-1}hw.$ Then
$$g_{_{ij}}=h_{_{kl}}\text{ for }i=kw, j=lw,\text{ and } \,\forall\,1\leqslant k,l \leqslant n.$$
The key of showing this argument is by using the following observation:
$i$ occupies the place in $\s$ which is occupied by $k$ in $\t^\lambda$ and
$j$ occupies the place in $\s $ which is occupied by $l$ in $\t^\lambda.$
\end{proof}

From now on we fix a 2-part partition \mbox{$\lambda=(n-m,m)\vdash n$} and $\t\in \RStd(\lambda).$
Let $w=d(\t)$ i.e. $\t^\lambda w=\t.$ Recall that the second row $\tt$ of $\t$ labels the rows of $L \in \X_{\t}$.
So let $\underline{\t}=(b_1,b_2,\cdots,b_m).$ In particular, we have:

\begin{Cor}\label{upsilon}  $g=(g_{_{ij}})\in U^w\cap U$ if and only if
$ g \in U$  and the following holds:
\mbox{$(i\notin \underline{\t} \text{ and } j\in \underline{\t}) \text{ implies }g_{_{ij}}=0.$} In particular, $U^w\cap U$
is generated by the root subgroups $X_{ij}$ where $1\leqslant j < i \leqslant n$ satisfying one of the following conditions:
 (1) $i \in \tt,\, j\notin\tt;$ \,   (2) $ i ,\, j\notin\tt;$  \, (3) $i,\,j \in \tt. $
\end{Cor}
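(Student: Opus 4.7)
The plan is to read Corollary \ref{upsilon} off directly from Lemma \ref{defsub} and then to identify the resulting position pattern as coming from a closed subset of $\Phi^+$ in the sense of Section~1.

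First I would specialise Lemma \ref{defsub} to the two-part case. Since $\lambda=(n-m,m)$ has only two rows, $\row_\t$ takes values in $\{1,2\}$, so the strict inequality $\row_\t(i)<\row_\t(j)$ is equivalent to $\row_\t(i)=1$ and $\row_\t(j)=2$, i.e.\ to $i\notin\tt$ and $j\in\tt$. The other alternative $i<j$ appearing in Lemma \ref{defsub} is redundant here, because $U$ consists of lower unitriangular matrices and hence $g\in U$ already forces $g_{ij}=0$ whenever $i<j$. The lemma therefore collapses to the asserted equivalence: $g=(g_{ij})\in U^w\cap U$ if and only if $g\in U$ and $g_{ij}=0$ whenever $i\notin\tt$ and $j\in\tt$.

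For the ``In particular'' statement I would set
\[J=\{(i,j)\in\Phi^+\mid (i,j)\text{ satisfies one of (1), (2), (3)}\},\]
and observe that $J$ is precisely the complement in $\Phi^+$ of the ``forbidden'' set $\{(i,j)\in\Phi^+\mid i\notin\tt,\ j\in\tt\}$ just identified. Combined with the first half, this says $U^w\cap U=U_J$ in the notation recalled at the end of Section~1. The general fact stated there, that $U_J=\langle X_{kl}\mid (k,l)\in J\rangle$ whenever $J\subseteq\Phi^+$ is closed, then reduces the whole assertion to verifying closedness of $J$.

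The closedness check is a short case analysis: given $(i,j),(j,k)\in J$ with $i>j>k$, so that $(i,k)\in\Phi^+$ automatically, I would fix the membership of $j$ in $\tt$. This leaves only four compatible combinations of the options (1)--(3) for the pairs $(i,j)$ and $(j,k)$ — namely $\{i,j,k\}\subseteq\tt$; $\{i,j\}\subseteq\tt$ with $k\notin\tt$; $i\in\tt$ with $j,k\notin\tt$; and $\{i,j,k\}\cap\tt=\emptyset$ — and in each of these $(i,k)$ turns out to be of type (3), (1), (1), or (2), respectively. This is the only step requiring genuine work, and it is entirely elementary, so I do not anticipate any real obstacle. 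The one subtle point is the directional asymmetry of the forbidden pattern ``$i\notin\tt,\ j\in\tt$'', which is precisely what guarantees that $(j,k)\in J$ constrains the middle index $j$ correctly relative to $\tt$ in every branch of the case distinction and thus keeps the case list to four.
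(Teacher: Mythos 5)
Your proposal is correct and follows the same route the paper takes: Corollary \ref{upsilon} is obtained by specialising Lemma \ref{defsub} to the two-row case (where $\row_\t(i)<\row_\t(j)$ means exactly $i\notin\tt$, $j\in\tt$, and the alternative $i<j$ is absorbed by $g\in U$), and the generation statement is exactly the closedness of $\Upsilon=\Upsilon_1\cup\Upsilon_2\cup\Upsilon_3$ recorded in Remark \ref{Upsilon} together with the general fact $U_J=\langle X_{kl}\mid(k,l)\in J\rangle$ from Section~1. Your four-case verification of closedness is the elementary check the paper leaves implicit, and it is carried out correctly.
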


\begin{Remark}\label{Upsilon}We denote three closed subsets of the root system
$\Phi$ of $G$ with respect to the three conditions above as follows:
 %Define three different sets such that each of them corresponds to one case in (\ref{ucondition}):
  \begin{eqnarray*}\Upsilon_1&=&\{(i,j)\,|  \,i>j \text{ and }i \in \tt,\, j\notin\tt \},\\
  \Upsilon_2&=&\{  (i ,j ) \,|  \,i>j \text{ and }i , j\notin\tt \} ,\\
  \Upsilon_3&=&\{(i, j) \,|  \,i>j \text{ and }i ,j \in \tt \}.
  \end{eqnarray*}
 Then
  $\Upsilon=\Upsilon_1 \cup \Upsilon_2\cup \Upsilon_3$ is also a closed subset of  $\Phi.$ Thus $U^w\cap U=\prod_{(i,j)\in \Upsilon}X_{ij}$
  where the product can be taken in any order.  And the following statements follow easily by direct calculation:
\end{Remark}

\begin{Lemma} \label{ocr}Keep the notations of $\Upsilon_1, \Upsilon_2, \Upsilon_3$ as in Remark \ref{Upsilon}. Set
$$U^w_0=\{ \Pi\, x_{ij}(\alpha)\,|\,(i,j)\in \Upsilon_1 , \alpha\in \F_q\},$$
$$U^w_\c=\{ \Pi \,x_{ij}(\alpha)\,|\,(i,j)\in \Upsilon_2, \alpha\in \F_q\},$$
$$U^w_\r=\{\Pi \,x_{ij}(\alpha)\,|\,(i,j)\in \Upsilon_3, \alpha\in \F_q\}$$
Then $U^w_0$ is a normal subgroup of $U^w\cap U$ and $U^w\cap U=U^w_0\rtimes (U^w_\c\times U^w_\r).$
\end{Lemma}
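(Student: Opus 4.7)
The plan is to deduce everything from the Chevalley commutator formula
\[
[x_{ij}(\alpha), x_{kl}(\beta)] = \begin{cases} x_{il}(\alpha\beta) & \text{if } j = k,\ i \neq l,\\ x_{kj}(-\alpha\beta) & \text{if } l = i,\ k \neq j,\\ 1 & \text{if } j \neq k \text{ and } l \neq i,\end{cases}
\]
combined with the product decomposition $U^w\cap U=\prod_{(i,j)\in\Upsilon}X_{ij}$ from Remark \ref{Upsilon}. All arguments reduce to bookkeeping with the three mutually exclusive conditions ``in $\tt$ vs.\ not in $\tt$'' which define $\Upsilon_1,\Upsilon_2,\Upsilon_3$.

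First I would check that $\Upsilon_1$, $\Upsilon_2$ and $\Upsilon_3$ are themselves closed, so that $U_0^w$, $U_\c^w$, $U_\r^w$ are well-defined subgroups: for $\Upsilon_2$ and $\Upsilon_3$ this is trivial since the property of lying in (or not in) $\tt$ is preserved under the chain $(i,j),(j,k)\mapsto(i,k)$, while for $\Upsilon_1$ the middle index $j$ would have to be simultaneously in $\tt$ (from $(j,k)\in\Upsilon_1$) and outside $\tt$ (from $(i,j)\in\Upsilon_1$), so the closure condition is vacuous. Next I would verify that $U_\c^w$ and $U_\r^w$ commute elementwise: for $(i,j)\in\Upsilon_2$ and $(k,l)\in\Upsilon_3$ the condition $j=k$ fails because $j\notin\tt$ and $k\in\tt$, and $l=i$ fails symmetrically, so all such commutators are trivial. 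Combined with $U_\c^w\cap U_\r^w=\{1\}$, which follows from uniqueness of expression in $U^w\cap U=\prod_{(i,j)\in\Upsilon}X_{ij}$ applied to the disjoint subsets $\Upsilon_2,\Upsilon_3$, this yields $U_\c^w U_\r^w=U_\c^w\times U_\r^w$.

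Normality of $U_0^w$ in $U^w\cap U$ reduces to showing $[x_{ij}(\alpha),x_{kl}(\beta)]\in U_0^w$ for every $(i,j)\in\Upsilon_1$ and every $(k,l)\in\Upsilon$. I split into three cases:
\begin{itemize}
\item[(i)] If $(k,l)\in\Upsilon_1$, both $j=k$ (requiring $j\notin\tt$ yet $k\in\tt$) and $l=i$ (requiring $l\notin\tt$ yet $i\in\tt$) are impossible, so the commutator is trivial.
\item[(ii)] If $(k,l)\in\Upsilon_2$, then $l=i$ is impossible; if $j=k$ the commutator equals $x_{il}(\alpha\beta)$ with $i\in\tt$, $l\notin\tt$ and $i>j=k>l$, hence $(i,l)\in\Upsilon_1$.
\item[(iii)] If $(k,l)\in\Upsilon_3$, then $j=k$ is impossible; if $l=i$ the commutator equals $x_{kj}(-\alpha\beta)$ with $k\in\tt$, $j\notin\tt$ and $k>l=i>j$, hence $(k,j)\in\Upsilon_1$.
\end{itemize}
Thus $[U^w\cap U,U_0^w]\subseteq U_0^w$, so $U_0^w\trianglelefteq U^w\cap U$.

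Finally, picking a linear ordering on $\Upsilon$ that lists all roots in $\Upsilon_2\cup\Upsilon_3$ before those in $\Upsilon_1$, Remark \ref{Upsilon} writes every $g\in U^w\cap U$ uniquely as a product $g=u' u_0$ with $u'\in U_\c^w\cdot U_\r^w$ and $u_0\in U_0^w$, so $U_0^w\cap(U_\c^w\times U_\r^w)=\{1\}$ and $(U^w\cap U)=U_0^w\cdot(U_\c^w\times U_\r^w)$. Combined with the normality just proved, this gives the asserted semidirect product decomposition. The only real work is the case analysis for the commutator relations in step three, but the design of $\Upsilon_1,\Upsilon_2,\Upsilon_3$ around the bipartition of $\{1,\dots,n\}$ induced by $\tt$ makes each case collapse immediately, so I do not anticipate any genuine obstacle.
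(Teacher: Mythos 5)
Your proposal is correct, and it is precisely the "direct calculation" that the paper invokes without writing out (Remark \ref{Upsilon} asserts the lemma follows easily by direct computation and no proof is given): closedness of each $\Upsilon_i$, the commutator formula for elementary matrices, and uniqueness of expression in $\prod_{(i,j)\in\Upsilon}X_{ij}$ are exactly the ingredients needed, and your case analysis is accurate. No gaps.
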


% From now on, we fix a 2-part partition \mbox{$\lambda=(n-m,m)\vdash n$} and $\t\in \RStd(\lambda).$
% Let $w=d(\t)$ and $\underline{\t}=(b_1,b_2,\cdots,b_m).$
\subsection{Monomial action of $U^w\cap U$ on $\mathcal E_\t$}

We now investigate the action of $U^w\cap U$ on \mbox{$\mathcal E_\t=\{e_{_L}\,|\,L \in \X_{\t}\}.$}
\begin{Prop}\label{mono}
 $U^w\cap U$ acts monomially on $\mathcal E_\t,$ that is given $L \in\X_{\t},$ \mbox{$g\in U^w\cap U,$} then there exist
 $K\in \X_{\t}$ and $0\not=C(L,g)\in F$ such that \mbox{$e_{_L}\circ g=C(L,g)e{_{_K}}.$}
\end{Prop}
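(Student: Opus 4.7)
The plan is to combine an explicit analysis of the $\circ$-action of $U^w_0$ on $\X_\t$ with Clifford theory applied to the normal subgroup $U^w_0 \triangleleft U^w \cap U$ furnished by Lemma~\ref{ocr}.

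First I would analyse the $U^w_0$-action on $\X_\t$. Every generator of $U^w_0$ has the form $x_{ij}(\alpha)$ with $(i,j)\in\Upsilon_1$, so $i=b_s\in\tt$ and $j\notin\tt$ with $j<b_s$. Right multiplication by this generator adds $\alpha$ times column $i$ to column $j$ of $M\in\X_\t$. Since $b_s\in\tt$, column $b_s$ of $M$ is the standard basis vector $e_{b_s}$ by Definition~\ref{Xi}, so the operation merely increments the entry $(b_s,j)\in\J_\t$ of $M$ by $\alpha$; no row reduction is required because the last $1$'s are unaffected and the new entry lies in an allowed position. Thus $M\circ x_{ij}(\alpha)=M\diamond E_{b_s,j}(\alpha)$, where $E_{b_s,j}(\alpha)\in\X_\t$ denotes the matrix with single non-last-$1$ entry $\alpha$ at $(b_s,j)$. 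The Chevalley commutator formula shows that any two such generators commute (a nontrivial commutator would require some $b_s\in\tt$ to coincide with some $j\notin\tt$), so $U^w_0$ is abelian and the map $\tau\colon U^w_0\to(\X_\t,\diamond)$, $g\mapsto P_g:=L_0\circ g$, is a group homomorphism; here $L_0\in\X_\t$ is the identity for $\diamond$, i.e.\ the matrix whose only nonzero entries are the last $1$'s. A cardinality count $|U^w_0|=q^{|\Upsilon_1|}=q^{|\J_\t|}=|\X_\t|$, combined with the obvious surjectivity, shows $\tau$ is an isomorphism; moreover $M\circ g=M\diamond P_g$ holds for every $M\in\X_\t$ and $g\in U^w_0$.

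Substituting into Definition~\ref{defofide} and re-indexing the sum via $N=M\diamond P_g$ then yields
\[
e_L\circ g \;=\; \frac{1}{q^{|\J_\t|}}\sum_{N\in\X_\t}\chi_L(-N\diamond P_g)\,[N] \;=\; \chi_L(P_g)\,e_L,
\]
using multiplicativity of $\chi_L$. So each $Fe_L$ is a one-dimensional $FU^w_0$-submodule of $\M_\t$ affording the character $g\mapsto\chi_L(P_g)$. Because $L\mapsto\chi_L$ is a bijection from $\X_\t$ onto the character group of $(\X_\t,\diamond)$ and $\tau$ is a group isomorphism, the characters so obtained are pairwise distinct as $L$ ranges over $\X_\t$. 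Consequently $\M_\t=\bigoplus_{L\in\X_\t}Fe_L$ is the decomposition of $\M_\t$ into its (one-dimensional) $U^w_0$-isotypic components.

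Finally, since $U^w_0\triangleleft U^w\cap U$ by Lemma~\ref{ocr}, Clifford's theorem guarantees that the action of $U^w\cap U$ on $\M_\t$ permutes these $U^w_0$-isotypic components. Thus for every $L\in\X_\t$ and $g\in U^w\cap U$ there is a unique $K\in\X_\t$ with $(Fe_L)\circ g=Fe_K$, i.e.\ $e_L\circ g=C(L,g)\,e_K$ for some $C(L,g)\in F$; the invertibility of $\circ g$ and $e_L\neq 0$ force $C(L,g)\neq 0$. The main technical obstacle is the verification in the first step that right multiplication by an arbitrary element of $U^w_0$ acts as a $\diamond$-translation (in particular, that no row reductions are ever triggered and that the bookkeeping composes correctly under products of generators), so that $\tau$ genuinely becomes a group isomorphism; once this is secured, the remainder of the argument is essentially formal.
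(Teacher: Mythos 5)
Your argument is correct, but it takes a genuinely different route from the paper. The paper proves Proposition~\ref{mono} by a direct computation: it reduces to generators $x_{ij}(\alpha)$ with $(i,j)$ in each of $\Upsilon_1,\Upsilon_2,\Upsilon_3$ and explicitly evaluates $\chi_{_L}(-M\circ g^{-1})$ in the three cases, thereby producing the matrix $K$ and the scalar $C(L,g)$ in closed form (this is exactly the content recorded in Corollary~\ref{calcor}, the ``truncated row and column operations''). You instead observe that $\Upsilon_1=\J_\t$, so that $U^w_0$ is an elementary abelian group of order $q^{|\J_\t|}=|\X_\t|$ acting by $\diamond$-translations, that $g\mapsto L_0\circ g$ is an isomorphism onto $(\X_\t,\diamond)$, and hence that the $Fe_{_L}$ are precisely the pairwise distinct one-dimensional $U^w_0$-isotypic components of $\M_\t$; Clifford theory for $U^w_0\trianglelefteq U^w\cap U$ then forces $U^w\cap U$ to permute these lines, which is exactly the monomiality claim. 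Your verifications (column $b_s$ of any $M\in\X_\t$ is a standard basis vector, the generators of $U^w_0$ commute, the substitution $N=M\diamond P_g$ gives $e_{_L}\circ g=\chi_{_L}(P_g)e_{_L}$) are all sound. What your approach buys is a conceptual, essentially computation-free proof of the proposition as stated, and it explains \emph{why} the idempotent basis is monomial: the $e_{_L}$ are forced on you as the character-isotypic lines of the abelian normal subgroup $U^w_0$. What it does not deliver is the explicit description of $K$ and of $C(L,g)$ — i.e.\ that the permutation of idempotents induced by $\Upsilon_2$ (resp.\ $\Upsilon_3$) is a truncated column (resp.\ row) operation and that $C(L,g)=\theta(l_{ij}\alpha)$ on $\Upsilon_1$ — and these formulas, not merely the existence statement, are what the rest of the paper (Lemma~\ref{stabcr}, Proposition~\ref{dimoforbit}, Theorem~\ref{keepcon}, etc.) actually uses. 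So your proof suffices for Proposition~\ref{mono} itself, but the paper's computational proof would still have to be carried out to obtain Corollary~\ref{calcor}.
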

\begin{proof}Note that
\begin{equation}\label{newexpan}e_{_L}\circ g=\frac{1}{q^{|\mathfrak{J}_{\t}|}}\sum_{\text{\tiny $M$}}\chi_{_L}(-M)[M\circ g]
=\frac{1}{q^{|\mathfrak{J}_{\t}|}}\sum_{\text{\tiny $M$}}\chi_{_L}(-M\circ g^{-1})[M]\end{equation} where $M$ runs through $\X_{\t}$, since then $M\circ g^{-1}$ runs through $\X_\t$ as well.
Keeping the notation in  \ref{Upsilon}, it is enough to prove the result for matrices of the form \mbox{$g=E_n+\alpha \epsilon_{_{ij}}$},
where $E_n$ is the $(n\times n)$-unit matrix, \mbox{$0\not= \alpha \in \F_q$}, $\epsilon_{_{ij}}$ is the $n\times n$-matrix unit to position
$(i,j)\in \Upsilon=\Upsilon_1 \cup \Upsilon_2\cup \Upsilon_3$.
So let \mbox{$g=E+\alpha \epsilon_{_{ij}}$}.

For $M\in \X_\t $ and $g^{-1}=E_n-\alpha \epsilon_{_{ij}}$, $Mg^{-1}$ is obtained from $M$ by adding $-\alpha$ times column $i$ to column $j$ of $M,$ therefore
$j\notin \tt$  implies that the columns of $M$ containing a last one are not changed by the action of $g$ and hence
$M\circ g^{-1}=Mg^{-1}$  for $(i,j)\in \Upsilon_1 \cup \Upsilon_2$.

\medskip

\noindent \textbf{Case (1):} $(i,j)\in \Upsilon_1$. That is  $i>j,\, i \in \tt$
and  $ j\notin\tt.$ Then
%By Lemma \ref{newmono},
\begin{eqnarray*} &&\chi_{_L}(-M\circ g^{-1})=\chi_{_L}(-M g^{-1})
 = \prod\nolimits_{(u, v)\in \J_\t}\theta\big(-l_{uv}(Mg^{-1})_{uv}\big)
%\\&=&\theta\big(-l_{ij}(Mg^{-1})_{ij}\big)\prod\nolimits_{(u, v)\not=(i,j)}\theta\big(-l_{uv}(Mg^{-1})_{uv}\big)
\\&=&\theta(-l_{ij}(m_{ij}-\alpha))\prod\nolimits_{(u, v)\not=(i,j)}\theta\big(-l_{uv}m_{uv}\big)=\theta(\alpha l_{ij})\chi_{_L}(-M).
\end{eqnarray*}
In this case
 $C(L,g)= \theta(\alpha l_{ij})$ and we have  $e_{_L}\circ g=C(L,g)e{_{_L}}.$

\medskip

\noindent \textbf{Case (2):} $(i,j)\in \Upsilon_2$. That is
$i>j$ and $i , j\notin\tt.$ Then %By Lemma \ref{newmono},
\begin{eqnarray*} &&\chi_{_L}(-M\circ g^{-1})=\chi_{_L}(-M g^{-1})
= \prod\nolimits_{(u, v)\in \J_\t}\theta\big(-l_{uv}(Mg^{-1})_{uv}\big)
%\\&=&\prod\limits_{(u, j)\in \J_\t}\theta\big(-l_{uj}(Mg^{-1})_{uj}\big)
%\prod\limits_{(u, v)\in \J_\t \atop{v\not=j}}\theta\big(-l_{uv}(Mg^{-1})_{uv}\big)
\\&=&\prod\limits_{(u, j)\in \J_\t}\theta\big(-l_{uj}(m_{uj}-\alpha m_{ui})\big)
\prod\limits_{(u, v)\in \J_\t \atop{v\not=j}}\theta\big(-l_{uv}m_{uv}\big)
\\&=&\prod\limits_{(u, i)\in \J_\t}\theta\big(-(l_{ui}-\alpha l_{uj}) m_{ui})\big)
\prod\limits_{(u, v)\in \J_\t \atop{v\not=i}}\theta\big(-l_{uv}m_{uv}\big)
=\chi_{_{K}}(-M)\end{eqnarray*}  where  $K\in \X_\t$ coincides with $Lg^{-t}$ in all positions in $\J_\t$.
In this case, we   have $e_{_L}\circ g=C(L,g)e{_{_{K}}}$ with $C(L,g)=1$.

\medskip

\noindent\textbf{Case (3):} $(i,j)\in \Upsilon_3$. That is   $i>j$ and $i , j\in\tt.$  Note in this case $Mg^{-1}\not=M\circ g^{-1}$ in general,  hence we need to row reduce $Mg^{-1}$ to obtain $M\circ g^{-1}.$ By easy calculation, we have
$\chi_{_L}(-M\circ g^{-1})=\chi_{_L}(-hMg^{-1})$
with $h=E_m+\alpha \tilde \epsilon_{_{ij}}$ where $E_m$ is the $(m\times m)$-unit
matrix, and $\tilde \epsilon_{_{ij}}$ is the $m\times m$-matrix unit to position
$(i,j)\in \Upsilon_3$. %By Lemma \ref{newmono},
\begin{eqnarray*}&&\chi_{_L}(-M\circ g^{-1})=\chi_{_L}(-hMg^{-1})= \prod\nolimits_{(u, v)\in \J_\t}\theta\big(-l_{uv}(hMg^{-1})_{uv}\big)\\
&=&\prod\limits_{(u, v)\in \J_\t}\theta\big(-l_{uv}(h M)_{uv}\big)
%=\prod\nolimits_{(i, v)\in \J_\t}\theta\big(-l_{iv}(h &M)_{iv}\big)\prod\nolimits_{(u, v)\in \J_\t\atop{u\not=i}}\theta\big(-l_{uv}(h &M)_{uv}\big)
%\\&=&
=\prod\limits_{(i, v)\in \J_\t}\theta\big(-l_{iv}(m_{iv}+\alpha m_{jv})\big)\prod\limits_{(u, v)\in \J_\t\atop{u\not=i}}\theta\big(-l_{uv}m_{uv}\big)
\\&=&
\prod\limits_{(j, v)\in \J_\t}\theta\big(-(l_{jv}+\alpha l_{iv}) m_{jv}\big)\prod\limits_{(u, v)\in \J_\t\atop{u\not=j}}\theta\big(-l_{uv}m_{uv}\big)
=\chi_{_{K}}(-M)\end{eqnarray*}
%since $l_{ij}=0$ for $i>j$ and $i,j\in \tt$ and here
where $K\in \X_\t$ coincides with $h^{t}L$ in all positions in $\J_\t$.
In this case, we   have $e_{_L}\circ g=C(L,g)e{_{_{K}}}$ with $C(L,g)=1$.
\end{proof}

\begin{Cor} \label{calcor}
We collect the information from the proof of the previous proposition as  follows: For $L\in \X_\t, g=E+\alpha \epsilon_{_{ij}}\in U^w\cap U$:
\begin{equation}\label{ucondition}e_{_L}\circ g=
\begin{cases}
\theta\,(l_{ij}\alpha)\,e_{_L}& \text{ if } i \in \tt,\, j\notin\tt;\\
e_{_K}& \text{ if  } i ,\,j \notin \tt;\\
e_{_R}& \text{ if  } i ,\,j \in \tt.
\end{cases}
\end{equation}
where $K=(k_{_{b_uv}})\in \X_\t , R=(r_{_{b_uv}})\in \X_\t$ satisfy:
\begin{equation}\label{column}k_{_{b_uv}}=
\begin{cases}
l_{_{b_uv}}& \text{ if  } v\not=i;\\
l_{_{b_ui}}-\alpha l_{_{b_uj}}& \text{ if  } v=i, i<b_u.
\end{cases}
\end{equation}
\begin{equation}\label{row}r_{_{b_uv}}=
\begin{cases}
l_{_{b_uv}}& \text{ if  } b_u\not=j;\\
l_{jv}+\alpha l_{iv}& \text{ if  } b_u=j, v<b_u.
\end{cases}
\end{equation}
From (\ref{column}) follows that  the action of $g=E+\alpha \epsilon_{_{ij}} \in U^w\cap U$ on $e_{_L}$ under the condition $i ,\,j \notin \tt$ is equivalent to subtracting in $L$ from the $i$-th column $\alpha$ times the  $j$-th column   ignoring the $(s,t)$-entries with $s\leqslant t$ and take the idempotent corresponding to the resulting matrix.  Hence we call this a \textbf{truncated  column operation}. Similarly, by (\ref{row}), the action of $g=E+\alpha \epsilon_{_{ij}}\in U^w\cap U$ on $e_{_L}$ under the condition $i ,\,j \in \tt$ is equivalent to adding $\alpha$ times the  $i$-th row to the $j$-th row of $L$  ignoring the $(s,t)$-entries with $s\leqslant t$ and take the idempotent corresponding to the resulting matrix. We call this a \textbf{truncated row operation}.
\end{Cor}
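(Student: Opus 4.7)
The plan is to extract the corollary as a bookkeeping exercise on top of the proof of Proposition \ref{mono}. The three-branch formula (\ref{ucondition}) is immediate: Case (1) of the proof yields $e_{_L} \circ g = \theta(l_{ij}\alpha)\, e_{_L}$, while Cases (2) and (3) each yield $e_{_L} \circ g = e_{_K}$ (resp. $e_{_R}$) for a uniquely determined element of $\X_\t$. What remains is to read off the explicit entries of $K$ and $R$ from the matrix identities $K \equiv Lg^{-t}$ and $R \equiv h^t L$ on $\J_\t$ that are already produced during the proof.

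For (\ref{column}) I would compute $Lg^{-t}$ explicitly: since $g = E_n + \alpha \epsilon_{ij}$ implies $g^{-t} = E_n - \alpha \epsilon_{ji}$, right multiplication of $L$ by $g^{-t}$ replaces column $i$ of $L$ by $(\text{col } i) - \alpha\cdot(\text{col } j)$ and leaves the other columns untouched. Restricting to positions $(b_u, v) \in \J_\t$ and noting that only the $v = i$ case is altered gives (\ref{column}) at once. Symmetrically, with $h = E_m + \alpha \tilde\epsilon_{ij}$ so that $h^t = E_m + \alpha \tilde\epsilon_{ji}$, left multiplication of $L$ by $h^t$ replaces row $j$ of $L$ by $(\text{row } j) + \alpha\cdot(\text{row } i)$ and fixes the other rows; restricting to $\J_\t$ yields (\ref{row}).

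The remaining content is interpretive: to call these operations \emph{truncated}, one has to check that applying the full column/row operation and then forgetting all entries outside $\J_\t$ really produces a genuine element of $\X_\t$. In Case (2) this is automatic, because $j \notin \tt$ and $i \notin \tt$ mean that no last-$1$ column is disturbed. The subtler point, and the one I would expect to require the most care, is Case (3): adding $\alpha$ times row $i$ to row $j$ with $i, j \in \tt$ and $i > j$ would a priori create a nonzero entry at position $(j, i)$, which sits to the right of the last $1$ in row $j$ and would violate the normal form. Here the truncation saves the day, since $i \in \tt$ forces $(j, i) \notin \J_\t$, so this spurious entry is discarded and the normal form is preserved. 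Once these compatibility checks are in place, (\ref{ucondition}), (\ref{column}) and (\ref{row}) all follow by direct substitution into the formulas derived in the proof of Proposition \ref{mono}.
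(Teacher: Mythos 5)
Your proposal is correct and follows the same route as the paper, which states this corollary without separate proof precisely because it is read off from the three cases in the proof of Proposition \ref{mono}; your explicit computations of $Lg^{-t}=L(E_n-\alpha\epsilon_{ji})$ and $h^tL=(E_m+\alpha\tilde\epsilon_{ji})L$ correctly recover (\ref{column}) and (\ref{row}), and your observation that in Case (3) the spurious entry at position $(j,i)$ lies outside $\J_\t$ (since $i\in\tt$) is exactly the point of the truncation. Nothing is missing.
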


With respect to this monomial action, we can define $U^m\cap U$-orbit naturally:
For  $ e_{_{L}}\in\E_{\t}$ with $\t=\t^\lambda w$,
the $U^w\cap U$-orbit   of $ e_{_{L}}$ is
$$\O_{\!_L}= \{ e_{_{K}}\,|\,e_{_L}\circ g=C(L,g)e_{_K} \text{ for some } g\in U^w\cap U,\, 0\not=C(L,g)\in F \,\}.$$
and let $M_{\O_{\!_L}}=\bigoplus\limits_{e_{\!_K}\in \O_{\!_L}}Fe_{_K}$ be the  corresponding $U^w\cap U$-orbit module of $\M_\t$.

\subsection{The irreducibility of  the $U^w\cap U$-orbit module $M_\O$}

From the previous section, we know $\M_\t$ decomposes naturally into a direct sum of  $U^w\cap U$-submodules $M_\O, $
where $\O$ runs through the set of orbits of $U^w\cap U$ acting on $\E_\t.$
% $$\M_\t=\bigoplus_\O M_\O.$$
Our goal in this section is to classify the orbits $\O,$ determine their size (and hence the $F$-dimension of $M_\O$) and count the number of orbits of a given fixed size. We shall show that this number is a polynomial in $q$ with integral coefficients and the sizes of the orbits are powers of $q$; moreover for a given orbit $\O,$ the corresponding monomial $U^w\cap U$-module $M_\O$
is irreducible.

\begin{Defn}\label{defofomega}
For  $(b ,j)\in \J_\t,$ we define the hook $\h_{_{bj}}=\h_{_{bj}}^\t$ (of $\t$)  as:
$\h_{_{bj}}=\h_{_{bj}}^{^{\up}} \cup \h_{_{bj}}^{^{\ri}} \cup \{(b,j)\} \text{ where }$
$h_{_{bj}}^{^{\up}}=\{(u,j)\in \J_\t\,|\,u < b\} \text{ called {\bf hook leg}}$ and
$h_{_{bj}}^{^{\ri}}=\{(b,v)\in \J_\t\,|\,v > j\} \text{ called {\bf hook arm}}.$
Denote $\bar h_{_{bj}}=h_{_{bj}}^{^{\up}} \cup h_{_{bj}}^{^{\ri}}$ and call $|h_{_{bj}}|$ the residue of the hook, denoted by $\Res(b,j)$. In fact, it is easy to prove the following lemma:
\end{Defn}

\begin{Lemma}\label{res} For $(b,j)\in\J_\t$, $\Res(b,j)=b-j.$
In particular,  $\Res(b,j)$ is independent of $\t\in\RStd(\lambda)$ and independent of the two-part partition $\lambda.$
\end{Lemma}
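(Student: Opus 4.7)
The plan is to unravel the definitions of the hook leg and hook arm and observe that together they index, in a disjoint manner, precisely the integers strictly between $j$ and $b$.

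First I would fix $(b,j)\in\J_\t$ and record that, by the definition of $\J_\t$ in \ref{jt}, this means $b>j$, $b\in\tt$ and $j\notin\tt$. Next I would rewrite the two pieces of the hook from Definition \ref{defofomega} in terms of the second row $\tt$: an element $(u,j)\in\h_{bj}^{\up}$ requires $(u,j)\in\J_\t$ and $u<b$, i.e.\ $u\in\tt$ and $j<u<b$ (the condition $j\notin\tt$ being automatic since $j$ is fixed); similarly $(b,v)\in\h_{bj}^{\ri}$ requires $(b,v)\in\J_\t$ and $v>j$, i.e.\ $v\notin\tt$ and $j<v<b$ (the condition $b\in\tt$ being automatic).

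Thus the map $(u,j)\mapsto u$ identifies $\h_{bj}^{\up}$ with $\{u\in\tt\mid j<u<b\}$, and the map $(b,v)\mapsto v$ identifies $\h_{bj}^{\ri}$ with $\{v\notin\tt\mid j<v<b\}$. These two subsets of $\{j+1,j+2,\dots,b-1\}$ are clearly complementary, so together they have exactly $b-j-1$ elements. Adding in the corner $(b,j)$ itself gives
\[
\Res(b,j)=|\h_{bj}|=|\h_{bj}^{\up}|+|\h_{bj}^{\ri}|+1=(b-j-1)+1=b-j,
\]
as claimed.

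There is no real obstacle here: the argument is a one-line counting observation once the definitions are unfolded, and the independence of the result from $\t$ and from $\lambda$ is an immediate consequence of the fact that the final expression $b-j$ mentions neither $\t$ nor $\lambda$.
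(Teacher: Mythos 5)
Your proof is correct, and it is exactly the natural counting argument: the hook leg and hook arm partition the integers strictly between $j$ and $b$ according to membership in $\tt$, giving $|\h_{bj}|=(b-j-1)+1=b-j$. The paper itself omits the proof (it only remarks that the lemma is "easy to prove"), so your write-up simply supplies the intended verification.
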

We remark that this property of hooks is the deeper reason for labeling the rows of matrices in $\Xi_{m,n}$ in this unusual way.
This will allow us later on to compare orbits for different row standard tableaux even for different 2-part partitions.
\begin{Example} \label{omega}
Let  $\t=
\begin{tabular}{|c|c|c|}\hline1&2&4\\\hline 3&5&6\\\hline\end{tabular}\,, \s=
\begin{tabular}{|c|c|c|}\hline1&3&4\\\hline 2&5&6\\\hline\end{tabular}\,, \u=
\begin{tabular}{|c|c|c|c|}\hline1&3&4&6\\\hline 2&5 \\\cline{1-2}\end{tabular}\,.$ Then
$$
h_{_{51}}^\t\!=\!\!\raisebox{5pt}{$
\begin{matrix}\begin{matrix}\quad\emph 1&\,\emph 2&\,\emph 3&\,\emph 4&\emph 5&\emph 6&\end{matrix}\\
\begin{pmatrix} \times & &1 & &\\\times&\times&0&\times&1&\\ & &0& &0&1\end{pmatrix}&\begin{matrix}\!\!\!\!\!\!\emph 3\\\!\!\!\!\!\!\emph 5\\\!\!\!\!\!\!\emph 6\end{matrix}\,,\quad\end{matrix}$}
h_{_{51}}^\s\!=\!\!\raisebox{5pt}{$
\begin{matrix}\begin{matrix} \,\,\,\,\emph 1&\,\emph 2&\,\emph 3& \,\emph 4&\emph 5&\emph 6&\end{matrix}\\
\begin{pmatrix}  \times &1& & &\\\times&0&\times&\times&1&\\  &0&  & &0&1\end{pmatrix}&\begin{matrix}\!\!\!\!\!\!\emph 2\\\!\!\!\!\!\!\emph 5\\\!\!\!\!\!\!\emph 6\end{matrix}\,,\quad\end{matrix}$}
h_{_{51}}^\u\!=\!\!\raisebox{5pt}{$
\begin{matrix}\begin{matrix} \,\quad\emph 1& \emph 2& \,\emph 3& \, \emph 4&\emph 5&\!\emph 6&\end{matrix}\\
\begin{pmatrix}  \times &1& & &\\\times&0&\times&\times&1&\end{pmatrix}&\begin{matrix}\!\!\!\!\!\!\emph 2\\\!\!\!\!\!\!\emph 5\end{matrix}\,.\end{matrix}$}$$
and  $\Res(5,1)=4.$
\end{Example}

\begin{Defn} \label{patternmatrix}
\begin{itemize}
\item [(1)] A subset $\p=\{(b_i,a_i)\,|\,1\leqslant i \leqslant s\}\subseteq \Phi^+$, $0\leqslant s \leqslant n$ is called a {\bf pattern,} if the following holds:\vspace{-2mm}
   \begin{itemize}
      \item [1)]  $1\leqslant b_1<\cdots<b_s\leqslant n$.  \vspace{-1mm}
      \item [2)]  $a_1,\cdots,a_s\in \{1,\dots,n\}\setminus \{b_1,\dots,b_s\}$ are pairwise different. \vspace{-1mm}
      \item [3)]  $a_i<b_i$ for $i=1,\dots,s$. \vspace{-1mm}
   \end{itemize}
\vspace{-1mm}
For $\lambda=(n-m,m)\vdash n$, $\t\in \RStd(\lambda)$, we say pattern $\p$ {\bf fits} the $\t$-batch $\M_\t$ and we call $\p$ a 
{\bf $\lambda$-pattern}, if $(b_1,\dots,b_s)\subseteq \tt$. Thus $\p$ is a $\lambda$-pattern if and only if $s\leqslant m$. \vspace{-2mm}
\item[(2)]  
 $L=(l_{{ij}})\in \Xi_{m,n}$ is called a \textbf{pattern matrix}, if each row and column of $L$ has at most one non zero entry besides the last 1's. The corresponding
idempotent $e_{_L}$ is called {\bf pattern idempotent}; it is easy to see for a pattern matrix $L$, the set of positions $(i,j)\in \J_\t$ with $l_{_{ij}}\not=0, \t=\tab(L)$ satisfies the condition for pattern in (1), thus we call it     
%\item [(3)] Let $L$ be a pattern matrix and $\t=\tab(L)$.
%The set of positions $(i,j)\in \J_\t$ with $l_{_{ij}}\not=0$ is called
{\bf pattern of $L$}, denoted by $\p=\p(L)$.
We call their concrete values in $\F_q^*$ %or $L$ itself
{\bf  a  filling of $\p$}, denoted by $\pf(L)$.
%\item [(3)] Let $\lambda=(n-m,m)\vdash n$.  We call $\p=\{(b_i,a_i)\,|\,1\leqslant i \leqslant s\}\subseteq \Phi^+$ a {\bf $\lambda$-pattern} if the following holds:
%$0\leqslant s\leqslant m, 1\leqslant b_1<\cdots<b_s\leqslant n, a_1,\cdots,a_s\in \{1,\dots,n\}\setminus \{b_1,\dots,b_s\}$ and $a_i<b_i$ for all $i=1,\dots,s$.% Obviously,
\end{itemize}
%\vspace{-1mm}
\end{Defn}

%We remark that

\begin{Remarks}\label{outcondition} %Let $L$ and $R$ being two pattern matrices. Then
\begin{itemize}
\item [(1)]For $\p \subseteq \Phi^+$ a pattern and $\lambda=(n-m,m)\vdash n$, there exists pattern matrices and pattern idempotents in $M^\lambda$ with associated pattern $\p$ if and only if $|\p|\leqslant m$. As we will see orbit modules of $U^w\cap U$  acting on $\E_\t$, $\t=\t^\lambda w$, are invariant under the action of $U$ and filled patterns are important invariants of these. % and 
%Two different pattern matrices, having the same filled pattern, might be attached to different tableaux of different shape. Similarly, it is possible that a $\lambda$-pattern $\p$ is also an $\nu$-pattern for some $\nu\vdash n'$ with $n'$ being not necessary equal to $n$. 
This is the reason behind labeling the rows of matrices in $\Xi_{m,n}$ in this particular way.\vspace{-2mm}
\item [(2)]Since the truncated  column and row operations of $U^w\cap U$ on $e_{_L}$ work from left to right and  down to up, they will not insert any nonzero values to the southwest positions of  the outer rim of $\p(L)$. More precisely,  say $(b, j)\in {\po(L)}$, the outer rim of $\p(L)$, if the following holds:
If $(c, k)\in \p(L)$ and $k<j,$ then $c<b.$ Naturally
we  define  $\pfo(L)$ by taking  the concrete values together
with those indices in $\po(L)$. Obviously, for any $e_{_K}\in \O_L,$ we have: $k_{bj}=l_{bj}$ for $(b,j)\in \po(L).$
%In other words: If we order $ \p(L)=\{ (u_{_1}, v_{_1}),\ldots,(u_{_s}, v_{_s})\}$
%such that $v_1<v_2<\cdots<v_s,$ then choose the unique maximal subset $\{ (u_{\tau_{_1}}, v_{\tau_{_1}}),\ldots,(u_{\tau_{_r}}, v_{\tau_{_r}})\}$ such that $u_{\tau_{_1}}<u_{\tau_{_2}}<\cdots<u_{\tau_{_r}}$ and \mbox{$v_{\tau_{_1}}<v_{\tau_{_2}}<\cdots<v_{\tau_{_r}}$} and for any $(u_\alpha, v_\alpha)\in \p(L)$ with $v_\alpha\not= v_{\tau_{_i}},\,\forall\, 1\leqslant i \leqslant s $, there exist at least one $i\in\{ 1,2\ldots, s\}$ such that $v_\alpha>v_{\tau_{_i}}.$
%Then this set is $\po(L) $.
\end{itemize}
\end{Remarks}

Next we show that each orbit $\O$ of $\E_\t$ under the monomial action of $U^w\cap U$ contains precisely one pattern matrix and that the dimension of $M_\O$ is determined combinatorially by the frame of the corresponding filled pattern.

\begin{Lemma} Each $ U^w\cap U$-orbit $\O$ of $\E_\t$ contains  a unique pattern idempotent. So we have a bijection between the $ U^w\cap U$-orbits of $\E_\t$   and pattern matrices in $\X_\t$. Moreover,
for a fixed pattern $\p$ there are precisely $(q-1)^{s}$ many different pattern matrices $L$ and orbits $\O_L$ such that $\p(L)=\p$, where $s=|\p|$, the cardinality  of $\p$.\end{Lemma}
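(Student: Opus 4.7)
The plan is to establish the bijection in three parts: existence, uniqueness, and the count.

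\textbf{Existence.} Starting from an arbitrary $L\in\X_\t$, I would construct a pattern matrix in the same orbit by a Gauss-type reduction using Corollary \ref{calcor}. Process the columns $j\notin\tt$ from smallest to largest; for each such $j$ whose entries in $\J_\t$ are not yet all zero, choose $b_u\in\tt$ maximal with $l_{b_u,j}\not=0$. First, apply truncated row operations (the Case 3 operations $E+\alpha\epsilon_{b_u,b_v}$ for $b_v<b_u$) to zero out the entries $(b_v,j)$ above the pivot, and then truncated column operations (the Case 2 operations $E+\alpha\epsilon_{k,j}$ for $k>j$, $k\notin\tt$) to zero out the entries $(b_u,k)$ to the right of the pivot. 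A direct inspection of formulas (\ref{column}) and (\ref{row}) shows that neither kind of step spoils columns $j'<j$ already processed, because their unique nonzero entries sit in rows different from $b_u$. The resulting matrix lies in $\X_\t$ and is a pattern matrix in the same $U^w\cap U$-orbit as $L$.

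\textbf{Uniqueness.} Define the \emph{leading position} of $L\in\X_\t$ as the lex-smallest $(b,j)\in\J_\t$ (ordered by $j$ ascending, then $b$ descending) with $l_{b,j}\not=0$. I claim that the leading position and its value $l_{b,j}$ are invariants of the orbit $\O_L$. Indeed, by Corollary \ref{calcor}, Case 2 only subtracts an earlier column $j'$ from a later column $i>j'$, so the smallest nonzero column and its entries are untouched; Case 3 only adds row $i$ to row $j'$ with $i>j'$, affecting positions $v<j'$, and any entry that could shift the leading position would have to lie strictly below it in its column, hence is zero by choice of the leading row. Consequently, in any pattern matrix $L^*\in\O_L$, the lex-smallest element $(b_0,j_0)$ of $\p(L^*)$ and its value are determined by $\O_L$. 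Inducting on $|\p|$ by peeling off this pivot and reducing to the analogous problem on the submatrix obtained by deleting row $b_0$ and column $j_0$ then shows that $\p(L^*)$ and all its filled values are completely determined by $\O_L$, which gives uniqueness.

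\textbf{Counting.} For any pattern $\p\subseteq\Phi^+$ of cardinality $s$ fitting $\M_\t$, each of the $s$ positions in $\p$ can be filled independently with any element of $\F_q^*$, yielding exactly $(q-1)^s$ pattern matrices $L\in\X_\t$ with $\p(L)=\p$. By the bijection just established, this is also the number of $U^w\cap U$-orbits $\O_L$ with $\p(L)=\p$.

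\textbf{Main obstacle.} The subtle point is making the inductive reduction in the uniqueness step rigorous: having read off the leading pivot from the orbit invariants, one must argue that ``deleting'' its row and column yields an analogous orbit problem. This requires identifying the subgroup of $U^w\cap U$ generated by the Case 2 and Case 3 operators that avoid the pivot row and column, and checking that its action on the reduced configuration is again governed by the same pattern-matrix principle. Alternatively, one can bypass the induction by iterating the leading-position invariant directly: after isolating the first $r$ pivots, the $(r+1)$-st leading position of the orbit is again an invariant, and the process terminates after exactly $|\p|$ steps, pinning down all filled positions.
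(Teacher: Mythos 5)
Your existence argument is sound and is essentially the paper's own: sweep the columns from left to right, take as pivot the lowest nonzero entry of the current offending column, clear its hook leg by truncated row operations and its hook arm by truncated column operations, and observe that already-processed columns are undisturbed. The counting step is immediate once the bijection is in place. The genuine gap is in the uniqueness step, precisely where you flagged it, and neither of your two proposed repairs closes it. The invariance of the \emph{first} leading position (and its value) is correct, but the iteration fails because orbit elements acquire nonzero entries at \emph{intersection} positions lying off the hooks of the pivots already identified. Concretely, take $n=6$, $m=3$, $\tt=(4,5,6)$ and the pattern matrix $L$ with the single pivot $l_{61}=z\neq 0$. Applying the truncated row operation $E+\alpha\epsilon_{64}$ followed by the truncated column operation $E+\beta\epsilon_{21}$ gives $e_{_K}\in\O_{\!_L}$ with $k_{41}=\alpha z$, $k_{62}=-\beta z$ and $k_{42}=-\alpha\beta z\neq 0$. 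The position $(4,2)$ lies neither in row $6$ nor in column $1$, so after peeling off the pivot $(6,1)$ the residual configuration is nonzero for $K$ but zero for $L$: the ``second leading position'' is not an orbit invariant, and deleting the pivot's row and column does not produce an analogous orbit problem (this is exactly the $0+\square$ phenomenon of Example \ref{short for orbit}).

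The paper closes this by a group-theoretic factorization instead of an iterated invariant. By Lemma \ref{ocr}, $U^w\cap U=U^w_0\rtimes (U^w_\c\times U^w_\r)$, so if $e_{_R}=e_{_L}\circ g$ for two pattern idempotents one may write $g=g_{_1}g_{_2}g_{_3}$ with $g_{_1}$ acting on $e_{_L}$ by a scalar, $g_{_2}$ a product of truncated column operations and $g_{_3}$ a product of truncated row operations, whence $C(L,g_{_1})\,e_{_L}\circ g_{_2}=e_{_R}\circ g_{_3}^{-1}$. Because $L$ is a pattern matrix, column operations alone can only populate the hook arms of $\p(L)$; because $R$ is a pattern matrix, row operations alone can only populate the hook legs of $\p(R)$; comparing the two sides column by column then forces $\pf(L)=\pf(R)$. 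This separation of the column part from the row part is the idea your argument is missing; without it, or something of equal strength, the induction on pivots does not go through.
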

\begin{proof}
First we prove the existence.
Let $e_{_K}\in \E_\t\subset \O$. Assume $j$ is the first column of $K$ containing nonzero values besides the last 1. Choose the lowest nonzero value in this column, namely $k_{{bj}}$.
Using truncated row and column operations  we can obtain a matrix $M=(m_{cd})\in \X_\t$ with $e_{_M}\in \O$ such that $m_{{bj}}=k_{{bj}}$ and all entries $m_{cd}$ with $(c,d)\in \bar h_{_{bj}}$  are zeros. Then we go to the next column which contains nonzero values besides the last 1 and do the same procedure. Continuing in this way, we will finally obtain a pattern
matrix $L$ such that $e_{_L}\in \O.$

Next we show the uniqueness. Suppose we have two different pattern idempotents $e_{_L}, e_{_R} \in \O$  with respectively filled pattern $\pf(L)$ and $\pf(R)$. Assume $e_{\!_R}=e_{\!_L}g$ for some $g\in U^w\cap U.$ Using \ref{Upsilon}, we can assume $g=g_{_1}g_{_2}g_{_3}$ where
$e_{\!_L}g_{_1}=C(L,g_{_1})e_{\!_L}$ and $g_{_2}$ is a series of products of truncated column operations and $g_{_3}$ is a series of products of truncated row operations. Now we have \begin{equation}\label{uniquep}e_{\!_L}\circ g_{_1}g_{_2}=C(L,g_{_1})e_{\!_L}\circ g_{_2}=e_{\!_R}\circ g_{_3}^{-1},\end{equation} where $g_{_3}^{-1}$ is again  a series of products of truncated row operations.
From  \ref{calcor} and   \ref{outcondition}, we can easily get $\pfo(L)=\pfo(R)$. If we order the condition sets $\pf(L)$ and $\pf(R)$ by the column indices, then we can choose without lose generality  the first $l_{_{uv}}\in \pf(L)$ and $l_{_{uv}}\notin \pf(R),$ such that $r_{st}\in \pf(R)\setminus \pfo(R)$ has the property $t\geqslant v.$
For $t>v$ or $t=v, s<u:$
Since $L$ is a pattern matrix, (\ref{column}) shows that the truncated column operations only change the hook row on the $(i,j)$-hook with $l_{ij}\in \pf(L).$ Hence we get $m_{_{uv}}=l_{_{uv}}$ for any $e_{_M}=e_{\!_L}\circ g_{_2}.$
Similarly since $R$ is a pattern matrix, the truncated row operations only change the hook column on the $(s,t)$-hook with $r_{st}\in \pf(R).$ Hence we get $m_{_{uv}}=0\not=l_{_{uv}},$ which means (\ref{uniquep}) never holds. Therefore,  $\pf(L)=\pf(R)$ and hence $e_{_L}=e_{_R}.$ For $t=v, s>u$, considering the position $(s,v)$ instead of $(u,v)$ we will get the same result similarly, which proves the uniqueness.
\end{proof}

Since we have proved each orbit has a unique pattern matrix, we can now define $\tab(\O)=\tab(L)$,   $\p(\O)=\p(L)$, and  $\pf(\O)=\pf(L).$

\begin{Notation}\label{s_i} Let $L$  be a pattern matrix with pattern  $\p=\p(L)$. Define:\vspace{-2mm}
\begin{itemize}
\item [(1)] $\ppi =\{i\,|\,{(i,j)\in   \p \,}\},$  which collects all the row indices of the positions in   $\p$. \vspace{-2mm}
\item [(2)] $\pj=\{j\,|\,{(i,j)\in    \p \,}\},$ which collects all the column indices of the positions in   $\p$.\vspace{-2mm}
\end{itemize}
Note that $ \p_{_{\mathcal I}}=\p_{_{\mathcal J}}=\emptyset$ if and only if $\p=\emptyset$.
\end{Notation}

Now we try to determine the size of an $U^w\cap U$-orbit. Since $U^w\cap U=U^w_0\rtimes (U^w_\c\times U^w_\r),$
using Corollary \ref{calcor} we see that every element in $U^w_0$ is  in the projective stabilizer of $e_{_L},$ hence in
order to compute the orbits of the action of $U^w\cap U$, it suffices to calculate  $\Stab_{\mbox{\tiny $U^w_\c\times U^w_\r$}}(e_{_L}),$ for a pattern idempotent $e_{_L}$, since by (\ref{column}) and (\ref{row}), the projective stabilizer of $e_{_L}$ in $U^w_\c\times U^w_\r$ is exactly the stabilizer of $e_{_L}$ in it.

\begin{Lemma}\label{stabcr}
Let $\O$ be an $U^w \cap U$-orbit with pattern idempotent $e_{_L}$ and  pattern $\p.$ Then
$\Stab_{\mbox{\tiny $U^w_\c $}}(e_{_L})=\langle X_{_{ij}}\,|\, i,j\notin\tt\,;\, j\notin \pj\text{ or }\exists\, (b,j)\in \p \text{ with } b<i\,\rangle$ and
$\Stab_{\mbox{\tiny $U^w_\r $}}(e_{_L})=\langle X_{_{ij}}\,|\, i,j\in\tt\,;\, i\notin \ppi\text{ or }\exists\, (i,v)\in \p \text{ with } v>j\,\rangle.$
\end{Lemma}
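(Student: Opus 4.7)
My plan is to prove the $U^w_\c$ statement in detail; the $U^w_\r$ statement is then obtained by the parallel argument after interchanging truncated column and row operations, and hook arms with hook legs. The strategy has two parts: a direct verification that the listed root subgroups stabilise $e_L$, and an orbit--stabilizer count inside the finite $p$-group $U^w_\c$ showing that nothing larger does.

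First, for the forward inclusion ``$\supseteq$'', I would apply formula~(\ref{column}) of Corollary~\ref{calcor}. Given a listed $X_{ij}$, the element $x_{ij}(\alpha)$ sends $e_L$ to $e_K$ where only the entries $k_{b_u i}=l_{b_u i}-\alpha l_{b_u j}$ (for $b_u>i$) can differ from $L$. Since $L$ is a pattern matrix, $l_{b_u j}=0$ unless $j\in\pj$ and $b_u=b$ for the unique $(b,j)\in\p$. If $j\notin\pj$ this covers everything; if some $(b,j)\in\p$ has $b<i$, the row $b<i$ is not among the $b_u>i$. Either way $K=L$, proving $H_\c\subseteq\Stab_{U^w_\c}(e_L)$, where $H_\c$ denotes the subgroup on the right-hand side of the claim.

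For the reverse inclusion I would use orbit--stabilizer. A direct count of the non-listed roots of $\Upsilon_2$ (those $(i,a_k)$ with $a_k<i<b_k$, $i\notin\tt$, for some pivot $(b_k,a_k)\in\p$) gives $N_\c=\sum_{(b,a)\in\p}|\h^{\ri}_{ba}|$, hence $[U^w_\c:H_\c]=q^{N_\c}$. My goal reduces to exhibiting $q^{N_\c}$ pairwise distinct elements of the $U^w_\c$-orbit of $e_L$ (the reverse inequality being automatic from the forward inclusion). For this I fix a lexicographic order on the non-listed roots (first by $a_k$, then by $i$) and consider the parametrisation
\[
(\alpha_{i,k}) \;\longmapsto\; e_L\circ\prod\nolimits_{(i,k)}x_{i,a_k}(\alpha_{i,k}),
\]
the product taken in the chosen order. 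Iterating (\ref{column}), the image matrix carries $-\alpha_{i,k}\,l_{b_k a_k}$ at position $(b_k,i)$ up to corrections coming from Chevalley commutators of earlier factors, and the $\alpha_{i,k}$ can then be recovered from these positions in a triangular fashion.

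The main obstacle is precisely this non-commutativity of $U^w_\c$: the Chevalley commutators $[x_{i_1,a_{k_1}},x_{i_2,a_{k_2}}]$, which are non-trivial whenever $a_{k_1}=i_2$ or $i_1=a_{k_2}$, lie in further root subgroups of $\Upsilon_2$ and thus deposit unwanted terms at other positions of the form $(b_k,i)$. The lexicographic order above is designed so that every such correction at $(b_k,i)$ depends only on parameters $\alpha_{i',k'}$ processed earlier, yielding a triangular system over $\F_q$ with non-zero diagonal entries $-l_{b_k a_k}$, which is therefore uniquely solvable. I expect to make the bookkeeping rigorous by induction on the number of factors already applied. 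Once the $U^w_\c$-case is settled, the $U^w_\r$-case is proved by the template of this argument using (\ref{row}) in place of (\ref{column}), non-listed parameters counted by the hook legs $|\h^{\up}_{ba}|$, and row operations in place of column operations.
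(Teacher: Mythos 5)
Your verification of the inclusion $\langle X_{ij}\mid\ldots\rangle\subseteq\Stab_{\mbox{\tiny $U^w_\c$}}(e_{_L})$ via (\ref{column}) is exactly the paper's argument. For the reverse inclusion you take a genuinely different and in fact more complete route: the paper only checks, root subgroup by root subgroup, that $X_{ij}\in\Stab_{\mbox{\tiny $U^w_\c$}}(e_{_L})$ iff the stated condition holds, and leaves implicit that the stabilizer is no larger than the group these generate (that point is effectively only settled later, inside the proof of Proposition \ref{dimoforbit}). Your orbit--stabilizer count, producing $q^{N_\c}$ distinct points of the orbit with $N_\c=\sum_{(b,a)\in\p}|h^{\,a}_{ba}|$, addresses this head-on and is the right idea; note only that you should also record that the listed roots form a closed subset of $\Upsilon_2$, so that $H_\c$ really is a subgroup of order $q^{|\Upsilon_2|-N_\c}$.

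There is, however, a concrete flaw in the injectivity step: the lexicographic order ``first by $a_k$, then by $i$'' does \emph{not} triangularize the system. Take pivots $(b_1,a_1),(b_2,a_2)\in\p$ with $a_1<a_2<i<\min(b_1,b_2)$ and $i\notin\tt$. After applying all factors in your order, the entry of the resulting matrix at position $(b_1,i)$ is $\bigl(-\alpha_{i,a_1}+\alpha_{i,a_2}\,\alpha_{a_2,a_1}\bigr)l_{b_1a_1}$: the factor $x_{i,a_2}(\alpha_{i,a_2})$, applied \emph{after} $x_{i,a_1}(\alpha_{i,a_1})$ because $a_2>a_1$, subtracts $\alpha_{i,a_2}$ times column $a_2$, which by then already carries the entry $-\alpha_{a_2,a_1}l_{b_1a_1}$ at row $b_1$. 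So the correction at $(b_1,i)$ involves the parameter $\alpha_{i,a_2}$, which comes \emph{later} than $\alpha_{i,a_1}$ in your order; the system is not triangular as claimed. The repair is to induct on the root height instead: for $g$ a product of the non-listed root elements one checks that the $(b_k,i)$-entry of the image equals $(g^{-1})_{i,a_k}\,l_{b_ka_k}$, and $(g^{-1})_{i,a_k}=-\alpha_{i,a_k}+P$, where $P$ is a polynomial in the parameters $\alpha_{i',a_{k'}}$ with $i'-a_{k'}<i-a_k$ (every contributing chain $i>c_1>\cdots>a_k$ uses only strictly shorter roots). Solving for the $\alpha$'s by increasing height $i-a_k$ then gives the required injectivity, and the rest of your argument, including the parallel row case, goes through.
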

\begin{proof}
%Here we only calculate $\Stab_{\mbox{\tiny $U^w_\c $}}(e_{_L})$, since the other one is similar.
From (\ref{column}) the truncated column action on  $e_{_L}$ induced by $x_{_{ij}}(\alpha)$ is just subtracting in $L$ from the $i$-th column $\alpha$ times the  $j$-th column ignoring in column $i$ all zero entries to the right
of a last one and taking the idempotent indexed by the resulting matrix. Hence $X_{{ij}}\in \Stab_{\mbox{\tiny $U^w_\c $}}(e_{_L})$ if and only if   the $j$-th column of  the pattern matrix $L$ is a zero column, that is   $j\notin\pj,$ or there exists $(b,j)\in \p$ with $b<i$.
The calculation of $\Stab_{\mbox{\tiny $U^w_\r $}}(e_{_L})$ is carried out similarly.
% Thus $\Stab_{\mbox{\tiny $U^w_\c $}}(e_{_L})=\langle X_{_{ij}}\,|\, i  , j\notin\tt;\, j\notin \pj\text{ or }\exists\, (b,j)\in \p \text{ with } b<i\,\rangle.$
\end{proof}

\begin{Prop}\label{dimoforbit}Let $ \O$ be a $\,U^w\cap U$ orbit and assume
$\p=\p(\O)=\{(b_{u_i}, v_i)\,|\,1\leqslant i \leqslant s\}.$ Thus $s=|\p|$. Then  $\dim M_\O= q^{k-s},$ where $k$ is the number of places which are on  the hooks whose corners belong to the pattern $\p$. More precisely,
$k=\sum_{1\leqslant i \leqslant s}\big( (b_{u_i}-v_i)-|Z_i|\big)$
where $Z_i=\{j\,|\,b_{u_j}>b_{u_i}>v_j>v_i\,\}$ for $1\leqslant i \leqslant s.$
\end{Prop}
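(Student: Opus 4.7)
The plan is to compute $|\O_L|$ by orbit--stabilizer, exploiting the semidirect decomposition $U^w\cap U = U^w_0\rtimes(U^w_c\times U^w_r)$ of Lemma \ref{ocr}. By Case~(1) of Corollary \ref{calcor}, every element of $U^w_0$ acts on $e_L$ via the linear character $\theta\bigl(\sum_{(i,j)\in\p}l_{ij}\alpha_{ij}\bigr)$, so $U^w_0$ lies in the projective stabilizer of $e_L$, while Cases~(2) and~(3) show that $U^w_c\times U^w_r$ acts scalar-freely. Hence
\[|\O_L|=\frac{|U^w_c\times U^w_r|}{|\Stab_{U^w_c\times U^w_r}(e_L)|},\]
and the problem reduces to identifying the joint stabilizer.

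I would assemble this stabilizer in two stages. First, Lemma \ref{stabcr} gives the product $R=\Stab_{U^w_c}(e_L)\cdot\Stab_{U^w_r}(e_L)$ of individual root-subgroup stabilizers: the root subgroups in $U^w_c$ outside $\Stab_{U^w_c}(e_L)$ correspond bijectively to the arm cells, and similarly those outside $\Stab_{U^w_r}(e_L)$ correspond to the leg cells of the hooks $h_{b_{u_i},v_i}$. Hence $|R|=q^{|\Upsilon_2|+|\Upsilon_3|-\sum_i(b_{u_i}-v_i-1)}$. Second, for each pair $(i,j)$ with $j\in Z_i$ the overlap cell $(b_{u_i},v_j)$, which lies simultaneously in the arm of hook $(b_{u_i},v_i)$ and the leg of hook $(b_{u_j},v_j)$, is the unique position in $\J_\t$ altered by both $X_{v_j v_i}$ and $X_{b_{u_j}b_{u_i}}$: by (\ref{column}) and (\ref{row}) the alterations are $-\alpha\,l_{b_{u_i},v_i}$ and $+\beta\,l_{b_{u_j},v_j}$ respectively. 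Coupling $\beta=\alpha\,l_{b_{u_i},v_i}/l_{b_{u_j},v_j}$ therefore produces a one-parameter family stabilizing $e_L$; over all $(i,j)$ with $j\in Z_i$ these families generate a subgroup $M$ of order $q^{\sum_i|Z_i|}$ which meets $R$ trivially, because neither root factor of a coupled generator lies in $R$. Consequently
\[|\Stab_{U^w_c\times U^w_r}(e_L)|\;\geq\; q^{|\Upsilon_2|+|\Upsilon_3|-\sum_i(b_{u_i}-v_i-1)+\sum_i|Z_i|}.\]

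To finish I would show this bound is sharp by establishing that the evaluation map $\O_L\to\F_q^{E(\p)}$, $e_K\mapsto(K_{s,t})_{(s,t)\in E(\p)}$, is injective, where $E(\p)=\bigcup_i\bar h_{b_{u_i},v_i}$ is the union of non-corner hook cells. A short inclusion--exclusion --- using that arms lie in pairwise distinct rows, legs in pairwise distinct columns, and the arm of hook $i$ meets the leg of hook $j$ (necessarily in the single cell $(b_{u_i},v_j)$) precisely when $j\in Z_i$ --- yields $|E(\p)|=\sum_i(b_{u_i}-v_i-1)-\sum_i|Z_i|=k-s$. Injectivity then forces $|\O_L|\leq q^{k-s}$, and combined with the previous lower bound, both become equalities, giving $\dim M_\O = q^{k-s}$. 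The main obstacle is the injectivity itself: writing $K=g_r L g_c$ and using that $L$ is a pattern matrix, every non-hook entry of $K$ must be expressible as a determined function of the hook entries of $K$. I would verify this by induction on the pattern size $s$, ordering the hook positions by $b_{u_i}$ so that each step reduces to the single-hook case; in that case $U^w_c$ and $U^w_r$ act by operations each touching precisely one hook cell, making the claim transparent.
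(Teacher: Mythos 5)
Your setup mirrors the paper's own proof: orbit--stabilizer for $U^w\cap U=U^w_0\rtimes(U^w_\c\times U^w_\r)$, with $U^w_0$ in the projective stabilizer, the root subgroups of Lemma \ref{stabcr} accounting for all roots not matched to arm or leg cells, and the coupled pairs at the overlap cells $(b_{u_i},v_j)$, $j\in Z_i$, which are exactly the paper's set $\mathbf P$; your count $|E(\p)|=\sum_i(b_{u_i}-v_i-1)-\sum_i|Z_i|=k-s$ is also correct. The problem is the logic of the closing step. Exhibiting $R$ and $M$ inside the stabilizer yields $|\Stab_{\mbox{\tiny $U^w_\c\times U^w_\r$}}(e_{_L})|\geqslant q^{|\Upsilon_2|+|\Upsilon_3|-(k-s)}$, which is the \emph{upper} bound $|\O_L|\leqslant q^{k-s}$. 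Injectivity of the evaluation map $\O_L\to\F_q^{E(\p)}$ gives $|\O_L|\leqslant q^{|E(\p)|}=q^{k-s}$ --- the \emph{same} upper bound a second time. Nowhere do you prove a lower bound on $|\O_L|$, i.e.\ that the stabilizer is no larger than $U^w_0\cdot R\cdot M$, so ``both become equalities'' is a non sequitur: your two inequalities point in the same direction.

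The missing piece is precisely the paper's key claim: writing a general $g$ as a stabilizer part times $u_1u_2$, with $u_1$ a product over the residual column roots $\Gamma_1$ and $u_2$ over the residual row roots $\Gamma_2$, one must show $e_{_L}\circ u_1u_2=e_{_L}\circ v_1v_2$ forces $u_1=v_1$ and $u_2=v_2$. The paper gets this from the observation that $e_{_L}\circ u_1$ can differ from $e_{_L}$ only at positions lying in rows of $\ppi$ but outside the columns $\pj$, while $e_{_L}\circ u_2^{-1}$ can differ only at positions in columns of $\pj$; these sets are disjoint, so the only coincidence is the trivial one. In your framework this amounts to proving the evaluation map is \emph{surjective} onto $\F_q^{E(\p)}$ (the non-overlap arm cells are swept out freely by $\Gamma_1$, the leg cells by $\Gamma_2$), not injective; injectivity alone cannot close the argument. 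A smaller point: ``$M$ meets $R$ trivially because no root factor of a coupled generator lies in $R$'' needs the uniqueness of root-subgroup coordinates for a fixed ordering of $\Upsilon_2\cup\Upsilon_3$ (and closedness of the relevant root sets); that is repairable, unlike the direction-of-inequality issue.
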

\begin{proof}Let $e_{_L}$ be the unique pattern idempotent in $\O,$ and let $\pf=\pf(L).$ Now we calculate the stabilizer of $e_{_L}$ in $U^w\cap U$.
We have already got three types of projective stabilizer of $e_{_L}$ by Corollary \ref{calcor} and Lemma \ref{stabcr}:
\begin{itemize}
\item[(1)] $U^w_0=\langle X_{_{ij}}\,|\, i \in \tt,\, j\notin\tt \,\rangle;$ \vspace{-2mm}
\item[(2)] $\Stab_{\mbox{\tiny $U^w_\c $}}(e_{_L})=\langle X_{_{ij}}\,|\, i , j\notin\tt\, ; j\notin \pj\text{ or }\exists\, (b,j)\in \p \text{ with } b<i\rangle;$ \vspace{-2mm}
\item[(3)] $\Stab_{\mbox{\tiny $U^w_\r $}}(e_{_L})=\langle X_{_{ij}}\,|\, i  , j\in\tt\, ;\, i\notin \ppi\text{ or }\exists\, (i,v)\in \p \text{ with } v>j\rangle.$
\end{itemize}
Moreover, since we may have some intersection positions which are both on some hook row and some hook column with those hooks whose corners belonging to the pattern  $\p$ and again by Corollary \ref{calcor} there exist pairs of row operations and column operations such that the product of these two operations acts trivially on $e_{_L}.$ More precisely,
the pair has the form $x_{{ij}}(\alpha_{{ij}})x_{{st}}(\beta_{{st}})$ with $\alpha_{{ij}}l_{{tj}}=\beta_{{st}} l_{{si}}$ where $l_{{si}},l{_{tj}}\in \pf$ and $t>i$. That means
$$\mathbf P:=\{x_{{ij}}(\alpha_{{ij}})x_{{st}}(\beta_{{st}})\,|\,\alpha_{{ij}} \in \F_q, t>i, l_{si},l_{tj}\in \pf,\alpha_{{ij}}l_{{tj}}=\beta_{{st}} l_{{si}} \}$$ is a set  of some elements contained in the stabilizer of $e_{_L}$.

By Lemma \ref{Upsilon}, $U^w\cap U=\prod_{(i,j)\in \Upsilon}X_{ij}$
  can be taken in any order. Hence for  any $g=\prod x_{_{ij}}(\alpha_{_{ij}})\in U^w\cap U,$ we can
fix an order like this: firstly
those $x_{_{ij}}(\alpha_{_{ij}})$ belonging to $U^w_0,\, \Stab_{\mbox{\tiny $U^w_\c $}}(e_{_L})$ and
$\Stab_{\mbox{\tiny $U^w_\r $}}(e_{_L})$, then those pairs $x_{_{ij}}(\alpha_{_{ij}})x_{_{st}}(\beta_{_{st}})$ belonging to the set $\mathbf P$, then the remaining truncated column operation, and at last the remaining truncated row operation. Since for those truncated row operation $x_{_{st}}$ in the pair set, there is a uniquely expression $x_{_{st}}(\gamma_{_{st}})=x_{_{st}}(\beta_{_{st}})\,x_{_{st}}(\gamma_{_{st}}-
\beta_{_{st}})$ for any $\gamma_{_{st}}\in \F_q,$  this order makes sense.

Suppose $u_1$ is the product of the remaining truncated column operation of $g$,  $u_2$ is  the product of the remaining truncated row operation of $g$, then
$$u_1\in \prod X_{ij} \text{ with }
(i,j)\in \Gamma_1:=\{(i,j)\in \Upsilon_2\,|\,i\notin \pj, (b,j)\in \p \text{ for some } b>i\}$$
%$i,j\notin\tt,$ $i\notin \pj,$ $(b,j)\in \p$ for some $b>i$ and
$$u_2\in \prod X_{st}\text{ with  }(s,t)\in\Gamma_2:=\{(s,t)\in \Upsilon_3\,|\, (s,v)\in \p \text{ for some }v<t\}.$$
Note that by (\ref{ucondition}), $u_1u_2\in \Stab_{\mbox{\tiny $U^w_\c\times U^w_\r$}}(e_{_L})$ iff $e_{_L}\circ u_1u_2=e_{_L}.$
We claim that: $e_{_L}\circ u_1 u_2=e_{_L}$ if and only if $u_1=u_2=1.$

Now
$e_{_L}\circ u_1 u_2=e_{_L} \Leftrightarrow e_{_L}\circ u_1 =e_{_L}\circ u_2^{-1}$ where $u_2^{-1}$ is again a truncated row operation and  belongs to $\prod X_{st}\text{ where  }s,t\in\tt, (s,v)\in \p$, for some $v<t$.
Moreover, by (\ref{column}), $e_{_L}\circ u_1$ has only possible nonzero entries on the positions in  rows $u\in \ppi$ except those whose column indices belonging to $\pj.$
And by (\ref{row}), $e_{_L}\circ u_2^{-1}$ has only possible nonzero entries on the positions in  columns $v\in \pj.$
It means that the action of $u_1$ and $u_2^{-1}$ on $e_{_L}$  influence different positions. Hence
\begin{equation}\label{rest}e_{_L}\circ u_1 =e_{_L}\circ u_2^{-1}\Leftrightarrow u_1=u_2^{-1}=1\Leftrightarrow u_1=u_2=1.\end{equation}
Therefore $e_{_L}\circ g =C(L,g)e_{_L}$ implies $g=g_{_1}g_{_2} $ where $g_{_1}\in U^w_0 \cup \Stab_{\mbox{\tiny $U^w_\c $}}(e_{_L})\cup
\Stab_{\mbox{\tiny $U^w_\r $}}(e_{_L})$ and $g_{_2}\in \mathbf P.$

Now  for  $g=g_{_1}u_{_1}u_{_2},\, h=h_{_1}v_{_1}v_{_2}$ with
\mbox{$g_{_1},h_{_1}\in$ $U^w_0 \cdot \Stab_{\mbox{\tiny $U^w_\c $}}\cdot \Stab_{\mbox{\tiny $U^w_\r $}}\cdot \mathbf P$} and
$u_{_1}$(resp. $u_{_2}$) is  the product of the remaining truncated column (resp. row) operation of $g$,  $v_{_1}$(resp. $v_{_2}$) is  the product of the remaining truncated column (resp. row) operation of $h.$
We claim: $e_{_L}\circ u_1u_2=e_{_L}\circ v_1v_2$ if and only if $u_1=v_1, u_2=v_2.$

Since   truncated row and column operations commute  with each other, we get
$e_{_L}\circ u_1u_2=e_{_L}\circ v_1v_2\Leftrightarrow e_{_L}\circ u_1u_2=e_{_L}\circ v_2 v_1\Leftrightarrow
e_{_L}\circ u_1v_1^{-1}=e_{_L}\circ v_2 u_2^{-1}.$
%Since $u_1v_1^{-1}\in \prod X_{ij}\text{ where  }i,j\notin\tt, i\notin \pj, j\in \pj,$
%$v_2 u_2^{-1}\in \prod X_{st}\text{ where  }s,t\in\tt, s\in \ppi.$
Using   (\ref{rest}), we obtain
$ e_{_L}\circ u_1v_1^{-1}=e_{_L}\circ v_2 u_2^{-1} \Leftrightarrow u_1v_1^{-1}=v_2 u_2^{-1}=1\Leftrightarrow u_1=v_1, u_2=v_2.$
Hence $e_{_L}\circ u_1u_2$ gives all the  coset representatives of $\Stab_{\mbox{\tiny $U^w\cap U$}}(e_{_L})$ in $U^w\cap U$ where
$u_1\in \prod X_{ij}$ with $i,j\notin\tt,$ $i\notin \pj,$ $(b,j)\in \p$ for some $b>i$ and $u_2\in \prod X_{st}\text{ where  }s,t\in\tt, (s,v)\in \p$ for some $v<t$.

Now we can calculate the size of the orbit, which is just the index of the projective stabilizer in $U^w\cap U$, namely $q^d$ where $d={|\Gamma_1|+|\Gamma_2|}$. For \mbox{$1\leqslant i \leqslant s=|\p|,$} set
$Z_i=\{j\,|\,b_{u_j}>b_{u_i}> v_j>v_i\,\}$ denoting the number of the hook intersections  on $b_{u_i}$-th row with hooks centered at positions in the pattern $\p$.
Moreover, if we denote
\mbox{$\tilde\Gamma_1:=\{(i,j)\in \Upsilon_2\,|\,(b,j)\in \p \text{ for some } b>i\}$} then obviously, $|\Gamma_1|+|\Gamma_2|=\tilde\Gamma_1+|\Gamma_2|-\sum_{i=1}^s |Z_i|$.

It is easy to see $|\tilde \Gamma_1|$ is the number of   all the positions on the hook arm with corners     in $\p$,
and respectively $|\Gamma_2|$ is the the number of     all the positions on the hook column places with corners     in $\p$.
Then by   \ref{res},
$d=\sum_{1\leqslant i \leqslant s}  (b_{u_i}-v_i)-s-\sum_{i=1}^s |Z_i|
=\sum_{1\leqslant i \leqslant s} \big( (b_{u_i}-v_i)-s-|Z_i|\big).$
Let $k=\sum_{1\leqslant i \leqslant s} \big( (b_{u_i}-v_i)-|Z_i|\big),$ then  it is  the number of places which are on   the hooks whose corners belong to the pattern $\p$. Therefore we obtain $\dim{M_\O}=q^ {k-s}. $
\end{proof}

\begin{Remark}\label{nodependency}By    \ref{dimoforbit}, if two orbits have the same pattern  then they have the same dimension. Even in the case that the tableaux of   two orbits having different shapes, the statement still holds. Therefore, for a given frame of a filled pattern, the number of all the admissible orbits   is a polynomial in $q$ with integral coefficients and the sizes of the orbits are   powers of $q.$
Moreover, if the elements on the hooks with corners belonging to the pattern are fixed, then we have no choice of the other places, otherwise the dimension of the orbit will be increased.\end{Remark}

\begin{Example}\label{short for orbit}
Let $L=\begin{pmatrix}
0 & 0 & 1 & 0\\
z & 0 & 0 & 1
\end{pmatrix}$ with $0\not=z\in \F_q$. Then\\ \mbox{$\O_L=\left\{e_{_K}\,\Big|\,K=\begin{pmatrix}
a & 0+\square & 1 & 0\\
z & b & 0 & 1
\end{pmatrix}\text{ where } \square \text{ is determined by } a,b\in \F_q\right\}$
and $\dim M_{\O_L}=q^2.$} In particular, we introduce a short notation for the orbit module $M_{\O_L}$ for later use:
$$M_{\O_L}= \begin{pmatrix}* & 0+\square & 1 & 0\\
z & * & 0 & 1
\end{pmatrix}\, .$$
\end{Example}

\smallskip

Next we shall prove the irreducibility of   the $(U^w\cap U)$-orbit modules:
\begin{Theorem}\label{irr}
Let $\lambda=(n-m,m)\vdash n,\, \O$ be an $\,U^w\cap U$-orbit with \mbox{$ \tab(\O)=\t^\lambda w.$} Then $M_{\mathcal{O}}$
is an irreducible $F(U^w\cap U)$-submodule of the batch $\M_\t$.
\end{Theorem}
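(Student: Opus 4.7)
The plan is to restrict the action of $U^w\cap U$ on $M_\O$ to its normal subgroup $U_0^w$ and to exploit the fact that the idempotent basis diagonalises this restriction with \emph{pairwise distinct} characters. First I would check that $U_0^w$ is abelian: its generators $x_{ij}(\alpha)$ and $x_{kl}(\beta)$ satisfy $i,k\in\tt$ and $j,l\notin\tt$, so a commutator $[x_{ij}(\alpha),x_{kl}(\beta)]$ can only be non-trivial if $j=k$, which would force $j$ to lie simultaneously in $\tt$ and its complement. Case (1) of Proposition \ref{mono} then shows that each $e_{_K}\in\E_\t$ affords a linear character $\psi_{_K}\colon U_0^w\to F^{\ast}$, determined on generators by $\psi_{_K}(x_{ij}(\alpha))=\theta(k_{ij}\alpha)$ for $(i,j)\in\Upsilon_1$.

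The heart of the argument, and the step I expect to need the most care, will be to show that the map $e_{_K}\mapsto\psi_{_K}$ is injective on all of $\E_\t$. The crucial observation here is the coincidence $\J_\t=\Upsilon_1$, which is immediate from Definitions \ref{jt} and \ref{Upsilon}: the very entries that parameterise a matrix $K\in\X_\t$ are indexed by precisely the positions that control the character $\psi_{_K}$. Injectivity then reduces to the non-degeneracy of the pairing $(L,M)\mapsto\prod_{(i,j)\in\J_\t}\theta(l_{ij}m_{ij})$, which is exactly the Fourier/orthogonality theory for the abelian $p$-group $(\X_\t,\diamond)$ over the splitting field $F$ and is guaranteed by the non-triviality of $\theta$ together with $F$ containing a primitive $p$-th root of unity.

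Once this distinctness is in hand, the irreducibility is essentially automatic. Since $\Char(F)\neq p$ and $U_0^w$ is a $p$-group, $FU_0^w$ is semisimple by Maschke, so any $F(U^w\cap U)$-submodule $N\subseteq M_\O$ decomposes under $U_0^w$ into isotypic components; because each character $\psi_{_K}$ occurs with multiplicity exactly one in $M_\O$, each isotypic line is already of the form $Fe_{_K}$, forcing $N=\bigoplus_{e_{_K}\in S}Fe_{_K}$ for some subset $S\subseteq\O$. Finally, since $\O$ is by construction a single $(U^w\cap U)$-orbit and the action on $\E_\t$ is monomial (Proposition \ref{mono}), $S$ being non-empty and closed under the action of $U^w\cap U$ forces $S=\O$, whence $N=M_\O$ and $M_\O$ is irreducible.
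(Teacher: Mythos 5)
Your proof is correct, and it takes a genuinely different route from the paper's. The paper argues cyclicity from an arbitrary nonzero vector: after using monomiality to normalise $x\in M_\O$ to the form $e_{_L}+\sum_{K\neq L}a_{_K}e_{_K}$ with $L$ the pattern matrix of $\O$, it applies the explicit averaging element $a=\prod_{(b,v)\in \Omega}\sum_{\alpha\in\F_q}x_{bv}(\alpha)$, where $\Omega$ is the union of the hook arms and legs attached to the positions of $\p(\O)$; the orthogonality relations kill every $e_{_K}$ with $K\neq L$ (such a $K$ is necessarily nonzero somewhere on $\Omega$, where $L$ vanishes) and return $q^{|\Omega|}e_{_L}$, whence $xF(U^w\cap U)=e_{_L}F(U^w\cap U)=M_\O$. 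You instead restrict to $U^w_0$, use the coincidence $\J_\t=\Upsilon_1$ together with the nontriviality of $\theta$ to see that $K\mapsto\psi_{_K}$ separates the points of $\X_\t$, and conclude by semisimplicity that every submodule is a sum of idempotent lines, which the transitivity of the monomial action then forces to be all of $M_\O$. Your route is more module-theoretic and yields the stronger structural fact that every $FU^w_0$-submodule of the whole batch $\M_\t$ is spanned by idempotents; it also avoids having to know on exactly which positions the matrices of an orbit differ from its pattern matrix, which the paper's choice of $\Omega$ quietly relies on. The paper's route, in exchange, produces an explicit group-algebra element realising the projection onto $e_{_L}$. Two minor points to tidy: the commutator $[x_{ij}(\alpha),x_{kl}(\beta)]$ of generators of $U^w_0$ could a priori also be nontrivial when $l=i$, not only when $j=k$, though that case is excluded for the same reason; and you do not actually need $U^w_0$ to be abelian, since Corollary \ref{calcor} already shows that each generator acts on $e_{_K}$ by a scalar, so $\psi_{_K}$ is a well-defined linear character of $U^w_0$ in any case.
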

\begin{proof} Let $e_{_L} $ be the unique pattern idempotent in $\O$.
We need to prove: For any arbitrary element $x\in M_\O= e_{_L}  F(U^w\cap U)$, we have  $xF(U^w\cap U)=M_{\mathcal{O}}$. Write $x=\sum_{e_{_K}\in \O} a_{_K}e_{_K}.$
Since $U^w\cap U$ acts monomially on $\O$, we can reduce our problem to the simple case:
$x=e_{_L}+\sum_{K\neq L}a_{_K}e_{_K}.$

Let $\p=\p(L)$ and
 $\Omega =\bigcup _{(b , j )\in \p}\bar h_{_{bj }}^\t.$ Note that for $(b,v)\in \Omega$ we have automatically $b\in\tt,$ \mbox{$v\notin\tt$} and by Corollary \ref{calcor} $X_{bv}\in U^w_0$.
More precisely, for any $e_{_R}\in \O$ and $ x_{bv}(\alpha)=E+\alpha  e_{_{bv}}$ with $(b,v)\in \Omega, \,\alpha\in \F_q$ we have $e_{_R}\circ x_{bv}(\alpha)=
\theta\,(r_{bv}\alpha )\,e_{_R}.$ Set
$a=\prod\nolimits_{(b,v)\in \Omega}\sum\nolimits_{\alpha\in   \F_q} x_{bv}(\alpha).$
Then
$e_{\!_{L}}\circ a=
\prod_{(b,v)\in \Omega}\sum_{\alpha \in      \F_q}\theta\,(l_{_{bv}}\alpha )\, e_{\!_{L}}=q^{|\Omega|} e_{\!_L}$, since $l_{{bv}}=0$ for all $(b,v)\in \Omega$. Moreover
$e_{\!_{K}}\circ a= \prod_{(b,v)\in \Omega}\sum_{\alpha \in  \F_q}
\theta\,(k_{_{bv}}\alpha )\, e_{\!_{K}}=0,$
since for any $K\not=L$, there exists at least one position $(b,v)\in \Omega$ such that $k_{_{bv}}\not=0$ and therefore the orthogonality relations for irreducible character of the group $(\F_q,+)$ imply
\begin{equation}\label{orth}\sum\nolimits_{\alpha \in  \F_q}
\theta\,(k_{_{bv}}\alpha )=\sum\nolimits_{\alpha\in \F_q}\theta(\alpha)=0\quad \text{  for $k_{_{bv}}\not=0$}.\end{equation}
Hence we obtain $ x \circ a=\big(e_{\!_L}+\sum_{K\neq L}a_{\!_K}e_{\!_K}\big) \circ a=e_{\!_L}\circ a= q^{|\Omega|} e_{\!_L}.$ This shows \mbox{$ x  F(U^w\cap U)=e_{\!_L}  F(U^w\cap U)=M_{\mathcal{O}}.$}
Therefore $M_{\mathcal{O}}$ is an irreducible \mbox{$F(U^w\cap U)$}-submodule of $\M_\t$ with $\t=\tab(\O)=\t^\lambda w.$
\end{proof}

%We remark that in this paper we use  the equation (\ref{orth}) frequently.

\subsection{$U$-invariance of $M_\O$}
In this section, we fix a $\,U^w\cap U$-orbit $\O$ of $\E_\t$ with $\t=\t^\lambda w\in \RStd(\lambda),$ \mbox{$\lambda=(n-m, m)\vdash n. $} Let $\tt=(b_1,\cdots, b_m)$ and let $e_{_L}$ be the unique pattern idempotent in $\O.$ Now we show that the corresponding
module $M_\O$ is invariant under the action of $U$, which shows that $M_\O$ is actually an irreducible $FU$-module. We first show that in order to prove that $M_\O$ is invariant under the action of $U$, it suffices to prove $e_{_L}\circ g\in M_\O.$

\begin{Lemma}\label{patterninvariant}
If  $e_{_L}\circ g\in M_\O,\,\forall\, g\in U$. Then $M_\O$ is invariant under the action of $U.$
\end{Lemma}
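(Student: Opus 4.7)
The plan is to exploit that $M_\O$ is spanned, as an $F$-vector space, by the $U^w\cap U$-translates of the pattern idempotent $e_{_L}$, and then transport the hypothesis ``$e_{_L}\circ g\in M_\O$ for every $g\in U$'' to an arbitrary element of $M_\O$ by simply using associativity of the $\circ$-action together with the fact that $U$ is closed under multiplication.

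First I would invoke Proposition \ref{mono}: $U^w\cap U$ acts monomially on $\E_\t$, so every $e_{_K}\in\O$ satisfies $e_{_L}\circ h=C(L,h)\,e_{_K}$ for some $h\in U^w\cap U$ and some $0\neq C(L,h)\in F$. Dividing by the nonzero scalar, each $e_{_K}\in\O$ is an $F$-multiple of $e_{_L}\circ h$ for a suitable $h$. Consequently
$$M_\O\;=\;\bigoplus\nolimits_{e_{_K}\in \O}F\,e_{_K}\;=\;\Span_F\{\,e_{_L}\circ h\,\mid\,h\in U^w\cap U\,\}.$$

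Next I would take an arbitrary $v\in M_\O$ and $g\in U$, write $v=\sum_i c_i\,(e_{_L}\circ h_i)$ with $c_i\in F$ and $h_i\in U^w\cap U$, and compute
$$v\circ g\;=\;\sum\nolimits_i c_i\,(e_{_L}\circ h_i)\circ g\;=\;\sum\nolimits_i c_i\,\big(e_{_L}\circ (h_ig)\big).$$
Because $U^w\cap U\leqslant U$ and $U$ is a group, each product $h_ig$ lies in $U$, so the hypothesis gives $e_{_L}\circ (h_ig)\in M_\O$ for every $i$. Hence $v\circ g\in M_\O$, which proves $M_\O\circ U\subseteq M_\O$, i.e.\ $M_\O$ is invariant under the action of $U$.

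I do not anticipate a serious obstacle here: the argument is a formal reduction from the cyclic generator $e_{_L}$ to the whole module, and the only point to be careful about is that the scalars $C(L,h)$ produced by the monomial action are nonzero, which is exactly the content of Proposition \ref{mono}.
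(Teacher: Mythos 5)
Your argument is correct as a proof of the lemma \emph{as stated}: since $\O$ is by definition the set of idempotents of the form $C(L,h)^{-1}\,e_{_L}\circ h$ with $h\in U^w\cap U$ and $C(L,h)\not=0$ (Proposition \ref{mono}), the module $M_\O$ is indeed the $F$-span of $\{e_{_L}\circ h \mid h\in U^w\cap U\}$, and the identity $(e_{_L}\circ h)\circ g=e_{_L}\circ(hg)$ together with $hg\in U$ closes the argument in one line. The paper proceeds quite differently: it builds a normal series $U^w\cap U=U_0\trianglelefteq U_1\trianglelefteq\cdots\trianglelefteq U_k=U$ and proves $U_{i+1}$-invariance from $U_i$-invariance by conjugating, writing $e_{_K}\circ g_{_j}=(e_{_L}\circ g_{_j})\circ(g_{_j}^{-1}ug_{_j})$ with $g_{_j}^{-1}ug_{_j}\in U_i$. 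The payoff of that more elaborate route is that it only consumes the hypothesis on a \emph{generating set} of each $U_{i+1}$ (in practice, on root subgroup elements $x_{ij}(\alpha)$), whereas your argument genuinely needs $e_{_L}\circ g\in M_\O$ for every single $g\in U$. This matters downstream: in Proposition \ref{uinvariant} the containment $e_{_L}\circ g\in M_\O$ is only verified for $g=x_{ij}(\alpha)$, so if one adopts your proof of the lemma one must add the (routine, but not free) step of upgrading the generator computation to arbitrary $g\in U$ before the lemma can be applied. As a standalone proof of the stated lemma, yours is simpler and valid; as a component of the paper's overall argument, the inductive version carries its weight.
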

\begin{proof}
By Theorem \ref{Upsilon}, we can define a normal sequence:
$$U^w\cap U=U_0\leqslant U_1 \leqslant \cdots \leqslant U_i \leqslant U_{i+1} \leqslant \cdots \leqslant U_k=U$$ such that
$U_i\trianglelefteq U_{i+1},$    for each $0\leqslant i \leqslant k-1.$

Now suppose that $e_{_L}\circ g\in M_\O$ for all $g\in U$ and suppose inductively that we have already shown that $M_\O$ is $U_i$-invariant for some $0\leqslant i \leqslant k-1$. We show that $M_\O$ is $U_{i+1}$-invariant, the case $i=0$ being trivial. Then the claim follows by induction.
The fact that $M_\O$ is $U^w\cap U$ invariant is our induction basis.
Inductively, suppose that $M_\O$ is $U_i$ invariant.
Let $g_{_1},\ldots,g_{_r}$ be generators of $U_{i+1}.$
Suppose $e_{_L}\circ g_{_j}\in M_\O\text{ for   }j=1,\ldots, r.$
Choose an arbitrary   $e_{_K}\in  \O$. Then there exist some $u\in U^w\cap U \leqslant U_i$ such that $e_{_L}\circ u=e_{_K}.$
Hence \mbox{$e_{_K}\circ g_{_j}=e_{_L}\circ u g_{_j}=e_{_L}\circ g_{_j} g_{_j}^{-1}u g_{_j}=(e_{_L}\circ g_{_j})\circ ( g_{_j}^{-1}u g_{_j}) \in M_\O $} since $U_i\trianglelefteq U_{i+1}$ and hence $g_{_j}^{-1}u g_{_j}\in U_{i}$. Then $ e_{_L}\circ g_{_j}\in M_\O$ since $M_\O$ is $U_i$ invariant by assumption. This shows that $M_\O$ is $U_{i+1}$ invariant.
\end{proof}

So we only need to prove:\vspace{-1mm}
\begin{Prop}\label{uinvariant}
$e_{_L}\circ g\in M_\O,\,\forall\, g\in U$. In particular, $M_\O$ is an irreducible $FU$-module.
\end{Prop}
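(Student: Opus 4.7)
The plan is to apply Lemma \ref{patterninvariant}, which reduces the statement to checking $e_{_L}\circ g\in M_\O$ for each generator $g=x_{ij}(\alpha)$ of $U$. When $(i,j)\in\Upsilon$ the generator lies in $U^w\cap U$ and Proposition \ref{mono} already yields $e_{_L}\circ g=C(L,g)\,e_{_K}$ for some $e_{_K}\in\O$, hence in $M_\O$. The only genuinely new case is $(i,j)$ with $i>j$, $i\notin\underline{\t}$, $j\in\underline{\t}$; throughout, write $j=b_s$.

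For such a generator I would compute $M'\circ g^{-1}$ explicitly for arbitrary $M'\in\X_\t$. Right multiplication by $g^{-1}=E_n-\alpha\epsilon_{ij}$ subtracts $\alpha$ times column $i$ from column $b_s$. Because $i>b_s$ the normal form forces $m'_{b_s,i}=0$, so the pivot at $(b_s,b_s)$ is preserved, and the only non-zero intruders in column $b_s$ are at positions $(b_t,b_s)$ with $t>s$ and $b_t>i$, each equal to $-\alpha\,m'_{b_t,i}$. Clearing these by adding, for each such $t$, $\alpha\,m'_{b_t,i}$ times row $b_s$ to row $b_t$ yields
\begin{equation*}
(M'\circ g^{-1})_{b_t,v}=m'_{b_t,v}+\alpha\,m'_{b_t,i}\,m'_{b_s,v}
\end{equation*}
for these row indices, with every other entry of $M'\circ g^{-1}$ unchanged. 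Since $L$ is a pattern matrix, $\chi_{_L}$ is supported on $\p$, and this gives $\chi_{_L}(-M'\circ g^{-1})=\chi_{_L}(-M')\cdot\Psi(M')$ with
\begin{equation*}
\Psi(M')=\prod_{(b_t,v)\in\p,\,t>s,\,b_t>i}\theta\bigl(-\alpha\,l_{b_t,v}\,m'_{b_t,i}\,m'_{b_s,v}\bigr);
\end{equation*}
factors with $v>b_s$ vanish automatically since $m'_{b_s,v}=0$ in the normal form.

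The decisive combinatorial input is that whenever the correction is non-trivial, both coordinates land on a hook of the pattern corner $(b_t,v)$: the chain $v<b_s<i<b_t$ together with $i\notin\underline{\t}$ and $b_s\in\underline{\t}$ places $(b_t,i)\in h^{\ri}_{b_t,v}$ and $(b_s,v)\in h^{\up}_{b_t,v}$. Therefore $\Psi$ depends only on $\{m'_{bv}:(b,v)\in\Omega\}$ where $\Omega=\bigcup_{(b,v)\in\p}\bar h_{_{bv}}^\t$. On the other hand $K\in\O$ is freely parameterized by its values on $\Omega$ (with values on $\p$ fixed equal to $L$ and zeros outside $\p\cup\Omega$), so Fourier inversion on $(\F_q,+)^{|\Omega|}$ identifies $M_\O$ with the subspace of $\M_\t$ whose coefficient at $[M']$ has the form $\tfrac{1}{q^{|\J_\t|}}\chi_{_L}(-M')\cdot F\bigl(\{m'_{bv}\}_{(b,v)\in\Omega}\bigr)$ for an arbitrary function $F$. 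Since the coefficient of $[M']$ in $e_{_L}\circ g$ is exactly $\tfrac{1}{q^{|\J_\t|}}\chi_{_L}(-M')\cdot\Psi(M')$, it has this form and hence $e_{_L}\circ g\in M_\O$.

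Irreducibility as an $FU$-module is then immediate: any $FU$-submodule of $M_\O$ is in particular an $F(U^w\cap U)$-submodule, so by Theorem \ref{irr} it is $0$ or all of $M_\O$. The main obstacle is the row-reduction step that produces the explicit quadratic correction and the subsequent combinatorial check that both $(b_t,i)$ and $(b_s,v)$ always lie in $\Omega$ whenever the correction is non-trivial; both depend essentially on the unusual row-labeling convention for matrices in $\Xi_{m,n}$ introduced in Section \ref{permutation}.
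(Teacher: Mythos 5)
Your reduction via Lemma \ref{patterninvariant} to the generators $x_{ib_s}(\alpha)$ with $i\notin\tt$, $b_s\in\tt$, the row-reduction giving $(M'\circ g^{-1})_{b_t,v}=m'_{b_t,v}+\alpha\,m'_{b_t,i}\,m'_{b_s,v}$, and the observation that the quadratic correction $\Psi$ only involves the arm position $(b_t,i)$ and the leg position $(b_s,v)$ of hooks with corner in $\p$ all match the paper's computation. The gap is the final step: it is \emph{not} true that $M_\O$ is the space of vectors whose coefficient at $[M']$ has the form $\frac{1}{q^{|\J_\t|}}\chi_{_L}(-M')\,F(M'|_\Omega)$ with $F$ arbitrary, because the orbit $\O$ is not $\{K\in\X_\t\,:\,K=L \text{ outside }\Omega\}$. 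Composing a truncated column operation with a truncated row operation creates nonzero entries at hook-intersection positions lying outside $\Omega\cup\p$, and these entries are \emph{determined by} the $\Omega$-coordinates rather than zero: in Example \ref{short for orbit} the entry $\square$ at position $(3,2)\notin\Omega=\{(3,1),(4,2)\}$ is determined by $a$ and $b$ and is nonzero as soon as $a,b\neq0$ (cf.\ Remark \ref{nodependency}). Since distinct linear characters of $(\X_\t,\diamond)$ are linearly independent, the span of $\{\chi_{_K}\,:\,K\in\O\}$ and the span of $\{\chi_{_K}\,:\,K=L\text{ off }\Omega\}$ are two \emph{different} $q^{|\Omega|}$-dimensional subspaces, so your characterization of $M_\O$ is false and the membership $e_{_L}\circ g\in M_\O$ does not follow from it.

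The conclusion is reachable from where you stop, but it requires exactly the verification your shortcut suppresses: expand each factor $\theta\big(-\alpha\,l_{b_tv}\,m'_{b_t i}\,m'_{b_s v}\big)$ of $\Psi$ into additive characters of the pair $(m'_{b_t i},m'_{b_s v})$. The characters $\chi_{_K}$ that actually occur have $K$ differing from $L$ only at the positions $(b_t,i)$ and $(b_s,v)$, and each such $K$ does index an element of $\O$: it is reached from $L$ by one truncated column operation $x_{iv}$ and one truncated row operation $x_{b_tb_s}$, and no spurious intersection entry arises because the relevant position $(b_s,i)$ has $i>b_s$ and hence lies outside $\J_\t$. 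Determining which $\chi_{_K}$ occur and checking each against Corollary \ref{calcor} is precisely the computation of the coefficients $C_K$ in the paper's proof; it is the essential content of the proposition and cannot be replaced by an appeal to arbitrary functions of the $\Omega$-coordinates.
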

\begin{proof}
It suffices to prove
$e_{_L}\circ g\in M_\O $
for $g=x_{_{ij}}(\alpha)\notin U^w\cap U,$ $\alpha\in\F_q$, (i.e. $i\notin \tt,\, j\in \tt,\, i>j$), since we know $M_\O$ is $U^w\cap U$-invariant.

Let $g=x_{_{vb_t}}(\alpha)\in U$ with  $v\notin \tt\,, b_t \in \tt$.
There exists  $1\leqslant s \leqslant m$ such that $ b_{s-1}<v<b_s$ then $1\leqslant t <s \leqslant m.$ By (\ref{newexpan}),
\begin{equation}\label{eee}e_{_L}\circ g
=\frac{1}{q^{|\mathfrak{J}_{\t}|}}\sum\nolimits_{\text{\tiny $M\in \X_\t$}}\chi_{_L}(-M\circ g^{-1})[M].\end{equation} Since $M g^{-1}$ is obtained from $M$ by replacing the zero entries on positions $(b_r, b_t)$ by $-\alpha m_{b_r v}$ for all $s\leqslant r \leqslant m$.
Then $M\circ g^{-1}$ is obtained from $Mg^{-1}$ by adding
$\alpha m_{b_r v}$ times row $b_t$ to row $b_r$ for all $s\leqslant r \leqslant m$.
That is $M\circ g^{-1}=hMg^{-1}$ with $h=\prod_{r=s}^m(E_m+\alpha m_{b_rv})$.
Similarly as the third case  in the proof of \ref{mono}, we obtain
$\chi_{_L}(-M\circ g^{-1})=\chi_{\hat L}(-M)$ where $\hat l_{ij}=(h^t L)_{ij}$
for $(i,j)\in \J_\t$.
%Since $h^t L$ is obtained from $L$ by adding $\alpha m_{b_rv}$ times row $b_r$ to row $b_t$ for all $s\leqslant r \leqslant m$,
More precisely, $\hat L$ is obtained from $L$ by replacing the entries on all positions $(b_t,j)\in \J_\t$ by $(l_{b_tj}+\sum_{r=s}^m \alpha m_{b_rv}l_{b_r j})$. Therefore
\begin{eqnarray}
&&\chi_{_L}(-M\circ g^{-1}) = \prod_{\stackrel{(b_i,j)\in \J_\t}{i\not=t}}\theta(-l_{b_ij}m_{b_ij})\prod_{(b_t,j)\in \J_\t}\theta\big(-(l_{b_tj}+\sum_{r=s}^m \alpha m_{b_rv}l_{b_r j})m_{b_tj}\big)\nonumber\\&=&
\prod\limits_{ (b_i,j)\in \J_\t}\theta(-l_{b_ij}m_{b_ij})\prod\limits_{(b_t,j)\in \J_\t}\theta\big(-\sum_{r=s}^m \alpha l_{b_r j}m_{b_rv}m_{b_tj}\big)\label{elg extension}\\\label{furtherexpand}
&=&\prod_{i \not= t,\, j \not=v \atop {(b_i,j)\in \J_\t}}\theta(-l_{b_ij}m_{b_ij}) \cdot\!\!\! \prod_{r=s\atop {(b_r,v)\in \J_\t}}^m\theta(-l_{b_rv}m_{b_rv})\cdot \prod_{(b_t,j)\in \J_\t}\theta\big(\!-(l_{b_tj}+\sum_{r=s}^m \alpha m_{b_rv}l_{b_r j})m_{b_tj}\big).\quad\quad\quad\,\end{eqnarray}
In (\ref{elg extension}), the critical term is the second product factor, since it contains a multiplication of $m_{b_rv}$ and $m_{b_tj}$. Obviously the easy case is when this critical term disappears, that is $l_{b_r j}=0$, for all $(b_r,j)\in \J_\t$ with $s\leqslant r \leqslant m$. In particular we get then $\chi_{_L}(-M\circ g^{-1})=\chi_{_L}(-M)$ and by (\ref{eee}) $e_{_L}\circ g=e_{_L}\text{  if  } l_{b_r j}=0 , \text{ for all } s\leqslant r \leqslant m,  1\leqslant j < b_t.$

Next we deal with the critical case, that is we have an $(b_r,j)\in \p=\p(L)\subset \J_\t$ for some $s\leqslant r \leqslant m, 1\leqslant j <b_t.$ Using general character theory we may
rewrite
$[M]=\sum _{K\in \X_{\t}} \chi_{_K}(M)e_{_K}$ in (\ref{eee}):
\begin{eqnarray}e_{_L}\circ g
%&=&\frac{1}{q^{|\mathfrak{J}_{\t}|}}\sum_{\text{\tiny $M$}}\chi_{_L}(-M\circ g^{-1})[M]\nonumber\\
&=&\frac{1}{q^{|\mathfrak{J}_{\t}|}}\sum_{\text{\tiny $M$}}\chi_{_L}(-M\circ g^{-1})\sum\limits_{K\in \X_{\t}} \chi_{_K}(M)e_{_K}\nonumber\\
&=&\sum\limits_{K\in \X_{\t}}\left(\frac{1}{q^{|\mathfrak{J}_{\t}|}}\sum_{\text{\tiny $M$}}\chi_{_L}(-M\circ g^{-1})\chi_{_K} (M)\right)e_{_K}.
\end{eqnarray}
Let $C_{_K}=\frac{1}{q^{|\mathfrak{J}_{\t}|}}\sum_{ M\in \X_\t}\,\chi_{_L}(-M\circ g^{-1})\chi_{_K} (M)$, then $e_{_L}\circ g=\sum _{K\in \X_{\t}} C_{_K}e_{_K}.$ Our strategy will be, to determine which $e_{_K}$ occurs with non zero coefficient in this expression. By (\ref{furtherexpand}),
\begin{eqnarray}
C_{_K}&=&\frac{1}{q^{|\mathfrak{J}_{\t}|}}\sum_{\text{\tiny $M\in\X_\t$}}\prod_{i \not= t,\, j \not=v \atop {(b_i,j)\in \J_\t}}\theta\big((k_{b_ij}-l_{b_ij})m_{b_ij}\big) \cdot \prod_{r=s\atop {(b_r,v)\in \J_\t}}^m\theta\big((k_{b_rv}-l_{b_rv})m_{b_rv}\big)\nonumber\\&&\cdot \prod_{(b_t,j)\in \J_\t}\theta\big((k_{b_tj}-l_{b_tj}-\sum_{r=s}^m \alpha m_{b_rv}l_{b_r j})m_{b_tj}\big)\label{ck1}
\end{eqnarray}
It is easy to see that the product $ \prod\limits_{i \not= t,\, j \not=v \atop {(b_i,j)\in \J_\t}} \sum_{m_{_{b_ij}}\in \F_q}\theta\,\((k_{_{b_ij}}-l_{_{b_ij}})m_{_{b_ij}}\)$ is a factor of $C_K.$
 Hence by (\ref{orth}) in order to get $C_K\not=0,$ all the terms $(k_{_{b_ij}}-l_{_{b_ij}})$ must be zero, which leads to
 $k_{_{b_ij}}=l_{_{b_ij}} \text{ for }   i \not= t,  j \not=v, (b_i,j)\in \J_\t.$ That means, if $C_K\not=0$ then $K$ and $L$ coincide in all positions except possibly ones in row $b_t$ or in  column $v.$ We remark that up to now, we have not used   the condition that $e_{_L}$ is a pattern idempotent. Now (\ref{ck1}) becomes:
\begin{eqnarray}
\!\!\!\!\!\!C_{_K}&\!\!\!\!\!=\!\!\!\!\!&\frac{1}{q^{a}}  \prod_{r=s\atop {(b_r,v)\in \J_\t}}^m\sum_{ m_{_{b_rv}}\in \F_q }\bigg(\theta\big((k_{b_rv}-l_{b_rv})m_{b_rv}\big) \cdot\nonumber\\&&\!\!\! \prod_{(b_t,j)\in \J_\t}\sum_{m_{_{b_t j}}\in \F_q}\theta\big((k_{b_tj}-l_{b_tj})m_{b_tj}-\sum_{r=s}^m \alpha l_{b_r j}m_{b_rv}m_{b_tj}\big)\bigg)\label{ck2}
\end{eqnarray}
where $a=\sharp\{(b_r,v), (b_t,j)\in \J_\t\,|\,s\leqslant r \leqslant m,1\leqslant j <b_t\}.$

Let $Y=\{(b_{u_i}, w_i)\in \p\,|\,s\leqslant u_i \leqslant m, 1\leqslant w_i< b_t, 1\leqslant i \leqslant \ell\}.$
The critical term $m_{b_rv}m_{b_tj}$ appears only when $(b_r,j)\in Y$, since otherwise $l_{b_r j}=0$. Therefore again by (\ref{orth}),   $C_K\not=0$ implies:
$$k_{b_rv}=l_{b_rv}\text{ for }r\not=u_i, \,\forall\, 1\leqslant i \leqslant \ell; \quad  k_{b_tj}=l_{b_tj}\text{ for }j\not=w_i,\,\forall\,1\leqslant i \leqslant \ell.$$
This shows the nonzero entries of $K$ only appear on a column or a row containing a position in   $Y$. In this sense, (\ref{ck2}) becomes:
\begin{eqnarray*}
C_{_K}&=&\frac{1}{q^{2\ell}}  \prod_{i=1\atop {(b_{u_i},v)\in \J_\t}}^l\sum_{ m_{_{b_{u_i}v}}\in \F_q }\bigg(\theta\big((k_{b_{u_i}v}-l_{b_{u_i}v})m_{b_{u_i}v}\big) \cdot\nonumber\\&&\!\!\! \!\! \prod_{(b_t,w_i)\in \J_\t}\sum_{m_{_{b_t w_i}}\in \F_q}\!\!\!\theta\big((k_{b_tw_i}-l_{b_tw_i})m_{b_tw_i}-\sum_{r=s}^m \alpha m_{b_{r}v}l_{b_{r}w_i}m_{b_tw_i}\big)\bigg)
\end{eqnarray*}
Since $L$ is a pattern matrix, we have $\sum_{r=s}^m \alpha m_{b_{r}v}l_{b_{r}w_i}m_{b_tw_i}=\alpha m_{b_{u_i}v}l_{b_{u_i}w_i}m_{b_tw_i}$ and $l_{b_tw_i}=l_{b_{u_i}v}=0$.
Then
\begin{eqnarray}
C_{_K}&=&\frac{1}{q^{2\ell}}  \prod_{i=1\atop {(b_{u_i},v)\in \J_\t}}^l\sum_{ m_{_{b_{u_i}v}}\in \F_q }\bigg(\theta(k_{b_{u_i}v}m_{b_{u_i}v}) \cdot\nonumber\\&&\!\!\! \!\!\prod_{(b_t,w_i)\in \J_\t} \sum_{m_{_{b_t w_i}}\in \F_q}\!\!\!\theta\big((k_{b_tw_i}-\alpha m_{b_{u_i}v}l_{b_{u_i}w_i})m_{b_tw_i}\big)\bigg)\label{ck3}
\end{eqnarray}
It is easy to see $K$ can have possible nonzero entries different from $L$ on positions $(b_{u_i},v)$ and $(b_t,w_i)$ for $1\leqslant i \leqslant \ell$. Notice that
those are all on the $(b_{u_i},w_i)$-hook with   $(b_{u_i}, w_i)\in Y\subset\p$.
Then by Corollary \ref{calcor}, using truncated column operations and truncated column operations, we  know: For each $e_{_K}$ such that $C_K\not=0,$   there exists $u_{_K}\in U^w\cap U$ such that $e_{_K}=e_{_L}\circ u_{_K}\in M_\O.$ Therefore we obtain $e_{_L}\circ g\in M_\O.$ Moreover, by Lemma \ref{patterninvariant} and Proposition \ref{irr}, we obtain $M_\O$ is an irreducible $FU$-module.
\end{proof}

\begin{Remark}
More precisely, we can actually determine the coefficient $C_K$. Fix $m_{b_{u_i}v}$ in (\ref{ck3}), then by (\ref{orth}), $$\sum_{m_{_{b_t w_i}}\in \F_q}\theta\big((k_{b_tw_i}-\alpha m_{b_{u_i}v}l_{b_{u_i}w_i})m_{b_tw_i}\big)\not=0$$ implies $k_{b_tw_i}-\alpha m_{b_{u_i}v}l_{b_{u_i}w_i}=0$, that is $m_{b_{u_i}v}=\alpha^{-1} l_{b_{u_i}w_i}^{-1} k_{b_tw_i}$. Therefore
$$C_K=\frac{1}{q^{|Y|}}\prod_{(b_{u_i},w_i)\in Y} \theta(\alpha^{-1} l_{b_{u_i}w_i}^{-1}k_{b_{u_i}v} k_{b_tw_i}).$$
\end{Remark}

\begin{Remark}For  $\lambda=(n-m,m)\vdash n:$ By general theory every batch $\M_\t$ of $M^\lambda$ contains precisely one trivial component and  this is the orbit module with empty pattern. More precisely, the unique pattern matrix $L$ in $\X_\t,$ whose only nonzero entries are the last ones, induces the unique trivial component of the  $\t$-batch $\M_\t$. This is given as $M_{\t}=Fe_{_L}$.
\end{Remark}

\section{The Specht modules $S^{(n-m,m)}$ }
Let $\lambda=(n-m,m)\vdash n$. Having completely decomposed  $M^\lambda$ into a direct sum of irreducible $FU$-modules, we now turn
our attention to the unipotent Specht module $S^\lambda$ given by  James's kernel intersection theorem.
% More precisely we use the fact that over a field of characteristic zero, $S^\lambda$ is the kernel of a single epimorphism \mbox{$\Phi_m: M^{(n-m,m)} \rightarrow M^{(n-m+1,m-1)}$}. Though $\Phi_m$ does not preserve batches, we will show: $\Phi_m$ preserves filled pattern.

\subsection{The homomorphism $\Phi_m$}
\begin{Defn}\label{phi1i} Assume that $0\leqslant i\leqslant m.$ Define $\phi_{_{1,i}}$ to be the linear map from $M^{(n-m,m)}$ into $M^{(n-i,i)}$
which sends each $m$-dimensional subspace $V$ to the formal linear combination of the $i$-dimensional subspaces contained in it. More precisely, let $X\subseteq V=\F_q^n$ with $\dim X=m.$ Then $$\phi_{_{1,i}}([X])=\sum\limits_{\mbox{\tiny $\begin{array}{c} Y\subseteq X\\ \dim Y =i\end{array}$}}[Y]$$ where $[X]$ denotes the flag $X\subseteq V$ in $\mathcal F(\lambda).$\\
\end{Defn}

\begin{Theorem}[James, \cite{gj1}]\label{kerinter}%The kernel intersection theorem for 2-part partitions:
Let $\lambda=(n-m,m)$ be a 2-part partition, then:
 $$S^{\lambda}=\bigcap\limits_{i=0}^{m-1}\ker\phi_{_{1,\,i}}, \quad
 \dim\, S^{\lambda}= \begin{bmatrix} n\\m\end{bmatrix}\,-\begin{bmatrix} n\\m-1\end{bmatrix}.$$
\end{Theorem}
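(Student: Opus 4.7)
The plan is to apply James's general kernel intersection theorem (stated just above) and identify the homomorphism spaces $\Hom_{FG}(M^{(n-m,m)},M^\mu)$ for each $\mu\rhd(n-m,m)$, then read off the dimension from a multiplicity-free decomposition of $M^{(n-m,m)}_\C$.

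First I would observe that the partitions $\mu\vdash n$ with $\mu\rhd(n-m,m)$ are precisely $(n-i,i)$ with $0\leqslant i\leqslant m-1$: combining $\mu_1+\mu_2\geqslant n$ with $\sum_k\mu_k=n$ forces $\mu_k=0$ for $k\geqslant 3$, and strictness of $\rhd$ yields $i<m$. Hence James's theorem gives
$$S^{(n-m,m)}\;=\;\bigcap_{i=0}^{m-1}\;\bigcap_{\Phi\in\Hom_{FG}(M^{(n-m,m)},M^{(n-i,i)})}\ker\Phi.$$

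Next I would show that the inner intersection coincides with $\ker\phi_{_{1,i}}$. A $G$-equivariant $\Phi$ is determined by $\Phi([X])=\sum_{\dim Y=i}a(X,Y)[Y]$ with $a(X,Y)$ depending only on $d:=\dim(X\cap Y)$, so $\Hom$ is spanned by the orbit maps $\Phi_c([X])=\sum_{\dim(X\cap Y)=c}[Y]$ as $c$ runs through the admissible range. For $0\leqslant d\leqslant i$ set
$$\Psi_d\;:=\;\psi_{d,i}\circ\phi_{_{1,d}}\ \colon\ M^{(n-m,m)}\longrightarrow M^{(n-i,i)},$$
where $\psi_{d,i}\colon M^{(n-d,d)}\to M^{(n-i,i)}$ sends a $d$-dimensional subspace to the sum of all $i$-dimensional subspaces containing it. A direct count gives $\Psi_d([X])=\sum_Y\begin{bmatrix}\dim(X\cap Y)\\ d\end{bmatrix}[Y]=\sum_c\begin{bmatrix}c\\ d\end{bmatrix}\Phi_c([X])$, so the transition matrix $\bigl(\begin{bmatrix}c\\ d\end{bmatrix}\bigr)_{c,d}$ is upper unitriangular and the $\Psi_d$'s span $\Hom_{FG}(M^{(n-m,m)},M^{(n-i,i)})$. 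Since $\Psi_i=\phi_{_{1,i}}$ and each $\Psi_d$ with $d<i$ factors through $\phi_{_{1,d}}$, one has $\ker\Psi_d\supseteq\ker\phi_{_{1,d}}$, and therefore
$$\bigcap_{i=0}^{m-1}\;\bigcap_\Phi\ker\Phi\;=\;\bigcap_{i=0}^{m-1}\;\bigcap_{d=0}^{i}\ker\Psi_d\;\supseteq\;\bigcap_{i=0}^{m-1}\ker\phi_{_{1,i}},$$
while the reverse inclusion is immediate from $\phi_{_{1,i}}=\Psi_i\in\Hom_{FG}$.

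For the dimension I would use induction on $m$, working over $\C$. From the Hom-space count above and the inductive decomposition of $M^{(n-i,i)}_\C$, one reads off that each $S^{(n-j,j)}_\C$ with $0\leqslant j\leqslant m$ appears in $M^{(n-m,m)}_\C$ with multiplicity one, so $M^{(n-m,m)}_\C=\bigoplus_{j=0}^{m}S^{(n-j,j)}_\C$ and
$$\begin{bmatrix}n\\m\end{bmatrix}=\dim M^{(n-m,m)}_\C=\sum_{j=0}^{m}\dim S^{(n-j,j)}_\C;$$
the base case $\dim S^{(n)}_\C=1$ and the inductive hypothesis then yield $\dim S^{(n-m,m)}_\C=\begin{bmatrix}n\\m\end{bmatrix}-\begin{bmatrix}n\\m-1\end{bmatrix}$. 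For arbitrary $F$ with $\Char F$ coprime to $p$, the theorem of James quoted earlier identifies $S^{(n-m,m)}_F$ as a reduction modulo $\Char F$ of $S^{(n-m,m)}_\C$, preserving the dimension. The main obstacle is the Hom-space computation, specifically verifying the unitriangular Gaussian identity linking the $\Psi_d$'s to the orbit basis $\Phi_c$; once that is in hand, the intersection-of-kernels argument and the dimension induction are formal.
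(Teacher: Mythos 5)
The paper does not prove this statement: it is quoted as a known result of James \cite{gj1}, so there is no internal proof to compare against. Your proposal is, however, essentially the standard argument, and it is sound. The reduction of the general kernel intersection theorem to the maps $\phi_{_{1,i}}$ is complete: the identification of the $\mu\rhd(n-m,m)$ as the two-part partitions $(n-i,i)$ with $i<m$ is correct, the double-coset description of $\Hom_{FG}(M^{(n-m,m)},M^{(n-i,i)})$ by the orbit maps $\Phi_c$ is valid over any field, and since the transition matrix $\bigl(\bigl[\begin{smallmatrix}c\\ d\end{smallmatrix}\bigr]\bigr)$ is unitriangular over $\Z$, the maps $\Psi_d=\psi_{d,i}\circ\phi_{_{1,d}}$ span the Hom-space over any $F$ with $\Char F$ coprime to $p$; the two inclusions then follow as you say. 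The only place that needs to be made explicit is the phrase ``one reads off that each $S^{(n-j,j)}_\C$ appears with multiplicity one'': from $\dim\End_{\C G}(M^{(n-m,m)})=m+1$ one gets $\sum_j m_j^2=m+1$ over all constituents, while $\dim\Hom_{\C G}(M^{(n-m,m)},M^{(n-m+1,m-1)})=m$ together with the inductive multiplicity-free decomposition of $M^{(n-m+1,m-1)}$ gives $\sum_{j=0}^{m-1}m_j=m$; since any $m_j\geqslant 2$ would force $\sum_{j\leqslant m-1}m_j^2\geqslant m+2$, all these $m_j$ equal $1$ and exactly one new constituent of multiplicity one remains, which by semisimplicity is precisely $\bigcap_{i=0}^{m-1}\ker\phi_{_{1,i}}$. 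With that sentence inserted, the telescoping dimension count and the reduction to arbitrary $F$ via the quoted theorem of James are correct.
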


We now concentrate on one of these homomorphisms,
$\phi_{_{1,m-1}}: M^{\lambda} \rightarrow M^{\mu}$
where $ \lambda=(n-m,m), \mu=(n-m+1,m-1).$
In section \ref{permutation} we have seen that each subspace $X$ of $V$ of dimension $m$ may be given uniquely by
a row reduced $(m\times n)$-matrix $M\in\Xi_{m,n}$. Hence we can translate the definition for $\Phi_m:=\phi_{_{1,m-1}}$ in \ref{phi1i} into the language of matrices:

\begin{Prop} \label{translate}
Let $\lambda=(n-m,m), \mu=(n-m+1,m-1),$ and let $M\in\Xi_{m,n},\,\tab(M)=\t$ with $ \underline \t=(b_1,\dots,b_m).$ For any $1\leqslant d \leqslant m$ we define
$R_d(M)$ to be the set of all $(m-1)\times n$-matrices obtained from  adding multiplies of row $b_d$ to rows $b_t$ for $t=d+1,\ldots,m$ and then deleting
row $b_d$ from $M$. Then $\Phi_m([M])$ is given as formal linear combination: $$\Phi_m([M])=\sum_{d=1}^m \sum\limits_{N\in R_d(M)}[N]\in M^\mu. $$
Moreover $\tab(N)\in \RStd(\mu)$ for $N\in R_d(M)$ is obtained from $\t$ by  moving $b_d$
to the first row of $\t$ at the appropriate place to make the resulting $\mu$-tableau row standard, denoted by $\ud.$
\end{Prop}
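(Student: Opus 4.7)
The plan is to unpack the combinatorial definition of $\phi_{1,m-1}$ and translate the enumeration of $(m-1)$-dimensional subspaces of $X=\mathrm{rowspan}(M)$ directly into the matrix language of $\Xi_{m-1,n}$. The starting observation is that any subspace $Y\subseteq X$ of dimension $m-1$ is represented uniquely by some $N\in \Xi_{m-1,n}$, and since every row of $N$ is a linear combination of rows of $M$, the pivot set of $N$ is forced to be an $(m-1)$-element subset of the pivot set $\{b_1,\ldots,b_m\}$ of $M$. Hence I would index such $Y$'s by the omitted pivot $b_d$, $1\leqslant d\leqslant m$.

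For fixed $d$, I would determine exactly which matrices $N\in \Xi_{m-1,n}$ with pivots $(b_1,\ldots,\hat b_d,\ldots,b_m)$ have row space contained in $X$. Row $b_k$ of $N$, having pivot at $b_k$ and zeros to the right, must be of the form $M_{b_k}+\alpha M_{b_d}$ for some $\alpha\in\F_q$ (using that row $b_d$ of $M$ is the only row of $M$ other than $M_{b_k}$ with a possibly nonzero entry in column $\le b_k$ that can be legally combined without ruining the normal form). Here the normal form conditions in Definition~\ref{Xi} force $\alpha=0$ when $k<d$ (otherwise a nonzero entry would appear in column $b_d>b_k$ of row $b_k$), while for $k>d$ any scalar $\alpha_k\in\F_q$ is admissible, since the entries of $M_{b_d}$ lie in columns $\le b_d<b_k$ and therefore do not disturb the pivot or the zeros to the right of $b_k$. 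This shows that $N$ is obtained from $M$ precisely by adding multiples of row $b_d$ to the rows $b_{d+1},\ldots,b_m$ and then deleting row $b_d$, which is exactly the set $R_d(M)$.

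Summing over $d$ then yields $\Phi_m([M])=\sum_{d=1}^m\sum_{N\in R_d(M)}[N]$; as a sanity check the total number of summands is $\sum_{d=1}^m q^{m-d}=[m]$, which matches the Gaussian binomial counting $(m-1)$-dimensional subspaces of an $m$-dimensional space. The tableau claim is immediate from the bookkeeping: the pivot set of $N$ is $\{b_1,\ldots,b_m\}\setminus\{b_d\}$, so by Definition~\ref{secondrow} the second row of $\tab(N)$ lists these pivots in increasing order, which is exactly what one gets from $\t$ by moving $b_d$ out of the second row and inserting it into the first row in the unique position that keeps the resulting $\mu$-tableau row standard, i.e.\ $\tab(N)=\underline{\ud}$.

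The only place that requires real attention is step two, namely verifying that the freedom in building $N$ from $M$ is exactly parameterised by the scalars $\alpha_{d+1},\ldots,\alpha_m\in\F_q$ and nothing more; this is a careful bookkeeping with the two conditions of Definition~\ref{Xi}, but there is no genuine difficulty beyond being systematic about which column indices are pivots of $M$, of $N$, and which rows of $M$ can be legitimately combined without destroying the normal form.
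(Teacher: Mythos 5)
Your argument is correct, and it is exactly the routine linear-algebra verification that the paper explicitly omits (``we leave it to the reader''): you correctly use that the ``last 1'' positions of a subspace are an invariant, so the pivot set of $N$ must be $\{b_1,\dots,b_m\}\setminus\{b_d\}$, and you correctly identify the free parameters as the multiples $\alpha_{d+1},\dots,\alpha_m$ added to the lower rows, with the $[m]=\sum_d q^{m-d}$ count confirming completeness. The only blemish is notational: at the end you should write $\tab(N)=\ud$ rather than $\tab(N)=\underline{\ud}$, since $\underline{\ud}$ denotes the second row of the tableau, not the tableau itself.
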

\begin{proof}
This is just a  linear algebra question, so we leave it to the reader.
\end{proof}
% Note in particular
%that for $1\leqslant d_1, d_2 \leqslant m$ with $d_1\not=d_2$  and \mbox{$N_1\in R_{d_1}(M),$} $ N_2\in R_{d_2}(M)$, we have $\tab(N_1)\not=\tab(N_2).$
\begin{Example}
  Let $\lambda=(2,2), \begin{pmatrix}1 &0 &0 &0\\0 &m &n &1\end{pmatrix}\in \Xi_{2,4}$. Then
$$\Phi_2:\,\begin{bmatrix}\begin{pmatrix}1 &0 &0 &0\\0 &m &n &1\end{pmatrix}\end{bmatrix}\longmapsto
\begin{bmatrix}\begin{pmatrix}1 &0 &0 &0\end{pmatrix}\end{bmatrix}+ \sum\limits_{a\in \F_q} {\begin{bmatrix}\begin{pmatrix}a &m &n &1\end{pmatrix}\end{bmatrix}}.$$
\end{Example}

\begin{Remark}\label{jsplit}Keep the notations in Proposition \ref{translate}.
If we set $\Phi_m^d([M])  =  \sum _{N\in R_d(M)}[N],$ then $\Phi_m([M])=\bigoplus_{d=1}^m \Phi_m^d([M]).$ In fact, $\Phi_m^d $ is $\Phi_m $ composed with the projection from $M^\mu$ onto the $\ud$-batch of $M^\mu$, and   hence is $FU$-linear. Now we use a picture to show the element $N\in R_d(M)$ for a fixed $d:$\\
\begin{center}
\begin{picture}(300,150)
\put(10,75){$b_d$}
\qbezier(40,0)(20,75)(40,150)
\multiput(35,75)(5,0){51}{\line(1,0){2}}
\put(45,90){\framebox(130,60){\shortstack{$m_{_{b_i j}}$\\\\$ (1 \leqslant i\leqslant d-1)$}}}
\put(90,76){omitted}
\put(45,0){\framebox(130,60){\shortstack{$m_{_{b_i j}}+\alpha_i m_{_{b_d j}}$\\\\$(d+1\leqslant i \leqslant m, j<b_d)$}}}
\put(190,76){0}
\put(190,160){$b_d$}
\put(190,142){$0$}
\qbezier[28](192,88)(192,114)(192,139)
\put(186,56){$\alpha_{_{d+1}}$}
\put(190,0){$\alpha_{_{m}}$}
\qbezier[25](192,50)(192,35)(192,10)
\put(210,0){\framebox(70,150){\shortstack{$m_{_{b_i j}}$\\\\$(b_d<j\leqslant n)$}}}
\qbezier(285,0)(295,75)(285,150)
\put(105,-25){\textbf{Picture of } $N\in R_d(M)$}
\end{picture}
\end{center}

\quad\\

Obviously $\uu=\{ {b_1},\ldots,b_{d-1}, b_{d+1}, \ldots, b_m\}$.
By Definition \ref{jt}, we have
%$$\J_{\t}=\{(i,j)\,|\,i>j, \,i\in \tt,\, j\notin \tt\,\},$$
%$\J_{\ud}=\{(i,j)\,|\,i>j,\, i\in \uu, \, j\notin \uu\}$ and
$\J_{\t}\cap \J_{\ud}=\{(i,j)\,|\,i>j,\, i\in \uu, \, j\notin \tt\,\}.$
In particular,   $\J_{\t}\cap \J_{\ud}$
together with row $b_d$ gives $\J_\t$ and together with column $b_d$ gives $\J_{\ud}.$ That is,
$\J_{\t}=(\J_{\t}\cap \J_{\ud})\, \dot{\cup}\, \{(b_d,j)\,|\, j<b_d,   j\notin \tt\,\}$ and $\J_{\ud}=(\J_{\t}\cap \J_{\ud})\, \dot{\cup}\, \{(b_i,b_d)\,|\,i=d+1,\ldots, m \}.$
\end{Remark}

Next we shall show first $\Phi_m^d$ preserves (filled) patterns. Then  it follows that $\Phi_m$ preserves (filled) patterns since  $\Phi_m =\bigoplus_{d=1}^m \Phi_m^d .$
To begin with, we   investigate  $\Phi_m^d(e_{_L})$ for $e_{_L}\in \M_\t\subset M^\lambda.$ By Definition \ref{defofide},
we have
%$$e_{\!_L}=\frac{1}{q^{|\mathfrak{J}_{\t}|}}\sum\limits_{M\in\X_{\t}}\chi_{_{L}}(-M)[M]$$ then
\begin{equation}\label{phimdel}\Phi_m^d(e_{\!_L})=\frac{1}{q^{|\mathfrak{J}_{\t}|}}
\sum\limits_{M\in\X_{\t}}\chi_{_{L}}(-M)\Phi_m^d([M]).\end{equation}
By \ref{jsplit} we may write $\Phi_m^d([M])=\sum_{N \in R_d(M)}[N]$  where
$\tab(N)=\u_{\mbox{\tiny $d$}}.$
And for $N=(n_{_{b_ij}})\in R_d(M)\subset\X_{\u_d}$, we have:
\begin{equation}\label{nbij}n_{_{b_ij}}=
\begin{cases}m_{_{b_ij}}&  \text{ if } i\leqslant d-1 \text{ or }j>b_d;\\
\alpha_i\in \F_q & \text{ if }  d+1\leqslant i \leqslant m, j = b_d;\\
m_{_{b_{i}j}}+\alpha_i m_{_{b_dj}}& \text{ if } d+1\leqslant i \leqslant m, j <b_d.\end{cases}\end{equation}
%\begin{eqnarray}\text{Note that by Remark \ref{jsplit}, the first case combined with the third case }\nonumber\\
%\label{1+3case}\text{in (\ref{nbij}) are just all positions in } \J_{\t}\cap\J_{\ud}.\hspace{146pt}\\\nonumber\end{eqnarray}
Obviously different elements in $R_d(M)$ are distinguished by the   entries $\alpha_i$ on places $(b_i,b_d)$ for $ d+1\leqslant i \leqslant m$.
Let $\underline \alpha=(\alpha_{d+1},\ldots,\alpha_m)$ and denote
% and we write $\underline \alpha\in \F_q^{m-d}$ for $\alpha_i\in \F_q,\, \forall\, d+1\leqslant i \leqslant m.$
$N=N_{\underline\alpha}\in R_d(M)$ with $\underline\alpha \in \F_q^{m-d}$ fixed. Then
\begin{equation}\Phi_m^d([M])=\sum\nolimits_{\underline\alpha\in \mbox{\tiny $\F_q$}^{m-d}}[N_{\underline\alpha}].\label{Nalpha}\end{equation}
From (\ref{phimdel}) we obtain
\mbox{$\Phi_m^d(e_{\!_L})
%=\frac{1}{q^{|\mathfrak{J}_{\t}|}}\sum\limits_{M\in\X_{\t}}\chi_{_{L}}(-M)\sum\limits_{\underline\alpha\in  \F_q^{m-d}}[N_\alpha]
=\frac{1}{q^{|\mathfrak{J}_{\t}|}}
\sum_{\mbox{\tiny $\underline\alpha\!\!\in\!\F_q^{m-d}$}}\sum_{\mbox{\tiny $M\!\!\in\!\!\X_\t$}}\chi_{_{L}}(-M)[N_{\underline\alpha}]. $}
Now we fix an $\underline\alpha\in \F_q^{m-d}.$ Rewrite
$[N_{\underline\alpha}]=\sum_{K\in \X_{\u_{\mbox{\tiny $d$}}}} \chi_{_K}(N_{\underline\alpha})e_{_K},$
then
\begin{eqnarray}
\Phi_m^d(e_{\!_L})
%&=&\frac{1}{q^{|\mathfrak{J}_{\t}|}}\sum\limits_{\underline \alpha\in \F_q^{m-d}}\sum_{M\in\X_{\t}}\chi_{_{L}}(-M)\sum\limits_{K\in \X_{\u_{\mbox{\tiny $d$}}}} \chi_{_K}(N_{\underline\alpha})e_{_K}\nonumber\\&=&
=\frac{1}{q^{|\mathfrak{J}_{\t}|}}\sum\limits_{\underline \alpha\in \F_q^{m-d}}\sum_{K\in\X_{\ud}}\sum\limits_{M\in\X_{\t}}\chi_{_{L}}(-M)\chi_{_{K}}
(N_{\underline\alpha})e_{\!_{K}}:=\sum_{K\in\X_{\ud}}C_{\!_{K}}e_{\!_{K}}.
\label{phiphi}\end{eqnarray}
Using Remark \ref{jsplit} and (\ref{nbij}) we get:
\begin{eqnarray}&\!\!\!\!&\!\!\!\!\!\! \!\!\chi_{_{L}}(-M)\chi_{_{K}}(N_{\underline\alpha})=
\prod_{(b_i,j)\in \J_\t} \theta\,(-l_{_{b_ij}}m_{_{b_ij}}) \prod_{(b_i,j)\in \J_{\ud}} \theta\,(k_{_{b_ij}}n_{_{b_ij}})\nonumber\\
%&=&\!\!\!\!\! \!\!\!\!\!\prod_{(b_i,j)\in \J_\t \cap \J_{\ud} }\!\!\!\!\! \!\!\theta\,(-l_{_{b_ij}}m_{_{b_ij}}) \prod_{\stackrel{ 1\leqslant j<b_d}{j\, \notin \tt}} \theta\,(-l_{_{b_dj}}m_{_{b_dj}}) \cdot\prod_{(b_i,j)\in \J_\t \cap \J_{\ud} } \theta\,(k_{_{b_ij}}n_{_{b_ij}}) \prod_{d+1\leqslant i\leqslant m} \theta\,(k_{_{b_ib_d}}n_{_{b_ib_d}})\nonumber\\
 &\!\!\!\!\!\!\!\!\!\!\!\!\!\!\!\!\!\!\! \!\!\!\!\!\!\!\!=& \!\!\!\! \!\!\!\!\!\!\!\!\!\!\!\!\!\!\!
%\prod_{(b_i,j)\in \J_\t \cap \J_{\ud} }\!\!\!\!\!\!\theta\,(-l_{_{b_ij}}m_{_{b_ij}})\,\theta\,(k_{_{b_ij}}n_{_{b_ij}})  \prod_{\stackrel{ 1\leqslant j<b_d}{j\, \notin \tt}} \theta\,(-l_{_{b_dj}}m_{_{b_dj}})  \prod_{d+1\leqslant i\leqslant m}  \theta\,(k_{_{b_ib_d}}n_{_{b_ib_d}})\nonumber\\
%&\stackrel{(\ref{nbij})}{=}&\!\!\!\!\! \!\!\!\!\!
\prod_{(b_i,j)\in \J_\t \cap \J_{\ud} }  \!\!\!\!\!\!\!\!\theta\,(-l_{_{b_ij}}m_{_{b_ij}})
\,\theta\,(k_{_{b_ij}}n_{_{b_ij}})\!\!    \prod_{\stackrel{ 1\leqslant j<b_d}{j\, \notin \tt}}
 \!\!\theta\,(-l_{_{b_dj}}m_{_{b_dj}})  \!\!\!   \!\!\prod_{d+1\leqslant i\leqslant m}   \!\! \!\!\! \theta\,(k_{_{b_ib_d}}\alpha_i)\quad\quad
\label{chilk}
\end{eqnarray}
and  for $(b_i,j)\in \J_\t\cap\J_{\ud}$ we have: $\theta\,(-l_{_{b_ij}}m_{_{b_ij}})\,\theta\,(k_{_{b_ij}}n_{_{b_ij}})=$
\begin{eqnarray}\label{tud}
%\begin{cases}\theta\,(-l_{_{b_ij}}m_{_{b_ij}})\, \theta\,(k_{_{b_ij}}m_{_{b_ij}})&  \text{ if } i\leqslant d-1 \text{ or }j>b_d\\
%\theta\,(-l_{_{b_ij}}m_{_{b_ij}}) \,\theta\,\(k_{_{b_ij}}(m_{_{b_{i}j}}+\alpha_i m_{_{b_dj}})\)& \text{ if } d+1\leqslant i \leqslant m, j <b_d \end{cases}\\\nonumber
%&=&
\begin{cases}\theta\,\((k_{_{b_ij}}-l_{_{b_ij}})m_{_{b_ij}}\)&  \text{ if } i\leqslant d-1 \text{ or }j>b_d\\
\theta\,\((k_{_{b_ij}}-l_{_{b_ij}})m_{_{b_ij}}\)\, \theta\,(\alpha_ik_{_{b_ij}} m_{_{b_dj}})& \text{ if } d+1\leqslant i \leqslant m, j <b_d\,. \end{cases}
\end{eqnarray}
For $K\in\X_{\u_d},$ $ \underline \alpha\in \F_q^{m-d}$ fixed, let
$C_K^{\underline\alpha}=\frac{1}{q^{|\mathfrak{J}_{\t}|}} \sum_{M\in\X_{\t}}\chi_{_{L}}(-M)\chi_{_{K}}(N_{\underline\alpha}).$
Thus the coefficient $C_K  $  of $e_{_K}$  is
\begin{equation}\label{ck} C_K=\sum\nolimits_{\underline \alpha\in \F_q^{m-d}} C_K^{\underline\alpha}.\end{equation}
By (\ref{chilk}) and (\ref{tud}), we get: $C_K^{\underline\alpha}=$
%\begin{eqnarray*}\nonumber
%C_K^{\underline\alpha}%&=&\frac{1}{q^{|\mathfrak{J}_{\t}|}} \sum\limits_{M\in\X_{\t}}\chi_{_{L}}(-M)\chi_{_{K}}(N_{\underline\alpha})\\\nonumber
%&=&\frac{1}{q^{|\mathfrak{J}_{\t}|}} \sum\limits_{M\in\X_{\t}}\prod_{(b_i,j)\in \J_\t\cap \J_{\u_d}}\theta\,\((k_{_{b_ij}}-l_{_{b_ij}})m_{_{b_ij}}\)\cdot
%\prod_{\stackrel{ d+1\leqslant i \leqslant m}{ 1\leqslant j<b_d , j\notin \tt}} \theta\,(\alpha_ik_{_{b_ij}} m_{_{b_dj}})
%\\&&\cdot \,\prod_{\stackrel{ 1\leqslant j<b_d}{j\, \notin \tt}}\,\theta\,(-l_{_{b_dj}}m_{_{b_dj}})  \prod_{d<i\leqslant m}  \theta\,(k_{_{b_ib_d}}\alpha_i).\end{eqnarray*}
%Combining the last three factors together, we get
\begin{eqnarray*}
%C_K^{\underline\alpha}&=&
\frac{1}{q^{|\mathfrak{J}_{\t}|}}\!\! \sum\limits_{M\in\X_{\t}}\prod_{(b_i,j)\in \J_\t\cap \J_{\u_d}}\!\!\!\!\!\!\!\!\!\theta\((k_{_{b_ij}}\!-l_{_{b_ij}})m_{_{b_ij}}\)
\!\!\!\! \!\!\!\prod_{\stackrel{ d+1\leqslant i \leqslant m}{ 1\leqslant j<b_d,\,j\notin \tt}} \!\! \!\!\!\!\!\theta(\alpha_ik_{_{b_ij}} m_{_{b_dj}})
\theta(-l_{_{b_dj}}m_{_{b_dj}})  \theta(k_{_{b_ib_d}}\alpha_i).\end{eqnarray*}
Since $\J_{\t}=(\J_{\t}\cap \J_{\ud}) \cup \{(b_d,j)\,|\, j<b_d,   j\notin \tt\,\}$ by Remark \ref{jsplit}, we obtain:
\begin{eqnarray}\nonumber\!\!\!\! \!\!\!\!\!\!\!\!\!\!\!\!\!\!\!\!\!\!C_K^{\underline\alpha}&\!\!\!=\!\!\!&\frac{1}{q^{|\mathfrak{J}_{\t}|}}\prod_{(b_i,j)\in \J_\t\cap \J_{\u_d}}\sum_{m_{_{b_ij}}\in \F_q}\theta\,\((k_{_{b_ij}}-l_{_{b_ij}})m_{_{b_ij}}\)\nonumber
\\&&\cdot\prod_{\stackrel{ d+1\leqslant i \leqslant m}{ 1\leqslant j<b_d,\, j\notin \tt}} \sum_{m_{_{b_dj}}\in \F_q}\theta\,(\alpha_ik_{_{b_ij}} m_{_{b_dj}})
\theta\,(-l_{_{b_dj}}m_{_{b_dj}})  \theta\,(k_{_{b_ib_d}}\alpha_i).\quad\quad\label{ckn}
\end{eqnarray}
%Note that $N$ runs through $R_d(M)$ just means $\alpha_i$ runs over $\F_q$ for \mbox{$d+1\leqslant i \leqslant m.$}
Inserting this formula into (\ref{ck}), we get:
\begin{eqnarray}\nonumber \!\!\!\!\!\!\!\!\!\!\!\! C_K
%&=&\sum\limits_{\underline \alpha\in \F_q^{m-d}} C_K^{\underline\alpha}\\
%&\!\!\!\!=\!\!\!\!&\sum\limits_{\underline \alpha\in \F_q^{m-d}}\frac{1}{q^{|\mathfrak{J}_{\t}|}}\prod_{(b_i,j)\in \J_\t\cap \J_{\u_d}}\sum_{m_{_{b_ij}}\in \F_q}\theta\,\((k_{_{b_ij}}-l_{_{b_ij}})m_{_{b_ij}}\)\nonumber
%\\&&\cdot\prod_{\stackrel{ d+1\leqslant i \leqslant m}{ 1\leqslant j<b_d \atop j\notin \tt}} \sum_{m_{_{b_dj}}\in \F_q}\theta\,(\alpha_ik_{_{b_ij}} m_{_{b_dj}})
%\theta\,(-l_{_{b_dj}}m_{_{b_dj}})  \theta\,(k_{_{b_ib_d}}\alpha_i).\nonumber\\
&\!\!\!\!=\!\!\!\!&\frac{1}{q^{|\mathfrak{J}_{\t}|}}\prod_{(b_i,j)\in \J_\t\cap \J_{\u_d}}\sum_{m_{_{b_ij}}\in \F_q}\theta\,\((k_{_{b_ij}}-l_{_{b_ij}})m_{_{b_ij}}\)\nonumber
\\
&\!\!& \cdot\!\!\prod_{\stackrel{ d+1\leqslant i \leqslant m}{ (b_d,j)\in \J_\t}}\sum_{m_{_{b_dj}}\in \F_q}\!\!\!\Big(\!\sum_{\alpha_i\in \F_q}
\!\theta\(\alpha_i(k_{_{b_ij}} m_{_{b_dj}}+k_{_{b_ib_d}})\)\!\Big)\theta(-l_{_{b_dj}}m_{_{b_dj}}).\quad\,\,
\label{cknew}
\end{eqnarray}
Obviously by (\ref{cknew}) $C_K$ contains the factor
\begin{equation}\label{firstfactor}\frac{1}{q^{|\mathfrak{J}_{\t}|}}\prod_{(b_i,j)\in \J_\t\cap \J_{\u_d}}\sum_{m_{_{b_ij}}\in \F_q}\theta\,\((k_{_{b_ij}}-l_{_{b_ij}})m_{_{b_ij}}\)\end{equation}
and there is no other factor of $C_K $ involving $m_{_{b_ij}}$ with $(b_i,j)\in \J_\t\cap \J_{\u_d}$. Hence
if the coefficient $C_K \not=0$, by (\ref{orth}) we must have:
\begin{equation}k_{_{b_ij}}=l_{_{b_ij}},\,\forall\,(b_i,j)\in \J_\t\cap \J_{\u_d}.  \label{commonplace}\end{equation}
That is, the entries in $K$ are the same as $L$ in the northwest, southwest and east boxes (c.f.   \ref{jsplit}). Thus the factor (\ref{firstfactor}) becomes $\frac{q^{|\mathfrak{J}_{\t}\cap \J_{u_d}|} }{q^{|\mathfrak{J}_{\t}|}}.$
In particular by (\ref{commonplace}) the remaining factor of $C_K$ is:
\begin{eqnarray}%&&\prod_{\stackrel{ d+1\leqslant i \leqslant m}{ 1\leqslant j<b_d \atop j\notin \tt}}\sum_{m_{_{b_dj}}\in \F_q}\bigg(\sum_{\alpha_i\in \F_q}
%\theta\,\(\alpha_i(k_{_{b_ij}} m_{_{b_dj}}+k_{_{b_ib_d}})\)\bigg)\theta\,(-l_{_{b_dj}}m_{_{b_dj}}) .\nonumber\\
%&=&
\label{restfactor}
\prod_{\stackrel{ d+1\leqslant i \leqslant m}{(b_d,j)\in \J_\t}}\sum_{m_{_{b_dj}}\in \F_q}\Big(\sum_{\alpha_i\in \F_q}
\theta\,\(\alpha_i(l_{_{b_ij}} m_{_{b_dj}}+k_{_{b_ib_d}})\)\Big)\theta\,(-l_{_{b_dj}}m_{_{b_dj}}).\quad\end{eqnarray}

%Recall that for a given pattern $\p$ we denote the set of all occurring row indices in $\p$ by $\ppi$ and
%of all occurring column indices by $\pj$ (c.f.  \ref{s_i}).

\begin{Lemma}\label{perservecor}
Let $\lambda=(n-m,m). $ Let $\O\subseteq M^{\lambda} $ associated with pattern $\p=\p(\O)$.
Then for any $e_{_K}\in \O$ we have:
$\Phi_m^d(e_{_K})=0 \text{ for any } b_d\in \ppi=\{i\,|\,(i,j)\in \p \text{ for some } 1\leqslant j \leqslant n\}.$
\end{Lemma}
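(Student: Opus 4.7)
The plan is to reduce the claim to the single case $K=L$, with $L$ the unique pattern idempotent of $\mathcal{O}$, and then prove $\Phi_m^d(e_L)=0$ by directly reading off the coefficients $C_K$ in the expansion $\Phi_m^d(e_L)=\sum_K C_K e_K$ that was already set up in (\ref{phiphi})--(\ref{cknew}). The reduction is immediate: by Proposition \ref{uinvariant}, $M_{\mathcal O}$ is a simple $FU$-module generated by $e_L$, and by Remark \ref{jsplit} the map $\Phi_m^d$ is $FU$-linear; so once $\Phi_m^d(e_L)=0$ is established, the same vanishing passes to every $e_K\in\mathcal O$ via $e_K=C(L,g)^{-1}e_L\circ g$ for some $g\in U^w\cap U$.

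For the computation of $\Phi_m^d(e_L)$ I would invoke the formulas (\ref{phiphi})--(\ref{cknew}) as given. The factor (\ref{firstfactor}), together with the orthogonality relation (\ref{orth}) for the nontrivial character $\theta$ of $(\F_q,+)$, forces $k_{_{b_ij}}=l_{_{b_ij}}$ for every $(b_i,j)\in\J_\t\cap\J_{\ud}$ whenever $C_K\neq 0$. Thus any $K$ potentially contributing to $\Phi_m^d(e_L)$ must agree with $L$ everywhere outside column $b_d$, and the analysis reduces to inspecting the remaining factor (\ref{restfactor}) after substituting these identifications.

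The decisive step uses the hypothesis $b_d\in\ppi$. By Definition \ref{patternmatrix} there is a unique column index $j_0<b_d$, $j_0\notin\tt$, with $(b_d,j_0)\in\p$, and in particular $l_{_{b_dj_0}}\neq 0$. Isolating the inner sum over $m_{_{b_dj_0}}$ in the exponent of (\ref{restfactor}), the coefficient of $m_{_{b_dj_0}}$ equals $-l_{_{b_dj_0}}+\sum_{i>d}\alpha_i\, l_{_{b_ij_0}}$. Because $L$ is a pattern matrix, column $j_0$ carries the single non-last-$1$ nonzero entry $l_{_{b_dj_0}}$, so $l_{_{b_ij_0}}=0$ for every $i\neq d$. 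Hence the coefficient collapses to $-l_{_{b_dj_0}}\neq 0$ independently of $\underline\alpha$, and the inner sum over $m_{_{b_dj_0}}$ vanishes by (\ref{orth}). This annihilates every $C_K$, yielding $\Phi_m^d(e_L)=0$ and therefore the lemma.

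The main obstacle is purely bookkeeping: one has to sort the interlocked sums in (\ref{cknew}) so that the variable $m_{_{b_dj_0}}$ can really be isolated as a single factor in which the exponent depends linearly on $m_{_{b_dj_0}}$ through a coefficient independent of the other summation variables. This is exactly what the clean splitting $\J_\t=(\J_\t\cap\J_{\ud})\,\dot{\cup}\,\{(b_d,j)\,:\,j<b_d,\,j\notin\tt\}$ recorded in Remark \ref{jsplit} guarantees, and it is the combinatorial reason the pattern-matrix hypothesis on $L$ is precisely what is needed.
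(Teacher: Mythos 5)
Your proof is correct and follows essentially the same route as the paper: after reducing to the unique pattern idempotent $e_{_L}$ via the $FU$-linearity of $\Phi_m^d$ and the transitivity of the monomial $U^w\cap U$-action on the orbit (a reduction the paper leaves implicit), you substitute the identifications forced by (\ref{firstfactor}) and isolate in (\ref{restfactor}) the column $j_0$ with $(b_d,j_0)\in\p$, where the pattern-matrix property kills the cross terms $\alpha_i l_{_{b_ij_0}}$ and orthogonality (\ref{orth}) annihilates every $C_K$ — exactly the paper's treatment of the column $v$. No gaps.
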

\begin{proof}   Assume $(b_d,v)\in \p$. keeping   notations in  $\ref{jsplit}$
we rewrite the factor (\ref{restfactor}) as  follows
\begin{eqnarray}%&&\prod_{\stackrel{ d+1\leqslant i \leqslant m}{ 1\leqslant j<b_d \atop j\notin \tt}}\,\sum_{\alpha_i\in \F_q} \sum_{m_{_{b_dj}}\in \F_q}\theta\,(\alpha_il_{_{b_ij}} m_{_{b_dj}})
%\theta\,(-l_{_{b_dj}}m_{_{b_dj}})  \theta\,(k_{_{b_ib_d}}\alpha_i)\nonumber\\\nonumber&=&
&&\prod_{\stackrel{ d+1\leqslant i \leqslant m}{ 1\leqslant j<b_d \atop j\notin \,\tt, j\not= v}}
\sum_{\alpha_i\in \F_q}\sum_{m_{_{b_dj}}\in \F_q}
 \,\theta\,(\alpha_i l_{_{b_ij}} m_{_{b_dj}})
\theta\,(-l_{_{b_dj}}m_{_{b_dj}})  \theta\,(k_{_{b_ib_d}}\alpha_i)\nonumber\\
&\cdot&\prod_{ d+1\leqslant i \leqslant m} \,\sum_{\alpha_i\in \F_q}  \sum_{m_{_{b_dv}}\in \F_q}\theta\,(\alpha_il_{_{b_iv}} m_{_{b_dv}})
\theta\,(-l_{_{b_dv}}m_{_{b_dv}})  \theta\,(k_{_{b_ib_d}}\alpha_i).\quad\quad\quad\label{restfactorcase2}\end{eqnarray}
Since $L$ is a pattern matrix,
$l_{_{b_i v}}=0$ for $d+1\leqslant i \leqslant m.$ Thus (\ref{restfactorcase2}) becomes:
\begin{eqnarray*}
&&\prod_{\stackrel{ d+1\leqslant i \leqslant m}{ 1\leqslant j<b_d \atop j\notin \,\tt, j\not= v}}\,
\sum_{\alpha_i\in \F_q\atop{\,\mbox{\scriptsize$m$}_{_{\mbox{\tiny$b_dj$}}}\in \F_q}}\!\!\!\!\!\!
 \theta\,(\alpha_i l_{_{b_ij}} m_{_{b_dj}})
\theta\,(-l_{_{b_dj}}m_{_{b_dj}})  \theta\,(k_{_{b_ib_d}}\alpha_i)\!\!\!\!
 \sum_{m_{_{b_dv}}\in \F_q}\!\!\!\!\!\!
\theta\,(-l_{_{b_dv}}m_{_{b_dv}}) \end{eqnarray*}
Note that $m_{_{b_dv}}$ only occurs in the second sum,  hence  by (\ref{orth}), if $C_K\not=0$ we must have $l_{_{b_{_{d}}v}}=0$ which is a contradiction to $(b_d,v)\in \p$. It means that  $C_{_K}=0$ for all $e_{\!_{K}}\in \X_{\ud}$ and the claim follows from \ref{phiphi}.
\end{proof}

Now we are ready for the following theorem:
\begin{Theorem} \label{keepcon}Let $\lambda=(n-m,m), \mu=(n-m+1,m-1).$ Then the homomorphism $\Phi_m: M^\lambda \rightarrow M^\mu$ preserves (filled) patterns. More precisely, let $\t\in\RStd(\lambda) $ and  $e_{_L}\in \O\subset \M_{\t}\subset M^\lambda,$ $\pf=\pf(\O)$ be the filled pattern of $\O.$ Then
$\Phi_m(e_{_L})=\sum_K C_{K}e_{_K}$ where $K\in \Xi_{m-1,n}$ satisfies:
Each $e_{_K}$ with $C_{K}\not=0$ is contained in an orbit $\tilde \O$ of some batch of $M^\mu$ such that
$\pf(\tilde \O)=\pf.$
\end{Theorem}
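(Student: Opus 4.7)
The plan is to use the decomposition $\Phi_m=\bigoplus_{d=1}^m \Phi_m^d$ from \ref{jsplit}. Each $\Phi_m^d$ is $FU$-linear and lands in the batch $\M_{\ud}$ of $M^\mu$, so it suffices to prove the pattern-preserving statement for each $\Phi_m^d$ separately. Lemma \ref{perservecor} already disposes of the case $b_d\in\ppi$: there $\Phi_m^d(e_{_L})=0$, so there is nothing to check. From now on I would assume $b_d\notin \ppi$; together with the pattern property of $L$ this forces $l_{b_dj}=0$ for every $j$ with $(b_d,j)\in \J_\t$, which is the key simplification driving the rest of the argument.

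The main computational step is to extract from the expansion $\Phi_m^d(e_{_L})=\sum_{K\in \X_{\ud}} C_{_K} e_{_K}$ in \eqref{phiphi}--\eqref{restfactor} precisely which $K$ have $C_{_K}\neq 0$. Orthogonality applied to the factor \eqref{firstfactor} forces $k_{b_ij}=l_{b_ij}$ for all $(b_i,j)\in \J_\t\cap \J_{\ud}$, so $K$ and $L$ automatically agree at all of these positions. With $l_{b_dj}=0$ the factor $\theta(-l_{b_dj}m_{b_dj})$ in \eqref{restfactor} disappears, and summing over the $\alpha_i$ first reduces the remaining product to the indicator of the linear system
\[ \sum_{j:(b_d,j)\in \J_\t} l_{b_ij}\, m_{b_dj}=-k_{b_ib_d}\quad (d+1\leqslant i\leqslant m).\]
Because $L$ is a pattern matrix, row $b_i$ contains at most one non-zero entry, namely (if it exists) at its pattern column $v_i$. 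So the $i$-th equation yields a dichotomy: either row $b_i$ has no pattern entry or its pattern column satisfies $v_i>b_d$, and then $k_{b_ib_d}=0$ is forced; or $(b_i,v_i)\in\p(L)$ with $v_i<b_d$, in which case $k_{b_ib_d}$ is free and $m_{b_dv_i}=-k_{b_ib_d}/l_{b_iv_i}$ is determined. Summing the remaining free variables produces $C_{_K}$ as a power of $q$.

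Finally I would interpret these constraints in terms of orbits. Every surviving $K$ coincides with $L$ on $\J_\t\cap \J_{\ud}$, and each pattern position $(b_i,v_i)\in\p(L)$ lies in $\J_\t\cap \J_{\ud}$ since $b_d\notin\ppi$ implies $b_i\in\uu$. Hence the filled pattern $\pf(L)$ is already embedded inside $K$ unchanged. The only novel non-zero entries of $K$ sit at positions $(b_i,b_d)\in \J_{\ud}\setminus \J_\t$, for those $i>d$ whose row $b_i$ carries a pattern entry $(b_i,v_i)$ with $v_i<b_d$. For each such $i$, I would act on $e_{_K}$ by the truncated column operation induced by $x_{b_d,v_i}(k_{b_ib_d}/l_{b_iv_i})$; by Corollary \ref{upsilon} applied to $\ud$, and since $b_d>v_i$ with neither of $b_d,v_i$ belonging to $\uu$, this element lies in $U^{d(\ud)}\cap U$. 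By Corollary \ref{calcor} its effect is to subtract $k_{b_ib_d}/l_{b_iv_i}$ times column $v_i$ from column $b_d$ of $K$; because column $v_i$ of the pattern matrix $L$ has only the single non-zero entry $l_{b_iv_i}$ at row $b_i$, the operation cleanly annihilates $k_{b_ib_d}$ at $(b_i,b_d)$ without disturbing any other position of column $b_d$. The operations for different $i$ commute on $K$, and together they transform $e_{_K}$ into the idempotent at a pattern matrix whose filled pattern equals $\pf(L)$, proving $\pf(\tilde\O)=\pf(\O)$. The main obstacle I expect is the character-sum bookkeeping in the middle step---tracking carefully the coupling between the coordinate variables $m_{b_dj}$ and the row-reduction parameters $\alpha_i$ appearing together inside \eqref{restfactor}---after which the orbit step is forced by the defining property of a pattern matrix (at most one non-zero entry per row and per column besides the last $1$'s).
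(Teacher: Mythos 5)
Your proposal is correct and follows essentially the same route as the paper: the decomposition $\Phi_m=\bigoplus_d\Phi_m^d$, Lemma \ref{perservecor} for $b_d\in\ppi$, orthogonality on the factor \eqref{firstfactor} to force agreement of $K$ with $L$ on $\J_\t\cap\J_{\ud}$, and then the analysis of \eqref{restfactor} for a pattern matrix $L$ with $l_{b_dj}=0$, splitting according to whether row $b_i$ carries a pattern entry in a column $v<b_d$ (the paper's Cases 1 and 2). Your final step, explicitly removing the new entries at $(b_i,b_d)$ by the truncated column operations $x_{b_d,v_i}$, is just a spelled-out version of the paper's appeal to Corollary \ref{calcor} together with the observation that $(b_i,b_d)$ lies on the $(b_i,v_i)$-hook arm.
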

\begin{proof} Keeping notations in \ref{jsplit}. Since $\Phi_m =\bigoplus_{d=1}^m \Phi_m^d,$  it suffices to prove  $\Phi_m^d$ preserves filled patterns. Since  $\Phi_m^d$ is $FU$-linear and each orbit module is an irreducible $U$-module by \ref{irr}, we can restrict our attention to the case that $L=(l_{_{b_ij}})\in \X_\t$ is a pattern matrix. Moreover, by \ref{perservecor}, we only need to consider those $d$ such that $b_d\notin \p$. Thus $l_{_{b_dj}}=0,$ for all $(b_d,j)\in \J_\t$.
Assume $\Phi_m^d(e_{_L})=\sum_{K} C_{K}e_{_K}$ where $K\in \Xi_{m-1,n}$ and $ C_K\not=0$.
The remaining factor (\ref{restfactor})
becomes:
\begin{equation}\label{restfactorcase1}\prod_{\stackrel{ d+1\leqslant i \leqslant m}{ (b_d,j)\in \J_\t}}\sum_{\alpha_i\in \F_q}\, \sum_{m_{_{b_dj}}\in \F_q}\theta\,\(\alpha_i(l_{_{b_ij}} m_{_{b_dj}}+k_{_{b_ib_d}})\).\end{equation}

\Case {1: $(b_i, j)\notin \p$ for all $d+1\leqslant i \leqslant m,  (b_d,j)\in \J_\t$}
In this case $l_{_{b_{_{i}}j}}=0,$ thus
(\ref{restfactorcase1}) is a nonzero multiple of $\prod_{{ d+1\leqslant i \leqslant m}}\sum_{\alpha_i\in \F_q}  \theta\,(k_{_{b_ib_d}}\alpha_i).$
%the  remaining factor (\ref{restfactorcase1}) becomes
%$$\prod_{\stackrel{ d+1\leqslant i \leqslant m}{ 1\leqslant j<b_d \atop j\notin \tt}}\,\sum_{m_{_{b_dj}}\in \F_q}\sum_{\alpha_i\in \F_q}  \theta\,(k_{_{b_ib_d}}\alpha_i).$$
Therefore by (\ref{orth}),   $C_K\not=0$ implies
$k_{_{b_i b_d}}=0$ for all $d+1\leqslant i \leqslant m.$ Combining with (\ref{commonplace}), we obtain easily in this case $K$
is a pattern matrix and $\pf(K)=\pf.$

\Case {2: There exists  $(b_u,v)\in \p$ for some $d+1\leqslant u \leqslant m, (b_d,v)\in \J_\t$}
In this case, we rewrite the factor (\ref{restfactorcase1}) of $C_K$ by separating the elements in the filled pattern from those which are not:
\begin{eqnarray*}
%&&\prod_{\stackrel{ d+1\leqslant i \leqslant m}{ 1\leqslant j<b_d \atop j\notin \tt}}\,\sum_{m_{_{b_dj}}\in \F_q}\sum_{\alpha_i\in \F_q} \theta \((l_{_{b_{_{i}}j}}m_{_{b_dj}}+k_{_{b_{_{i}}b_{_d}}})\alpha_i\)\\&=&
\prod_{ d+1\leqslant u \leqslant m \atop { (b_d,v)\in \J_\t \atop{(b_u,v)\in \p}}}\,\sum_{m_{_{b_dv}}\in \F_q}\sum_{\alpha_u\in \F_q} \!\!\theta \((l_{_{b_{_{u}}v}}m_{_{b_dv}}+k_{_{b_{_{u}}b_{_d}}})\alpha_u\)\cdot\!\!\!\!\!\!\prod_{d+1\leqslant i\leqslant m\atop { (b_d,j)\in \J_\t \atop{(b_i,j)\notin \p}}}\!\sum_{m_{_{b_dj}}\in \F_q}\sum_{\alpha_i\in \F_q} \theta (k_{_{b_{_{i}}b_{_d}}}\alpha_i).\end{eqnarray*}
Hence by (\ref{orth}), $C_K\not=0$ implies
\begin{eqnarray}\label{mbdvfixed}
 \begin{cases}
  k_{_{b_{_{u}}b_{_d}}}=-l_{_{b_{_{u}}v}}m_{_{b_dv}}&\text{ for }d+1\leqslant u \leqslant m,\,(b_d,v)\in \J_\t,(b_u,v)\in \p\\
  k_{_{b_{_{i}}b_{_d}}}=0 &\text{ for }d+1\leqslant i \leqslant m, \, (b_i,j)\notin  \p,\,\forall\,(b_d,j)\in \J_\t.\quad\quad
 \end{cases}
\end{eqnarray}
%\begin{equation}\label{mbdvfixed}l_{_{b_{_{u}}v}}m_{_{b_dv}}+k_{_{b_{_{u}}b_{_d}}}=0  \text{ for }d+1\leqslant u \leqslant m,\,(b_d,v)\in \J_\t, (b_u,v)\in \p\end{equation} and
%$k_{_{b_{_{i}}b_{_d}}}=0 \text{ for }d+1\leqslant i \leqslant m, \, (b_i,j)\notin  \p,\,\forall\,(b_d,j)\in \J_\t.$
%Therefore  $$k_{_{b_{_{u}}b_{_d}}}=-l_{_{b_{_{u}}v}}m_{_{b_dv}}\text{ for }d+1\leqslant u \leqslant m,\,(b_d,v)\in \J_\t,(b_u,v)\in \p.$$
Note that
$(b_u,b_d)$  is on the $(b_u,v)$-hook arm with nonzero entry $l_{_{b_u v}}\in \pf$ in the hook corner. Hence  by Corollary \ref{calcor}
%, using truncated column operations, we can change all those possible nonzero entries on positions $(b_u,b_d)$  to zeros. Combining with
and (\ref{commonplace}), we know that $C_K\not=0$ implies that
%we can find $u\in U^w\cap U, \, \big(\t^\mu w=\tab(K)\big)$ such that  $e_{_K}\circ u=e_{_{\tilde L}}$ where $\tilde L$ is a pattern matrix in $\Xi_{m-1,n}$ and $\pf(\tilde L)=\pf.$
%Hence,
$e_{_K}$ is  contained in an orbit $\tilde \O$ of $\tab(K)$-batch of $M^\mu$ such that
$\pf(\tilde \O)=\pf.$
\end{proof}

Now we collect   all orbit modules  $M_\O$, where $\O\subseteq \M_\t$ is some orbit associated with some fixed filled pattern $\pf$ and $\t$ runs through  the tableaux in $\RStd(\lambda)$ satisfying $\ppi\subseteq \tt$ and $\pj\cap \tt=\emptyset$:
\begin{Defn}\label{collection}Let $\lambda=(n-m,m)\vdash n$, $\p$ be a $\lambda$-pattern and let $\pf$ be a filling of $\p$. Define:
\begin{eqnarray*}
\mathfrak{C}_{\pf}^\lambda &=& \bigoplus\limits_{\pf(\O)= \pf} M_{\mathcal{O}}=
\bigoplus\limits_{\pf(\O)=\pf}\bigoplus_{\mbox{\small $e$}\!_{_{\mbox{\tiny $L$}}}\in \mathcal{O}} Fe_{_L} , \text{ where }M_{\mathcal{O}}\subset M^\lambda,
\end{eqnarray*}
runs through all the different  orbits in $M^\lambda$ which have the same filled pattern $\pf$.
\end{Defn}

Recall the short notation for an orbit module in   \ref{short for orbit}.
\begin{Example}Let $\lambda=(3,3),  \pf=\{l_{41}\not=0\}.$ Then:
\begin{eqnarray*}
\mathfrak{C}_{\pf}^\lambda&=&\begin{pmatrix}\ast &1 &0 &0 &0 &0\\\ast &0 &1 &0 &0 &0\\l_{41} &0 &0 &1 &0 &0\end{pmatrix}\oplus
\begin{pmatrix}\ast&1 &0 &0 &0 &0\\l_{41} &0 &\ast &1 &0 &0\\0&0&0&0&1&0\end{pmatrix}\\& \oplus &
\begin{pmatrix}\ast &0+\square &1 &0 &0 &0\\l_{41} &\ast &0 & 1&0 &0\\0&0&0&0&1&0\end{pmatrix}\oplus
\begin{pmatrix}\ast &1 &0 &0 &0 &0\\l_{41} &0 &\ast & 1&0 &0\\0&0&0&0&0&1\end{pmatrix}.
\end{eqnarray*}
\end{Example}
From   \ref{keepcon} we   easily obtain the following corollary :
\begin{Cor}\label{keepcon1}Let $\lambda=(n-m,m)\vdash n, \mu=(n-m+1,m-1)\vdash n$ and $\O$ be an orbit in $M^\lambda$ with the filled pattern
$\pf=\pf(\O)$. Then
$\Phi_m(\mathfrak{C}_{{\pf }}^{\lambda})\subseteq \mathfrak{C}_{{\pf }}^{\mu}.$
\end{Cor}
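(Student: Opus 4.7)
The corollary is essentially a direct bookkeeping consequence of Theorem \ref{keepcon}; the substantive analysis has already been carried out there. My plan is to unfold the definitions, reduce to a statement about individual idempotent basis elements, and then quote Theorem \ref{keepcon} on each basis element.

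First I would recall that $\mathfrak{C}_{\pf}^\lambda$ is by definition the direct sum of the orbit modules $M_\O\subseteq M^\lambda$ for which $\pf(\O)=\pf$, and that each such $M_\O$ is $F$-spanned by the idempotents $\{e_{_L}\mid e_{_L}\in\O\}$. Thus $\mathfrak{C}_{\pf}^\lambda$ has as an $F$-basis the set of all $e_{_L}$ contained in some orbit $\O$ (in some batch $\M_\t$ with $\ppi\subseteq\tt,\ \pj\cap\tt=\emptyset$) satisfying $\pf(\O)=\pf$. Since $\Phi_m$ is $F$-linear, it suffices to verify $\Phi_m(e_{_L})\in \mathfrak{C}_{\pf}^\mu$ for each such basis element $e_{_L}$.

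Next I would apply Theorem \ref{keepcon} to this fixed $e_{_L}$. That theorem yields an expansion $\Phi_m(e_{_L})=\sum_K C_K\, e_{_K}$ in which every summand with $C_K\neq 0$ has the property that $e_{_K}$ lies in some orbit $\tilde\O$ of some batch of $M^\mu$ with $\pf(\tilde\O)=\pf$. Because the orbit module $M_{\tilde\O}$ is $F$-spanned by the idempotents of $\tilde\O$, we have $e_{_K}\in M_{\tilde\O}$, and hence $e_{_K}\in \mathfrak{C}_{\pf}^\mu$ by Definition \ref{collection}. Summing the contributions over all $K$ gives $\Phi_m(e_{_L})\in \mathfrak{C}_{\pf}^\mu$, and linearity then produces the required inclusion $\Phi_m(\mathfrak{C}_{\pf}^\lambda)\subseteq \mathfrak{C}_{\pf}^\mu$.

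There is essentially no real obstacle to overcome beyond keeping the bookkeeping straight: the whole point of introducing $\mathfrak{C}_{\pf}^\lambda$ and $\mathfrak{C}_{\pf}^\mu$ is precisely to collect together \emph{all} orbits sharing a common filled pattern across different batches, because Theorem \ref{keepcon} guarantees only that the target orbit sits in \emph{some} batch of $M^\mu$, not in a prescribed one. The only thing one must verify explicitly is that $\pf$ is simultaneously a $\lambda$-pattern and a $\mu$-pattern in the sense of Definition \ref{patternmatrix}, which follows because $|\pf|\leqslant m-1<m$ (this is forced by the analysis of cases (1) and (2) in the proof of Theorem \ref{keepcon}, where one sees that the target row index set has size $m-1$).
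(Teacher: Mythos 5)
Your proposal is correct and matches the paper's intent exactly: the paper states Corollary \ref{keepcon1} as an immediate consequence of Theorem \ref{keepcon} without writing out a proof, and your argument is precisely the obvious unfolding (reduce by $F$-linearity to the idempotent basis elements of $\mathfrak{C}_{\pf}^\lambda$, apply Theorem \ref{keepcon} to each, and note that $\mathfrak{C}_{\pf}^\mu$ by Definition \ref{collection} collects all orbits with filled pattern $\pf$ across all batches of $M^\mu$). Your closing remark about $|\p|\leqslant m-1$ is the right thing to check, though the cleaner way to put it is that when $|\p|=m$ Lemma \ref{perservecor} forces $\Phi_m(e_{_L})=0$, so the inclusion holds trivially in that case.
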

% \begin{proof}If we denote the orbit $M^\mu$ by $\tilde \O$ then  Theorem \ref{keepcon} tells us
% \begin{equation}\label{precon}\Phi_m(M_\O)\subseteq \bigoplus M_{\tilde \O}\text{ where }\pf(\tilde \O)=\pf.\end{equation} Hence by Definition \ref{collection} we have
% $\Phi_m(\mathfrak{C}_{_{\pf }}^{\lambda})\subseteq \mathfrak{C}_{_{\pf }}^{\mu}.$
% \end{proof}

\begin{Prop}\label{sphim}If $\Char(F)=0$, then $S^{(n-m,m)}=\ker \Phi_m.$
\end{Prop}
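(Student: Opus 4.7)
The plan is to combine James's kernel intersection theorem \ref{kerinter} with the classical observation that, in characteristic zero, each of the remaining defining maps $\phi_{1,i}$ with $0\leq i\leq m-2$ factors through $\Phi_m=\phi_{1,m-1}$. The inclusion $S^{(n-m,m)} \subseteq \ker \Phi_m$ is immediate from \ref{kerinter}, since $\Phi_m$ is one of the defining maps in the intersection. So I would focus entirely on the reverse inclusion.

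To set up the factorization, write $\mu=(n-m+1,m-1)$ and denote by $\phi_{1,i}^{(\lambda)} \colon M^\lambda \to M^{(n-i,i)}$ and $\phi_{1,i}^{(\mu)} \colon M^\mu \to M^{(n-i,i)}$ the two analogues of the map of \ref{phi1i} built from $\lambda$ and $\mu$ respectively. My first step is to establish the identity
\[
\phi_{1,i}^{(\mu)} \circ \Phi_m \;=\; [m-i]\,\phi_{1,i}^{(\lambda)} \qquad (0\leq i\leq m-1),
\]
where $[m-i] = 1+q+\cdots+q^{m-i-1}$ is the Gaussian integer defined earlier. To verify it, I would evaluate both sides on a basis element $[X]$ corresponding to an $m$-dimensional subspace $X\subseteq \F_q^n$: both expressions are formal sums over chains $Z\subseteq Y\subseteq X$ with $\dim Z=i$ and $\dim Y=m-1$, and for each fixed $Z$ the number of admissible intermediate $Y$'s is exactly the number of lines in the $(m-i)$-dimensional quotient $X/Z$, namely $[m-i]$. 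This is a routine flag count.

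Since $\Char(F)=0$, the Gaussian integer $[m-i]$ is a nonzero rational integer in $F$ for every $0\leq i\leq m-2$, hence invertible. The factorization therefore forces $\ker \Phi_m \subseteq \ker \phi_{1,i}^{(\lambda)}$ for each such $i$, so by Theorem \ref{kerinter},
\[
\ker \Phi_m \;\subseteq\; \bigcap_{i=0}^{m-1}\ker \phi_{1,i}^{(\lambda)} \;=\; S^{(n-m,m)},
\]
and combined with the easy containment this yields equality.

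No step in this plan is genuinely hard: the flag-counting identity is elementary, and the characteristic-zero hypothesis enters only to guarantee invertibility of the $[m-i]$. That single arithmetic requirement is exactly what restricts the clean statement to characteristic zero, and it is precisely this failure in positive characteristic that motivates the paper's subsequent finer analysis of $S^\lambda$ via orbit modules and filled patterns.
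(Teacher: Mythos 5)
Your proposal is correct and follows essentially the same route as the paper: both prove the identity $\phi_{1,i}\circ\Phi_m=[m-i]\,\phi_{1,i}$ by the same flag count and then invert $[m-i]$ in characteristic zero to get $\ker\Phi_m\subseteq\ker\phi_{1,i}$ for $0\leqslant i\leqslant m-2$. (One cosmetic slip: the intermediate subspaces $Y$ with $Z\subseteq Y\subseteq X$, $\dim Y=m-1$, correspond to hyperplanes of $X/Z$ rather than lines, but the count is $[m-i]$ either way.)
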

\begin{proof}By   \ref{kerinter}, we have
$S^{(n-m,m)}=\big(\bigcap\nolimits_{i=0}^{m-2}\ker\phi_{_{1,\,i}}\big) \,\bigcap\,\ker \Phi_m.$
Hence it suffices to prove $\ker  \Phi_m \subset \ker\phi_{_{1,\,i}}$ for all $0\leqslant i \leqslant m-2.$
In fact, for $X\subseteq V, \,\dim_{\mbox{\tiny $\F_q$}}X=m,$ we have by  \ref{phi1i}:
$$\Phi_m([X])=\sum\limits_{\mbox{\tiny $\begin{array}{c} Y\subseteq X\\ \dim Y =m-1\end{array}$}}[Y], \quad\quad
\phi_{1,\,i}([Y])=\sum\limits_{\mbox{\tiny $\begin{array}{c} Z\subseteq Y\\ \dim Z =i\end{array}$}}[Z].\quad \text{ Then} $$
 $$   \phi_{1,\,i}\circ \Phi_m([X]) = \sum\limits_{\mbox{\tiny $\begin{array}{c} Y\subseteq X\\ \dim Y =m-1\end{array}$}}
\sum\limits_{\mbox{\tiny $\begin{array}{c} Z\subseteq Y\\ \dim Z =i\end{array}$}}[Z].$$
Now we calculate the number of the $(m-1)$-dimensional subspace \mbox{$Y\subseteq X$}  which contains a fixed $i$-dimensional space $Z\subseteq X$. Obviously, this number equals the ways of choosing $(m-i-1)$-dimensional spaces from an \mbox{$(m-i)$-}dimensional space,
that is $[\begin{smallmatrix}m-i\\m-i-1\end{smallmatrix}]=[m-i].$ Hence for all $0\leqslant i \leqslant m-2,$ we have:
$\,\phi_{1,\,i}\circ \Phi_m = [m-i]\,\phi_{1,\,i}.$
Since $\Char(F)=0$, we obtain
for all $0\leqslant i \leqslant m-2:$
$ \phi_{1,\,i}=\frac{1}{[m-i]}\,\phi_{1,\,i}\circ \Phi_m $ and hence  $\ker\Phi_m \subset  \ker\phi_{1,\,i}$ for all $0\leqslant i \leqslant m-2.$ Therefore $S^{(n-m,m)}=\ker \Phi_m.$
\end{proof}

\begin{Cor}\label{epi} If $\Char(F)=0$ then $\Phi_m$ is an epimorphism.
\end{Cor}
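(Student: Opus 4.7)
The plan is to prove surjectivity by a simple dimension count, using the previous two results together with the formula for the dimension of $S^{(n-m,m)}$ supplied by James's kernel intersection theorem.

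First I would recall the three inputs. Theorem \ref{kerinter} gives
\[
\dim S^{(n-m,m)} \;=\; \begin{bmatrix}n\\m\end{bmatrix} - \begin{bmatrix}n\\m-1\end{bmatrix},
\]
and also $\dim M^{(n-m,m)} = \begin{bmatrix}n\\m\end{bmatrix}$ and $\dim M^{(n-m+1,m-1)} = \begin{bmatrix}n\\m-1\end{bmatrix}$ by Definition \ref{basisxi}. Proposition \ref{sphim} identifies $\ker \Phi_m$ with $S^{(n-m,m)}$ in characteristic zero.

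Combining these, the rank–nullity theorem applied to the $F$-linear map $\Phi_m: M^{(n-m,m)} \to M^{(n-m+1,m-1)}$ gives
\[
\dim \operatorname{Im}(\Phi_m) \;=\; \begin{bmatrix}n\\m\end{bmatrix} - \dim \ker \Phi_m \;=\; \begin{bmatrix}n\\m\end{bmatrix} - \left(\begin{bmatrix}n\\m\end{bmatrix} - \begin{bmatrix}n\\m-1\end{bmatrix}\right) \;=\; \begin{bmatrix}n\\m-1\end{bmatrix}.
\]
Hence $\dim \operatorname{Im}(\Phi_m) = \dim M^{(n-m+1,m-1)}$, and since $\operatorname{Im}(\Phi_m) \subseteq M^{(n-m+1,m-1)}$, the map is surjective.

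There is no real obstacle here: the work has already been done in Theorem \ref{kerinter} and Proposition \ref{sphim}. The only thing worth verifying explicitly is that the dimension formula in Theorem \ref{kerinter} holds over the field $F$ we are working with, which in characteristic zero is immediate from the fact that $S^\lambda_\C$ has that dimension and specializes correctly.
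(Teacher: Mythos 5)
Your proof is correct and is essentially identical to the paper's: both apply rank--nullity to $\Phi_m$, identify $\ker\Phi_m$ with $S^{(n-m,m)}$ via Proposition \ref{sphim}, and use the dimension formula of Theorem \ref{kerinter} to conclude that the image has full dimension $[\begin{smallmatrix}n\\m-1\end{smallmatrix}]$.
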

\begin{proof}By   \ref{sphim}, if $\Char(F)=0$, then
$\dim\Phi_m \big(M^{(n-m,m)}\big)=\dim M^{(n-m,m)}-\dim \ker \Phi_m
=\dim\,M^{(n-m,m)}-\dim\, S^{(n-m,m)}\stackrel{(\ref{kerinter})}{=}%$.  %Then by \ref{kerinter},    %$\dim\, S^{(n-m,m)}=\begin{bmatrix} n\\m\end{bmatrix}\,-\begin{bmatrix} n\\m-1\end{bmatrix}.$ Thus
%\begin{eqnarray*}
[\begin{smallmatrix} n\\m\end{smallmatrix}]-\,([\begin{smallmatrix} n\\m\end{smallmatrix}]\,-[\begin{smallmatrix} n\\m-1\end{smallmatrix}])=[\begin{smallmatrix} n\\m-1\end{smallmatrix}]=\dim\, M^{(n-m+1, m-1)}.$
%\end{eqnarray*}
Obviously, $\Phi_m \big(M^{(n-m,m)}\big)\subseteq M^{(n-m+1, m-1)},$ hence $\Phi_m \big(M^{(n-m,m)}\big)=M^{(n-m+1, m-1)}.$ %that is $\Phi_m$ is an epimorphism.
\end{proof}

There is an easy consequence of   \ref{keepcon1} and      \ref{epi}:
\begin{Cor}\label{f0keepcon1}Let $\lambda=(n-m,m)\vdash n, \mu=(n-m+1,m-1)\vdash n$ and $\O$ be an orbit in $M^\lambda$ with filled pattern
$\pf=\pf(\O)$. If $\Char(F)=0$ then
$\Phi_m(\mathfrak{C}_{{\pf }}^{\lambda})=\mathfrak{C}_{{\pf }}^{\mu}.$
\end{Cor}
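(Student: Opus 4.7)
The plan is to combine the containment from Corollary \ref{keepcon1} with the surjectivity from Corollary \ref{epi}, using that both $M^\lambda$ and $M^\mu$ decompose as direct sums indexed by filled patterns.

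First I would observe that $M^\lambda = \bigoplus_{\pf'} \mathfrak{C}_{\pf'}^\lambda$ where $\pf'$ runs over all filled $\lambda$-patterns, and similarly $M^\mu = \bigoplus_{\pf'} \mathfrak{C}_{\pf'}^\mu$ over all filled $\mu$-patterns. This is because $M^\lambda$ has the idempotent basis $\bigcup_{\t \in \RStd(\lambda)} \mathcal{E}_\t$, each idempotent belongs to a unique $U^w \cap U$-orbit, and each orbit carries a unique filled pattern $\pf(\O)$ via its unique pattern idempotent. The same reasoning applies to $\mu$. Note that for a $\lambda$-pattern $\pf'$ with $|\pf'| = m$ the corresponding $\mathfrak{C}_{\pf'}^\mu$ is zero (as a $\mu$-pattern has at most $m-1$ rows), and in that case Lemma \ref{perservecor} applied at every $d$ forces $\Phi_m(\mathfrak{C}_{\pf'}^\lambda) = 0$, so the statement to be proved holds trivially. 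It therefore suffices to treat $\pf$ with $|\pf| \leqslant m-1$.

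Next, Corollary \ref{keepcon1} gives $\Phi_m(\mathfrak{C}_{\pf'}^\lambda) \subseteq \mathfrak{C}_{\pf'}^\mu$ for every filled pattern $\pf'$. Combining these inclusions over $\pf'$ and using the surjectivity from Corollary \ref{epi}, I would chain:
\begin{equation*}
\bigoplus\nolimits_{\pf'} \mathfrak{C}_{\pf'}^\mu \;=\; M^\mu \;=\; \Phi_m(M^\lambda) \;=\; \sum\nolimits_{\pf'} \Phi_m(\mathfrak{C}_{\pf'}^\lambda) \;\subseteq\; \sum\nolimits_{\pf'} \mathfrak{C}_{\pf'}^\mu.
\end{equation*}
Since the right-hand side is a direct sum decomposition of $M^\mu$ and each summand on the left is contained in the matching summand on the right, the inclusion $\Phi_m(\mathfrak{C}_{\pf'}^\lambda) \subseteq \mathfrak{C}_{\pf'}^\mu$ must be an equality component-wise. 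Concretely, given $x \in \mathfrak{C}_{\pf}^\mu$, pick a preimage $y \in M^\lambda$ under $\Phi_m$, decompose $y = \sum_{\pf'} y_{\pf'}$ with $y_{\pf'} \in \mathfrak{C}_{\pf'}^\lambda$, and read off from the directness of the $\mu$-decomposition that $x = \Phi_m(y_{\pf})$.

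There is no serious obstacle here: the real work is already done in Theorem \ref{keepcon} (pattern preservation) and Corollary \ref{epi} (surjectivity via the kernel intersection theorem). The only point requiring a little care is the bookkeeping around patterns of maximal size $m$, which are killed by $\Phi_m$ and correspond to the ``missing'' components in $M^\mu$; this is exactly consistent with the dimension count $\dim M^\lambda - \dim S^\lambda = \dim M^\mu$ used in the proof of Corollary \ref{epi}.
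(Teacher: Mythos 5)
Your proof is correct and matches the paper's intent exactly: the paper gives no explicit proof, stating only that the equality is "an easy consequence of" Corollary \ref{keepcon1} and Corollary \ref{epi}, and your argument supplies precisely that deduction, including the right bookkeeping for patterns of size $m$ (killed by $\Phi_m$ via Lemma \ref{perservecor}/Proposition \ref{fullcon} and absent from $M^\mu$) and the standard component-wise equality argument from the direct sum decompositions by filled patterns.
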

%\begin{proof}It is a easy consequence of Corollary \ref{keepcon1} and    Corollary \ref{epi}  .\end{proof}

\begin{Defn}Let \mbox{$\lambda=(n-m,m)\vdash n, \,\mu=(n-m+1,m-1)\vdash n$.} And let $\pf$ be a filled pattern.
Define: $\Phi_{m,\pf}: \mathfrak{C}_{{\pf }}^{\lambda}\rightarrow \mathfrak{C}_{{\pf }}^{\mu}$.
Observe that \mbox{$\Phi_m=\bigoplus_{\pf} \Phi_{m,\pf}.$}
\end{Defn}

\begin{Theorem}\label{iso}Let $\lambda, \mu$ be 2-part partitions of $n.$ Let $\O $ (resp. $ O'$) be an orbit in $M^\lambda$ (resp. $M^ \mu$).
If the filled patterns of this two orbits are the same, then the corresponding irreducible orbit modules $M_{\O}$ and $ M_{\O'}$ are isomorphic.
\end{Theorem}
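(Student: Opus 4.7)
The plan is to use the $FU$-homomorphism $\Phi_m$ from Section 3 as a bridge between orbits sitting in different two-part partitions. Without loss of generality write $\lambda=(n-m,m)$ and $\mu=(n-m',m')$ with $m\geqslant m'$; since both $\O$ and $\O'$ carry the same filled pattern $\pf$, we have $s:=|\pf|\leqslant m'\leqslant m$. The strategy is a descending induction on the size of the second part: I will show that for every $k$ with $s\leqslant k\leqslant m$, every orbit module in $M^{(n-k,k)}$ with filled pattern $\pf$ is isomorphic to one fixed irreducible $FU$-module $N_\pf$. Applied to both $\O$ and $\O'$ this yields $M_{\O}\cong N_\pf\cong M_{\O'}$. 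The base case $k=s$ is immediate: the second row $\tt$ has length $s$ and must contain $\ppi$, forcing $\tt=\ppi$, so by \ref{collection} there is exactly one orbit of filled pattern $\pf$ in $M^{(n-s,s)}$, and we take $N_\pf$ to be the corresponding orbit module.

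For the inductive step, let $\O\subset\M_\t\subset M^{(n-k,k)}$ with $k>s$ be an orbit with pattern idempotent $e_{_L}$. I will consider the restriction $\Phi_k|_{M_{\O}}:M_{\O}\rightarrow \mathfrak{C}_{\pf}^{(n-k+1,k-1)}$, which really does land in $\mathfrak{C}_{\pf}^{(n-k+1,k-1)}$ by Theorem \ref{keepcon}. By the inductive hypothesis $\mathfrak{C}_{\pf}^{(n-k+1,k-1)}$ is semisimple with every irreducible summand isomorphic to $N_\pf$. Since $M_{\O}$ is irreducible by Proposition \ref{uinvariant}, Schur's lemma together with complete reducibility force $\Phi_k(M_{\O})$ to be either zero or isomorphic to $N_\pf$, so the conclusion $M_{\O}\cong N_\pf$ reduces to showing $\Phi_k(e_{_L})\neq 0$.

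For the non-vanishing, because $|\pf|=s<k$ there is at least one index $d$ with $b_d\notin\ppi$; for any such $d$ all entries $l_{_{b_dj}}$ with $(b_d,j)\in\J_\t$ vanish, since $L$ is a pattern matrix and $b_d$ indexes no position of $\p$. Let $K\in\X_{\ud}$ be obtained from $L$ by deleting row $b_d$; then $K$ is itself a pattern matrix carrying the same filled pattern $\pf$. Specialising the coefficient formula (\ref{cknew}) from the proof of Theorem \ref{keepcon} with $k_{_{b_ib_d}}=0$ and $l_{_{b_dj}}=0$, the remaining factors collapse to products of sums of the form $\sum_{\alpha,m}\theta(\alpha l m)$, each of which evaluates to a positive power of $q$ whether $l$ vanishes or not, so $C_K\neq 0$. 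Moreover, for distinct admissible $d$ the contributions $\Phi_k^d(e_{_L})$ lie in distinct batches $\M_{\ud}\subset M^{(n-k+1,k-1)}$ (because the second rows $\uu$ differ), and so they cannot cancel in the total sum $\Phi_k(e_{_L})=\sum_d\Phi_k^d(e_{_L})$; hence $\Phi_k(e_{_L})\neq 0$, completing the induction.

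The step I expect to be most delicate is the combinatorial verification that $C_K\neq 0$: although formula (\ref{cknew}) is explicit, one must carefully track which terms survive the Case 1 and Case 2 analyses inside the proof of Theorem \ref{keepcon}, and confirm that when several positions of $\p$ sit below row $b_d$ no unexpected cancellation occurs across the multiple summations over $\alpha_i$ and $m_{_{b_dj}}$. A secondary subtlety is that Corollary \ref{f0keepcon1} was stated only in characteristic zero; however, since the inductive argument above uses only $\Phi_k(M_{\O})\neq 0$ together with semisimplicity of $FU$ (which holds for every admissible base field with $\Char(F)\nmid p$), surjectivity of $\Phi_k$ is not actually required, and the theorem should therefore extend to the full class of fields $F$ considered in the paper without further work.
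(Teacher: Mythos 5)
Your proposal is correct and follows essentially the same route as the paper: induction along the chain of two-part partitions starting from the unique orbit with filled pattern $\pf$ in $M^{(n-s,s)}$, using $\Phi_k$, Corollary \ref{keepcon1}, and the irreducibility of the orbit modules (Proposition \ref{uinvariant}) to transport the isomorphism type upward. The only difference is that you explicitly justify the non-vanishing $\Phi_k(e_{_L})\neq 0$ (via the coefficient of the row-deleted pattern matrix in (\ref{cknew}) and the fact that distinct $d$ contribute to distinct batches), a step the paper's proof simply asserts as obvious; your verification is sound and is a worthwhile addition.
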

\begin{proof}Let $m=[\frac{n}{2}].$ With respect of the dominance order $\unrhd$ of partitions, we have
$(n-m,m)\unrhd (n-m+1,m-1)\unrhd \cdots \unrhd (n,0)$.
Let $\p$ (resp. $\pf$) be a (filled) pattern of some orbit in $M^{(n-m,m)}$ and let $s=|\p|.$ It is obvious that
this filled pattern only fits the following partitions:
$(n-m,m)\unrhd \mbox{(n-m+1,m-1)}\unrhd \cdots \unrhd (n-s,s).$
Moreover  $\pf$ only fits one $\t$-batch $\M_\t$ of $M^{(n-s,s)}$ since the elements in the set
$\ppi=\{i\,|\,(i,j)\in \p \text{ for some }1\leqslant j \leqslant n\}$ should be in the second row $\tt$ of $\t$ but $|\ppi|=s$ hence these are all elements in $\tt$, which leads to $\t$ is fixed. Thus, we obtain
$\CC_{\pf}^{(n-s,s)}=M_\O$ where $\O$ is the unique orbit in $\M_\t$ such that $\pf(\O)=\pf.$ We prove the claim by induction.

Suppose $O'$ is an arbitrary orbit in $M^{(n-s-1, s+1)}$ such that $\pf(\O')=\pf.$ Then by \ref{keepcon1}, we get
$\Phi_{s+1} (M_{ \O'})\subseteq \CC_{\pf}^{(n-s,s)}=M_\O$ where $M_{\O'}$ is the orbit module corresponding to $\O'.$
By Theorem \ref{uinvariant}, $M_{\O}$ is an irreducible $FU_n$-module and obviously $\Phi_m (M_{ \O'})\not=0$ hence
we obtain  $\Phi_{s+1} (M_{ \O'})=M_{  \O}.$
Since $M_{\O'}$  is also  irreducible, %module, the image of it is either zero or a copy of itself. Hence
we get $M_{\O'}\cong M_{\O}.$

Assume  for some $i \geqslant s+1,$ $M_{\tilde \O}\cong M_\O$ for all \mbox{$\tilde \O\subset M^{(n-i, i)}$} such that $\pf(\tilde \O)=\pf$.
Suppose $\O''$ is an arbitrary orbit in $M^{(n-i-1, i+1)}$ such that $\pf( \O'')=\pf,$
Again by \ref{keepcon1}, we get
$\Phi_{i+1} (M_{ \O''})\subseteq \CC_{\pf}^{(n-i,\,i)}= \bigoplus M_{\tilde \O}$ where $M_{\tilde \O}\subseteq M^{(n-i, i)} $ and $\pf(\tilde \O)=\pf.$
Since $M_{ \O''}$ is an irreducible $FU_n$-module by   \ref{uinvariant} and $\Phi_{i+1} (M_{ \O''})\not=0$, we obtain
$M_{ \O''}\cong \Phi_{i+1} (M_{ \O''})\subseteq  \bigoplus M_{\tilde \O}\cong\bigoplus M_{\O}.$
Since $M_{\O}$ is irreducible, we have $M_{ \O''}\cong M_{ \O}.$
\end{proof}
% Note that we only show in this theorem that orbit modules are isomorphic if their filled patterns are the same but we did not
% show whether two orbit modules are isomorphic or not if they have the same size but different filled patterns.
\subsection{Special orbits in $M^{(n-m,m)}$}\label{specialorbit}
In this section we investigate two special orbits in $M^{(n-m,m)}$
which can  easily give us some elements in $S^{(n-m,m)}$.
%The first one concerns special orbits in $M^{(n-m,m)}$, whose patterns are full in the sense that their size is $m$.
\begin{Prop}\label{fullcon}Let $\lambda=(n-m,m).$ Let $\O$ be an orbit in the $\t$-batch $\M_\t$ of $ M^\lambda$.
Let $\p=\p(\O)$ be the pattern of $\O$. If $|\p|=m,$ then
$M_\O\subset \ker \Phi_m$  and $\t\in\Std(\lambda).$
More precisely, for any $e_{_L}\in \O$ we have $\Phi_m(e_{_L})=0 $ and $\tab(L)\in\Std(\lambda).$
\end{Prop}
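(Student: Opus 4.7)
My plan is to derive both assertions almost at once from Lemma \ref{perservecor}, after a short combinatorial check that the existence of a size\,$m$ pattern in a $\t$-batch forces $\t$ to be standard.

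First I would establish that $\t\in\Std(\lambda)$. For $\lambda=(n-m,m)$, a row standard tableau $\t$ with $\tt=(b_1,\dots,b_m)$ is standard iff each of its $m$ columns is increasing, which unwinds to the condition $b_i\geqslant 2i$ for every $1\leqslant i\leqslant m$. Since $|\p|=m$ and the row coordinates inside a pattern are pairwise distinct, every row of the (unique) pattern matrix $L\in\O$ must carry a pattern entry; thus the row indices in $\p$ are exactly $\tt$, and I may write $\p=\{(b_i,a_i)\,|\,1\leqslant i\leqslant m\}$ with $a_i<b_i$, $a_i\notin\tt$, and $a_1,\dots,a_m$ pairwise distinct. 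Fixing $i$, the $i$ distinct values $a_1,\dots,a_i$ all lie in $\{1,\dots,b_i-1\}\setminus\tt$ (because $a_j<b_j\leqslant b_i$), while $\{1,\dots,b_i-1\}\cap\tt$ has exactly $i-1$ elements. Counting gives $b_i-1-(i-1)\geqslant i$, i.e.\ $b_i\geqslant 2i$. Hence $\t\in\Std(\lambda)$, and since $\tab(L)=\t$ for every $e_{_L}\in\O$, the tableau claim is settled.

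Next I would read off the vanishing $\Phi_m(e_{_L})=0$ directly from Lemma \ref{perservecor}. By Remark \ref{jsplit} one has $\Phi_m=\bigoplus_{d=1}^m\Phi_m^d$, so it suffices to verify $\Phi_m^d(e_{_L})=0$ for every $d$. The step above showed that $|\p|=m$ forces $\ppi=\tt$, so $b_d\in\ppi$ for every $d\in\{1,\dots,m\}$. Lemma \ref{perservecor} then yields $\Phi_m^d(e_{_K})=0$ for every $e_{_K}\in\O$ and every $d$; summing over $d$ gives $\Phi_m(e_{_K})=0$, hence $M_\O\subseteq\ker\Phi_m$.

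The only real ingredient beyond invoking Lemma \ref{perservecor} is the column-count that produces $b_i\geqslant 2i$, which is a short counting argument. I therefore do not anticipate any substantial obstacle in carrying out the proof.
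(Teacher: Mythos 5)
Your proposal is correct and follows essentially the same route as the paper: the vanishing is obtained by observing that $|\p|=m$ forces $\ppi=\tt$ and then applying Lemma \ref{perservecor} to each summand $\Phi_m^d$, and standardness comes from the same counting of distinct column indices $a_1,\dots,a_i$ below $b_i$ (your reformulation via $b_i\geqslant 2i$ is just a more explicit version of the paper's observation that there must be $i$ first-row entries preceding $b_i$, hence $a_i<b_i$).
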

\begin{proof}
Recall that $\ppi =\{i\,|\,(i,j)\in \p \,\},\, \pj =\{j\,|\,(i,j)\in   \p \,\}$. Thus $|\p|=m$ says that each row of $L$ has a nonzero entry besides the last 1's. Hence by \ref{perservecor} we have $\Phi_m^d(e_{_L})=0$ for $d=1,\dots,m$ and hence $\Phi_m(e_{_L})=0$ for all $e_{_L}\in \O$ by
 \ref{jsplit}. Thus, $M_\O\subset \ker \Phi_m$.
Now let $e_{_L}\in \O$ be the  pattern idempotent in $\O$. Assume $$\tab(L)=\begin{tabular}{|c|c|c|c|c|c|}\hline $a_1$ & $a_2$&$\cdots$&$a_m$&$\cdots$&$a_{n-m}$\\
\hline $b_1$ & $b_2$&$\cdots$&$b_m$\\\cline{1-4} \end{tabular}\,\,.$$
Since $|\p(L)|=|\p|=m,$ we need to have $i$-many columns $a_1,\ldots,a_i$ before column $b_i$, hence $a_i<b_i$
and $\t=\tab(L)$ is a standard $\lambda$-tableau.
% More precisely, if $\tab(L)\in \RStd(\lambda)$ is nonstandard, then
% we assume $a_i<b_i$ for $1\leqslant i \leqslant r$ and $a_r>b_r.$ Then we can only choose $1\leqslant j <r$ such that
% $(b_r,a_j)\in \p.$ Hence there should be some $(b_s, a_t)\in \p$ for $s<r$ and $t\geqslant r$.
% Thus we have $b_r>b_s>a_t\geqslant a_r$ which is a contradiction of our assumption. Hence we must have $\tab(L)\in \Std(\lambda).$
\end{proof}

From the definition of the homomorphisms $\phi_{1,\,i}$ for \mbox{$0\leqslant i \leqslant m-2,$} we know
the orbit modules with full pattern also live in \mbox {$\ker \phi_{1,\,i},\,\forall\,\leqslant i \leqslant m-2,$} then
they are in the Specht module $S^{(n-m,m)}$  for any arbitrary field.

Note that the result $\tab(L)\in \Std(\lambda)$ in Proposition \ref{fullcon}  coincides with an important
result by Sin\'{e}ad Lyle, which we will use very often in the later sections. First we introduce an order
which was used in Lyle's theorem:

\begin{Defn}Let $\lambda=(n-m,m).$ Define a partial order $\trianglelefteq$ on $\RStd(\lambda)$\vspace{1.2mm}
by:
\centerline{ $\begin{tabular}{|c|c|c|c|c|c|}\hline $a_1$ & $a_2$&$\!\!\cdots\!\!$&$a_m$&$\!\!\cdots\!\!$&$a_{n-m}$\\
\hline $b_1$ & $b_2$&$\!\!\cdots\!\!$&$b_m$\\\cline{1-4} \end{tabular}\trianglelefteq
\begin{tabular}{|c|c|c|c|c|c|}\hline $a'_1$ & $a'_2$&$\!\!\cdots\!\!$&$a'_m$&$\!\!\cdots\!\!$&$a'_{n-m}$\\
\hline $b'_1$ & $b'_2$&$\!\!\cdots\!\!$&$b'_m$\\\cline{1-4} \end{tabular}\,\Leftrightarrow b_i\leqslant b'_i, \,\forall\,1\leqslant i \leqslant m.$}
\end{Defn}

\begin{Theorem}[Lyle, \cite{ly}]\label{ori}Suppose that $0\neq v\in S^{(n-m,m)}$ and write
\mbox{$v =\sum\nolimits_{X\in\Xi_{m,n}}C_XX$} where $C_X\in F.$ Say that $X$ occurs in $v$ if $C_X\neq0$.
Assume $X'$ occurs in $v$  such that for every $X$ with $X\neq X'$ and $\tab(X')\trianglelefteq \tab(X)$
we have: $X$ does not occur in $v$. Then $\tab(X')$ is standard.
\end{Theorem}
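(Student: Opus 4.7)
The plan is to proceed by induction on $m$, the length of the second row. For $m \leq 1$ the statement is trivial, as every row-standard two-part tableau with second row of length at most one is automatically standard.

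For the inductive step, fix $m \geq 2$ and suppose the theorem holds for smaller values. Let $v \in S^{(n-m,m)}$ and let $X'$ satisfy the maximality hypothesis. Assume for contradiction that $\t := \tab(X')$ is not standard, and write $\underline{\t} = (b_1,\dots,b_m)$. Non-standardness means there is an index $d$ with $b_d \leq 2d-1$. By Proposition \ref{sphim}, $v \in \ker \Phi_m$, and by the batchwise splitting in Remark \ref{jsplit} each component $\Phi_m^k(v)$ vanishes separately, so in particular $\Phi_m^d(v) = 0$.

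I would then analyze the contributions to $\Phi_m^d(v)$. The contribution of $X'$ equals $C_{X'} \cdot \sum_{\underline\alpha \in \F_q^{m-d}} [N_{\underline\alpha}]$ with $N_{\underline\alpha} \in R_d(X')$. Any other $Y \in \Xi_{m,n}$ that contributes to the $\ud$-batch via some $\Phi_m^{d'}$ must have its tableau obtained from $\uu$ by inserting a single value $c := c_{d'}$. A case analysis shows: $c = b_d$ forces $\tab(Y) = \t$ and hence $Y = X'$ by the maximality hypothesis; $c > b_d$ forces $\tab(Y) \triangleright \t$, which is also excluded by maximality; only $c < b_d$ remains as a source of potentially cancelling contributions, and these $Y$ necessarily satisfy $\tab(Y) \triangleleft \t$.

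The crux is to select a target matrix $N^* \in R_d(X')$---for instance $N^* := N_{\underline 0}$, namely $X'$ with row $b_d$ simply deleted---and extract enough equations from the vanishing coefficient of $N^*$ and its siblings in $\Phi_m^d(v) = 0$ to force $C_{X'} = 0$. The $X'$-contribution to the coefficient of $N^*$ is exactly $C_{X'}$; contributions from the $Y$'s with $\tab(Y) \triangleleft \t$ come with matching constraints on their entries, and varying an auxiliary free parameter in the target matrix should yield a family of equations whose only simultaneous solution is $C_{X'} = 0$. The main obstacle is precisely this cancellation book-keeping: the $Y$'s in case $c < b_d$ form sizeable parameter families and could in principle absorb $C_{X'}$. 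Pushing the argument through requires exploiting the non-standardness inequality $b_d \leq 2d-1$ in the spirit of the dimension-count in the proof of Proposition \ref{fullcon}, where $d$ required positions had to fit into fewer than $d$ available columns. An alternative attack, probably closer to Lyle's argument in \cite{ly}, is to project $\Phi_m^d(v)$ onto a submodule on which the inductive hypothesis applies directly: a putative maximal non-standard tableau for $v$ would descend to a maximal non-standard tableau for a Specht module with smaller second row, contradicting the induction.
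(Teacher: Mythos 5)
First, note that the paper itself offers no proof of this statement: it is quoted verbatim from Lyle \cite{ly} and used as an external input (via Corollary \ref{lyle}), so there is no internal argument to compare yours against. Judged on its own merits, your proposal is a plan with two genuine problems rather than a proof.

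The base case is wrong as justified. For $m=1$ the row-standard $(n-1,1)$-tableau with second row $(1)$ is \emph{not} standard (its first column reads $2$ above $1$), so the statement is not vacuous for $m\leqslant 1$. It is still easy — that tableau is minimal for $\trianglelefteq$, so the maximality hypothesis forces $v=C_{X'}X'$ to be a single basis element, and then $\phi_{1,0}(v)=C_{X'}\neq 0$ contradicts $v\in S^{(n-1,1)}=\ker\phi_{1,0}$ — but the reason you give ("every such tableau is automatically standard") is false, and the correct argument is of a different nature than the one you claim.

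The serious gap is the one you yourself flag: the cancellation book-keeping in the $\ud$-batch of $\Phi_m(v)=0$. Your case analysis correctly reduces to the contributions of $X'$ (coefficient $C_{X'}$ on each $N_{\underline\alpha}\in R_d(X')$) against those of matrices $Y$ with $\tab(Y)\triangleleft\tab(X')$, but you never show these cannot absorb $C_{X'}$. This is not a technicality: if the argument you sketch went through without further input, it would apply equally to a standard maximal tableau and would prove $S^\lambda=0$, since the standard basis elements of $S^\lambda$ exhibit exactly such cancellations. Hence the non-standardness inequality $b_d\leqslant 2d-1$ must enter the computation in an essential, quantitative way (roughly, a rank or parameter count showing the family of lower matrices $Y$ is too small to match all $q^{m-d}$ targets $N_{\underline\alpha}$ coherently), and your proposal only gestures at this "in the spirit of" Proposition \ref{fullcon} without carrying it out. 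The alternative route via projecting to a smaller Specht module is likewise too vague to assess: you do not say which module, which map, or why the maximal tableau of the image remains non-standard and maximal. As it stands the proof is incomplete at precisely the step that carries the content of Lyle's theorem.
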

% \begin{proof}\cite{ly}\end{proof}
% \quad\\
Recall the order we defined in section 2, (c.f. \ref{original basis}). Since our order is weaker than the order in Lyle's theorem, we can obtain the following corollary, on which our work heavily relies:
\begin{Cor}\label{lyle}Suppose that $0\neq v\in S^{(n-m,m)}$. Then $\last(v)$ is standard.
\end{Cor}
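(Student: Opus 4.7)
The plan is to deduce the corollary from Lyle's theorem (Theorem \ref{ori}) by showing that the lexicographic total order on $\RStd(\lambda)$ used in the definition of $\last(v)$ is a linear refinement of the partial order $\trianglelefteq$. Since Lyle's theorem guarantees standardness at a $\trianglelefteq$-maximal occurring tableau, and the lex-maximum is automatically a $\trianglelefteq$-maximum, the corollary will follow.

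First I would verify the order comparison: for $\t,\t'\in \RStd(\lambda)$ with second rows $(b_1,\ldots,b_m)$ and $(b'_1,\ldots,b'_m)$, the implication $\t\trianglelefteq\t' \Rightarrow \t\leqslant\t'$ holds, where $\leqslant$ denotes the lexicographic order. This is immediate: if $b_i\leqslant b'_i$ for all $i$ and the tuples differ, then the smallest index $k$ with $b_k\neq b'_k$ must satisfy $b_k<b'_k$, so $\t<_{\mathrm{lex}}\t'$. In particular, a strict $\triangleleft$-inequality forces a strict $<_{\mathrm{lex}}$-inequality.

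Next, write $v=\sum_{X\in\Xi_{m,n}}C_XX$ and set $\t=\last(v)$. Suppose some $X\in\Xi_{m,n}$ occurs in $v$ (i.e. $C_X\neq 0$) with $\tab(X)\triangleright\t$ strictly. By the previous step, $\tab(X)>_{\mathrm{lex}}\t$; but the coefficient of $X$ in $v(\tab(X))$ is $C_X\neq 0$, so $v(\tab(X))\neq 0$, contradicting the definition of $\t=\last(v)$ as the lex-last tableau with nonvanishing $\t$-component. Hence no matrix with strictly $\trianglerighteq$-larger tableau occurs in $v$. Choosing any $X'$ occurring in $v$ with $\tab(X')=\t$ (available since $v(\t)\neq 0$), the hypothesis of Theorem \ref{ori} is met for $X'$ modulo the borderline case $\tab(X)=\tab(X')$, so Lyle's theorem yields that $\tab(X')=\last(v)$ is standard.

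The main step that needs care is the borderline issue just mentioned: ensuring Lyle's hypothesis is satisfied even when several distinct $X\neq X'$ share the same tableau $\t=\last(v)$. I would expect to handle this either by appealing to the intended reading of Theorem \ref{ori} (strict $\trianglerighteq$-maximality of the tableau being what really drives Lyle's argument) or, if a literal reading is insisted upon, by replacing $v$ with a generic $U^w\cap U$-translate using the monomial action on $\mathcal E_\t$ (Proposition \ref{mono}), which permits concentrating $v(\t)$ on a single matrix while keeping $v(\t)\neq 0$ and leaving the strictly larger components vanishing. Either route reduces the problem to the order-refinement step, which is the conceptual content of the corollary.
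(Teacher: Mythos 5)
Your main line of argument---that the lexicographic order on second rows refines $\trianglelefteq$, so that $\last(v)$ is $\trianglelefteq$-maximal among the tableaux occurring in $v$ and Theorem \ref{ori} then applies---is exactly the paper's argument; the paper disposes of the corollary with the single remark that its order is ``weaker than'' the order in Lyle's theorem, and your first two paragraphs simply spell that remark out correctly.

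Your ``borderline'' worry is a fair reading of the hypothesis of Theorem \ref{ori} as quoted, and the paper resolves it silently in the way of your option (a): Lyle's result is to be read at the level of tableaux, i.e.\ the operative hypothesis is that no matrix with strictly $\trianglerighteq$-larger tableau occurs in $v$. Be aware, though, that your fallback (b) would not work if pressed: for $g\in U^w\cap U$ the translate $v\circ g$ acts on the matrix basis $\X_\t$ by a permutation (and on $\mathcal E_\t$ monomially), so it preserves the number of matrices occurring in $v(\t)$ and cannot concentrate $v(\t)$ on a single matrix; and projecting onto the span of a single $[M]$ is not $U$-equivariant, so the result would no longer lie in $S^\lambda$. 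So if a literal reading of Theorem \ref{ori} were insisted upon, the gap would remain; since the paper itself relies on the tableau-level reading, your proof matches the paper's.
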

% \begin{proof}
% Write $v =\sum\nolimits_{X\in\Xi_{m,n}}C_XX \text{ where } C_X\in F.$ Suppose $\tab(X')=\last(v)$,
% then obviously for any $X\neq X'$  and \mbox{$\tab(X')\trianglelefteq \tab(X)$}  we have that $X$ does not occur in $v$.
% (Otherwise, $\tab(X')\trianglelefteq \tab(X)\Rightarrow \tab(X')<\tab(X)$, then $X'$ can not be the last tableau of $v$.)
% By Theorem~\ref{ori}, $\last(v)=\tab(X') \text{ is standard }.$
% \end{proof}

Now we investigate another special orbits having empty pattern.
First we prove an easy lemma which will be very useful later on.
\begin{Lemma}\label{fst}
For $\lambda=(n-m,m)\vdash n$, $ \mu=(n-m+1,m-1)\vdash n$, let
$P_{m}=\RStd(\lambda)\setminus \Std(\lambda)$, $ Q_{m}=\RStd(\mu).$
Then $|P_{m}|= |Q_{m}|.$
\end{Lemma}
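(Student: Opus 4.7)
The plan is to show that both sides equal $\binom{n}{m-1}$. Since any row-standard tableau of a two-row shape is uniquely determined by its second row, which may be any subset of $\{1,\ldots,n\}$ of the appropriate size, we have $|\RStd(\lambda)| = \binom{n}{m}$ and $|Q_m| = |\RStd(\mu)| = \binom{n}{m-1}$. The content of the lemma therefore reduces to the classical identity $|\Std((n-m,m))| = \binom{n}{m} - \binom{n}{m-1}$, after which $|P_m| = |\RStd(\lambda)| - |\Std(\lambda)| = \binom{n}{m-1} = |Q_m|$ follows by subtraction.

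To derive that identity I would first translate standardness into a ballot-type condition on the second row. For $\t \in \RStd(\lambda)$ with $\underline{\t}=(b_1<\cdots<b_m)$, a direct count shows that $\t$ is standard if and only if $b_i \geq 2i$ for every $i$, equivalently $|\underline{\t}\cap\{1,\ldots,k\}| \leq k/2$ for every $k\in\{1,\ldots,n\}$. I would then apply the standard reflection principle: given a non-standard $\t$, let $k$ be the smallest index violating the ballot inequality; minimality forces $k$ to be odd, $k \in \underline{\t}$, and $|\underline{\t}\cap\{1,\ldots,k-1\}| = (k-1)/2$. Replacing $\underline{\t}\cap\{1,\ldots,k\}$ by its complement $\{1,\ldots,k\}\setminus\underline{\t}$ while keeping the entries above $k$ produces an $(m-1)$-subset of $\{1,\ldots,n\}$, i.e.\ the second row of an element of $Q_m$. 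The inverse reflection, applied to $\s\in Q_m$ using the smallest $k$ for which $|\underline{\s}\cap\{1,\ldots,k\}|$ falls below $k/2$, recovers $\t$; a routine check that the two maps are mutually inverse finishes the bijection.

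The only genuine point requiring care is the parity argument: one must verify that the minimal $k$ violating the ballot inequality is always odd, so that the reflection changes the size of the second row by exactly one. This follows because $|\underline{\t}\cap\{1,\ldots,k-1\}|\leq (k-1)/2$ by minimality of $k$, while $|\underline{\t}\cap\{1,\ldots,k\}|>k/2$; together these force $k$ to be odd. Alternatively, one can bypass the bijection entirely and invoke the hook length formula for two-row shapes, which yields $|\Std((n-m,m))| = \binom{n}{m} - \binom{n}{m-1}$ directly. Either approach makes the lemma essentially a reformulation of the classical ballot identity.
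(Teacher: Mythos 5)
Your proposal is correct and follows essentially the same route as the paper: both reduce the lemma to the three counts $|\RStd(\lambda)|=\binom{n}{m}$, $|\RStd(\mu)|=\binom{n}{m-1}$, and $|\Std(\lambda)|=\binom{n}{m}-\binom{n}{m-1}$, and conclude by subtraction. The only difference is that the paper simply quotes the standard-tableau count as known, whereas you supply a (correct) proof of it via the ballot condition $b_i\geqslant 2i$ and the reflection principle.
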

\begin{proof}Note that $|\RStd(\lambda)|=(\begin{smallmatrix}n\\m\end{smallmatrix}),\, |\RStd(\mu)|=(\begin{smallmatrix}n\\m-1\end{smallmatrix})$
 and $|\Std(\lambda)|=(\begin{smallmatrix}n\\m\end{smallmatrix})-(\begin{smallmatrix}n\\m-1\end{smallmatrix}),$
 hence the statement holds.
\end{proof}

Recall that for $v =\sum\nolimits_{X\in\Xi_{m,n}}C_XX$, $\ttop(v)$ is the collection of all $X$ occurring in this sum with $\tab(X)=\last(v)$,  (c.f. \ref{original basis}).

Note that the trivial $FU$-module occurs in each batch of $M^\lambda$ precisely once as composition factor. This follows immediately from the Mackey decomposition, c.f. \ref{xj}. Indeed this trivial $FU$-component is the unique orbit module $M_\O$ such that $\p(\O)=\emptyset.$
\begin{Prop}\label{trivialorbit}
Let $\lambda=(n-m,m)\vdash n,\,\t\in \Std(\lambda).$ Suppose $M_\emptyset = Fe_{_L}$ is the unique trivial orbit in
the $\t$-batch $\M_\t.$ If $\Char F=0$ then there exist $v\in S^\lambda$ such that $\ttop(v)=e_{_L}.$
\end{Prop}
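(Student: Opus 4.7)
\medskip
\noindent\textbf{Plan.} The plan is to find $v$ inside the ``trivial component'' piece $\mathfrak C_{\emptyset}^{\lambda}$ of $M^{\lambda}$. By the last Remark of Section~2.5, each batch $\M_{\s}$ contributes exactly one trivial orbit module $Fe_{L_{\s}}$, so
\[
\mathfrak C_{\emptyset}^{\lambda}=\bigoplus_{\s\in\RStd(\lambda)} F e_{L_{\s}},\qquad \dim \mathfrak C_{\emptyset}^{\lambda}=|\RStd(\lambda)|=\tbinom{n}{m},
\]
and likewise for $\mu=(n-m+1,m-1)$. Set $K:=S^{\lambda}\cap\mathfrak C_{\emptyset}^{\lambda}$. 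Since $\Phi_m$ preserves filled patterns (Theorem~\ref{keepcon}) and $S^\lambda=\ker\Phi_m$ (Proposition~\ref{sphim}), we have $K=\ker(\Phi_{m}\vert_{\mathfrak C_{\emptyset}^{\lambda}})$. By Corollary~\ref{f0keepcon1} this restriction is surjective onto $\mathfrak C_{\emptyset}^{\mu}$, so
\[
\dim K = \tbinom{n}{m}-\tbinom{n}{m-1}=|\Std(\lambda)|,
\]
the last equality being the comparison underlying Lemma~\ref{fst}.

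\medskip
\noindent The next step is to compare $K$ with its projection to the ``standard part'' of $\mathfrak C_{\emptyset}^{\lambda}$. Let
\[
\pi_{S}\colon\mathfrak C_{\emptyset}^{\lambda}\longrightarrow\bigoplus_{\t'\in\Std(\lambda)}Fe_{L_{\t'}}
\]
be projection along the non-standard coordinates. I claim $\pi_{S}\vert_{K}$ is injective: if $0\neq v\in K$ with $\pi_{S}(v)=0$, then $v\in S^{\lambda}$ is supported only on batches $\s\notin\Std(\lambda)$, forcing $\last(v)\notin\Std(\lambda)$; this contradicts Corollary~\ref{lyle}. Since source and target have the same dimension $|\Std(\lambda)|$, $\pi_{S}\vert_{K}$ is an isomorphism.

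\medskip
\noindent Now for the given $\t\in\Std(\lambda)$, let $v_{\t}\in K$ be the unique element with $\pi_{S}(v_{\t})=e_{L_{\t}}$; equivalently,
\[
v_{\t}=e_{L_{\t}}+\sum_{\s\notin\Std(\lambda)}c_{\t,\s}\,e_{L_{\s}}\qquad(c_{\t,\s}\in F).
\]
Because $v_{\t}\in S^{\lambda}$, Corollary~\ref{lyle} forces $\last(v_{\t})\in\Std(\lambda)$. Among the batches on which $v_{\t}$ is supported, the only standard one is $\t$ itself. Hence $\last(v_{\t})=\t$ and
\[
\ttop(v_{\t})=v_{\t}(\t)=e_{L_{\t}}=e_{L},
\]
as required.

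\medskip
\noindent The only point that needs care is the injectivity of $\pi_{S}\vert_{K}$, which is exactly where Lyle's theorem is brought to bear; everything else is the dimension count of $K$ (using surjectivity of $\Phi_m$ on the empty-pattern components, which in turn needs $\Char F=0$) together with the filled-pattern-preserving direct-sum decomposition $\Phi_m=\bigoplus_{\pf}\Phi_{m,\pf}$. No extra calculation with the idempotents themselves is needed beyond what has already been established.
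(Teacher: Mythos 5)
Your proof is correct and is essentially the paper's own argument in dual form: the injectivity of $\pi_S|_K$ is exactly equivalent to the paper's claim that $\{\Phi_m(e_{_{L^{\s}_\emptyset}})\mid \s\in\RStd(\lambda)\setminus\Std(\lambda)\}$ is linearly independent, and both are proved by the same appeal to Corollary \ref{lyle}, with the same dimension count (Lemma \ref{fst}) and the same use of $S^\lambda=\ker\Phi_m$ in characteristic zero closing the argument.
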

\begin{proof}For each $\s$-batch we denote the basis element in the empty orbit
by $L^\s_\emptyset$. We claim the set
$R:=\{\Phi_m(e_{_{L^{\s}_\emptyset}})\,|\,\s\in \RStd(\lambda)\setminus \Std(\lambda)\}$ is linearly independent.
In fact if we have a linear combination
$\sum_\s a_\s\Phi_m(e_{_{L^\s_\emptyset}})=0$ then $\Phi_m(\sum_\s a_\s e_{_{L^\s_\emptyset}})=0$ hence
$\sum_\s a_\s e_{_{L^\s_\emptyset}}\in \ker \Phi_m.$ If $\Char F=0$ then by \ref{sphim} $\ker \Phi_m=S^\lambda.$ Hence
$\sum_\s a_\s e_{_{L^\s_\emptyset}}\in S^\lambda\text{ but }\tab(L^\s_\emptyset)=\s \text{ is nonstandard  for all } L^\s_\emptyset.$
By Corollary \ref{lyle} we obtain $a_\s=0$ for all $\s\in \RStd(\lambda)\setminus \Std(\lambda).$

Let $\mu=(n-m+1,m-1)$, then   $\Phi_m(\CC ^\lambda _\emptyset)\subset \CC ^\mu_\emptyset$ and we get $R\subset \CC ^\mu_\emptyset.$
Moreover by  \ref{fst}, we know $\dim R = P_m=Q_m=\dim \CC ^\mu_\emptyset.$ Thus we obtain
$FR= \CC ^\mu_\emptyset.$ Now suppose $\t\in \Std(\lambda)$, then by \ref{keepcon1}
$\Phi_m(e_{_{L^\t_\emptyset}})\subset \CC ^\mu_\emptyset=FR $ thus
$\Phi_m(e_{_{L^\t_\emptyset}})=\sum_\s a_\s\Phi_m(e_{_{L^\s_\emptyset}})$ where $\s\in \RStd(\lambda)\setminus \Std(\lambda),\, a_\s\in F.$
Let $v=e_{_{L^\t_\emptyset}}-\sum_\s a_\s e_{_{L^\s_\emptyset}}.$ Then $v\in \ker \Phi_m=S^\lambda$, since $\Char F=0$. Moreover, we have $\last(v)=\t$ since
by \ref{lyle} we know $\last(v)$ must be standard. That is,  $\ttop(v)=e_{_{L^\t_\emptyset}}.$
\end{proof}

\subsection{Standard basis of $S^{(n-m,m)}$}
Throughout this section, we fix $\lambda=(n-m,m)\vdash n$. We shall  first construct a basis for  $S^{\lambda}$ over a field with characteristic zero and
then show  this is an integral basis for any arbitrary field. The idea is  reducing nonempty    pattern case to the second special case in the previous section, that is when the pattern is empty. In this sense, we define the following map $\R_{\p}$ where $\p$ is a  pattern. This map removes every row and column related to the   pattern $\p.$ More precisely:

\begin{Defn}\label{defofpsi} let $\p$ be a $\lambda$-pattern and $\pf$ be a filling of $\p$. Let $s=|\p|$.
For $L\in \Xi_{m,n}$, let $\t=\tab(L)$. If $\ppi\subseteq \tt$ and $\pj\cap \tt=\emptyset$, then we define $\mathfrak R_{\p}(L)$ by deleting from $L$ all rows and columns $b_i\in \ppi$ and in addition all columns $j\in \pj$. Otherwise we define $\mathfrak R_{\p}(L)=0$.  Obviously, $\mathfrak R_{\p}(L)\in  \Xi_{m-s,n-2s}$. Now we  extend this by  linearity to an $F$-linear map: $M^\lambda\longrightarrow M^\nu$ where $\nu=(n-m-s,m-s)\vdash n-2s$.
\end{Defn}

 Note that for  a pattern matrix $L$ to pattern $\p$, we have    $ \mathfrak R_{\p}(L)$ is the pattern matrix in $\Xi_{m-|\p|,n-2|\p|}$ with empty  pattern.

\begin{Example}\label{4,4}
Let $\p=\{(5, 2), (8,6)\}$ be a pattern and suppose
$$\begin{matrix}

  L=&\begin{pmatrix}          0 & 0 & 1 & 0 & 0 & 0 & 0 & 0\\
                         0 & l_{52} & 0 & 0 & 1 & 0 & 0 & 0\\
                         0 & 0 & 0 & 0 & 0 & 0& 1 & 0\\
                         0 & 0 & 0 & 0 & 0 & l_{86}& 0 & 1
\end{pmatrix}&\begin{matrix} \emph{3}\\\emph 5\\\emph 7\\\emph 8\end{matrix}&
\end{matrix}$$
%be a pattern matrix with  pattern $\p=\{(5, 2), (8,6)\}.$
then
$\tilde L =\mathfrak R_{\p}(L)=
\big(\begin{smallmatrix}
0  & 1 & 0   & 0 \\0  & 0 & 0  & 1\end{smallmatrix}\big)\in \Xi_{2,4}$. Obviously, $\p(\tilde L)=\emptyset.$
% $$\tab(L)=
% \begin{tabular}{|c|c|c|c|}\hline
% 1 & 2& 4 & 6\\\hline
% 3 & 5& 7 & 8\\\hline
% \end{tabular}\,,\quad \tab(\tilde L)=
% \begin{tabular}{|c|c|}\hline
% 1 & 3 \\\hline
% 2 & 4 \\\hline
% \end{tabular}\,.$$
\end{Example}

\begin{Remark} \label{shifted}
let $\p$ be a $\lambda$-pattern, $s=|\p|$. Assume $L\in \Xi_{m,n}$ with $0\not=\tilde L=\R_{\p}(L)\in  \Xi_{m-s,n-2s}$.
We can easily obtain   $\tab(\tilde L)$ from $\t=\tab(L)$ in the following way:
First we delete the numbers  $i\in \ppi \cup \pj$ in $\t$ and omit the resulting gaps to obtain a row standard $\nu$-tableau
 $\tilde \t $ of shape
$\nu=(n-m-s,m-s)$ filled by numbers  $\{1,2,...,n\}\setminus(\ppi\cup \pj),$ denoted by
$\tilde \t= \t\setminus (\ppi\cup \pj), $
called \textbf{shifted $\mu$-tableau}.
Assume
$\{1,2,...,n\}\setminus(\ppi\cup \pj)$ $=\{a_1,a_2,...,a_{n-2s}\}$
with  order $a_1<a_2<\cdots<a_{n-2s}.$
Replacing the numbers $a_i$ in $\tilde \t$ by $i$ instead, we get
a $\mu$-tableau $\s$ filled by numbers $1,2,\ldots,n-2s$ with $\s=\tab(\tilde L)$.
Obviously $\s$ and $ \tilde \t$ are 1-1 correspondence if we fixed the pattern $\p$. We say $\s$ and $\tilde\t$
are \textbf{$\p$-similar}, denoted by $\s \stackrel{\p}{\sim} \tilde \t.$ Of course, $\s$ is standard if and only if
$\tilde \t$ is standard.
\end{Remark}

\begin{Example}
In \ref{4,4},
$\widetilde {\tab(L)}=\begin{tabular}{|c|c|}\hline
1 & 4 \\\hline
3 & 7 \\\hline
\end{tabular}\,\stackrel{\p}{\sim} \begin{tabular}{|c|c|}\hline
1 & 3 \\\hline
2 & 4 \\\hline
\end{tabular}\,=\tab(\tilde L).$
\end{Example}

\begin{Defn} \label{tplambda}Let $\p$ be a  $\lambda$-pattern and let $\nu=(n-m-|\p|,m-|\p|).$ Denote
$T^\lambda_{\p}$  be the set of row-standard but non-standard shifted $\nu$-tableaux, which are
filled by numbers in  $\{1,2,...,n\}\setminus(\ppi\cup \pj)$. In particular, if $\p=\emptyset, $
then $T_{\emptyset}^\lambda $ is the set of row-standard but non-standard tableaux of shape $\lambda$.
\end{Defn}

\begin{Example}  \label{3,3} Let $\lambda=(3,3), \p=\{(6, 4)\},$ hence $\ppi\cup \pj=\{4,6\}$ and
$$ T^\lambda_{\p}=\bigg \{\,
\begin{tabular}{|c|c|}\hline 3 & 5\\\hline 1 & 2\\\hline \end{tabular}\,,
\begin{tabular}{|c|c|}\hline 2 & 5\\\hline 1 & 3\\\hline \end{tabular}\,,
\begin{tabular}{|c|c|}\hline 2 & 3\\\hline 1 & 5\\\hline \end{tabular}\,,
\begin{tabular}{|c|c|}\hline 1 & 5\\\hline 2 & 3\\\hline \end{tabular}\,
\bigg \}.$$
\end{Example}

\begin{Cor}\label{tssize}Let   $\p$ be a $\lambda$-pattern and $s=|\p|$. Then
$$T^\lambda_{\p}=|\RStd(\mu)| \text{ where } \mu=(n-m-s+1,m-s-1).$$
\end{Cor}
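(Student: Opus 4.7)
My plan is to reduce this to Lemma \ref{fst} by using the $\p$-similarity bijection of Remark \ref{shifted}. Set $\nu=(n-m-s,m-s)$, the shape underlying $T^\lambda_\p$. First I would observe that the correspondence $\s\stackrel{\p}{\sim}\tilde\t$ gives a bijection between row-standard $\nu$-tableaux filled with $\{1,\dots,n\}\setminus(\ppi\cup\pj)$ and row-standard $\nu$-tableaux filled with $\{1,\dots,n-2s\}$ (simply relabel the entries in increasing order). Since standardness is preserved by this order-preserving relabelling, the bijection restricts to a bijection between the non-standard tableaux on each side. Hence
\[
|T^\lambda_{\p}|=|\RStd(\nu)\setminus\Std(\nu)|.
\]

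Next, I would apply Lemma \ref{fst} to the two-part partition $\nu=(n-m-s,m-s)\vdash n-2s$. Reading Lemma \ref{fst} with $\lambda$ replaced by $\nu$, the associated ``shifted'' partition is $(n-2s-(m-s)+1,\,m-s-1)=(n-m-s+1,\,m-s-1)=\mu$, so the lemma gives
\[
|\RStd(\nu)\setminus\Std(\nu)|=|\RStd(\mu)|.
\]
Combining the two displayed equalities yields the claimed $|T^\lambda_{\p}|=|\RStd(\mu)|$.

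I do not anticipate any real obstacle: the only thing to check carefully is that the relabelling used in Remark \ref{shifted} truly preserves the property of being standard (which it does, being order-preserving on entries and acting trivially on the shape), and that the arithmetic $n-2s-(m-s)+1=n-m-s+1$ matches the partition $\mu$ in the statement. Once these two bookkeeping points are made explicit, the corollary follows immediately from Lemma \ref{fst}.
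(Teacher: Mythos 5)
Your argument is correct and is exactly what the paper intends: its proof is the single sentence ``It is an easy consequence of Lemma \ref{fst}.'', i.e.\ apply that lemma to the two-part partition $\nu=(n-m-s,m-s)\vdash n-2s$ after identifying $T^\lambda_{\p}$ with $\RStd(\nu)\setminus\Std(\nu)$ via the order-preserving relabelling of Remark \ref{shifted}. You have merely made explicit the two bookkeeping steps the paper leaves implicit, so nothing further is needed.
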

\begin{proof}It is a easy consequence of Lemma \ref{fst}.
\end{proof}

In \ref{fullcon}, we have discussed the case that $|\p|=m$, thus we  only need to investigate the following key lemma   under the condition: $0\leqslant |\p| \leqslant m-1$:
\begin{Lemma}\label{compatible}Let $\pf$ be a filled  pattern associated with a $\lambda$-pattern $\p$. Let $s=|\p|$ such that $0\leqslant s \leqslant m-1$.
If $\{\Phi_m(e_{\!_{L}})\,|\,e_{_{L}}\in \CC_{\pf}^\lambda\} $
is linearly dependent, then
$\{\Phi_{m-s}(e_{_{\R_{_\p}(L)}})\,|\,e_{_{L}}\in \CC_{\pf}^\lambda\}$
 is  linearly dependent.
\end{Lemma}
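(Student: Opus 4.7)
The plan is to exploit the fact that $\R_{\p}$ is constant on each orbit: if $L\in\CC_{\pf}^\lambda$ and $L_0$ is the pattern idempotent of the orbit containing $L$, then every position on a hook of a pattern entry $(b,j)\in\pf$ lies either in row $b\in\ppi$ or in column $j\in\pj$, and both are erased by $\R_\p$. Hence $e_{\R_\p(L)}=e_{\R_\p(L_0)}$, and I may assume without loss of generality that $L$ ranges over pattern idempotents of $\CC_{\pf}^\lambda$. Via $\R_\p$, these biject (using $\p$-similarity of tableaux, Remark \ref{shifted}) with pattern idempotents of $\CC_\emptyset^\nu$, where $\nu=(n-m-s,m-s)$. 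The same recipe yields a bijection between pattern idempotents of $\CC_{\pf}^\mu$ and those of $\CC_\emptyset^{\nu'}$ with $\nu'=(n-m-s+1,m-s-1)$.

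Next I would split $\Phi_m=\bigoplus_d\Phi_m^d$ and $\Phi_{m-s}=\bigoplus_{d'}\Phi_{m-s}^{d'}$ according to Remark \ref{jsplit}. Lemma \ref{perservecor} shows $\Phi_m^d(e_L)=0$ whenever $b_d\in\ppi$, so only the $m-s$ indices $d$ with $b_d\notin\ppi$ survive, and these biject naturally with $d'\in\{1,\dots,m-s\}$ under the renaming built into $\R_\p$. The heart of the argument is a compatibility statement: for pattern idempotents $L$ and surviving $d$, the expansion $\Phi_m^d(e_L)=\sum_K C_K\, e_K$ in pattern idempotents $K$ of $\CC_{\pf}^\mu$ should satisfy $C_K=C'_{\R_\p(K)}$, where $\Phi_{m-s}^{d'}(e_{\R_\p(L)})=\sum_{\tilde K} C'_{\tilde K}\, e_{\tilde K}$. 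I would verify this by inspecting the explicit formulas (\ref{cknew}) and (\ref{restfactor}) from the proof of Theorem \ref{keepcon}: the deleted rows and columns contribute only factors that do not depend on the non-pattern data, so the surviving sums over $m_{b_dj}$ (with $j\notin\tt\cup\pj$) are formally the same as the ones that appear when one directly computes $\Phi_{m-s}^{d'}(e_{\R_\p(L)})$.

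Granted the compatibility, the lemma follows at once: suppose $\sum_L a_L\Phi_m(e_L)=0$. Projecting onto each pattern-idempotent basis vector $e_K$ of $\CC_{\pf}^\mu$ (via the base-change matrix of Definition \ref{defofide}) gives $\sum_L a_L C_{L,K}=0$ for every $K$. The compatibility then yields $\sum_L a_L C'_{\R_\p(L),\R_\p(K)}=0$, i.e.\ the same scalar equations hold in $\CC_\emptyset^{\nu'}$, hence $\sum_L a_L\Phi_{m-s}(e_{\R_\p(L)})=0$.

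The main obstacle is precisely the compatibility claim. One needs to check that \emph{every} surviving factor in (\ref{restfactor}) after Lemma \ref{perservecor} has been applied corresponds bijectively to a factor in the analogous expansion for $\Phi_{m-s}^{d'}(e_{\R_\p(L)})$. This is essentially a bookkeeping exercise on indexing sets $\J_\t\cap\J_{\ud}$ versus their shifted counterparts $\J_\s\cap\J_{\u_{d'}}$ for $\s=\tab(\R_\p(L))$, but it requires care in both Case 1 and Case 2 of the proof of Theorem \ref{keepcon}, because the Case 2 adjustment $k_{b_ub_d}=-l_{b_uv}m_{b_dv}$ is exactly the coupling that is absent after reduction (since $(b_u,v)\in\p$ is erased). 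Showing that these couplings do not affect the coefficient of the pattern idempotent in the image — they only determine the orbit-hook data, which $\R_\p$ strips away — is the key technical point.
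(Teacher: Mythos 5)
There is a genuine gap at your very first step. The constancy of $\R_{\p}$ on orbits correctly identifies the \emph{target} set as one vector per orbit, but it does not let you assume that the hypothesised dependence $\sum_L a_{_L}\Phi_m(e_{_L})=0$ involves only pattern idempotents: the hypothesis is a dependence among the images of \emph{all} basis vectors of $\CC_{\pf}^{\lambda}$, and a linearly dependent set need not remain dependent after being cut down to the subset of pattern idempotents. The restriction is not harmless, because your compatibility statement is formulated only for pattern idempotents $L$, and it genuinely fails otherwise: for a non-pattern $e_{_L}$ the hook-leg entries $l_{_{b_uv}}$ with $b_u\notin\ppi$, $v\in\pj$ may be nonzero, and the factor contributed by those deleted columns is $\prod\sum_{m\in\F_q}\theta(-l_{_{b_uv}}m)$, which \emph{vanishes} whenever some such $l_{_{b_uv}}\neq 0$. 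Consequently, pushing a general dependence through $\R_{\p}$ can produce the trivial relation $0=0$ downstairs, which proves nothing. The paper closes exactly this hole: it first uses the $FU$-linearity of $\Phi_m$ and the monomial action of $U^{w}\cap U$ to replace the given dependence by one in which \emph{some} pattern idempotent occurs with nonzero coefficient, and then checks that pattern idempotents acquire a \emph{nonzero} multiplier (a power of $q$), so the induced dependence is nontrivial.

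Two further points. First, the exact equality $C_{L,K}=C'_{\R_{\p}(L),\R_{\p}(K)}$ cannot hold as stated: the normalisations $q^{-|\J_{\t}|}$ versus $q^{-|\J_{\tilde\t}|}$ and the sums over deleted positions leave a residual factor $q^{|\J_{\tilde\t}|-|\J_{\t}|+c}$ which depends on the batch $\t=\tab(L)$; the correct statement is $C_{L,K}=\mu_{_L}\,C'_{\R_{\p}(L),\R_{\p}(K)}$ with $\mu_{_L}$ a nonzero power of $q$ for pattern idempotents (and possibly $0$ otherwise), so "the same scalar equations hold" must be replaced by a rescaled dependence — still sufficient, but only once one knows some surviving coefficient is nonzero, which brings you back to the first gap. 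Second, you explicitly leave the compatibility unverified, and it is the entire content of the lemma: the paper's proof is precisely this bookkeeping, carried out by splitting $\Phi_m^d([M])$ according to whether the new entries sit in rows of $\ppi$, discarding the summands supported on matrices with a nonzero entry in a $\ppi$-row (these are linearly independent of the remaining terms, which is the paper's analogue of your projection), and identifying $\R_{\p}$ of what is left with $\Phi_{m-s}^d([\tilde M])$.
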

\begin{proof}
Suppose   $\sum_{r=1}^k \gamma_r\, \Phi_m(e_{_{L_r}})=0$ with $e_{_{L_1}},\ldots, e_{_{L_k}}$  being pairwise different idempotents in $\CC_{\pf}^\lambda$ and $\gamma_r\not=0$ for $r=1,\ldots,k.$ In order to keep notation simple, we denote  for $r\in \{1,\ldots,k\}$:
\begin{equation}\label{tildeel}
e_{r}=e_{_{L_r}},\quad\tilde L_r=\R_{\p} (L_r),\quad \tilde e_{r}=e_{_{\tilde L_r}},
\quad \tab(L_r)=\t_r, \quad \tilde \t_r=\t_r \setminus \ppi\cup \pj.\end{equation}
%Let $e_{_{L_1}},\ldots, e_{_{L_k}}\in \CC_{\pf}^\lambda$ \vspace{1mm} where $L_r$ are pairwise different for $r=1,\ldots,k.$ Denote $\tab(L_r)=\t_r$ with $\underline{\t_r}=(b_1^r,\dots,b_m^r), \text{ for }r=1,\ldots,k.$
%Suppose we have $\sum_{r=1}^k \gamma_r\, \Phi_m(e_{_{L_r}})=0$
%with at least one $r\in \{1,\ldots,k\}$ such that  $\gamma_r \not=0.$
% Otherwise we find $u\in U^w\cap U$ where
% $\t_1=\t^\lambda w$ such that $$e_{_{L_1}}\circ u=e_{_{L'_1}}\in \CC_{\pf}^\lambda$$ where $L'_1$ is a pattern matrix and
% $$\sum_{r=1}^k \gamma_r\, \Phi_m(e_{_{L_r}}\circ u)=0.$$ Hence we obtain
% \begin{equation}\label{kts}\gamma_1\, \Phi_m (e_{_{L'_1}})+\sum \rho_K\, \Phi_m(e_{_{K}})=0\end{equation}
% where $K\not=L_1', \rho_K\in F^\ast$ and $e_{_{K}}\in \CC_{\pf}^\lambda.$
% %,\quad \tab(K)\setminus (\ppi\cup \pj) \in T^\lambda_{\p}.$$
%So let $L_1$ be a pattern matrix with $\gamma_1 \not=0.$
%In order to keep notation simple, we denote  for $r\in \{1,\ldots,k\}$:
%\begin{equation}\label{tildeel}
%e_{r}=e_{_{L_r}},\quad\tilde L_r=\R_{\p} (L_r),\quad
%\tilde e_{r}=e_{_{\tilde L_r}}, \quad \tilde \t_r=\t_r
%\setminus \ppi\cup \pj\end{equation}
% where
% $$\tilde L_r=\R_{\p} (L_r) \text{ for }r=1,\ldots,k$$
% and the corresponding shifted tableau
% $$\tilde \t_r=\t_r \setminus \ppi\cup \pj\text{ for }r=1,\ldots,k.$$
thus  $\tilde \t_r$ is a shifted tableau filled by numbers in  $\{1,\ldots,n\}\setminus \ppi\cup \pj.$

Now we fix some  $r\in \{1,\ldots,k\}$ to investigate  $\Phi_m(e_{_{L_r}})$. Hence at this moment we drop  the index $r$, which means we let $L=L_r,e=e_r,  \t=\t_r, \tilde L=\tilde L_r, \tilde e=\tilde e_r, \tilde \t=\tilde {\t_r}$. By \ref{defofide}
 we have:
\[
e=\frac{1}{q^{|\mathfrak{J}_{\t}|}}\sum\limits_{M\in\X_{\t}}\chi_{_{L}}(-M)[M]
=\frac{1}{q^{|\mathfrak{J}_{\t}|}}\sum\limits_{M\in\X_{\t}}\prod\limits_{(b_i,j)
\in\J_{\t}}
\theta(-l_{_{b_ij}}m_{_{b_i}j})[M]
\]
\mbox{where $L=(l_{_{b_ij}})\in\X_{\t}, M=(m_{_{b_ij}})\in\X_{\t}$. Suppose $\tt=(b_1,\dots,b_m)$. For $b_d\notin \ppi,$ $(1\leqslant d \leqslant m)$:}
\begin{equation}\label{phielprop}\Phi_m^d(e)=\frac{1}{q^{|\mathfrak{J}_{\t}|}}
\sum\limits_{M\in\X_{\t}}\prod\limits_{(b_i,j)\in\J_{\t}}
\theta(-l_{_{b_ij}}m_{_{b_ij}})\Phi_m^d([M]).\end{equation}

Using similar notation as in (\ref{Nalpha}) we may write
\begin{equation}\label{phimr}\Phi_m^d([M])=\sum\limits_{\underline\alpha\in  \F_q^{m-d}}[N^d_{\underline\alpha}(M)]\end{equation}
where $\underline \alpha=(\alpha_{_{d+1}},\ldots,\alpha_{_m})\in  \F_q^{m-d}.$
If we denote $N^d_{\underline\alpha}(M)=(n^{d}_{_{b_ij}})\in\X_{\u_d}$ where $\u_d$ is a $\mu$-tableau, $\mu=(n-m+1,m-1)$,
obtained from $\t$ by moving the number $b_d$ to the
first row at the appropriate place to make the resulting tableau row-standard, then from (\ref{nbij}) we have:
\begin{equation}\label{nbijlemma}n^{d}_{_{b_ij}}=
\begin{cases}m_{_{b_ij}}&  \text{ if } i\leqslant d-1 \text{ or }j>b_d;\\
\alpha_i\in \F_q & \text{ if }  d+1\leqslant i \leqslant m, j = b_d;\\
m_{_{b_{i}j}}+\alpha_i m_{_{b_dj}}& \text{ if } d+1\leqslant i \leqslant m, j <b_d.\end{cases}\end{equation}
We split the summation in (\ref{phimr}) as follows:
\begin{equation}\label{phimr2}\Phi_m^d([M])=\sum\limits_{d+1\leqslant i \leqslant m \atop { b_i\in \ppi,\,\alpha_{i}\in \F_q}}\,\sum\limits_{d+1\leqslant u \leqslant m\atop{ b_u\notin \ppi,\,\alpha_{_u}\in \F_q}}[N^d_{\underline\alpha}(M)].\end{equation}
where $\underline \alpha=(\alpha_{_{d+1}},\ldots,\alpha_{_m})\in \F_q^{m-d}.$
Fixing $\alpha_i\in \F_q$ for all  $d+1\leqslant i \leqslant m$ satisfying $b_i\in \ppi$, let
\begin{equation}\label{nbar}\overline{N^d_{\underline{\check\alpha}}}(M)=
\sum\limits_{d+1\leqslant u \leqslant m\atop {\alpha_{u}\in \F_q,\, b_u\notin \ppi}}[N^d_{\underline\alpha}(M)]\end{equation}
 where $\check \alpha=(\alpha_{{i_1}},\ldots,\alpha_{i_h})$ with $d+1\leqslant i_1<\cdots< i_h \leqslant m, \,b_{{i_1}},\ldots,b_{i_h}\in \ppi$ for some $0\leqslant h \leqslant m-d;$ and then we can
 we can rewrite (\ref{phimr2}) as:
\begin{equation}\label{phimr3}\Phi_m^d([M])=\sum\nolimits_{\check \alpha \in \F_q^{h}}
\overline{N^d_{\underline{\check\alpha}}}(M).\end{equation}
 %where $\check \alpha=(\alpha_{_{i_1}},\ldots,\alpha_{i_h})$
 %with $b_{{i_1}},\ldots,b_{i_h}\in \ppi$ for some $0\leqslant h \leqslant m-d.$
Note that if for all $b_i\in \ppi$ all entries of $b_i$-th row  in $N^d_{\underline \alpha}(M)$ are zeros except the last 1's then
$\alpha_i=0$ for all $d+1\leqslant i \leqslant m$ such that $b_i\in \ppi$ \($\alpha_i$ is the entry at position $(b_i,b_d)$ of $N^d_{\underline \alpha}(M)$\) and hence by (\ref{nbijlemma}), we obtain $m_{_{b_ij}}=0$ for $(b_i,j)\in \J_{\t}$ and
$b_i\in \ppi.$ In this case $N^d_{\underline \alpha}(M)$ is a summand of $\overline{N^d_{\underline{\check 0}}}(M).$
Inserting (\ref{phimr3}) into (\ref{phielprop}), we obtain
\begin{eqnarray}\nonumber\Phi_m^d(e)&=&\frac{1}{q^{|\mathfrak{J}_{\t}|}}
\sum\limits_{M\in\X_{\t}}\prod\limits_{(b_i,j)\in\J_{\t}}
\theta(-l_{_{b_ij}}m_{_{b_ij}})\sum\limits_{\check \alpha \in \F_q^{h}}\overline{N^d_{\underline{\check\alpha}}}(M)\\
&=&\frac{1}{q^{|\mathfrak{J}_{\t}|}}
\sum\limits_{M\in\X_{\t}^0}\prod\limits_{(b_i,j)\in\J_{\t}\atop b_i\notin \ppi}
\theta(-l_{_{b_ij}}m_{_{b_ij}})\overline{N^d_{\underline{\check 0}}}(M)+y^d\label{n0yr}\quad\quad\end{eqnarray}
where
$\X_{\t}^0$ is the set of matrices $M\in \X_{\t}$ such that $m_{_{b_ij}}=0$ for all $(b_i,j)\in \J_{\t}$ with $ b_i\in \ppi$
 and $y^d(=y_r^d)$ is a linear combination of matrices in $\X_{\u_d}$ with at least one nonzero entry at a position  $(b_i,j)\in \J_{\u_d}$ with $ b_i\in \ppi;$ moreover, we used $\theta(-l_{_{b_ij}}m_{_{b_ij}})=1$ for $m_{_{b_ij}}=0$ with $(b_i,j)\in \J_{\t}$ and $ b_i\in \ppi$.
Since $\tilde e=\tilde e_{_{\tilde L}}$ with $\tilde L=\R_\p(L)$, by \ref{jsplit} and \ref{perservecor}, we have \begin{equation} \label{phim=d}
            \Phi_m(e)=\sum_{1\leqslant d\leqslant m\atop {b_d\notin \ppi}}\Phi_m^d(e)\quad\text{ hence }\quad\Phi_m(\tilde e)=\sum_{1\leqslant d\leqslant m\atop {b_d\notin \ppi}}\Phi_m^d(\tilde e).
           \end{equation}
Note that  this two summations have the same index set but we should keep in mind that for different $r\in\{1,\dots,k\}$, the index set $\{1\leqslant d \leqslant m\,|\,b_d\notin \ppi\}$ can be different.
Similarly as (\ref{phielprop}) and  (\ref{phimr}) for $b_d\notin \ppi,$ $(1\leqslant d \leqslant m)$ we have:
 \begin{eqnarray} \Phi_{m-s}^d(\tilde e)=\frac{1}{q^{|\mathfrak{J}_{\tilde \t}|}}
 \sum\limits_{\tilde M\in\X_{\tilde \t}}\prod\limits_{(b_i,j)\in\J_{\tilde \t}}
\theta(-l_{_{b_ij}}\tilde m_{_{b_ij}})\Phi_{m-s}^d([\tilde M])\label{phitilde}
\end{eqnarray}
%  \begin{eqnarray}\nonumber\Phi_{m-s}^d(\tilde e_{{r}})&=&\frac{1}{q^{|\mathfrak{J}_{\tilde \t_r}|}}
%  \sum\limits_{\tilde M_{r}\in\X_{\tilde \t_r}}\prod\limits_{(b_i,j)\in\J_{\tilde \t_r}}
% \theta(-l^{r}_{_{b_ij}}\tilde m^{r}_{_{b_ij}})\Phi_{m-s}^d([\tilde M_r])\\
% &=&\frac{1}{q^{|\mathfrak{J}_{\tilde \t_r}|}}\sum\limits_{\tilde M_{r}\in\X_{\tilde \t_r}}\prod\limits_{(b_i,j)\in\J_{\tilde \t_r}}\theta(-l^{r}_{_{b_ij}}
% \tilde m^{r}_{_{b_ij}})\Phi_{m-s}^d([\tilde M_r])\nonumber\\\label{phitilde}\end{eqnarray}
and
 \begin{equation}\Phi_{m-s}^d([\tilde M])=\sum\limits_{\underline \beta\in \F_q^{m-d-h}}[\tilde N^d_{\underline\beta}(\tilde M)]\end{equation}
 where $\underline \beta=(\beta_{_{i_1}},\ldots,\beta_{i_{m-d-h}})\in \F_q^{m-d-h}$ with $d+1\leqslant i_t \leqslant m$
 such that $b_{i_t}\notin \ppi$.
Recall from (\ref{nbar}) we have \begin{equation}\nonumber\overline{N^d_{\underline{\check 0}}}(M)=\sum\limits_{ d+1\leqslant u \leqslant m\atop{\alpha_{_u}\in \F_q,\, b_u\notin \ppi}}[N^d_{\underline\alpha}(M)]\end{equation}
where $\underline \alpha=(\alpha_{_{d+1}},\ldots,\alpha_{_m})$ such that
$\alpha_i=0$ for all $d+1\leqslant i \leqslant m$ with $b_i\in \ppi.$
For all $(b_i,j)\in \J_{\tilde \t},$ identifying $\tilde m_{_{b_ij}}$ with $m_{_{b_ij}}$ we obtain:
\begin{equation}\label{rsn0}\R_{\p}\(\overline{N^d_{\underline{\check 0}}}(M)\)=\sum\nolimits_{\underline \beta\in \F_q^{m-d-h}}[\tilde N^d_{\underline\beta}(\tilde M)]=\Phi_{m-s}^d([\tilde M]).\end{equation}
Inserting back the index $r$ in (\ref{n0yr}) and (\ref{phim=d}), we obtain
\begin{eqnarray*}
0&=&\sum_{1\leqslant r \leqslant k} \gamma_r  \Phi_m(e_{{r}})=\sum_{1\leqslant r \leqslant k} \gamma_r \big(\sum_{ {b_d\in \underline{\t_r}\setminus\ppi}}\Phi_m^d(e_r)\big)\\
 &=&\sum_{1\leqslant r \leqslant k} \gamma_r\Big( \sum_{{b_{d}\in \underline{\t_r}\setminus\ppi}} \frac{1}{q^{|\mathfrak{J}_{r}|}}
\sum\limits_{M_{r}\in\X_{r}^0}\prod\limits_{(b_i,j)\in\J_{r}\atop b_i\notin \ppi }
\!\!\!\theta\,(-l^r_{_{b_ij}}m^r_{_{b_ij}})\,\overline{N^d_{\underline{\check 0}}}(M_r)+y_r^d\Big)
\end{eqnarray*}
where $\J_r=\J_{\t_r}$ and $\X_{r}^0=\X_{\t_r}^0$.
Note that all matrices involved in $y^d_r$ are linearly independent of those involved in $\overline{N^{d'}_{\underline{\check 0}}}(M_{r'})$
for every  $1\leqslant d'\leqslant m,1\leqslant r' \leqslant k$ with $b_{d'}\in \t_{r'}\setminus\ppi$  since they differ in some row $b_i\in \ppi.$ Hence we have
\begin{eqnarray}\label{n0lin}
\sum_{1\leqslant r\leqslant k}\Big(\sum_{ {b_d \in \underline{\t_r}\setminus\ppi}}\frac{\gamma_r }{q^{|\mathfrak{J}_{\t_r}|}}
\sum\limits_{M_{r}\in\X_{r}^0}\prod\limits_{(b_i,j)\in\J_{r}\atop b_i\notin \ppi}\!\!\theta\,(-l^r_{_{b_ij}}m^r_{_{b_ij}})\,\overline{N^d_{\underline{\check 0}}}(M_r)\Big)=0.\end{eqnarray}
Acting by the $F$-linear map $\R_{\p}$ on both sides of  (\ref{n0lin}), from   (\ref{phim=d}) and (\ref{rsn0}) we obtain:
\begin{eqnarray}\label{rsphi}
% &=&
% \sum_{r=1}^k\frac{\gamma_r }{q^{|\mathfrak{J}_{\t_r}|}}
% \sum\limits_{M_{r}\in\X_{\t_r}^0}\prod\limits_{(b_i,j)\in\J_{\t_r}\atop b_i\notin \ppi}\theta(-l^{r}_{_{b_ij}}m^{r}_{_{b_ij}})\R_{\p}\(\overline{N_{\underline{\check 0}}}(M_r)\)\nonumber\\\label{rsphi}
\sum_{1\leqslant r \leqslant k}\frac{\gamma_r }{q^{|\mathfrak{J}_{\t_r}|}}
\sum\limits_{M_{r}\in\X_{r}^0}\prod\limits_{(b_i,j)\in\J_{r}\atop b_i\notin \ppi}\theta(-l^r_{_{b_i j}}m^r_{_{b_i j}})\Phi_{m-s}([\tilde M_r])=0\quad\quad\end{eqnarray}
where $\tilde m^r_{_{b_i j}}=m^r_{_{b_i j}},$ for all $(b_i,j)\in \J_{\tilde \t_r}.$
We split the product in (\ref{rsphi}) along the column indices as the following:
\begin{eqnarray*}\label{phimrlin}\sum_{1\leqslant r \leqslant k}\frac{\gamma_r }{q^{|\mathfrak{J}_{\t_r}|}}
 \sum\limits_{M_{r}\in\X_{r}^0} \Big(\prod\limits_{(b_u,v)\in\J_{r}\atop b_u\notin \ppi,\,v\in \pj}\!\!\!\!\!\! \theta\,(-l^r_{_{b_uv}}m^r_{_{b_uv}}) \,\, \cdot\!\!\!\!\prod
\limits_{(b_i,j)\in\J_{r}\atop b_i\notin \ppi,\,j\notin \pj}\!\!\!\!\! \theta\,(-l^r_{_{b_ij}}m^r_{_{b_ij}})\Big)\Phi_{m-s}([\tilde M_r])=0.\end{eqnarray*}
Since $\Phi_{m-s}([\tilde M_r])$ is independent of $m^r_{_{b_uv}}$ for all $(b_u,v)\in\J_{r}$ with $b_u\notin \ppi, v\in \pj$
then by identifying   $\tilde m^r_{_{b_ij}}$ with $m^r_{_{b_ij}}$ for all $(b_i,j)\in \J_{\tilde r}=\J_{\tilde \t_r}=\{(b_i,j)\in\J_{r}\,|\,b_i\notin \ppi,\,j\notin \pj\},$
we can separate the summation in the formula above as  follows:
\begin{eqnarray}
%0&=&
%\sum_{r=1}^k\frac{\gamma_r }{q^{|\mathfrak{J}_{\t_r}|}}
% \prod\limits_{(b_u,v)\in\J_{\t_r}\atop b_u\notin \ppi,\,v\in \pj}\sum_{\mbox{\small $m$}^{r}_{_{b_uv}}
% \in \F_q}\theta(-l^{r}_{_{b_uv}}m^{r}_{_{b_uv}})\\&&\cdot\sum_{\mbox{\small $m$}^{r}_{_{b_ij}}\in \F_q\atop {(b_i,j)\in\J_{\tilde t_r}}}\prod\limits_{(b_i,j)\in\J_{\tilde \t_r}}\theta(-l^{r}_{_{b_ij}}m^{r}_{_{b_ij}})\Phi_{m-s}^d([\tilde M_r])\nonumber\\\nonumber
% &\stackrel{(\ref{mrequal})}{=}&
\sum_{1\leqslant r \leqslant k}\frac{\gamma_r }{q^{|\mathfrak{J}_{\t_r}|}}
\!\!\prod\limits_{(b_u,v)\in\J_{r}\atop b_u\notin \ppi,\,v\in \pj}\sum_{\mbox{\small $m$}^r_{_{b_uv}}
\in \F_q}\!\!\theta(-l^r_{_{b_uv}}m^{r}_{_{b_uv}})
%\nonumber\\&\cdot&
\sum_{\mbox{\small $\tilde m$}^r_{_{b_ij}}\in \F_q\atop {(b_i,j)\in\J_{\tilde r}}}\prod\limits_{(b_i,j)\in\J_{\tilde  r}}\!\!\theta(-l^r_{_{b_ij}}
\tilde m^r_{_{b_ij}})\Phi_{m-s}([\tilde M_r])=0.\quad\label{mrlin}\end{eqnarray}
Using (\ref{phim=d}) and (\ref{phitilde}), we rewrite (\ref{mrlin}):
\begin{eqnarray}\label{tildelin}\sum_{1\leqslant r \leqslant k}\Big( \frac{\gamma_r }{q^{|\mathfrak{J}_{\t_r}|}}
\!\!\prod\limits_{(b_u,v)\in\J_{r}\atop b_u\notin \ppi,\,v\in \pj}\!\!\sum_{\mbox{\small $m$}^r_{_{b_uv}}
\in \F_q}\!\!\!\theta(-l^r_{_{b_u v}}m^r_{_{b_u v}})\,{q^{|\mathfrak{J}_{\tilde  r}|}}
\Big)\Phi_{m-s}(\tilde e_{{r}})=0.\quad\end{eqnarray}
For $r \in\{1,\dots, k\}$, let
\begin{equation}\label{lastlin}\delta_r=\gamma_r \,\frac{q^{|\mathfrak{J}_{\tilde r}|}}{q^{|\mathfrak{J}_{r}|}}
\prod\limits_{(b_u,v)\in\J_{r}\atop b_u\notin \ppi,\,v\in \pj}\sum\limits_{\mbox{\small $m$}^{r}_{_{b_uv}}
\in \F_q}\theta(-l^r_{_{b_uv}}m^r_{_{b_uv}}),
%\text{ for }1\leqslant r \leqslant k,
\text{ then }
%Note that $\delta_r$ is independent of $d$. Using (\ref{deltar}) in (\ref{tildelin}) we obtain:
\sum\limits_{1\leqslant r \leqslant k} \delta_r \Phi_{m-s}(\tilde e_{{r}})=0.
\end{equation}
We can now assume $e_{_1}$ is a pattern idempotent with $\gamma_1 \not=0,$ since $\Phi_m$ is $FU$-linear and $U^w\cap U$ acts monomially on $\E_{\t_1}$ with $\t_1=\t^\lambda w$.
Hence we have $l^1_{_{b_u v}}=0$ for all $(b_u ,v )\in\J_{\t_1}$ with
$b_u \notin \ppi,\,v \in \pj.$
%Hence \mbox{$\theta(-l_{_{b_u^{1}v^1}}m_{_{b_u^{1}v^1}})=1$} for all $(b_u^1,v^1)\in\J_{\t_1}\text{ with }b_u^1\notin \ppi,\,v^1\in \pj.$
Therefore
$\delta_1=\gamma_1  q^{|\mathfrak{J}_{\tilde \t_1}|-|\mathfrak{J}_{\t_1}|+c}\not=0$
where $c=\big|\{(b_u,v)\in\J_{\t_1}\,|\,b_u\notin \ppi,\,v\in \pj\}\big|.$
Hence by (\ref{lastlin}), the set $\{\Phi_{m-s}(\tilde e_{{r}})\,|\,1\leqslant r \leqslant k\}$ is linearly dependent.
\end{proof}
We state two easy consequences obtained from the proof of \ref{compatible}:
\begin{Cor}\label{patternlin}If $L$ is a pattern matrix with $\pf(L)=\pf$ and we have $\Phi_m(e_{_{L}})+\sum_{K} \gamma_{_K}\, \Phi_m(e_{_{K}})=0$
with $\pf(\O_K)=\pf,$ then there exist $\delta_K\in F$ such that
$\Phi_{m-s}(e_{_{\R_{\p}(L)}})+\sum_{K} \delta_{_K}\, \Phi_{m-s}(e_{_{\R_{\p}(K)}})=0.$
\end{Cor}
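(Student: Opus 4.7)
The plan is to read off the corollary as a direct consequence of the construction inside the proof of Lemma \ref{compatible}, rather than running a separate argument. What Lemma \ref{compatible} actually establishes is not merely the existence of \emph{some} linear dependence among the $\Phi_{m-s}(e_{\R_\p(K)})$'s, but an explicit transformation rule: given $\sum_r \gamma_r\,\Phi_m(e_{L_r})=0$, the coefficients
\[
\delta_r \;=\; \gamma_r\,\frac{q^{|\J_{\tilde\t_r}|}}{q^{|\J_{\t_r}|}}\prod_{\substack{(b_u,v)\in\J_{\t_r}\\ b_u\notin\ppi,\,v\in\pj}}\sum_{m\in\F_q}\theta(-l^{r}_{b_uv}\,m)
\]
produced in the last step of that proof satisfy $\sum_r \delta_r\,\Phi_{m-s}(e_{\R_\p(L_r)})=0$. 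So my first step is to invoke this formula with the dependence relation furnished by the hypothesis, namely $L_1=L$, $\gamma_1=1$, and the remaining $L_r$ ranging over the $K$'s (with coefficients $\gamma_{K}$).

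Next I would isolate the contribution of $L$. Because $L$ is a pattern matrix with filled pattern $\pf=\pf(L)$, its only nonzero entries (besides the last $1$'s) sit at positions $(b_i,v_i)\in\p$, and in particular $l_{b_uv}=0$ at every position $(b_u,v)$ with $b_u\notin\ppi$ and $v\in\pj$. Consequently every inner sum $\sum_{m\in\F_q}\theta(-l_{b_uv}m)=\sum_{m\in\F_q}1=q$, and the whole product collapses to a nonzero power of $q$. Combined with the unit coefficient $\gamma_1=1$ this gives $\delta_L=q^{|\J_{\tilde L}|-|\J_L|+c}\neq 0$, where $c$ counts the relevant positions.

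The final step is then purely cosmetic: divide the identity $\delta_L\,\Phi_{m-s}(e_{\R_\p(L)})+\sum_K \delta_K\,\Phi_{m-s}(e_{\R_\p(K)})=0$ through by $\delta_L$ and set $\delta_K':=\delta_K/\delta_L$ to obtain the stated form of the corollary. Since no step beyond the book-keeping of Lemma \ref{compatible} is needed, there is no serious obstacle; the only point that must be checked carefully is the vanishing $l_{b_uv}=0$ at the positions $(b_u,v)$ with $b_u\notin\ppi$, $v\in\pj$, which is immediate from Definition \ref{patternmatrix} (a pattern matrix has at most one nonzero entry per row and per column besides the last $1$'s, located at a position belonging to $\p$). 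Thus the nonvanishing of $\delta_L$, which is what allows one to normalise the coefficient of $\Phi_{m-s}(e_{\R_\p(L)})$ to $1$, is the only content the corollary adds to Lemma \ref{compatible}.
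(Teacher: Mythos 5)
Your proposal is correct and is exactly the argument the paper intends: Corollary \ref{patternlin} is stated as a direct consequence of the proof of Lemma \ref{compatible}, read off from the explicit coefficient formula $\delta_r=\gamma_r\,q^{|\J_{\tilde r}|-|\J_r|}\prod\sum_m\theta(-l^r_{b_uv}m)$ in (\ref{lastlin}), together with the observation (also made at the end of that proof) that for the pattern matrix $L$ one has $l_{b_uv}=0$ at the relevant positions, so $\delta_L$ is a nonzero power of $q$ and one may normalise. No gap; same approach as the paper.
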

% \begin{proof}It is a easy consequence from the proof Lemma \ref{compatible}.
% \end{proof}
%
\vspace{1mm}
Recall the definition of $T_{\p}^\lambda$ in \ref{tplambda}, then we have:\vspace{-1mm}
\begin{Cor}\label{compatible1}Let $\pf$ be a filled pattern associated with
some $\lambda$-pattern $\p$ with \mbox{$0\leqslant |\p| \leqslant m-1$.} % Let $s=|p|, $
If \mbox{ $\{\Phi_m(e_{\!_{L}})\,|\,e_{_{L}}\in \CC_{\pf}^\lambda,\, \tab(L)\setminus(\ppi\cup \pj)\in T_{\p}^\lambda\}$}
is    linearly dependent   %for some $\lambda=(n-m,m)\vdash n$,
then %the set
$\{\Phi_{m-|\p|}(e_{_{\R_{\p}(L)}})\,|\,e_{_{L}}\in \CC_{\pf}^\lambda,\, \tab(L)\setminus(\ppi\cup \pj)\in T_{\p}^\lambda\}$
 is  linearly dependent.
\end{Cor}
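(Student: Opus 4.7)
The plan is to reduce directly to Lemma \ref{compatible}, verifying that both the normalization argument in its proof and its output respect the tableau restriction $\tab(L)\setminus(\ppi\cup\pj)\in T_\p^\lambda$. Concretely, starting from a nontrivial relation $\sum_{r=1}^k \gamma_r\,\Phi_m(e_{L_r})=0$ with each $L_r\in\CC_\pf^\lambda$ obeying the tableau constraint, I would first reproduce the WLOG step from the proof of Lemma \ref{compatible}: choose some $r_0$ with $\gamma_{r_0}\neq 0$ and exploit the $FU$-linearity of $\Phi_m$ together with the monomial action of $U^w\cap U$ on $\O_{L_{r_0}}$ (Proposition \ref{mono}) to replace $e_{L_{r_0}}$ by a nonzero scalar multiple of the unique pattern idempotent in that orbit. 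The key observation for the corollary is that $U^w\cap U$ preserves the batch $\M_{\t_{r_0}}$, so this substitution does not alter $\tab(L_{r_0})$, and the constraint $\tab(L_{r_0})\setminus(\ppi\cup\pj)\in T_\p^\lambda$ persists through normalization.

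Next I would invoke the computation (\ref{phielprop})--(\ref{lastlin}) from the proof of Lemma \ref{compatible} verbatim to produce scalars $\delta_r$ such that $\sum_{r=1}^k \delta_r\,\Phi_{m-|\p|}(e_{\R_\p(L_r)})=0$. The explicit form of $\delta_{r_0}$ in (\ref{lastlin}), combined with the vanishing $l^{r_0}_{b_uv}=0$ at every position $(b_u,v)$ with $b_u\notin\ppi$ and $v\in\pj$ (such positions lie outside the pattern of a pattern idempotent), forces every Gauss-sum factor appearing in $\delta_{r_0}$ to be $q$, so $\delta_{r_0}\neq 0$ and the resulting relation is nontrivial.

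Finally I would translate the hypothesis under $\R_\p$ using Remark \ref{shifted}: $\tab(\R_\p(L_r))$ is standard exactly when the shifted tableau $\tab(L_r)\setminus(\ppi\cup\pj)$ is standard. Since the hypothesis places the latter in the non-standard set $T_\p^\lambda$ for each $r$, each $\R_\p(L_r)$ corresponds to a non-standard tableau of shape $(n-m-|\p|,m-|\p|)$, so the produced dependence indeed lives in the restricted set of the conclusion. The only delicate point is the WLOG normalization step, and this is controlled by the fact that orbits under $U^w\cap U$ stay within a single batch; all remaining steps are either algebraic rewriting or direct invocation of the main lemma.
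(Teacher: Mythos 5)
Your proposal is correct and follows exactly the route the paper intends: Corollary \ref{compatible1} is stated as an immediate consequence of the proof of Lemma \ref{compatible}, whose construction keeps the index set $\{L_1,\dots,L_k\}$ fixed and produces $\sum_r\delta_r\Phi_{m-|\p|}(e_{\R_\p(L_r)})=0$ with $\delta_1\neq 0$, so the tableau constraint carries over unchanged. Your verification that the normalization to a pattern idempotent stays within the orbit (hence the batch, hence preserves $\tab(L_r)$ and membership in $\CC_\pf^\lambda$) is precisely the point the paper leaves implicit.
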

% \begin{proof}
% Obviously every $K$ occurring in (\ref{kts}) has the property:
% $$\tab(K)\setminus(\ppi\cup \pj)\in T_{\p}^\lambda.$$ Hence the statement holds by Lemma \ref{compatible}.
% \end{proof}
%
%
%
% \quad\\
\begin{Prop}\label{indep}
Let $\pf$ be a filled pattern associated with
some $\lambda$-pattern $\p$ with \mbox{$0\leqslant |\p| \leqslant m-1.$}
% Suppose $$\mathfrak{C}_{\pf}^\mu =\bigoplus\limits_{\pf(\tilde{\mathcal{O}})= \pf} \bigoplus\limits_{e_{_R}\in \tilde{\mathcal{O}}} Fe_{_R}.$$
If $\Char(F)=0$, then
\[
\mathfrak{C}_{\pf}^\mu=F \Span \{\Phi_m (e_{_L})\,|\,\pf(\mathcal{O}_L)=\pf,
\tab(L)\setminus(\ppi\cup \pj)\in T^\lambda_{\p}\}
\]
as $F$-vector space, where $\mu=(n-m+1,m-1)\vdash n$.
\end{Prop}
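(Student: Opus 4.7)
The inclusion of the right-hand $F$-span into $\mathfrak{C}_{\pf}^\mu$ is immediate from Corollary \ref{keepcon1}, since $\Phi_m$ sends $\mathfrak{C}_{\pf}^\lambda$ into $\mathfrak{C}_{\pf}^\mu$ and each indicated $e_L$ lies in $\mathfrak{C}_{\pf}^\lambda$. For the reverse inclusion, the plan is to match dimensions, using the surjection $\Phi_m \colon \mathfrak{C}_{\pf}^\lambda \twoheadrightarrow \mathfrak{C}_{\pf}^\mu$ of Corollary \ref{f0keepcon1} in combination with linear independence of the spanning set.

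Set $s = |\p|$. By Theorem \ref{iso} every orbit module with filled pattern $\pf$ has a common dimension $d_{\pf}$. Counting the $\s \in \RStd(\mu)$ compatible with $\p$ (meaning $\ppi \subseteq \underline{\s}$ and $\pj \cap \underline{\s} = \emptyset$) gives $\dim \mathfrak{C}_{\pf}^\mu = \binom{n-2s}{m-1-s} \cdot d_{\pf}$. On the $M^\lambda$-side, each compatible orbit $\mathcal O$ whose shifted tableau $\tab(\mathcal O) \setminus (\ppi \cup \pj)$ lies in $T^\lambda_{\p}$ contributes $d_{\pf}$ basis elements, and by Corollary \ref{tssize} there are $|T^\lambda_{\p}| = \binom{n-2s}{m-s-1} = \binom{n-2s}{m-1-s}$ such orbits. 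Hence the spanning set has exactly $\dim \mathfrak{C}_{\pf}^\mu$ elements, and it remains to prove they are linearly independent.

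Suppose, for contradiction, that $\sum_r \gamma_r \Phi_m(e_{L_r}) = 0$ is a nontrivial relation over the restricted set. By Corollary \ref{compatible1} this induces $\sum_r \delta_r \Phi_{m-s}(e_{\mathfrak{R}_{\p}(L_r)}) = 0$, with the $\delta_r$'s the explicit coefficients from the proof of Lemma \ref{compatible}. The map $\mathfrak{R}_{\p}$ collapses every $e_L$ inside a single orbit $\mathcal O$ to a common pattern idempotent $e_{\tilde L_{\mathcal O}}$ of empty pattern in the smaller shape $\nu = (n-m-s,\, m-s)$, and these $\tilde L_{\mathcal O}$ are pairwise distinct as $\mathcal O$ varies (distinct batches of $M^\lambda$ give distinct shifted tableaux). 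Using $FU$-irreducibility of each $M_{\mathcal O}$ (Theorem \ref{irr}, Proposition \ref{uinvariant}) together with the explicit monomial $U^w \cap U$-action on $\mathcal E_{\mathfrak t}$, I reduce to the case where the $e_{L_r}$ are pattern idempotents, one per orbit. In this reduced situation Lemma \ref{compatible} forces $\delta_r \neq 0$ for every $r$, producing a nontrivial relation among $\{\Phi_{m-s}(e_{L^{\tilde{\mathfrak t}_{\mathcal O}}_\emptyset})\}$ indexed by pairwise distinct $\tilde{\mathfrak t}_{\mathcal O} \in \RStd(\nu) \setminus \Std(\nu)$. This contradicts Proposition \ref{trivialorbit} applied to the shape $\nu$, whose proof shows exactly this set is linearly independent.

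The main obstacle is the reduction in the last paragraph: a priori a nontrivial relation among general $e_{L_r}$ may cluster several basis elements inside one orbit, and $\mathfrak{R}_{\p}$ identifies them, so the naive reduction is not automatically nontrivial. Cleanly replacing the relation by one supported on pattern idempotents, one per orbit, will be the technical heart of the argument; it hinges on the irreducibility of each $M_{\mathcal O}$ as an $FU$-module and the transitive monomial action of $U^w \cap U$ on $\mathcal E_{\mathfrak t}$, which allow an orbitwise ``straightening'' of the coefficients inside the original relation before applying Corollary \ref{compatible1}.
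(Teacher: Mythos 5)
Your proposal is correct and follows essentially the same route as the paper: the inclusion via Corollary \ref{keepcon1}, linear independence by pushing a hypothetical relation through Corollary \ref{compatible1} down to the empty-pattern situation and contradicting Lyle's theorem (your detour through Proposition \ref{trivialorbit} is the same argument, since its proof is exactly ``nonstandard $\last$ contradicts Corollary \ref{lyle}''), and the dimension count via Theorem \ref{iso} and Corollary \ref{tssize}. The ``technical heart'' you defer --- straightening the relation so that it is supported on pattern idempotents, one per orbit, before applying $\R_{\p}$ --- is precisely what the proof of Lemma \ref{compatible} already performs (it uses the monomial $U^w\cap U$-action to arrange that $e_{_1}$ is a pattern idempotent with $\gamma_1\not=0$ and then shows $\delta_1\not=0$), so citing Corollary \ref{compatible1} as a black box, as the paper itself does, already covers that step.
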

\begin{proof}
Let $M_{\pf}=\{\Phi_m (e_{_L})\,|\,\pf(\mathcal{O}_L)=\pf,
\tab(L)\setminus(\ppi\cup \pj)\in T^\lambda_{\p}\}$. Obviously by  \ref{keepcon1} we have
$M_{\pf}\subset \mathfrak{C}_{\pf}^\mu.$
We prove first that $M_{\pf}$ is a linearly independent set.

Suppose $M_{\pf}$ is a linearly dependent set then
by Corollary \ref{compatible1}, $\{\Phi_{m-|\p|} (e_{_{\R_{\p}(L)}})\,|\,\pf(\mathcal{O}_L)=\pf,
\tab(L)\setminus(\ppi\cup \pj)\in T^\lambda_{\p}\}$ is  a linearly dependent set.
Assume $\sum \delta_{_{L}} \Phi_{m-|\p|} ( e_{_{\R_{\p}(L)}})=0$  with
$\pf(\mathcal{O}_L)=\pf,$ \mbox{$\tab(L)\setminus(\ppi\cup \pj)\in T^\lambda_{\p}$} and there
exist at least one $ L$ such that $\delta_{_{L}}\not=0.$ Then
\mbox{$0\not=\sum \delta_{_{L}} e_{_{\R_{\p}(L)}}\in \ker \Phi_{m-|\p|}.$}
For $\Char(F)=0$, we have by \ref{sphim}, $S^{(n-m-|\p|,m-|\p|)}=\ker \Phi_{m-|\p|}.$ Thus
$0\not=\sum \delta_{_{L}} e_{_{\R_{\p}(L)}}\in  S^{(n-m-|\p|,m-|\p|)}$.    % with at least one $L$ such that $\delta_{_{L}}\not=0.$
Moreover by \ref{shifted}, we know $\tab\big(\R_{\p}(L)\big)$ is row-standard but non-standard
since $\tab(L)\setminus(\ppi\cup \pj)\in T^\lambda_{\p}.$ This is a
contradiction to  \ref{lyle}.
Thus $M_{\pf}$ is a linearly independent set and then we shall prove $|M_{\pf}|=\dim_F(\CC_{\pf}^\mu)$.

Let $M_\O$ (resp. $M_{\tilde \O}$) denotes orbit modules in $M^\lambda$ (resp. $M^\mu$). By \ref{iso}
for any $\O$ (resp. $\tilde \O$) such that $\pf(\O)=\pf$ (resp. $\pf(\tilde \O)=\pf$), we have
$\dim M_\O=\dim  M_{\tilde \O}:=a.$
Then $|M_{\pf}|=a\cdot|T^\lambda_{\p}|$ and $\dim_F(\CC_{\pf}^\mu)=a\cdot|\RStd(\mu)|.$
By \ref{tssize}, we know $|T^\lambda_{\p}|=|\RStd(\mu)|$ hence
$|M_{\pf}|=\dim_F(\CC_{\pf}^\mu).$ %This finishes the proof.
\end{proof}

% \begin{Remark}\label{leading}
% If an element in the orbit $\O$ appears as the leading term of some element in $S^\lambda$, then of course every element
% in the orbit $\O$ can appear as the leading term of some element in $S^\lambda$. In this sense,
% we say $$M_{\O} \text{ appears as the leading term of  } S^\lambda.$$\end{Remark}

In general, there exist some $L$ such that $\p(\O_L)=\p$ and  $\tab(L)$
is  standard but $\tab(L)\setminus \ppi\cup \pj$ is nonstandard.
\begin{Lemma}\label{non-standard}
Let $\p$ be a  $\lambda$-pattern.
If $\p$ fits some \mbox{$\t\in \RStd(\lambda)$} and $\t$ is non-standard, then $\tilde\t=\t\setminus(\ppi\cup \pj)$ is non-standard.
\end{Lemma}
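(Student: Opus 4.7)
The plan is to reformulate (non-)standardness for a row-standard 2-part tableau as a simple combinatorial condition on a counting function, and then to show that deleting the entries appearing in the pattern can only make this function more negative.

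Write $\t$ with first row $A=(a_1,\dots,a_{n-m})$ and second row $B=\tt=(b_1,\dots,b_m)$, both listed in increasing order, and define
\[
 g_\t(k)\;=\;|A\cap\{1,\dots,k\}|\,-\,|B\cap\{1,\dots,k\}|,\qquad k=0,1,\dots,n.
\]
Since the entries of $\t$ are distinct, a short argument (evaluate at $k=b_i$ and use that $B$ is sorted) shows that $\t$ is standard if and only if $g_\t(k)\geqslant 0$ for every $k$; hence $\t$ is non-standard precisely when $g_\t(k_0)<0$ for some $k_0$. I would establish this equivalence first.

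Next I would record what the hypothesis ``$\p$ fits $\t$'' buys us. It forces $\ppi\subseteq B$, and together with the admissibility condition $\pj\cap\tt=\emptyset$ (which is required in Definition \ref{defofpsi} for $\tilde\t$ to have shape $\nu$) this gives $\pj\subseteq A$. The first row of $\tilde\t$ is then $A\setminus\pj$, its second row is $B\setminus\ppi$, and a direct count yields
\[
 g_{\tilde\t}(k)\;=\;g_\t(k)\;+\;\bigl(|\ppi\cap\{1,\dots,k\}|-|\pj\cap\{1,\dots,k\}|\bigr)
\]
for every $k\in\{0,\dots,n\}$.

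The key (and really the only) combinatorial step is to show that the bracketed correction is $\leqslant 0$ for every $k$. By the definition of a pattern, each $(b,a)\in\p$ satisfies $a<b$, so the assignment $b\mapsto a$ is a well-defined injection $\ppi\to\pj$ that strictly decreases values; restricting it to $\ppi\cap\{1,\dots,k\}$ lands inside $\pj\cap\{1,\dots,k\}$, which proves the inequality. Combining everything, $g_{\tilde\t}(k)\leqslant g_\t(k)$ for all $k$, so evaluating at the index $k_0$ produced by non-standardness of $\t$ gives $g_{\tilde\t}(k_0)<0$, i.e.\ $\tilde\t$ is non-standard as well. There is no real obstacle here; the only step that takes any thought is picking the right counting function in the first place.
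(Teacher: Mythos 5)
Your proof is correct and complete, but it takes a genuinely different route from the paper. The paper argues by induction on $|\p|$: it claims that removing a single pattern position $(b_i,j)$ from $\t$ preserves non-standardness and then iterates, explicitly leaving the verification of that one-step claim to the reader. You instead give a one-shot global argument: you encode (non-)standardness of a row-standard two-row tableau via the lattice-word (ballot) function $g_\t(k)=|A\cap\{1,\dots,k\}|-|B\cap\{1,\dots,k\}|$, compute exactly how deletion of $\ppi\cup\pj$ shifts this function, and observe that the condition $a_i<b_i$ in the definition of a pattern makes $b_i\mapsto a_i$ an injection $\ppi\cap\{1,\dots,k\}\hookrightarrow\pj\cap\{1,\dots,k\}$, so the correction term is always $\leqslant 0$ and $g_{\tilde\t}\leqslant g_\t$ pointwise. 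All the steps check out: the equivalence between standardness and $g_\t\geqslant 0$ is the usual ballot criterion (and is insensitive to the fact that $\tilde\t$ is filled from $\{1,\dots,n\}\setminus(\ppi\cup\pj)$ rather than an initial segment), and your use of $\pj\cap\tt=\emptyset$ to place $\pj$ inside the first row is exactly the implicit admissibility hypothesis needed for $\tilde\t$ to have shape $\nu$ in Definition \ref{defofpsi}. What your approach buys is that the monotonicity $g_{\tilde\t}\leqslant g_\t$ handles all $s$ pattern positions simultaneously, so no induction is needed and, unlike the paper's sketch, no details are deferred; the paper's inductive formulation, for its part, is the $s=1$ special case of your inequality repeated $s$ times.
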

\begin{proof}If $s=|\p|=0,$ the lemma holds obviously. Now we assume $s>0.$
Let $$\t=\begin{tabular}{|c|c|c|c|c|c|}\hline $a_1$ & $a_2$&$\cdots$&$a_m$&$\cdots$&$a_{n-m}$\\
\hline $b_1$ & $b_2$&$\cdots$&$b_m$\\\cline{1-4} \end{tabular}\, \in \RStd(\lambda)\setminus \Std(\lambda).$$  If one can prove for any $(b_i,j)\in \p$  that $\t\setminus \{b_i,j\}$ is non-standard, then the lemma  holds inductively. We leave the details to the readers.
% For   $(b_i,j)\in \p$ we have the following three cases:
% \begin{itemize}
% \item [(1)] $\col_\t(b_i)=\col_\t(j).$\\
% Obviously in this case $\tilde\t=\t\setminus{\{b_i,j\}}$ is non-standard since $\t$ is non-standard.
% \item [(2)] $\col_\t(b_i)<\col_\t(j).$\\
% Recall that $j<b_i$, since $(b_i,j)\in \J_\t$ (see Definition \ref{jt}). Thus  $b_t>a_t$ for all $i\leqslant t \leqslant \col_\t(j)$. Hence  we must have some $b_r<a_r$ where $r<i$ or $r>col_{\t}(j)$ since  $\t$ is non-standard.
% Then of course $\tilde \t=\t\setminus{\{b_i,j\}}$ is non-standard. (Observe that columns 1 up to $i-1$ and $\col_\t(j)+1$
% up to $n-m$ are preserved  when entries $b_i$ and $j$ are removed from $\t$).
% \item [(3)] $\col_\t(b_i)>\col_\t(j).$
% \begin{itemize}
% \item [1)]if there is some $r<\col_{\t}(j)$ or $r>i$ such that $b_r<a_r,$ then  $\tilde \t=\t\setminus{\{b_i,j\}}$ is non-standard by the same argument as in (2).
% \item [2)] if there is some $r\geqslant \col_{\t}(j)$ and $ r<i$ such that $b_r<a_r$, then $b_r<a_{_{r+1}}$, hence $\tilde\t=\t\setminus{\{b_i,j\}}$ is non-standard since column $r$ in $\t$ has entries $a_{_{r+1}}$ and $b_r$.
% \item [3)] if $ b_i<a_i,$ then $b_{i-1}<b_i<a_{i}$, hence $\tilde \t=\t\setminus{\{b_i,j\}}$ is non-standard,
% since column $i-1$ in $\t$ has entries $a_i$ and $b_{i-1}$.
% \end{itemize}
% \end{itemize}
\end{proof}

\begin{Lemma}\label{lr}
Let  $ \p$ be a  $\lambda$-pattern and  let $e_{_L}, e_{_R}\in M^\lambda $ such that
$\p(\O_L)=\p(\O_R)=\p,$ then $\tab(R)<\tab(L)$ implies $\tab(\tilde R)< \tab(\tilde L).$
\end{Lemma}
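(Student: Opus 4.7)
The plan is to reduce the claim to a purely order-theoretic observation about sorted $m$-tuples. Set $\t=\tab(L)$ and $\s=\tab(R)$, with second rows $\tt=(b_1<\cdots<b_m)$ and $\underline{\s}=(c_1<\cdots<c_m)$; recall that the order on $\RStd(\lambda)$ is the lexicographic order on these second rows. The first step is to observe that, since $\p(\O_L)=\p(\O_R)=\p$, the pattern-fitting condition in Definition \ref{patternmatrix} forces $\ppi\subseteq \tt\cap\underline{\s}$ and $\pj\cap(\tt\cup\underline{\s})=\emptyset$; the latter because the entries of $\pj$ label columns containing a ``last $1$'' and therefore sit in the first row of any fitting tableau.

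Next I would invoke the following elementary characterization of lex order on sorted $m$-tuples: writing $A=\{b_1,\ldots,b_m\}$ and $B=\{c_1,\ldots,c_m\}$, one has $\underline{\s}<\tt$ if and only if $\min(A\triangle B)\in B$. Since $\ppi\subseteq A\cap B$, the set $\ppi$ is disjoint from $A\triangle B$, and therefore $A\triangle B=(A\setminus\ppi)\triangle(B\setminus\ppi)$. The same min-element characterization then yields
\[
\underline{\s}<\tt \;\Longleftrightarrow\; \underline{\s}\setminus\ppi<\tt\setminus\ppi
\]
as sorted tuples inside $\{1,\ldots,n\}\setminus\ppi$.

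Finally, by Remark \ref{shifted} the second rows of $\tab(\tilde L)$ and $\tab(\tilde R)$ are obtained from $\tt\setminus\ppi$ and $\underline{\s}\setminus\ppi$ via the order-preserving relabeling $\{1,\ldots,n\}\setminus(\ppi\cup\pj)\to\{1,\ldots,n-2|\p|\}$ that sends the $i$-th smallest element to $i$. Since monotone bijections preserve lex order on sorted tuples, this transports the strict inequality to $\tab(\tilde R)<\tab(\tilde L)$, as required. There is no real obstacle here; the entire argument is bookkeeping, with the only substantive step being the symmetric-difference characterization of lex order, which is a one-line verification.
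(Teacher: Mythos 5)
Your proof is correct, and it takes a genuinely different route from the paper's. The paper reduces to $|\p|=1$ by removing one pattern element at a time, then analyses where the removed second-row entry $k$ sits relative to the first index $i$ at which the two second rows disagree, writing out only the case $k=t_i$ and leaving the remaining cases to the reader. You instead handle the whole pattern at once via the observation that for two distinct sorted $m$-tuples with underlying sets $A,B$ one has $B<A$ in lexicographic order if and only if $\min(A\triangle B)\in B$, combined with $(A\setminus\ppi)\triangle(B\setminus\ppi)=A\triangle B$ whenever $\ppi\subseteq A\cap B$; this turns the lemma into an immediate consequence of the definitions and eliminates both the induction and the case split, which is a real gain in completeness over the paper's sketch. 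All the inputs you need are available: $\ppi\subseteq\tt\cap\underline{\s}$ and $\pj\cap(\tt\cup\underline{\s})=\emptyset$ follow from $\p\subseteq\J_{\tab(L)}\cap\J_{\tab(R)}$ together with Definition \ref{jt}, and Remark \ref{shifted} supplies the single order-preserving relabelling (depending only on $\p$) that carries both second rows to those of $\tab(\tilde L)$ and $\tab(\tilde R)$. One small correction to your justification: the columns indexed by $\pj$ are precisely columns \emph{not} containing a last $1$ (the columns containing last $1$'s are those indexed by $\tt$); it is exactly because $\J_\t$ by definition consists of positions $(i,j)$ with $j\notin\tt$ that the elements of $\pj$ lie in the first row of any fitting tableau. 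The conclusion $\pj\cap(\tt\cup\underline{\s})=\emptyset$ that you actually use is nonetheless correct.
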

\begin{proof}Let $\underline {\t_1}=\underline{\tab(L)}=(t_1,t_2,...,t_m),\, \underline {\t_2}=\underline{\tab(R)}=(r_1,r_2,...,r_m)$ \vspace{1mm}
and assume $\t_2<\t_1.$ Working step by step, by removing one element in the pattern at each step we may assume that $\p=\{(k,j)\}$
consists of one element.
% $0\not=l_{kj}\in \F_q.$
Since $\p$ fits $\t_1$ and $\t_2$,
$k\in \{t_1,t_2,\dots,t_m\}\cap\{r_1,r_2,\dots,r_m\}.$ Note that
\mbox{$\underline {\tilde \t_1}=(t_1,t_2,\dots,t_m)\setminus\{k\}$, $\underline {\tilde \t_2}=(r_1,r_2,\dots,r_m)\setminus\{k\}.$}
Assume $i$ is the smallest number satisfying $r_i < t_i$.
Then by the minimality of $i$, we obtain:
$k < r_i \text{ or } k \geqslant t_i .$
% \begin{itemize}
% \item[(1)]
%If $k =r_j$ with $j<i$ then $k=t_j=r_j$ and
% $$\underline {\tilde \t_1}=(r_1,\dots,r_{j-1},r_{j+1},\dots,r_{i-1},t_{i},\dots,t_m),$$
%  $$\underline {\tilde \t_2}=(r_1,\dots,r_{j-1},r_{j+1},\dots,r_{i-1},r_{i},\dots,r_m).$$
%  Hence
%we obtain $\tilde \t_2<\tilde \t_1.$
In fact, for $k < r_i \text{ or }k > t_i,$ it is easy to get $\tilde \t_2<\tilde \t_1.$ Here we only deal with the case $k=t_i.$
% \item[(2)]
In this case, $ t_{i+1}> t_i > r_i$. And  we get
$\underline {\tilde \t_1}=(r_1,\dots,r_{i-1},t_{i+1},\dots,t_m),$ $\underline {\tilde \t_2}=(r_1,\dots,r_{i-1},r_{i},\dots,r_m)\setminus \{k\}$
 where $k=r_j$ such that $j>i.$ Hence we obtain $\tilde \t_2<\tilde \t_1.$
% \end{itemize}
\end{proof}

\begin{Theorem}\label{main}Let $\Char(F)=0,\,\lambda=(n-m,m)\vdash n$. For \mbox{$e_{_L}\in \O\subset M^\lambda$} with $ \p=\p(\O)$
there exists  $v\in S^\lambda$ such that $\last(v)=\tab(L)$ and \mbox{$\ttop(v)=e_{_L}$} if and only if
$\tab(L)\setminus(\ppi\cup \pj) \text{ is a shifted standard $\mu$-tableau,}$ where $\mu=(n-m-|\p|,m-|\p|)$;
here ``shifted''   means the tableau is filled by numbers in $\{1,2,\ldots,n\}\setminus (\ppi\cup \pj).$
\end{Theorem}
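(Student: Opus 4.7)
The plan is to prove the two directions of the equivalence by complementary methods; throughout I will reduce to the case where $L$ is the unique pattern idempotent of $\O$. This reduction is legitimate since $\tab(L)\setminus(\ppi\cup\pj)$ is constant on $\O$, and the monomial action of $U^w\cap U$ on $\E_\t$ from Proposition~\ref{mono}, together with the $G$-invariance of $S^\lambda$ and the $U$-invariance of each orbit module (Proposition~\ref{uinvariant}), transports the property ``$e_{_L}=\ttop(v)$ for some $v\in S^\lambda$'' freely between the idempotents of a single orbit. For the forward direction, suppose $v\in S^\lambda$ with $\last(v)=\tab(L)$ and $\ttop(v)=e_{_L}$; since $\Phi_m$ preserves filled patterns (Theorem~\ref{keepcon}) and $v\in\ker\Phi_m$ by Proposition~\ref{sphim}, the component $v_\pf\in\CC^\lambda_\pf$ with $\pf=\pf(\O_L)$ again lies in $\ker\Phi_m$ and has the form $v_\pf=e_{_L}+\sum_K c_K e_{_K}$ with $\tab(K)<\tab(L)$. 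Applying Corollary~\ref{patternlin} to the relation $\Phi_m(v_\pf)=0$ yields
\[
\tilde v\;=\;e_{_{\R_\p(L)}}+\sum_K \delta_K\,e_{_{\R_\p(K)}}\;\in\;\ker\Phi_{m-s}\;=\;S^{(n-m-s,m-s)},
\]
with coefficient $1$ on $e_{_{\R_\p(L)}}$. Lemma~\ref{lr} promotes $\tab(K)<\tab(L)$ to $\tab(\R_\p(K))<\tab(\R_\p(L))$, so $\last(\tilde v)=\tab(\R_\p(L))$ and $\ttop(\tilde v)=e_{_{\R_\p(L)}}$; Corollary~\ref{lyle} then forces $\tab(\R_\p(L))$ to be standard, which via Remark~\ref{shifted} is exactly the shifted-standardness of $\tab(L)\setminus(\ppi\cup\pj)$.

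For the reverse direction I first build a basis of $S^\lambda$ from Proposition~\ref{indep}. Let $\mathcal N\subseteq\Xi_{m,n}$ be the disjoint union, over filled patterns $\pf$ with $|\p|<m$, of $\{K:\pf(\O_K)=\pf,\;\tab(K)\setminus(\ppi\cup\pj)\in T^\lambda_\p\}$, and set $\mathcal B:=\Xi_{m,n}\setminus\mathcal N$; thus $\mathcal B$ consists of those $L$ for which $\tab(L)\setminus(\ppi\cup\pj)$ is shifted standard (the full-pattern $L$ are included, their shifted tableau being empty and trivially standard). Since every $\mu$-pattern satisfies $|\p|\leqslant m-1$, summing Proposition~\ref{indep} over $\pf$ gives $|\mathcal N|=\dim M^\mu$, and hence $|\mathcal B|=\dim M^\lambda-\dim M^\mu=\dim S^\lambda$. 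For $L\in\mathcal B$ with $|\p(L)|<m$, I expand $\Phi_m(e_{_L})$ in the basis $\{\Phi_m(e_{_K})\}_{K\in\mathcal N}$ of $\CC^\mu_{\pf(\O_L)}$ and define $v_L:=e_{_L}-\sum_K c_K e_{_K}\in\ker\Phi_m=S^\lambda$; for full-pattern $L$, I set $v_L:=e_{_L}\in S^\lambda$ (Proposition~\ref{fullcon}). Reading off coefficients on $e_{_L}$ ($L\in\mathcal B$) against $e_{_K}$ ($K\in\mathcal N$) shows that $\{v_L\}_{L\in\mathcal B}$ is linearly independent, hence a basis of $S^\lambda$.

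It remains to upgrade each $v_L$ to some $w_L\in S^\lambda$ with $\ttop(w_L)=e_{_L}$, for which I will prove a \emph{generalized forward} statement: for any nonzero $v\in S^\lambda$, $\ttop(v)$ lies in the $F$-span of $\mathcal B_{\last(v)}:=\{e_{_{L'}}:\tab(L')=\last(v),\;\tab(L')\setminus(\ppi\cup\pj)\text{ shifted standard}\}$. Decomposing $v=\sum_\pf v_\pf$, for each $\pf$ with $\last(v_\pf)=\t=\last(v)$, $\ttop(v_\pf)$ lives inside the single orbit module $M_{\O_{\t,\pf}}$, which is $U$-irreducible by Proposition~\ref{uinvariant}. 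Hence there exist $g_i\in U$, $c_i\in F$ with $\sum_i c_i\,g_i\cdot\ttop(v_\pf)=e_{_{K_0}}$, where $K_0$ is the pattern idempotent of $\O_{\t,\pf}$; setting $v_\pf':=\sum_i c_i\,g_i v_\pf\in S^\lambda\cap\CC^\lambda_\pf$ gives $\ttop(v_\pf')=e_{_{K_0}}$, to which the forward-direction argument applies verbatim, forcing $\t\setminus(\ppi\cup\pj)$ shifted standard. Letting $T_\t\subseteq\M_\t$ denote the subspace of all $\ttop(v)$ for $v\in S^\lambda$ with $\last(v)=\t$, this yields $T_\t\subseteq F\text{-span}(\mathcal B_\t)$; the identity $\sum_\t\dim T_\t=\dim S^\lambda=\sum_\t|\mathcal B_\t|$ then forces equality $T_\t=F\text{-span}(\mathcal B_\t)$, so every $e_{_L}\in\mathcal B_\t$ belongs to $T_\t$ and is therefore realized as $\ttop(w_L)$ for some $w_L\in S^\lambda$. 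The main technical obstacle is the coefficient-control inside the $\R_\p$-reduction, which Corollary~\ref{patternlin} handles cleanly only when the leading term sits at a pattern idempotent; this is exactly why the WLOG reduction at the outset and the $U$-twist inside the generalized forward step are essential.
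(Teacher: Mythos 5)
Your proof is correct. The forward (necessity) direction is essentially the paper's own argument: project $v$ onto the filled-pattern component $\CC^\lambda_{\pf}$ (legitimate because $\Phi_m=\bigoplus_{\pf}\Phi_{m,\pf}$), normalize the leading term to the pattern idempotent by the monomial $U^w\cap U$-action, apply Corollary \ref{patternlin} to push the relation down to $S^{(n-m-s,m-s)}$, and invoke Lemma \ref{lr} and Corollary \ref{lyle}. The reverse (sufficiency) direction, however, is a genuinely different route. The paper fixes one good $e_{_L}$, expands $\Phi_m(e_{_L})$ in the basis of $\CC^{\mu}_{\pf}$ supplied by Proposition \ref{indep}, and proves by a contradiction argument (twisting a hypothetical larger term to a pattern idempotent, reducing by $\R_{\p}$, and contradicting Lyle's theorem) that every correction term has strictly smaller tableau, so that $v=e_{_L}-\sum a_{_R}e_{_R}$ has the prescribed leading term. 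You instead run a global dimension count: $|\mathcal B|=\dim M^\lambda-\dim M^\mu=\dim S^\lambda$ from Proposition \ref{indep} and Theorem \ref{kerinter}, the leading-term filtration identity $\sum_{\t}\dim T_{\t}=\dim S^\lambda$, and the ``generalized forward'' statement $T_{\t}\subseteq F\Span(\mathcal B_{\t})$ (which needs the $U$-irreducibility of the orbit modules, Proposition \ref{uinvariant}, to move an arbitrary nonzero leading term onto the pattern idempotent before \ref{patternlin} applies). Equality of dimensions then forces every good $e_{_L}$ to occur as a leading term. Your version is cleaner for the bare existence statement and avoids the paper's delicate triangularity claim, but it yields less: the paper's construction controls the lower-order terms of $v_{_L}$ (they are indexed by $T^\lambda_{\p}$ and have strictly smaller tableaux), which is exactly what is exploited afterwards in Proposition \ref{intbasis} to descend to an integral basis; with your argument that additional step would still have to be carried out separately. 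Two small points worth making explicit: the case $|\p|=m$ should be excluded from the $\R_{\p}$-reduction (Corollary \ref{patternlin} is stated for $s\leqslant m-1$), though there the conclusion is vacuous since the shifted tableau has empty second row; and one should note that Lemma \ref{lr} guarantees no two terms of the reduced relation collide in the leading batch, so the coefficient of $e_{_{\R_{\p}(L)}}$ really survives.
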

\proof
Let $e_{_L}\in \O\subset M^\lambda$ with $\pf=\pf(\O),\,\p=\p(\O)$ and $s=|\p|.$ In particular, we have discussed two
special types of orbits in Section \ref{specialorbit}. One is the case of orbits with full pattern:
For  $s=|\p|=m,$ by Proposition \ref{fullcon} and Proposition \ref{sphim}, we have
$e_{_L}\in S^\lambda \text{ and }\tab(L)\in\Std(\lambda).$
In particular, in this case %$\ppi = \underline {\tab(L)},$ hence
$\tab(L)\setminus(\ppi\cup \pj) \text{ is a shifted standard $(n-2m,0)$-tableau}.$ The other type of special orbits are those with $s=0$. In this case the sufficiency is  \ref{trivialorbit} and the necessity is  \ref{lyle}. Now we assume \mbox{$1\leqslant  s \leqslant m-1.$}
\begin{itemize}
\item[(1)]$(\Longleftarrow)$
Assume %$e_{_L}\in \O\subset M^\lambda$ with $\pf=\pf(\O),\,\p=\p(\O)$ and
\mbox{$\tab(L)\setminus(\ppi\cup \pj)$} is a shifted standard $\mu$-tableau. By \ref{non-standard},
we know $\tab(L)$ is standard.
By \ref{indep}, \begin{equation}\label{thirdlin}
                 \Phi_m (e_{\!_L})= \sum {a_{_{R}}\Phi_m (e_{_R})}\in \mathfrak{C}_S^{\mu}
                \end{equation}
 where  $e_{_R}\in \O_R\subset M^\lambda$ with $\pf(\O_R)=\pf,\,$ \mbox{$\tab(R)\setminus(\ppi\cup \pj)\in T^\lambda_{\p}$} and $a_{_{R}}\in F$.
% such that
We claim that all occurring $R$ with $a_{_{R}}\not=0$ has the property: \mbox{$\tab(R)<\tab(L).$}
Otherwise, assume there exist  some $R$ such that $a_{_{R}}\not=0 \text{ and } \tab(R)>\tab(L).$
We choose some $u\in U^w\cap U$ where $\t^\lambda w=\tab(R)$ such that
$e_{_{R_0}}=e_{_{R}}\circ u$, is a pattern idempotent.
Hence we obtain:
$\Phi_m (e_{\!_L}\circ u)- a_{_{R}}\Phi_m (e_{_{R_0}})-\Phi_m \big(\sum_{_{R'\not=R}} a_{_{R'}}e_{_{R'}}\circ u\big)=0.$
Suppose $e_{_L}\circ u=\sum_{_K} \alpha_{_K} e_{\!_K},$ and
$\sum_{_{R'\not=R}} a_{_{R'}} e_{_{R'}}\circ u=\sum_{_N} \beta_{_N} e_{\!_N}.$
Then:
\begin{equation}\label{firstlin}
 \sum\nolimits_K \alpha_{_K}\Phi_m (e_{\!_K})- a_{_{R}}\Phi_m (e_{_{R_0}})-\sum\nolimits_N \beta_{_N} \Phi_m(e_{\!_N})=0
\end{equation}
where $\tab(K)=\tab(L), \,\tab(N)\setminus(\ppi\cup \pj)\in T_{\p}^\lambda$ for all $K$ and $N.$

Denote $\tilde K=\R_{\p}(K),\,\tilde R_0=\R_{\p}(R_0),\tilde N=\R_{\p}(N)$. Then  by \ref{patternlin} and (\ref{firstlin}), we obtain:
\begin{equation}\label{secondlin}
\sum\nolimits_K \delta_{_K}\Phi_{m-|\p|} (e_{\!_{\tilde K}})- a_{_{R}}\Phi_{m-|\p|} (e_{_{ {\tilde R_0}}})-\sum\nolimits_N \delta_{_N} \Phi_{m-|\p|}(e_{\!_{\tilde N}})=0
\end{equation}
where $\tab(\tilde K)=\tab(\tilde L), \tab(\tilde R_0)=\tab(\tilde R)$ and $\tab(\tilde N)$ is nonstandard.
%Now we get $0\not=\sum_K \delta_{_K}e_{\!_{\tilde K}}- a_{_{R}}e_{_{ {\tilde R_0}}}-\sum_N \delta_{_N} e_{\!_{\tilde N}}\in \ker \Phi_{m-|\p|}.$
Since $\Char(F)=0,$ by  \ref{sphim} we have $S^{\mu}=\ker \Phi_{m-|\p|}$ where $\mu=(n-m-|\p|, m-|\p|)$ and then from (\ref{secondlin}) we get:
$$0\not=x=\sum\nolimits_K \delta_{_K}e_{\!_{\tilde K}}- a_{_{R}}e_{_{ {\tilde R_0}}}-\sum\nolimits_N \delta_{_N} e_{\!_{\tilde N}}\in S^\mu .$$
By assumption we have $\tab(R)>\tab(L)$ then from Lemma \ref{lr}, we obtain
%$\tab(\tilde R)>\tab(\tilde L)$ hence
$\tab(\tilde R_0)=\tab(\tilde R)>\tab(\tilde L)=\tab(\tilde K).$
Moreover we know $\tab(\tilde R_0)$ and $\tab(\tilde N)$ are non-standard. Hence we obtain that $\last(x)$ is non-standard, which is a contradiction to
 \ref{lyle}.
Let $v=e_{_L}-\sum {a_{_{R}}e_{_R}}.$  By \ref{sphim} and (\ref{thirdlin}) we get
$v\in \ker\Phi_m=S^\lambda$ with $\last(v)=\tab(L),$ $\ttop(v)=e_{\!_L}.$
This finishes the proof of the sufficiency.
\item[(2)] $(\Longrightarrow)$
Suppose %$e_{_L}\in \O\subset M^\lambda$ with $\p=\p(\O)$ such that
there exists  $v\in S^\lambda$ such that $\last(v)=\tab(L)$ and $\ttop(v)=e_{_L}.$
Assume $0\not=v=e_{\!_L}-\sum\nolimits_R {a_{_{R}}e_{_R}}\in S^\lambda=\ker \Phi_m$
where $\tab(R)<\tab(L)$  and $0\not=a_{_{R}}\in F.$ Thus \begin{equation}\label{v}\Phi_m(e_{\!_L})=\sum {a_{_{R}}\Phi_m(e_{_R}}).\end{equation}
By   \ref{keepcon1},  $\Phi_m(e_{\!_L})\in \CC_{\pf}^\mu,$
hence we can assume for all $R$ in (\ref{v}), we have  $\pf(\O_R)= \pf$ where $\O_R$ denotes the orbit containing $R$;
% Since we have proved in (1) that
% for an arbitrary $e_{_K}\in \O_{_L}$ with \mbox{$ \tab(K)\setminus(\ppi\cup \pj)$} is standard,
% there exists  $v\!_{_K}\in S^\lambda$ such that $$\last(v\!_{_K})=\tab(K)\text{ and }\ttop(v\!_{_K})=e_{_K}.$$
moreover, from the proof in (1), we can further assume:  %that all occurring $R$ in (\ref{v}) have the property:
$\tab(R)\setminus(\ppi\cup \pj)\in T^\lambda_{\p}.$

In fact it suffices to prove for $L$ is a pattern matrix since $\tab(K)=\tab(L)$ for all $e_{_K}\in \O_L.$
Assume $L$ is a pattern matrix, then by  \ref{patternlin} and (\ref{v}),
$\Phi_{m-|\p|}( e_{_{\tilde L}})= \sum\nolimits_{_R} {b_{_{R}} \Phi_{m-|\p|}( e_{_{\tilde R}})}$
for some $b_{_{R}} \in F.$
That is,
\mbox{$v_{_{\tilde L}}:=e_{_{\tilde L}}-\sum b{_{_R}}   e_{_{\tilde R}}\in \ker\Phi_{m-|\p|} = S^{\mu}$}
where \mbox{$\mu=(n-m-|\p|,m-|\p|).$}
By \ref{lr}, we have \mbox{$\tab(\tilde R)< \tab(\tilde L)$} and from   \ref{lyle}, we obtain
$\tab(\tilde L)=\last(v_{_{\tilde L}})\in \Std(\mu).$ Thus by   \ref{shifted},
$\tab(L)\setminus(\ppi\cup \pj)$  is  a shifted standard $\mu$-tableau. This finishes the proof of the necessity.\qed\end{itemize}
%\end{proof}

\begin{Defn}\label{basisfor0}
Suppose $\Char F=0.$ Let $\lambda=(n-m,m)\vdash n$ and $e_{_L}\in \O \subset M^\lambda$ with
$\pf(\O)=\pf$ and $\p(\O)=\p$. Suppose \mbox{$\tab(L)\setminus(\ppi\cup \pj)$} is  a shifted standard $\mu$-tableau with \mbox{$\mu=(n-m-|\p|, m-|\p|).$} Choose $v\!_{_L}\in S^\lambda$ such that $\last(v\!_{_L})=\tab(L)$ and $\ttop(v\!_{_L})=e_{_L}.$ (By  \ref{main} there exists such an $v\!_{_L}$). Let
$$\mathcal B ^\lambda_{\pf}:=\mathcal B ^\lambda_{\pf,_F}=\{v\!_{_L}\,|\,e_{_L}\in \O\subset M^\lambda, \pf(\O)=\pf, \tab(L)\setminus(\ppi\cup \pj) \text{ is  standard}\}.$$ Finally   take
$\mathcal B ^\lambda=\mathcal B ^\lambda_{_F}=\dot{\bigcup\limits_{\pf}}\,\mathcal B ^\lambda_{\pf}\,.$
Note that this union is disjoint, since its elements are distinguished by their leading term $\ttop(v\!_{_L})=e_{_L}$ and we say this $e_{_L}$  appears as leading term of $S^\lambda$.
\end{Defn}

We choose now a suitable principal ideal domain $\Lambda$ (containing a primitive $p$-th root of unity), with quotient field $Q$
of characteristic zero. Moreover we assume that $q=q\cdot 1_\Lambda\in \Lambda$ is invertible. Finally
We assume that  our field $F$ is epimorphic image of $\Lambda$ and has characteristic $l$ coprime to $q.$
Note that
$M^\lambda_R=M^\lambda_\Lambda\otimes_\Lambda R\,\text{ and } S^\lambda_R=S^\lambda_\Lambda\otimes_\Lambda R\text{ for }R=Q \text{ or } F.$

\begin{Prop}\label{intbasis}In the notation of  \ref{basisfor0}, replacing $F$ by $Q$, we have  $0\not=v_{_L}\in S^\lambda_\Lambda$ and
$v_{_{L,F}}=v_{_L}\otimes_{_\Lambda} 1_{_F}\not=0$ with $\ttop(v_{_{L,F}})=e_{_L}.$
\end{Prop}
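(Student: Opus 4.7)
The plan is to verify that the construction of $v_L$ given in the proof of Theorem~\ref{main} already goes through integrally, over $\Lambda$. Two preliminary observations: since $\Lambda$ contains a primitive $p$-th root of unity, the character $\theta$ takes values in $\Lambda^\times$; and since $q\in\Lambda^\times$, every idempotent $e_L$ of Definition~\ref{defofide} already lies in $M^\lambda_\Lambda$. Because $\Phi_m$ is $\Lambda$-linear, it sends $M^\lambda_\Lambda$ into $M^\mu_\Lambda$, where $\mu=(n-m+1,m-1)$.

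Recall that $v_L=e_L-\sum_R a_R\,e_R\in S^\lambda_Q$, where $R$ runs over matrices with $\pf(\O_R)=\pf$ and $\tab(R)\setminus(\ppi\cup\pj)\in T^\lambda_\p$, and where the scalars $a_R\in Q$ are the unique solution of $\Phi_m(e_L)=\sum_R a_R\,\Phi_m(e_R)$ in $\mathfrak{C}^\mu_{\pf,Q}$. The main step is to show $a_R\in\Lambda$. For this I would upgrade Proposition~\ref{indep} to the integral statement that $\{\Phi_m(e_R)\}_R$ is a \emph{$\Lambda$-basis} of the free $\Lambda$-module $\mathfrak{C}^\mu_{\pf,\Lambda}$. $\Lambda$-linear independence is inherited from $Q$-linear independence, and the cardinality matches the $\Lambda$-rank of $\mathfrak{C}^\mu_{\pf,\Lambda}$ by Theorem~\ref{iso} together with Corollary~\ref{tssize}; to see that these elements $\Lambda$-generate, I would expand each $\Phi_m(e_R)$ in the idempotent basis $\{e_{\tilde K}\}$ of $\mathfrak{C}^\mu_{\pf,\Lambda}$ using the explicit formulas in the proof of Theorem~\ref{keepcon}, in particular formula~(\ref{mbdvfixed}), and check that the resulting transition matrix is block-triangular with respect to an ordering compatible with the pattern-preservation statement of Corollary~\ref{keepcon1}, with diagonal blocks of unit determinant over $\Lambda$ (the nonzero entries come from $\theta$ applied to products involving nonzero pattern entries $l_{b_u v}\in\F_q^\times$, and are therefore units in $\Lambda$ up to scalars that are powers of $q$).

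Granted $a_R\in\Lambda$, we obtain $v_L\in M^\lambda_\Lambda\cap\ker(\Phi_m)=S^\lambda_\Lambda$, the last equality holding because $S^\lambda_\Lambda=S^\lambda_Q\cap M^\lambda_\Lambda=\ker(\Phi_m|_{M^\lambda_Q})\cap M^\lambda_\Lambda$ by Proposition~\ref{sphim}. The reduction $v_{L,F}=e_L-\sum_R \bar a_R\,e_R\in S^\lambda_F$ is then automatically nonzero, since $e_L$ occurs with unit coefficient $1$, and $\ttop(v_{L,F})=e_L$ since every other term has $\tab(R)<\tab(L)$ by the sufficiency argument in the proof of Theorem~\ref{main}. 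The principal obstacle is the integral enhancement of Proposition~\ref{indep}, i.e.\ the unit-determinant claim for the transition matrix; once this is verified via the explicit pattern calculations already developed in Sections~2 and~3, everything else is formal.
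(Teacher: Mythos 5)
Your overall architecture is sound and your reduction is correct: everything does come down to showing that the coefficients $a_{_R}$ in $\Phi_m(e_{_L})=\sum_R a_{_R}\Phi_m(e_{_R})$ lie in $\Lambda$, after which nonvanishing of the reduction and $\ttop(v_{_{L,F}})=e_{_L}$ are formal. But the step you yourself flag as ``the principal obstacle'' is in fact the entire content of the proposition, and you have not supplied it. The assertion that the transition matrix from $\{\Phi_m(e_{_R})\}$ to the idempotent basis of $\mathfrak{C}^\mu_{\pf,\Lambda}$ is block-triangular with unit-determinant diagonal blocks is neither obvious nor verified: $\Phi_m=\sum_d\Phi_m^d$ scatters each source batch $\M_\t$ over several target batches $\M_{\ud}$, several source batches hit the same target batch, and the coefficients $C_K$ of Theorem~\ref{keepcon} are sums over $\underline\alpha\in\F_q^{m-d}$ whose nonvanishing already requires the delicate analysis of (\ref{mbdvfixed}). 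Note that the paper itself, even in the easiest case $\p=\emptyset$ (Proposition~\ref{trivialorbit}), does not establish linear independence of the $\Phi_m(e_{_{L^\s_\emptyset}})$ by any triangularity; it has to invoke Lyle's theorem. Having the determinant be a \emph{unit} in $\Lambda$ is equivalent to the images remaining linearly independent after reduction modulo every prime $l\neq p$, and the only tool available for such a characteristic-$l$ statement is Corollary~\ref{lyle} applied to an element that is known to lie in $S^\nu_\Lambda\otimes_\Lambda F$ --- which is exactly what your route avoids setting up. A further subtlety your computation would have to confront: in characteristic $l$ one may have $\ker\Phi_{m-s,F}\supsetneq S^\nu_F$ (the proof of Proposition~\ref{sphim} divides by $[m-i]$), so one cannot simply rerun the characteristic-zero independence argument mod $l$.

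For comparison, the paper does not prove integrality of the $a_{_R}$ up front. It clears denominators to get $\hat v_{_L}=\beta_{_L}e_{_L}+\sum_K\beta_{_K}e_{_K}\in\ker\Phi_{m,\Lambda}$ with coprime coefficients in $\Lambda$, and then shows $\overline{\beta_{_L}}\neq 0$ in every residue field $F$: if $\overline{\beta_{_L}}=0$, one acts by a suitable $u\in U$, applies $\R_\p$ via Corollary~\ref{patternlin} to produce a nonzero element of $S^\nu_\Lambda\otimes_\Lambda F$ whose $\last$ is non-standard, contradicting Lyle. Varying $l$ over all primes $\neq p$ (this is why $\Lambda$ is taken to be the ring of integers of $\Q[\varepsilon]$) forces $\beta_{_L}$ to be a unit, whence $a_{_K}=\beta_{_K}/\beta_{_L}\in\Lambda$. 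If you want to salvage your route, you should replace the unproved triangularity claim by this reduction-mod-$l$ argument; as it stands, the proof is incomplete at its decisive point.
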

\begin{proof}Note that $e_{_{L,Q}}=e_{_{L,\Lambda}}$. Keeping notation in \ref{basisfor0}, by \ref{indep} we may write uniquely
$\Phi_m(e_{_L})+\sum\nolimits_K \alpha_{_K}\Phi_m(e_{_K})=0$
where $K$ runs through all matrices with $\pf(\O_K)=\pf(\O_L):=\pf$ and \mbox{$\tab(K)\setminus (\ppi\cup \pj)\in T_{\p}^\lambda$} and $\alpha_{_K}\in Q.$ Thus
$e_{_L}+\sum\nolimits_K \alpha_{_K}e_{_K}\in S^\lambda_Q=\ker \Phi_{m,Q}.$
Multiplying  this equation by the least common denominator of the coefficients we obtain an expression
\begin{equation}\label{hatvl}\hat v_{_L}:=\beta_{_L}e_{_L}+\sum\nolimits_K \beta_{_K}e_{_K}\in \ker \Phi_{m,\Lambda}\text{ with }\beta_{_L}, \beta_{_K}\in \Lambda,\,\forall\,K.\end{equation}
Moreover we may assume that the greatest common divisor of the coefficients $\beta_{_L},\, \beta_{_K}$ is $1$.
Note that $\hat v_{_L}\in S^\lambda_\Lambda$ hence
$\hat v_{_{L,F}}=\hat v_{_L}\otimes_\Lambda 1_F \in S_F^\lambda.$
Let \begin{equation}\label{hatv}\hat v_{_{L,F}}=\hat v_{_L}\otimes_\Lambda 1_F=\overline {\beta_{_L}}e_{_L}+\sum\nolimits_K \overline {\beta_{_K}}e_{_K},\end{equation}
where for $c\in \Lambda$, $\overline c$ denoted the corresponding residue class of $c$ in $F.$
Here we identify $M^\lambda_F=M^\lambda_\Lambda/l M^\lambda_\Lambda$
and $M^\lambda_F=M^\lambda_\Lambda\otimes_\Lambda F$ by the canonical isomorphism, where $l\in \Lambda$ generates the kernel of the epimorphism from $\Lambda$ onto $F.$
Since we have assumed the greatest common divisor of the coefficients $\beta_{_L},\, \beta_{_K}$ is $1$, we obtain
$\hat v_{_{L,F}}\not=0 \text{ and }\hat v_{_{L,F}}\in S_F^\lambda=S_\Lambda^\lambda\otimes_\Lambda F.$

We claim that $\overline {\beta_{_L}}\not=0$ in $F.$ Otherwise, if  $\overline {\beta_{_L}}=0$ in $F$ then in (\ref{hatv}) there exist some $K,$ namely $R$, such that
\begin{equation}\label{r0}\overline {\beta_{_R}}\not=0\text{ in }F\text{ and } \tab(R)\setminus (\ppi\cup \pj)\in T_{\p}^\lambda.\end{equation}
Acting by a suitable $u\in U$ we can obtain a pattern matrix $R_0$ such that
\begin{equation}\label{r0r}e_{_R}\circ u=e_{_{R_0}} \text{ with }\tab(R_0)=\tab(R).\end{equation}
Denote $e_{_L}\circ u=\sum\nolimits_{_X} a_{_X}e_{_X}$ and  $\sum_{_{K\not=R}} \beta_{_K} e_{_K}\circ u=\sum_Y b_{_Y}e_{_Y}$ where
\begin{equation}\label{y}0\not=a_{_X}, b_{_Y}\in \Lambda \text{ and } \tab(Y)\setminus (\ppi\cup \pj)\in T_{\p}^\lambda.\end{equation}
Thus by (\ref{hatvl}) we obtain
$0\not=\hat v_{_L}\circ u%&=&\beta_{_L}e_{_L}\circ u+\sum_K \beta_{_K}e_{_K}\circ u\\
=\beta_{_L} \sum_X a_{_X}e_{_X}+\beta_{_R}e_{_{R_0}}+\sum_Y b_{_Y}e_{_Y}\in S_\Lambda^\lambda=\ker \Phi_{m,\Lambda}$
 and then by \ref{patternlin} we get:
$\beta_{_L} \sum_X a_{_X} \delta_{_X}\Phi_{m-|\p|,\Lambda}(e_{_{\R_S(X)}})+
\beta_{_R}\Phi_{m-|\p|,\Lambda}(e_{_{\R_\p(R_0)}})+\sum_Y b_{_Y}\delta_{_Y}\Phi_{m-|\p|,\Lambda}(e_{_{\R_\p(Y)}})=0$
where $\delta_{_X},\delta_{_Y}$ are just zeros or some powers of $q$ by construction. Let
\begin{equation}\label{z}z=\beta_{_L} \sum\nolimits_{_X} a_{_X} \delta_{_X}e_{_{\R_\p(X)}}+
\beta_{_R}e_{_{\R_\p(R_0)}}+\sum\nolimits_{_Y} b_{_Y}\delta_{_Y}e_{_{\R_\p(Y)}}\end{equation} then
$0\not= z\in \ker\Phi_{m-|\p|,\Lambda} =S^\lambda_\Lambda.$
Since $\overline {\beta_{_R}}\not=0\text{ in }F$ we obtain $z\otimes_\Lambda 1_F\not=0$ in $F$. Moreover, we have
$0\not= z\otimes_\Lambda 1_F\in S^\lambda_F=S^\lambda_\Lambda\otimes_\Lambda F.$
Since $\overline {\beta_{_L}}=0$, from (\ref{z}) we obtain
$0\not=z\otimes_\Lambda 1_F=\overline{\beta_{_R}}e_{_{\R_\p(R_0)}}+\sum_Y \overline{b_{_Y}}\overline{\delta_{_Y}}e_{_{\R_\p(Y)}}\in S^\lambda_F.$
Hence by (\ref{r0}), (\ref{r0r}), (\ref{y}) and  \ref{shifted}, we know
$z\otimes_\Lambda 1_F$ is a nonzero element of $S^\lambda_F$ with $\last(z\otimes_\Lambda 1_F)$ non standard, which is a contradiction
to \ref{lyle}. So $\overline {\beta_{_L}}=\beta_{_L}\otimes_\Lambda 1_F\not=0$ in $F.$

This means that $\Char(F)=l$ does not divide $\beta_{_L}\in \Lambda.$ Choosing for example $\Lambda$ to be the integral
closure of $\Z$ in the field $\Q[\varepsilon]$, where $\varepsilon$ is a $p$-th root of unity, we may vary $l$ through all primes of $\Z$ except $p$ to conclude that $\beta_{_L}$ must be a unit in $\Lambda$. Thus we can assume $\beta_{_L}=1$. This shows:
\mbox{$v_{_{L,\Lambda}}=e_{_L}+\sum_{_K} \alpha_{_K}e_{_K}\in S^\lambda_\Lambda$} and
\mbox{$v_{_{L,F}}=e_{_L}+\sum_K \overline{\alpha_{_K}}e_{_K}\in S^\lambda_F$}  with  $\ttop(v_{_{L,F}})=e_{_L}.$
\end{proof}

We remark that if $e_{_L}\in \O$ can appear as a leading term of $S^\lambda$, then all the idempotents in $\O$ can also be a leading term of $S^\lambda$, thus we say $M_\O$ appears as a leading term. Now we can state the main result of this thesis.
\begin{Theorem}\label{main2} Let $\lambda=(n-m,m)\vdash n.$ Then $ \mathcal B ^\lambda$ is an integral  standard basis for $S^\lambda$ and
$\mathcal B ^\lambda_{\pf}$ is an integral  standard basis of the $\pf$-component $S^\lambda\!\! \downarrow_{\pf}$ of $\RRes^{\tiny{\mbox {$FG$}}}_{\tiny{\mbox {$FU$}}} S^\lambda$. Moreover for  $0\leqslant c\in \Z,$ there exist polynomials $f_c(t)\in \Z[t]$
such that the number of irreducible components of $\RRes^{\tiny{\mbox {$FG$}}}_{\tiny{\mbox {$FU$}}} S^\lambda$
of dimension $q^c$ is $f_c(q)$. Here  $S^\lambda$ is over any field  $F$  with characteristic coprime to $p$ containing a primitive $p$-th root of unity.\end{Theorem}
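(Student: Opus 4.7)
The plan is to synthesize Proposition \ref{intbasis}, Theorem \ref{main}, and Corollary \ref{lyle} for the basis statements, and to combine Propositions \ref{dimoforbit}, \ref{uinvariant} with Theorem \ref{iso} for the polynomial count.

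For linear independence of $\mathcal B^\lambda$ I would argue directly: by Proposition \ref{intbasis} each $v_{L,F}\in \mathcal B^\lambda$ is a nonzero element of $S^\lambda_F$ with $\ttop(v_{L,F})=e_{_L}$, and distinct basis elements have distinct leading idempotents $e_{_L}$. Since the $e_{_L}$ lie in the idempotent basis $\bigcup_\t \E_\t$ of $M^\lambda_F$, the standard leading-term argument yields linear independence. For spanning I would work first in characteristic zero and induct on $\last(v)$: Corollary \ref{lyle} forces $\last(v)\in\Std(\lambda)$ for nonzero $v\in S^\lambda_Q$, and Theorem \ref{main} then identifies some $e_{_L}$ in $\ttop(v)$ whose orbit meets the standardness condition, so that subtracting a suitable scalar multiple of $v\!_{_L}$ strictly decreases $\last$. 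Iterating produces an expression of $v$ in $\mathcal B^\lambda$, and hence $\mathcal B^\lambda$ is a basis of $S^\lambda_Q$. Proposition \ref{intbasis} transports this to arbitrary admissible $F$: the transition matrix between $\mathcal B^\lambda$ and the idempotent basis lies over $\Lambda$ with unit leading entry under the epimorphism $\Lambda\to F$, so $\mathcal B^\lambda_F$ remains linearly independent, and $|\mathcal B^\lambda|=\dim_F S^\lambda_F$ by the characteristic-free formula of Theorem \ref{kerinter}. Thus $\mathcal B^\lambda$ is an integral basis for every admissible $F$.

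For the $U$-module structure, for each orbit $\O$ with filled pattern $\pf$ satisfying the standardness condition I would set $N_\O=F\Span\{v\!_{_L}\mid e_{_L}\in \O\}\subseteq S^\lambda$ and claim $N_\O\cong M_\O$ as $FU$-modules via $v\!_{_L}\mapsto e_{_L}$. The $U$-invariance of $N_\O$ follows by tracking leading terms of $v\!_{_L}\circ u$: the action on $e_{_L}$ inside $M^\lambda$ stays in $M_\O$ by Proposition \ref{uinvariant}, and the lower-order tail of $v\!_{_L}\circ u$ is forced to be the prescribed combination of the $v\!_{_{L'}}$ by the uniqueness part of Theorem \ref{main}. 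Irreducibility of $N_\O$ transfers from $M_\O$ under the leading-term isomorphism. Grouping by filled pattern then produces $\RRes^{G}_{U}S^\lambda=\bigoplus_\pf S^\lambda\!\!\downarrow_\pf$ with $S^\lambda\!\!\downarrow_\pf=\bigoplus_{\pf(\O)=\pf,\,\mathrm{standard}} N_\O$, exhibiting $\mathcal B^\lambda_\pf$ as an integral basis of $S^\lambda\!\!\downarrow_\pf$.

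Finally, for the polynomiality: by Proposition \ref{dimoforbit} the dimension of $M_\O$ depends only on the combinatorial pattern $\p=\p(\O)$, equal to $q^{k(\p)-|\p|}$ with $k(\p)$ independent of $q$, and by Theorem \ref{iso} the isomorphism type is determined by the filled pattern. For a fixed abstract pattern $\p$ with $k(\p)-|\p|=c$, the number of orbits with that pattern that appear in $\RRes^{G}_{U}S^\lambda$ equals $(q-1)^{|\p|}\cdot N_{\p,\lambda}$, where $N_{\p,\lambda}$ counts shifted standard tableaux of shape $(n-m-|\p|,m-|\p|)$ compatible with $\p$, a combinatorial quantity independent of $q$. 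Summing over all such $\p$ yields
\[
f_c(q)=\sum_{\p\colon k(\p)-|\p|=c} N_{\p,\lambda}\,(q-1)^{|\p|}\in \Z[q],
\]
a polynomial in $(q-1)$ with non-negative integer coefficients. The main obstacle I anticipate is the careful bookkeeping for $U$-invariance of $N_\O$ inside $S^\lambda$: the orbit module structure has been established in $M^\lambda$, and lifting it to a submodule of $S^\lambda$ with basis $\{v\!_{_L}\}$ requires that the uniqueness of the $v\!_{_L}$ from Theorem \ref{main} be compatible with the $U$-action on leading terms in every admissible characteristic; once this is arranged, all other ingredients are already in place.
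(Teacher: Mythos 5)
Your main line — linear independence of $\mathcal B^\lambda$ from the distinct leading idempotents, the equality $|\mathcal B^\lambda|=\dim_F S^\lambda$ via Theorem \ref{main} and Proposition \ref{intbasis}, and polynomiality of $f_c$ from the $(q-1)^{|\p|}$ fillings of each pattern together with the fact (Proposition \ref{dimoforbit}, Theorem \ref{iso}) that $\dim M_\O$ depends only on $\p(\O)$ — is exactly the paper's (very terse) proof, and your closed formula for $f_c(q)$ is the intended one. The spanning induction on $\last(v)$ is redundant given the dimension count, and as stated it has a wrinkle: $\ttop(v)$ is in general a sum of several idempotents, while Theorem \ref{main} only characterizes when a \emph{single} $e_{_L}$ occurs as $\ttop(v)$; I would simply drop that induction.

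The step you flag as the main obstacle is, however, a genuine gap. Theorem \ref{main} is purely an existence statement: there is no uniqueness part, and $v_{_L}$ is determined only up to adding elements of $S^\lambda$ with strictly smaller $\last$. Hence $N_\O=F\Span\{v_{_L}\mid e_{_L}\in\O\}$ need not be $U$-invariant for an arbitrary choice of the $v_{_L}$: for $u\in U$ one only gets that $v_{_L}\circ u-\sum_K c_K v_{_K}$ lies in $S^\lambda$ with strictly smaller $\last$, and nothing forces this difference to vanish. The repair is to avoid the $N_\O$ entirely. By the construction in Proposition \ref{intbasis} each $v_{_L}$ lies in $\CC_{\pf}^\lambda$, so $\mathcal B^\lambda_{\pf}\subseteq S^\lambda\cap\CC_{\pf}^\lambda$; since the $\CC_{\pf}^\lambda$ are in direct sum, $\sum_{\pf}\dim(S^\lambda\cap\CC_{\pf}^\lambda)\leqslant\dim S^\lambda=\sum_{\pf}|\mathcal B^\lambda_{\pf}|$ forces each $\mathcal B^\lambda_{\pf}$ to be a basis of $S^\lambda\cap\CC_{\pf}^\lambda$ and $S^\lambda=\bigoplus_{\pf}(S^\lambda\cap\CC_{\pf}^\lambda)$. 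Now $S^\lambda\cap\CC_{\pf}^\lambda$ is a $U$-submodule of $\CC_{\pf}^\lambda=\bigoplus M_\O$, a direct sum of mutually isomorphic irreducibles by Proposition \ref{uinvariant} and Theorem \ref{iso}; semisimplicity of $FU$ then gives that it is a direct sum of $|\mathcal B^\lambda_{\pf}|/\dim M_\O$ copies of that irreducible, which is exactly the multiplicity your count requires — no explicit irreducible summands inside $S^\lambda$ need to be exhibited.
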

\begin{proof}Obviously, $\B^\lambda$ is linearly independent subset of $S^\lambda$. And by Theorem \ref{main} and Proposition \ref{intbasis} we have $|\B^\lambda|=\dim_{_F} S^\lambda.$ Hence the first statement holds.
Moreover, if $M_\O$ appears as the leading term of $S^\lambda$  then  varying the filling of the pattern $\p=\p(\O)$, we will get  $(q-1)^{|\p|}$ many orbit $M_{\O'}$ appearing as the leading term of $S^\lambda$ with $\p(\O')=\p$. And obviously the dimensions of these orbits are the same, given by the hook length. That is for $0\leqslant c\in \Z,$ there exist polynomials $f_c(t)\in \Z[t]$
such that the number of irreducible components of $\RRes^{\tiny{\mbox {$FG$}}}_{\tiny{\mbox {$FU$}}} S^\lambda$
of dimension $q^c$ is $f_c(q)$.
\end{proof}

\subsection{Rank polynomials $r_\t(q)$}
In  \cite{brandt2}, Brandt-Dipper-James-Lyle   introduced a kind of polynomials in $q$ attached to each standard $\lambda$-tableau $\t,\,\big(\lambda=(n-m,m)\vdash n\big)$, called ``rank polynomials'', denoted by $r_\t(q)$ such that $r_\t(1)=1.$
% They showed that and there exist $r_\t(q)$ many elements of $S^\lambda$ for every standard $\lambda$-tableau $\t$, such that all the elements form a basis of $S^\lambda.$ In this section
We will show that the number of our basis elements $B^\lambda$ in the $\t$-batch $\M_\t$
with leading term in $\E_\t$ is exactly the rank polynomial $r_\t(q)$.

\begin{Defn}(Brandt, Dipper, James and Lyle \cite{brandt2})
\begin{itemize}
\item[(1)] Consider a rectangular $a\times b, \,(a\leqslant b)$ array of boxes embedding into a $\Z\times \Z$
coordinate system such that the northwest corner has coordinate $(1,1)$. % as follows
For example, $a=5,\, b=8:$
\begin{center}
\begin{picture} (150,80)
\put(0,80){\vector(0,-1){70}}
\put(0,80){\vector(1,0){120}}
\put(-15,10){$ \Z_x$}
\put(130,75){$ \Z_y$}
\put(-8,78){$0$}
%\put(0,70){\line(1,0){110}}
\multiput(0,70)(0,-10){6}{\line(1,0){2}}
\multiput(0,80)(10,0){11}{\line(0,-1){2}}
\linethickness{0.8pt}
\multiput(10,69.8)(0,-10){6}{\line(1,0){80}}
\multiput(10,69.8)(10,0){9}{\line(0,-1){50}}
\put(10,20){\circle*{3}}
\put(90,20){\circle*{3}}
\put(10,70){\circle*{3}}
\put(90,70){\circle*{3}}
\put(12,18){\line(1,-1){10}}
\put(8,68){\line(-2,-1){25}}
\put(-41,49){$(1,1)$}
\put(23,2){$(6,1)$}
\put(92,18){\line(1,-1){10}}
\put(103,2){$(6,9)$}
\put(92,68){\line(1,-1){10}}
\put(103,52){$(1,9)$}
\end{picture}
\end{center}
% $$\begin{tabular*}{60mm}{|@{\extracolsep{\fill}}c@{\extracolsep{\fill}}|c|c|c|c|c|}
% \hline&&&&&\\\hline&&&&&\\ \hline&&&&&\\ \hline&&&&&\\ \hline&&&&&\\ \hline\end{tabular*}$$
% where $a=5$ and $b=6.$
We call a route along the grids from the northwest   corner to the southeast corner a \textbf{path}, denoted by $\pi$.
Define $P(a,b)$ to be the set of all paths in an $a\times b$ array of boxes.
\vspace{-2mm}
\item[(2)] Given a corner $(i,j)$ let $r(i,j)=j-i$. Suppose that $Y$ is a filling of the boxes to the south of some path $\pi$
with elements of $\F_q.$ Say that $Y$ is good if for each corner $(i,j)$ through which the path passes, the matrix whose bottom left hand corner is $(a+1,1)$ and whose top right hand corner is $(i,j)$ has rank at most $r(i,j)$.
\vspace{-2mm}
\item[(3)] We define the rank polynomial $r(\pi)$ of the path to be the number of ways of filling the boxes below the path with elements of $\F_q$ such that the filling is good.
\end{itemize}
\end{Defn}

\begin{Remark}\label{eastmoving}(Brandt, Dipper, James and Lyle \cite{brandt2})
\begin{itemize}
\item[(1)]
If $\pi$ passes through a corner with $i>j$ then $r(\pi)=0$. In particular, if $r(\pi)\not=0$ then the path must start with a east move.
\vspace{-3mm}
\item[(2)] Note that in the definition of a good filling, we may replace  `for each corner $(i,j)$ through which the path passes'
by `for each corner $(i,j)$ through which the path passes and which has the property that $(i-1,j)$ and $(i,j+1)$ are on the path'
since all the other restrictions follow from these.
\end{itemize}
\end{Remark}

% \begin{Remark}Given a path and then filling of the boxes below the path with elements of $\F_q$ is a way of encoding an $a$-dimensional
% subspace of a given $(a+b)$-dimensional vector space over $\F_q$. In fact, this can be shown by definition of $\Xi_{a,\,a+b}$
% (c.f. Definition \ref{Xi}) if we change the labeling of the rows back to the original label $1,2\ldots,a$ and delete the columns of the last 1's. To each $\pi\in P(m,n-m)$ we attach  a unique $\t\in \RStd(\lambda)$ with $\lambda=(n-m,m).$ We use the notation ${\pi}_{\t}$ and let $r_\t=r({\pi}_{\t}).$
% \end{Remark}

\begin{Lemma}\label{std}Let $\lambda$ be a two part partition and \mbox{$\t \in \RStd(\lambda)$}.
Then \mbox{$\t \in \Std(\lambda)$} if and only if all the corners $(i,j)$ of ${\pi}_{\t}$ satisfying $i\leqslant j.$
\end{Lemma}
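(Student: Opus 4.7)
The plan is to unwind the definition of $\pi_\t$ and translate the inequality on corners into the combinatorial condition defining $\Std(\lambda)$. Write $\tt=(b_1,\dots,b_m)$ and first row $(a_1,\dots,a_{n-m})$. The path $\pi_\t$ is read off $\t$ by scanning $1,2,\dots,n$ in order and performing an east step when the current number lies in the first row of $\t$ and a south step when it lies in the second row. Hence after $k$ steps the path sits at the corner $(1+s_k,\,1+e_k)$, where
\[
s_k=\big|\{i\,|\,b_i\leqslant k\}\big|,\qquad e_k=k-s_k=\big|\{i\,|\,a_i\leqslant k\}\big|.
\]
Consequently, ``every corner $(i,j)$ of $\pi_\t$ satisfies $i\leqslant j$'' is equivalent to
\[
s_k\leqslant e_k\quad\text{for all }0\leqslant k\leqslant n.\tag{$\ast$}
\]

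For the direction ``$\t\in\Std(\lambda)\Rightarrow(\ast)$'', I would fix any $0\leqslant k\leqslant n$, set $r=s_k$, and note $b_1<\cdots<b_r\leqslant k$. Standardness gives $a_i<b_i\leqslant b_r\leqslant k$ for $1\leqslant i\leqslant r$, so $a_1,\dots,a_r$ are $r$ distinct first-row entries lying in $\{1,\dots,k\}$; thus $e_k\geqslant r=s_k$.

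For the converse ``$(\ast)\Rightarrow\t\in\Std(\lambda)$'', fix $i\in\{1,\dots,m\}$ and take $k=b_i$. The second-row entries in $\{1,\dots,b_i\}$ are exactly $b_1,\dots,b_i$, so $s_{b_i}=i$; by $(\ast)$ then $e_{b_i}\geqslant i$. Hence at least $i$ first-row entries lie in $\{1,\dots,b_i\}$, which forces the $i$ smallest first-row entries $a_1<\cdots<a_i$ all to be $\leqslant b_i$. Since $a_i\neq b_i$ we conclude $a_i<b_i$; as $i$ was arbitrary, $\t\in\Std(\lambda)$.

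There is no real obstacle here: the statement is a direct combinatorial dictionary between the lattice path and the tableau, and once the correspondence ``east = first row, south = second row'' is fixed, both implications are a one-line prefix count. The only minor care needed is to observe that in $(\ast)$ it suffices to test $k$ at the lattice points visited by $\pi_\t$, which is automatic since $(s_k,e_k)$ describes precisely those points.
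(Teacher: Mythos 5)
Your proof is correct and follows essentially the same route as the paper: the paper simply asserts that $\t\in\Std(\lambda)\Leftrightarrow a_i<b_i$ for all $i$, which it identifies with the diagonal boxes $(i,i)$ lying south of $\pi_\t$, i.e.\ with all corners satisfying $i\leqslant j$; your prefix counts $s_k\leqslant e_k$ are just a fully written-out version of that same dictionary. No gaps.
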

\begin{proof}Suppose $\t=\begin{tabular}{|c|c|c|c|c|c|}\hline $a_1$ & $a_2$&$\cdots$&$a_m$&$\cdots$&$a_{n-m}$\\
\hline $b_1$ & $b_2$&$\cdots$&$b_m$\\\cline{1-4} \end{tabular}\,\in  \RStd(\lambda).$
Then \begin{equation}\label{stdij}\t \in \Std(\lambda) \Leftrightarrow b_i>a_i,\,\forall\, i=1,\ldots,m.\end{equation}
If we label the boxes by their left top corner labeling, then (\ref{stdij})
 is equivalent to say box $(i,i)$ appears in the south of the path ${\pi}_{\t},$ that is
all the corners $(i,j)$ of ${\pi}_{\t}$ satisfying $i\leqslant j.$
\end{proof}

\begin{Theorem}\label{ranlpolynomial}Let $\lambda$ be a two part partition and $\t \in \RStd(\lambda)$.
Let \mbox{$e_{_L}\in \O\subset \M_\t\subset M^\lambda$} with $\p(O)=\p.$
Then $L \text{ is a good filling of path }{\pi}_{\t} $ if and only if $\t \setminus (\ppi\cup \pj)\text{ is standard}.$
\end{Theorem}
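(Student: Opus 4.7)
The plan is to translate both sides of the claimed equivalence into combinatorics on the lattice path $\pi_\t$: expressing the ranks defining the good-filling condition as counts of pattern positions in $\p$, and reading standardness of $\t\setminus(\ppi\cup\pj)$ via the path criterion of Lemma \ref{std}.

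First I would show that the rank of the submatrix $S(i,j)$ with bottom-left corner $(m+1,1)$ and top-right corner $(i,j)$ — whose entries are $l_{b_u,a_v}$ for $u\in\{i,\dots,m\}$, $v\in\{1,\dots,j-1\}$ — depends only on the orbit $\mathcal O$. By Corollary \ref{calcor}, each generator of $U^w_\c$ (resp.\ $U^w_\r$) subtracts a multiple of one column $a_{v'}$ from another column $a_{v''}$ (resp.\ adds a multiple of row $b_{u''}$ to row $b_{u'}$) of $L$, and the inequalities $v''>v'$ on $\Upsilon_2$ and $u''>u'$ on $\Upsilon_3$ rule out the only case in which such a modification could alter a column (or row) of $S(i,j)$ using a column (or row) from outside $S(i,j)$. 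Hence the orbit action restricts on $S(i,j)$ to elementary row/column operations or leaves it untouched, while $U^w_0$ acts only projectively; so $\rank S(i,j)$ is orbit-invariant and I may assume $L$ is the unique pattern matrix of $\mathcal O$.

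For a pattern matrix the non-zero entries outside the last 1's occupy distinct rows and columns, so
\[
\rank S(i,j)=\#\{l:u_l\geq i,\ v_l\leq j-1\},
\]
where $\p=\{(b_{u_l},a_{v_l}):1\leq l\leq s\}$. Setting $k=i+j-2$ and writing
\[
s_k=\#\{l:u_l\leq i-1\},\qquad e_k=\#\{l:v_l\leq j-1\},
\]
— which equal the numbers of south and east steps of $\pi_\t$ already removed in forming $\pi_{\tilde\t}$ by step $k$, with $\tilde\t=\t\setminus(\ppi\cup\pj)$ — an inclusion–exclusion gives $\rank S(i,j)=e_k-s_k+N(i,j)$, where $N(i,j)=\#\{l:u_l\leq i-1,\ v_l\geq j\}\geq0$. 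At an S-to-E corner $(i,j)$ of $\pi_\t$ one has $a_j=b_{i-1}+1$, so any $l$ counted by $N(i,j)$ would force $a_{v_l}\geq a_j>b_{i-1}\geq b_{u_l}$, contradicting $a_{v_l}<b_{u_l}$; thus $N(i,j)=0$ and the good-filling condition $\rank S(i,j)\leq j-i$ becomes $i-s_k\leq j-e_k$, i.e.\ the shifted coordinate $(\tilde i,\tilde j):=(i-s_k,j-e_k)$ — the point on $\pi_{\tilde\t}$ reached after step $k$ — satisfies $\tilde i\leq\tilde j$.

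The equivalence then follows from two reductions. On the one hand, by Remark \ref{eastmoving}(2), $L$ is good iff the rank bound holds at every S-to-E corner of $\pi_\t$. On the other, Lemma \ref{std} reduces standardness of $\tilde\t$ to $\tilde i\leq\tilde j$ at all corners of $\pi_{\tilde\t}$, and since $\tilde i-\tilde j$ is locally maximised at S-to-E corners of $\pi_{\tilde\t}$ it suffices to check these. For $(\Leftarrow)$, standardness propagates $\tilde i\leq\tilde j$ from corners to every point of $\pi_{\tilde\t}$, in particular to the shifted coordinate of every original S-to-E corner of $\pi_\t$. For $(\Rightarrow)$, each S-to-E corner of $\pi_{\tilde\t}$ arises from a maximal block of original steps $b_{i-1},\mathrm{removed},\dots,\mathrm{removed},a_v$ bracketed by two kept steps; inside this block I locate an original S-to-E corner sharing the same shifted coordinate — at the start of the block if the first intermediate step is east, at any interior south immediately followed by an east, or at the block's terminal corner if the intermediate ends in south followed by the kept east $a_v$ — and apply the rank inequality there.

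I expect the principal difficulty to lie in the $(\Rightarrow)$ case analysis: confirming under every interleaving of intermediate removed south and east steps between two consecutive kept steps that an original S-to-E corner with matching shifted coordinate exists, and handling these cases uniformly.
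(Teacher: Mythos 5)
Your proposal is correct and follows essentially the same route as the paper: reduce to the pattern matrix via orbit-invariance of the relevant ranks, identify $\rank S(i,j)$ with the count of pattern positions to the southwest, translate the rank bound into the inequality $\tilde i\leqslant\tilde j$ for the shifted coordinates, and conclude by Lemma \ref{std}. You are in fact somewhat more careful than the paper on two points it leaves implicit — the vanishing of the northeast count $N(i,j)$ at S-to-E corners and the matching of S-to-E corners of $\pi_\t$ with those of $\pi_{\tilde\t}$ — and both of your supplementary arguments are sound.
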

\begin{proof}By Remark \ref{eastmoving}, the path ${\pi}_{\t}$ must start with a east move, hence we can draw the following picture
for it:
\begin{center}
\begin{picture}(200,90)
\put(60,80){\line(1,0) {40}}
\put(100,80){\line(0,-1) {15}}
\multiput(60,65)(5,0){8}{\line(1,0){2}}
\multiput(100,65)(0,-5){8}{\line(0,1){2}}
\put(102,68){\tiny $(i,j)$}
\put(100,65){\circle*{5}}
\put(100,65){\line(1,0) {25}}
\put(125,65){\line(0,-1) {15}}
\put(125,50){\circle*{5}}
\put(125,50){\line(1,0) {25}}
\put(150,50){\line(0,-1) {20}}
\put(60,45){ $ M_{\mbox {\tiny $(i,j)$}}$}
\put(70,12){\textbf{Picture of } ${\pi}_{\t}$}
\end{picture}
\end{center}
\vspace{-5mm}
Note that $\t \setminus (\ppi\cup \pj)$ is a shifted tableau, filled by numbers in \mbox{$\{1,2,\ldots,n\}\setminus (\ppi\cup \pj).$}
Remember the definition of $\p$-similar in  \ref{shifted}. We denote
$\s\,\stackrel{\p}{\sim}\, \t \setminus (\ppi\cup \pj).$
Thus after deleting the rows and columns which contain positions in $\p,$ and closing the gaps, we obtain the path ${\pi}_\s.$
Again by  \ref{eastmoving}, it is sufficient to investigate those kind of corners labeled by  black dots in the Picture of ${\pi}_{\t}$ above. Choose an arbitrary corner of this kind, say $(i,j).$ Note that $L$ is obtained from a pattern matrix $L_0$ by truncated row and column operations. Furthermore note that such operations preserve the ranks of the sub-matrices determined by the relevant corners of the path ${\pi}_{\t}$. In particular $L$ is a good filling if and only if $L_0$ is a good filling. Thus we may assume that $L$ is a pattern matrix.

Assume there are $\alpha_{_{(i,j)}}$ many positions in the north west boxes $(u,v)$ of the corner $(i,j)$ such that $(u,v)\in \p,$ and $\beta_{(i,j)}$ many positions in the south west boxes $(s,t)$ of the corner $(i,j)$ such that $(s,t)\in \p.$
If we denote the south west part of the corner $(i,j)$ by $M_{(i,j)},$ then
by  \ref{calcor}, we obtain that:
\begin{equation}\label{rank}\rank M_{(i,j)}=\beta_{(i,j)}.\end{equation}
Hence after deleting the rows and columns which contain positions in $\p,$ and closing the gaps, the corner $(i,j)$
has a new labeling $(i',j')$ namely
$i'=i-\alpha_{_{(i,j)}}\text{ and } j'=j-\alpha_{_{(i,j)}}-\beta_{(i,j)}.$ Hence
\begin{equation}\label{j'}j'-i'=j-\alpha_{_{(i,j)}}-\beta_{(i,j)}-\(i-\alpha_{_{(i,j)}}\)=j-i-\beta_{(i,j)}.\end{equation}
By the definition of good filling,
$L\text{ is a good filling of }{\pi}_{\t}$ if and only if $\rank M_{(i,j)}\leqslant j-i$ for all black dots $(i,j)$.
By (\ref{rank}), we get
\begin{equation}\label{rbe}\rank M_{(i,j)}\leqslant j-i \Leftrightarrow \beta_{(i,j)} \leqslant j-i.\end{equation}
Combining (\ref{j'}) and  (\ref{rbe}), we get
\begin{equation}\rank M_{(i,j)}\leqslant j-i \Leftrightarrow j'-i'\geqslant 0\Leftrightarrow i'\leqslant j'.\end{equation}
By \ref{std}, we get that $\s$ is a standard tableau. Then by  \ref{shifted}, we obtain that $\t \setminus (\ppi\cup \pj)$
is standard.
\end{proof}

Combining the two main results \ref{main2} and \ref{ranlpolynomial}, we actually get a reproof of the following theorem:
\begin{Theorem}(Brandt, Dipper, James and Lyle \cite{brandt2})\label{brandt}\\
If $L$ is a good filling for ${\pi}_{\t}$ where $\t=\tab(L)$, then there   exist $v_{_L}\in S^\lambda$ such that $\ttop(v_{_L})=e_{_L}$ and $\last(v_{_L})=\tab(L).$ Moreover, if we choose some appropriate $v_{_L}$ for each $L$ which is a good filling, then
$\{v_{_L}\,|\,L \text{ is a good filling}\} $ is a standard basis of $S^\lambda$.
\end{Theorem}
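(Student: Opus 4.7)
The plan is to deduce Theorem \ref{brandt} as a direct corollary of the dictionary between combinatorial and representation-theoretic data that has already been set up. The key observation is that Theorem \ref{ranlpolynomial} is precisely the translation statement: for $L\in\X_\t$ with $\t=\tab(L)$ and $\p=\p(\O_L)$, the condition \emph{``$L$ is a good filling of $\pi_\t$''} is equivalent to the condition \emph{``$\tab(L)\setminus(\ppi\cup\pj)$ is a (shifted) standard $\mu$-tableau''} with $\mu=(n-m-|\p|,m-|\p|)$. Once this dictionary is in hand, Theorem \ref{brandt} is just the combinatorial shadow of Theorem \ref{main}, Proposition \ref{intbasis} and Theorem \ref{main2}.

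Concretely, for the first assertion, I start with a good filling $L$ of $\pi_\t$, and apply Theorem \ref{ranlpolynomial} to conclude that $\tab(L)\setminus(\ppi\cup\pj)$ is standard. This is exactly the hypothesis of the existence direction of Theorem \ref{main}, which produces $v_L\in S^\lambda_Q$ with $\last(v_L)=\tab(L)$ and $\ttop(v_L)=e_L$ when $\Char(F)=0$. Proposition \ref{intbasis} then upgrades this to any admissible field $F$ by passing through a suitable discrete valuation ring $\Lambda$ and checking that the leading coefficient survives reduction.

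For the basis assertion, my strategy is to identify the proposed family $\{v_L : L\text{ good filling}\}$ with the set $\B^\lambda$ of Definition \ref{basisfor0}. Indeed, by Theorem \ref{ranlpolynomial} the pattern matrices $L$ (varying over all orbits and all row-standard tableaux $\t$) satisfying ``$L$ is a good filling of $\pi_\t$'' are exactly the pattern matrices $L$ with $\tab(L)\setminus(\ppi\cup\pj)$ standard; varying the filling $\pf$ of the pattern $\p$ and ranging over all $e_L$ in the corresponding orbits then matches the indexing of $\B^\lambda$. Theorem \ref{main2} asserts that $\B^\lambda$ is an integral standard basis of $S^\lambda$, so the same is true of $\{v_L\}$ once one has chosen, for each $L$, an ``appropriate'' $v_L$ (any choice with $\ttop(v_L)=e_L$ suffices, since distinct choices differ by elements of $S^\lambda$ supported on strictly smaller leading terms, and bases transform to bases under such changes). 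Linear independence can also be verified directly: the leading idempotents $e_L$ are all distinct, and the idempotent basis $\bigcup_\t\E_\t$ of $M^\lambda$ is linearly independent, so the transition matrix from $\{v_L\}$ to the idempotent basis is block-triangular with ones on the diagonal; spanning then follows by comparing $|\{v_L\}|$ with $\dim S^\lambda$ as computed in Theorem \ref{main2}.

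There is no genuine obstacle here beyond bookkeeping: the hard work has already been done in proving the $U$-module decomposition of $M^\lambda$, the characterization of leading terms in $S^\lambda$ (Theorem \ref{main}), the integrality statement (Proposition \ref{intbasis}), and the combinatorial translation (Theorem \ref{ranlpolynomial}). The only subtlety worth emphasizing in the write-up is that the ``appropriate'' $v_L$ is not canonical: only the leading term $e_L$ is forced, while the lower-order corrections are determined only modulo the span of basis vectors with strictly smaller $\last$. Once this is acknowledged, the theorem follows by assembling the above ingredients, and one obtains a new, representation-theoretic interpretation of the Brandt--Dipper--James--Lyle standard basis as the unique-leading-term basis of $S^\lambda$ adapted to the idempotent decomposition of $\RRes^G_U M^\lambda$.
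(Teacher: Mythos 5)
Your proposal is correct and follows essentially the same route as the paper, which obtains Theorem \ref{brandt} directly by combining the combinatorial dictionary of Theorem \ref{ranlpolynomial} (good fillings of $\pi_\t$ correspond exactly to $\tab(L)\setminus(\ppi\cup \pj)$ standard) with Theorems \ref{main}, \ref{main2} and Proposition \ref{intbasis}. The extra bookkeeping you supply on the block-triangular transition matrix and the non-canonicity of $v_{_L}$ is consistent with, and slightly more explicit than, the paper's one-line derivation.
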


%%%%%%%%%%%%%%%%%%%%%%%%%%%%%%%%%%%%%%%%%%%%%%%%%%%%%%%%%%%%%%%%%%%%%%%%%%

\providecommand{\bysame}{\leavevmode ---\ }
\providecommand{\og}{``} \providecommand{\fg}{''}
\providecommand{\smfandname}{and}
\providecommand{\smfedsname}{\'eds.}
\providecommand{\smfedname}{\'ed.}
\providecommand{\smfmastersthesisname}{M\'emoire}
\providecommand{\smfphdthesisname}{Th\`ese}

\end{document}